%% 
%% Copyright 2007-2020 Elsevier Ltd
%% 
%% This file is part of the 'Elsarticle Bundle'.
%% ---------------------------------------------
%% 
%% It may be distributed under the conditions of the LaTeX Project Public
%% License, either version 1.2 of this license or (at your option) any
%% later version.  The latest version of this license is in
%%    http://www.latex-project.org/lppl.txt
%% and version 1.2 or later is part of all distributions of LaTeX
%% version 1999/12/01 or later.
%% 
%% The list of all files belonging to the 'Elsarticle Bundle' is
%% given in the file `manifest.txt'.
%% 
%% Template article for Elsevier's document class `elsarticle'
%% with harvard style bibliographic references

\documentclass[10pt]{amsart}

%% Use the option review to obtain double line spacing
%% \documentclass[preprint,review,12pt]{elsarticle}

%% Use the options 1p,twocolumn; 3p; 3p,twocolumn; 5p; or 5p,twocolumn
%% for a journal layout:
%% \documentclass[final,1p,times]{elsarticle}
%% \documentclass[final,1p,times,twocolumn]{elsarticle}
%% \documentclass[final,3p,times]{elsarticle}
%% \documentclass[final,3p,times,twocolumn]{elsarticle}
%% \documentclass[final,5p,times]{elsarticle}
%% \documentclass[final,5p,times,twocolumn]{elsarticle}

%% For including figures, graphicx.sty has been loaded in
%% elsarticle.cls. If you prefer to use the old commands
%% please give \usepackage{epsfig}

%% The amssymb package provides various useful mathematical symbols
%% The amsthm package provides extended theorem environments
%% \usepackage{amsthm}
\usepackage{amsthm,amsmath}%natbib
\usepackage{graphicx}

\usepackage{amsmath}
\usepackage{amsthm}
\usepackage{amsfonts}
\usepackage{amssymb}
\usepackage{color}
\usepackage{verbatim}
\usepackage{amscd}
\usepackage[latin1]{inputenc}
\usepackage{latexsym}
\usepackage{url}
% etc.
\usepackage[colorlinks=true]{hyperref}
\usepackage{cleveref}
\usepackage{enumitem}
%% The lineno packages adds line numbers. Start line numbering with
%% \begin{linenumbers}, end it with \end{linenumbers}. Or switch it on
%% for the whole article with \linenumbers.
%% \usepackage{lineno}

% \journal{Stochastic Processes and their Applications}

\usepackage{amsmath}
\usepackage{amssymb}
\usepackage{fancyhdr}
\usepackage{tikz}
\usepackage{amsthm}
\usepackage{relsize}
\usepackage{empheq}
\usepackage{mathrsfs}
\usepackage{bm}

\newtheorem{theorem}{Theorem}[section]
\newtheorem{corollary}{Corollary}[theorem]
\newtheorem{lemma}[theorem]{Lemma}
\newtheorem{proposition}[theorem]{Proposition}

\newtheorem{assumption}{Assumption}[section]
\theoremstyle{definition}
\newtheorem{definition}{Definition}[section]
\newtheorem{remark}{Remark}[section]
\newtheorem{example}{Example}[section]
\numberwithin{equation}{section}

% --- Dynamics Large Deviation ---
\newcommand{\one}{\mathbf{1}}
\newcommand{\abs}[1]{\left\lvert#1\right\rvert}
\newcommand{\norm}[1]{\left\lVert#1\right\rVert}
\newcommand{\real}{\mathbb{R}}
\newcommand{\nat}{\mathbb{N}}

\newcommand{\rat}{\mathbb{Q}}

\newcommand{\inprod}[2]{\left\langle#1,\,#2\right\rangle}
\newcommand{\set}[1]{\left\{#1\right\}}

\newcommand{\paran}[1]{\left(#1\right)}

\newcommand{\convergeto}{\longrightarrow}
\newcommand{\prob}[1]{\mathbb{P}\left(#1\right)}
\newcommand{\expect}[1]{\mathbb{E}\left[#1\right]}
\newcommand{\cX}{\mathcal{X}}
\newcommand{\xmap}{\Phi}
\newcommand{\cY}{\mathcal{Y}}
\newcommand{\ymap}{\Psi}
\newcommand{\Z}{\mathcal{Z}}

\newcommand{\U}{\mathcal{U}}

\newcommand{\T}{\mathbb{T}}
\newcommand{\K}{\mathcal{K}}
\newcommand{\R}{\mathbb{R}}
\renewcommand{\S}{\mathcal{S}}
\newcommand{\F}{\mathcal{F}}
\newcommand{\M}{\mathcal{M}}
\newcommand{\I}{\mathcal{I}}
\newcommand{\J}{\mathcal{J}}
\newcommand{\cPP}{\mathcal{P}}

\newcommand{\tF}{\widetilde{\F}}

\newcommand{\ttheta}{\tilde{\theta}}

\newcommand{\relent}[2]{K(#1\,\|\,#2)}
\newcommand{\dynrelent}[2]{h(#1\,\|\,#2)}
\newcommand{\dist}[2]{\text{dist}(#1, #2)}
\newcommand{\cL}{\mathcal{L}}

\newcommand{\cG}{\mathcal{G}}

\newcommand{\supp}{\text{supp}}

\newcommand{\partition}{\mathscr{Z}}

\begin{document}

% \begin{frontmatter}

%% Title, authors and addresses

%% use the tnoteref command within \title for footnotes;
%% use the tnotetext command for theassociated footnote;
%% use the fnref command within \author or \address for footnotes;
%% use the fntext command for theassociated footnote;
%% use the corref command within \author for corresponding author footnotes;
%% use the cortext command for theassociated footnote;
%% use the ead command for the email address,
%% and the form \ead[url] for the home page:
%% \title{Title\tnoteref{label1}}
%% \tnotetext[label1]{}
%% \author{Name\corref{cor1}\fnref{label2}}
%% \ead{email address}
%% \ead[url]{home page}
%% \fntext[label2]{}
%% \cortext[cor1]{}
%% \affiliation{organization={},
%%             addressline={},
%%             city={},
%%             postcode={},
%%             state={},
%%             country={}}
%% \fntext[label3]{}

\title[Large deviations and posterior consistency]{Large Deviation Asymptotics and Bayesian Posterior Consistency on Stochastic Processes and Dynamical Systems}

%% use optional labels to link authors explicitly to addresses:
 %\author{Langxuan Su and Sayan Mukherjee}
%\affiliation{organization={Department of Mathematics},
            % addressline={},
%%             city={},
%%             postcode={},
%%             state={},
%%             country={}}
%%
%% \affiliation[label2]{organization={},
%%             addressline={},
%%             city={},
%%             postcode={},
%%             state={},
%%             country={}}
\author{Langxuan Su}
\author{Sayan Mukherjee}
% \footnote{%
%     $^1$Department of Mathematics, Duke University, NC\\%
%     $^2$ Departments of Statistical Science, Computer Science, Biostatistics \& Bioinformatics, Duke University, NC\\%
%     $^3$ Center for Scalable Data Analytics and Artificial Intelligence, Universit\"at Leipzig\\%
%     $^4$ Max Planck Institute for Mathematics in the Sciences, Leipzig \\%
%     ~\\
% %    \today
% }

%\author{Langxuan Su}
%\affiliation{{Duke University},
%%             addressline={},
%%             city={},
%%             postcode={},
%%             state={},
%%             country={}}
%\author{Sayan Mukherjee}

\maketitle

\begin{abstract}
%% Text of abstract
We consider generalized Bayesian inference on stochastic processes and dynamical systems with potentially long-range dependency. Given a sequence of observations, a class of parametrized model processes with a prior distribution and a loss function, we specify the generalized posterior distribution.
The problem of frequentist posterior consistency
is concerned with whether as more and more samples are observed,
the posterior distribution on parameters will asymptotically concentrate on the ``right" parameters.
We show that posterior consistency can be derived using a combination of classical large deviation techniques, such as Varadhan's lemma, conditional/quenched large deviations, annealed large deviations, and exponential approximations.
We show that the posterior distribution will asymptotically concentrate on parameters that minimize the expected loss and a divergence term, and we identify the divergence term as the Donsker-Varadhan relative entropy rate from process-level large deviations. As an application, we prove new quenched and annealed large deviation asymptotics and new Bayesian posterior consistency results for a class of mixing stochastic processes. 
In the case of Markov processes, one can obtain explicit
conditions for posterior consistency, whenever estimates
for log-Sobolev constants are available, which
makes our framework essentially a black box.
We also recover state-of-the-art posterior consistency on classical dynamical systems with a simple proof. Our approach has the potential of proving posterior consistency for a wide range of Bayesian procedures in a unified way.

\end{abstract}

%%Graphical abstract
% \begin{graphicalabstract}
% %\includegraphics{grabs}
% \end{graphicalabstract}

%%Research highlights
% \begin{highlights}
% \item Research highlight 1
% \item Research highlight 2
% \end{highlights}

% \begin{keyword}
%% keywords here, in the form: keyword \sep keyword
% Bayesian posterior consistency, large deviations
%% PACS codes here, in the form: \PACS code \sep code

%% MSC codes here, in the form: \MSC code \sep code
%% or \MSC[2008] code \sep code (2000 is the default)

% \end{keyword}

% \end{frontmatter}

%% \linenumbers

%% main text

\begin{center}
    \vspace{0.5cm}
    \textit{LS dedicates this to his father Weibo Su for his endurance.}
\end{center}

\section{Introduction} \label{Sect:Intro}

In this paper, we provide asymptotic results concerning (generalized) Bayesian inference for certain stochastic processes and dynamical systems based on a large deviation approach. The elements of generalized Bayesian inference include a sequence of observations $y$, a class of model processes parameterized by $\theta \in \Theta$ which can be characterized as a stochastic process $X^\theta$ or a measure $\mu_\theta$, a prior distribution $\pi_0$ on $\Theta$, and a loss function $L$ which measures the error between $y$ and a realization of $X^\theta$. These elements together allow us to specify the generalized posterior distribution $\pi_t(\theta \mid y)$ on the parameters, where $t$ denotes number of samples or the length of the observed trajectory based on $y$. The goal of this paper is to study the asymptotic behavior of $\pi_t(\theta \mid  y)$ as $t \to \infty$, such as under what conditions it will asymptotically concentrate on the ``right" set of parameters so that the Bayesian procedure is justified. This is known as the problem of posterior consistency. In particular, we state conditions on the model family $\{\mu_\theta\}_{\theta \in \Theta}$ and the loss function $L$ such that the posterior distribution converges, and we characterizes the set of parameters the posterior distribution asymptotically concentrates on as those minimize the expected loss and a divergence term. 

The study of posterior consistency is known to be technically involved in the statistics community. Most of the existing literature on posterior consistency focuses on very specific cases of the underlying data-generating processes, and Bayesian procedures.
Each case is treated with many ad hoc arguments that do not generalize,
and this limits the applicability. 
In addition, very few posterior consistency results exist for dynamical systems that
involves potentially long-range dependency. 
It was unclear how posterior consistency connects with other well-studied properties of dynamical systems, such as mixing conditions.

% In generalized Bayesian inference, we only require the existence of a loss function that can be used to assess how well a sequence generated by a model process fits the observed data sequence. In contrast, Bayesian inference requires knowledge of the likelihood or the data generation process, and it is generally assumed that the data generation process belongs to the family of candidate model processes.  We consider generalized Bayes procedures for two reasons: (1) the large deviation approach to Bayesian inference is more natural for considering general loss functions, and (2) when studying dynamical or other complex systems, it may be unrealistic to assume knowledge of the data generating system and difficult to verify that the data generating process is in the model class specified. 

% The two conditions we require are: (1) a conditional large deviation behavior for a single
% $X^\theta$, and (2) an  exponential  continuity condition over the model family for the map from the parameter $\theta$ to the loss incurred between $X^\theta$ and the observation sequence $y$. 
The central contribution of this paper is to provide a flexible proof framework that can be applied to a variety of stochastic processes and dynamical systems including both discrete and continuous state spaces and discrete-time and continuous-time dynamics. 
We simplify proving posterior consistency to checking for properties that have been well-studied in the large deviation literature. As the proposed framework can be applied to a wide class of processes, we are providing new proof techniques that can be used to analyze posterior consistency for novel stochastic models. We provide some evidence of the generality of our procedure via applications to continuous-time hypermixing processes and Gibbs processes on shifts of finite type.
In particular, posterior consistency of hypermixing processes has 
not been covered by the current literature.
The point is that the same procedure for proving posterior consistency can be used for two very different dynamical systems, and this procedure potentially connects posterior consistency with well-studied properties of dynamical systems. 

To obtain posterior consistency of Bayesian inference of hypermixing processes, we prove new quenched and annealed large deviation asymptotics. In the special case of Markov processes,
we make explicit connections with the log-Sobolev inequality
and hypercontractivity, where our sufficient conditions
for posterior consistency can be seamlessly identified.
As a result, whenever there is new progress in estimating log-Sobolev constants, a new posterior consistency result will follow,
which makes our framework essentially a black box in 
the case of Markov processes.

A key result in our framework is the identification of the 
divergence term more suitable for dynamical systems, which governs the large deviation asymptotics of the posterior distributions. 
This divergence term coincides with the classical
Donsker-Varadhan relative entropy rate in
process-level large deviation principles,
which can be defined on a variety of 
stochastic processes and dynamical systems. It also
connects to other classical quantities in ergodic 
theory, such as the Kolmogorov-Sinai entropy. 
This divergence allows us to formulate and prove posterior consistency
for a wide range of stochastic processes and dynamical
systems. 

\subsection{Connections to previous work}\label{Sect:Previous}

Large deviations have been applied to Bayesian procedures including posterior distributions of i.i.d. random variables \cite{ganesh1999inverse}, Markov chains \cite{papangelou1996large, eichelsbacher2002bayesian},
Dirichlet processes \cite{ganesh2000large}, and empirical likelihoods \cite{grendar2007bayesian}. These papers
do not specifically consider posterior consistency and do not provide a general methodology for proving
posterior consistency, rather they focus on deriving the large deviation rate function for a particular Bayesian
procedure.

Posterior consistency for i.i.d. processes is well-studied, going back to initial results by 
Doob \cite{Doob49CNRS} and Schwartz \cite{Schwartz1965}. Counterexamples and challenges to proving
posterior consistency for nonparametric models were highlighted by Diaconis and Freedman in \cite{diaconis1998}.
Posterior consistency for Bayesian nonparametric models is an active area of research, for a detailed review see \cite{Ghosal2017}.
There are far fewer results for posterior consistency for dependent processes or dynamical systems. Posterior 
consistency for hidden Markov models was established in \cite{Chopinetal2015,douc2020posterior,Gassiat2014,Vernet2015}. For dynamical systems, \cite{mcgoff2019gibbs} established
posterior consistency of (hidden) Gibbs processes on mixing subshifts of finite type using properties of Gibbs measures, and \cite{lopes2020bayes} directly used large deviation results to prove a similar result.

The setting of this paper is similar to \cite{shalizi2009dynamics}, where the author establishes posterior consistency by proving the large deviation principle for the posterior distribution of some dependent processes with likelihood ratios instead of loss functions. There are technical difficulties in verifying the main assumptions in \cite{shalizi2009dynamics}, in particular when the state space is neither finite nor compact. In addition, \cite{shalizi2009dynamics} does not provide any procedures to check the required assumptions. Our work can be seen as a general framework for verifying those assumptions. Additionally, the methods used in both this paper and \cite{shalizi2009dynamics} are
similar to the entropy-based argument in \cite{mcgoff2015consistency} based on \cite{barron1985strong}, which originates from \cite{douc2011consistency}.

From a large deviation perspective, we will make extensive use of two techniques in large deviation theory.
We will require large deviation behavior on each model process conditioning on a given observation $y$. In the large deviation literature, this is closely related to the conditional large deviation principle or the quenched large deviation principle which has been studied for a variety of stochastic models \cite{seppalainen1994large,comets2000quenched,chi2002conditional}. We will also require a regularity condition on the map from parameters to model processes called exponential continuity, 
which is adapted from the work on the large deviation of mixtures or
the annealed large deviation principle. This formalism has been widely studied for a variety of
processes \cite{comets2000quenched,biggins2004large,dinwoodie1992large, wu2004large}, with
exchangeable sequences the most notable from the literature \cite{dinwoodie1992large, wu2004large}.
The idea of exponential continuity we use comes from \cite{dinwoodie1992large},
and it is also implicit in the study of uniform large deviation principles
of Markov processes with different initial conditions
in \cite{dupuis2011weak, budhiraja2019analysis}.

We will see later the variational perspective of Bayesian inference is critical. The idea of a variational formulation of Bayesian inference was developed by Zellner \cite{Zellner1988}, and the link between statistical mechanics and information theory with Bayesian inference was at the heart of the inference framework advocated by Edwin T. Jaynes 
\cite{Jaynes1973-JAYTWP}. In the variational perspective Bayesian inference is an optimization procedure over distributions that minimizes an error term and a regularization term that enforces the posterior to be close to the prior. Generalized Bayes inference refers to procedures for updating prior beliefs where the loss may not necessarily be the log-likelihood and the regularization term 
need not be the relative entropy to the prior. The idea of using loss functions to update beliefs goes back at least to Vovk \cite{Vovk}. It has played a central role in the PAC-Bayesian  approach to
statistical learning \cite{McAllester,catoni2007pac} and has been adopted by the mainstream Bayesian community \cite{Bissiri2016,MillerDunson}. 
In \cite{Grunwald2016}
% \cite{Grunwald2016}
consistency and rates 
of convergence are obtained for
generalized Bayesian methods. 

\subsection{Overview}

%\textcolor{red}{Placeholder. Divided remainder of this section into three sections.  
%Need to update section and subsection headings and labels.}

In Section \ref{Sect:ObservedSystem}, we provide notations, the setup of our problem and the main results of the paper. In Section \ref{sec:LDA}, we outline our large deviation framework to prove posterior consistency and provide both details and examples for the steps required and the various mathematical quantities involved. 
In Section \ref{sec:ex_inverse}, as a sanity check, we apply our framework to Bayesian inverse problems. 
In Section \ref{sec:ex_hypermixing} and \ref{sec:ex_Gibbs}, we apply our large deviation framework to two very different dynamic models: continuous-time hypermixing stochastic processes, including Markov processes, and Gibbs processes on shifts of finite type. We close with a discussion.
We also provide appendices for deferred proofs, background material and auxiliary technical results.

\section{The setting} 
\label{Sect:ObservedSystem}

Our inference framework consists of two components. The first component is the dynamical system that generated the observations. The second component is a parameterized family of  stochastic processes for which (generalized) Bayesian updating provides a posterior distribution conditioning on the observations.
We will denote $\T$ as the time index. In this chapter, we consider both discrete $\T = \nat$ time indices, equipped with discrete topology, and continuous  $\T = [0, \infty)$ time indicies, equipped with Euclidean topology. In both cases, $\T$ is equipped with its Borel $\sigma$-algebra.
%We focus on the continuous time horizon $\T = [0, \infty)$, but the same results hold for the discrete time horizon $\T = \nat$ by replacing integrals in time with summations and making other similar straightforward changes.

\subsection{Observed system}

% We will denote the observation process as $Y$. 
Let $\cY$ be a Polish space equipped with the Borel $\sigma$-algebra. We denote
 an one-parameter family of measurable transformations acting on $\cY$ by $(\ymap^t)_{t \in \T}$, i.e.,  for each $t \in \T$, $\ymap^t: \cY \to \cY$ is a measurable map, 
the map $\T \times \cY \to \cY, \,(t,y) \mapsto \ymap^t y$ is jointly measurable, 
%  $\ymap^0 = \text{Id}$, 
 and $\ymap^{t + s} = \ymap^t \circ \ymap^s.$ We recall a few definitions from ergodic theory.
A Borel probability measure $\nu$ on $\cY$ is said to be $\ymap$-invariant if $\nu((\ymap^t)^{-1} E) = \nu(E)$ for
any $t \in \T$ and any measurable set $E \subset \cY$. A set $E \subset \cY$ is said to be $\ymap$-invariant
if $(\ymap^{t})^{-1}(E) = E$ for any $t \in \T$. A $\ymap$-invariant Borel probability measure $\nu$ is
called $\ymap$-ergodic if $\nu(E) \in \set{0,1}$ for any $\ymap$-invariant set $E \subset \cY$.
A function $f: \cY \to [-\infty, \infty]$ is called $\ymap$-invariant if 
for $\nu$-almost every
$y \in \cY$, $f(y) = f(\ymap^t y)$ for any $t \in \T$.
In the following, we assume that observations come from a dynamical system $(\cY, \ymap, \nu)$
where $\nu$ is $\ymap$-ergodic. 
% We denote the observed sequence in the discrete case as $y_{0;t} := \{y, \ymap y, \ymap^2 y,  \ldots, \ymap^{t-1} y\}$ and in the continuous case
% $y_{0:t} := \{\ymap^s y\}_{s \in [0,t]}$. 
% is indexed by time $0$ to $t$. 

In the continuous time case $\T = [0, \infty)$, we make an additional assumption that 
$(\ymap^t)_{t \in \T}$ is an one-parameter family of \emph{continuous} transformations so that
the map $(t,y) \mapsto \ymap^t(y)$ is continuous. This ensures the integrals in time in
this paper are all well-defined.

\subsection{Parameterized model processes}

As we consider Bayesian inference in this paper, our objective is to obtain a posterior distribution over a parameterized family of processes which quantifies the evidence that each of the model processes in the family could have generated the observed data. We specify
the family of processes as $(X^\theta_t)_{t \in \T}$ indexed by parameter $\theta \in \Theta$ and our inferential goal is to obtain a posterior distribution $\pi_t(\theta \mid y)$ given observations $y$ up to time $t$. Before we rigorously state the form of the posterior, we formally specify the general form of the model processes we consider in this paper.

Let $\S$ be a Polish space with the Borel $\sigma$-algebra. Let $\cX = C(\T, \S)$ be the space of continuous paths $x: t \mapsto x_t$ on $\S$ equipped with the topology of uniform convergence on compact intervals of $\T$, so $\cX$ is also a Polish space under this topology with a compatible metric $d_\cX$. The natural one-parameter family of continuous transformations acting on $\cX$ is the family of time shift  maps $(\xmap^t)_{t \in \T}$ given by $\xmap^t: \cX \to \cX$,
$(\xmap^t(x))_s = x_{t+s}$. Thus, $\cX$ can be viewed as the canonical probability space of continuous $\S$-valued stochastic processes. 
% For the discrete setting $\T = \nat$ we will also denote as $\cX$ the discrete process analogous to the continuous process above, basically the 
% for the time shift map $s,t \in \nat$.
Let $\Theta$ be a compact metric space of parameters with the metric denoted by 
$d_\Theta$. One can view the family of model processes as a family of $\xmap$-invariant measures $\{\mu_\theta\}_{\theta \in \Theta}$ on $\cX$, perhaps a more dynamical perspective. A more probabilistic view considers  a parameterized family of $\S$-valued  stationary stochastic processes 
$\{X^\theta = (X^\theta_t)_{t \in \T}\}_{\theta \in \Theta}$ 
with corresponding laws $\{\mu_\theta\}_{\theta \in \Theta}$ on $\cX$. Both perspectives will be considered in this paper. From the probabilistic view point, 
we have the freedom to choose the underlying probability space  $(\Omega, \F, \mathbb{P})$ of the stochastic process $X^\theta$. 
% A realization from the process $X^\theta$ from time zero to $t$ will be denoted as $x^\theta_{0:t}$.
\begin{remark}[Parametric and non-parametric inference]
    It may seem that we are working in a parametric setting,
    since each model process depends on a parameter $\theta$.
    However, non-parametric inference is basically
    parametric inference when the parameter space is
    infinite-dimensional, such as functions. We do not assume
    $\Theta$ to be finite-dimensional, so our setting
    includes non-parametric inference.
\end{remark}

\begin{remark}[Compactness of $\Theta$]
    It is quite common to assume the parameter space to be compact in the posterior consistency literature.
    In the discussion section at the end, we briefly
    provide a reason why compactness is necessary from the 
    large deviation perspective. Even so, compactness of $\Theta$
    is only used in the last step of large deviation framework.
    Also, the large deviation asymptotics is strictly stronger than posterior consistency. 
    We believe that the current framework can be extended to non-compact
    $\Theta$, for example, by assuming some exponential decay of 
    the prior distribution $\pi_0$.
\end{remark}

\subsection{Posterior inference}
In Bayesian inference, the quantity of interest is the posterior distribution on the parameters given our observation sequence
$y$ up to time $t$,
\begin{eqnarray*}
 \pi_t(\theta \mid y) 
 &=& 
\frac{\mbox{Lik}^t(y \mid \theta) \times \pi_0(\theta)}
        {\int_{\theta' \in \Theta}  \mbox{Lik}^t(y \mid \theta') \times \pi_0(\theta') \, d\theta' },
\end{eqnarray*}
where $\pi_0(\theta)$ is the prior distribution or the belief over the parameters $\theta \in \Theta$, $\mbox{Lik}^t(y \mid \theta)$ is the likelihood of observing the data $y$ given
parameter $\theta$ up to time $t$, and for the purpose of this paper, the denominator is a normalizing constant (in Bayesian inference it is the marginal probability of the data $y$). Another formulation of Bayesian inference focuses on the loss function $l^t(y;\theta) = -\log \mbox{Lik}^t(y \mid \theta)$ which measures the incurred error or loss on the data $y$ given a parameter $\theta$ up to time $t$. The posterior in this setting is
\begin{eqnarray*}
  \pi_t(\theta \mid y) 
  &=& 
  \frac{ \exp(-l^t(y;\theta)) \times \pi_0(\theta)}
         {\int_{\theta' \in \Theta} \exp(-l^t(y;\theta')) \times \pi_0(\theta') \, d\theta' }.
  \end{eqnarray*}
We will focus on the above loss-based formulation of Bayesian inference
because it makes more transparent the connections between large deviation theory and posterior consistency, and it allows us to provide results for the wider class of generalized Bayesian updating procedures. 

We consider a general loss function 
$L: \Theta \times \cX \times \cY \to \real, \, (\theta, x, y) \mapsto L_\theta(x,y)$,
where the path $x \in \cX$ can be interpreted as the underlying state of the observed 
data.
We define the (integrated) loss up to time $t$ for the continuous and discrete cases as 
$$L^t_\theta(x,y) := \int_0^t L_\theta(\xmap^s x, \ymap^s y)  \, ds, \quad L^t_\theta(x,y) := \sum_{s=0}^{t-1} L_\theta(\xmap^s x, \ymap^s y).$$
% The two properties of the loss function that will directly impact our large deviation perspective is that the loss effects the posterior at an exponential scale and we will
% require continuity of the map $(\theta, x, y) \mapsto L(x,y;\theta)$.
We now formally state the posterior distribution of interest in this paper. We assume the space of parameters $\Theta$ is equipped with its Borel $\sigma$-algebra and a fully supported prior measure $\pi_0$. Given an observation $y$, the posterior distribution is
\begin{equation}
    \pi_t(E  \mid  y)
     = \frac{1}{\partition^{\pi_0}_t(y)} \int_{E \times \cX} 
    \exp\left(-L^t_\theta(x, y)\right)
    d\mu_\theta(x)\,d\pi_0(\theta),
\end{equation}
where $E \subseteq \Theta$ and the normalization constant or partition function is
\begin{equation}
\label{partition}
\partition^{\pi}_t(y) =  \int_{\Theta \times \cX} 
    \exp\left(-L^t_{\theta'}(x, y)\right)
    d\mu_{\theta'}(x)\,d\pi(\theta'),  
\end{equation}
when $\pi$ is any Borel probability measure on $\Theta$.
Again, when $L_\theta(x,y)$ is the negative log-likelihood of $y$ given $x$ and $\theta$,
one recovers Bayesian inference.

The goal of this paper is to study the asymptotic behavior of the 
posterior distribution $\pi_t(\cdot \mid  y)$ as $t \to \infty.$
In particular, we want to understand under what conditions on
the model family $\{\mu_\theta\}_{\theta \in \Theta}$ and the loss function $L$, 
the posterior distribution $\pi_t(\cdot \mid y)$ converges, and around what set of parameters
it concentrates asymptotically.
%On what sets of parameters does $\pi_t(\cdot \,|\, y)$ concentrate as $t \to \infty$?
\begin{remark}[Support of $\pi_0$]
    It is not necessary to assume that $\pi_0$ is fully supported.
    In that case, one can prove the main results of the paper on
    $\supp(\pi_0)$ instead of the whole $\Theta$ in the same way.
\end{remark}

\subsection{Main results}
We develop a large deviation framework to study the asymptotic
behavior of the posterior distribution $\pi_t(\cdot \mid  y)$. 
We introduce the notion of exponentially continuous
families for describing the regularity of the parametrization 
map $\theta \mapsto \mu_\theta$
with respect to the loss $L$ and prove the following
variational characterization of exponential asymptotics of the partition
function.
\begin{theorem}
    Suppose $\set{\mu_\theta}_{\theta \in \Theta}$ is an exponentially continuous
    family with respect to the loss function $L$. Then
    there exists a continuous function
    $V: \Theta \to \real$ such that for any Borel probability measure
    $\pi$ on $\Theta$, for
    $\nu$-almost every $y \in \cY$,
    \begin{equation*}
        \lim_{t \to \infty} -\frac{1}{t}\log \partition_t^{\pi}(y) 
        = \inf_{\theta \in \supp(\pi)} V(\theta).
    \end{equation*}
\end{theorem}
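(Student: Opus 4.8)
The plan is to prove this as a Laplace--Varadhan principle \emph{in the parameter} $\theta$, with the limiting function $V$ built from the quenched single-process exponential rate, and then to transfer that rate from Dirac priors $\delta_\theta$ to a general prior $\pi$ by a covering argument.

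\textbf{Step 1 (the quenched rate $V(\theta)$).} For fixed $\theta$ write $\partition_t^{\delta_\theta}(y)=\int_\cX\exp(-L^t_\theta(x,y))\,d\mu_\theta(x)$, the exponential integral of the additive functional $L^t_\theta(x,y)=\int_0^t L_\theta(\xmap^s x,\ymap^s y)\,ds$ (resp.\ the corresponding sum) against the $\xmap$-invariant law $\mu_\theta$. I would first show that, for $\nu$-a.e.\ $y$,
\[
V(\theta):=\lim_{t\to\infty}-\tfrac1t\log\partition_t^{\delta_\theta}(y)
\]
exists in $\real$ and is independent of $y$. Existence of the limit is the ``conditional/quenched large deviations plus Varadhan's lemma'' ingredient advertised in the introduction; in this framework it is supplied by, or built into, the hypothesis that $\{\mu_\theta\}$ is exponentially continuous with respect to $L$, and finiteness comes from the integrability assumptions on $L$. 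Independence of $y$ is forced by $\ymap$-ergodicity of $\nu$: using $\xmap$-invariance of $\mu_\theta$ to shift the integration window by one unit shows that $y\mapsto\limsup_t-\frac1t\log\partition_t^{\delta_\theta}(y)$ changes only by a vanishing boundary term under $\ymap$, hence is $\ymap$-invariant and thus $\nu$-a.s.\ constant. Performing this simultaneously for every $\theta$ in a fixed countable dense set $\Theta_0\subseteq\Theta$ fixes a single $\nu$-full-measure set of observations $y$.

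\textbf{Step 2 (continuity and extension of $V$).} The notion of an exponentially continuous family should also yield a modulus-of-continuity bound $\bigl|{-\tfrac1t\log\partition_t^{\delta_\theta}(y)}+\tfrac1t\log\partition_t^{\delta_{\theta'}}(y)\bigr|\le\varepsilon\bigl(d_\Theta(\theta,\theta')\bigr)+o(1)$ with $\varepsilon(r)\to0$ as $r\to0$. On the full-measure set from Step 1 this gives $|V(\theta)-V(\theta')|\le\varepsilon(d_\Theta(\theta,\theta'))$ for $\theta,\theta'\in\Theta_0$, so $V$ extends uniquely to a uniformly continuous $V:\Theta\to\real$; moreover, squeezing $-\tfrac1t\log\partition_t^{\delta_\theta}(y)$ between its values at nearby points of $\Theta_0$ shows that on this same set of $y$ the limit equals $V(\theta)$ for \emph{every} $\theta\in\Theta$.

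\textbf{Step 3 (from $\delta_\theta$ to $\pi$).} For the ``easy'' inequality, fix $\theta_0\in\supp(\pi)$ and $r>0$; since $\pi(B(\theta_0,r))>0$, exponential continuity gives
\[
\partition_t^\pi(y)\ \ge\ \pi\bigl(B(\theta_0,r)\bigr)\inf_{\theta\in B(\theta_0,r)}\partition_t^{\delta_\theta}(y)\ \ge\ e^{-t\varepsilon(r)}\,\pi\bigl(B(\theta_0,r)\bigr)\,\partition_t^{\delta_{\theta_0}}(y),
\]
so $\limsup_t-\tfrac1t\log\partition_t^\pi(y)\le V(\theta_0)+\varepsilon(r)$; sending $r\to0$ and optimizing over $\theta_0$ gives $\le\inf_{\theta\in\supp(\pi)}V(\theta)$. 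For the matching inequality---the only place compactness of $\Theta$ is used---cover the compact set $\supp(\pi)$ by finitely many balls $B(\theta_1,r),\dots,B(\theta_N,r)$ with $\theta_i\in\supp(\pi)$; then exponential continuity gives
\[
\partition_t^\pi(y)\ \le\ \sum_{i=1}^N\pi\bigl(B(\theta_i,r)\bigr)\sup_{\theta\in B(\theta_i,r)}\partition_t^{\delta_\theta}(y)\ \le\ e^{t\varepsilon(r)}\sum_{i=1}^N\partition_t^{\delta_{\theta_i}}(y),
\]
and since $N$ does not depend on $t$, $-\tfrac1t\log\partition_t^\pi(y)\ge\min_{1\le i\le N}V(\theta_i)-\varepsilon(r)+o(1)\ge\inf_{\theta\in\supp(\pi)}V(\theta)-\varepsilon(r)+o(1)$. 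Combining and sending $t\to\infty$ then $r\to0$ shows the limit exists and equals $\inf_{\theta\in\supp(\pi)}V(\theta)$.

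\textbf{Main obstacle.} The crux is Step 1 together with the uniformity used in Steps 2--3: one needs the single-process exponential rate $V(\theta)$ to exist \emph{and} to be deterministic in the quenched variable $y$, and one needs the comparison of $\partition_t^{\delta_\theta}(y)$ between nearby parameters to be exponentially tight uniformly in $t$. This is exactly what the notion of an exponentially continuous family --- together with the underlying conditional LDP and Varadhan's lemma --- is designed to deliver, so the genuine work lies in the later sections that verify these hypotheses for concrete models; the argument above is then the standard Laplace-principle bookkeeping, with the passage through the countable dense set $\Theta_0$ and the continuity of $V$ guaranteeing that a single $\nu$-null set works for all $\pi$.
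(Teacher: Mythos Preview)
Your overall plan—extract the single-process rate $V(\theta)$ from the hypothesis, prove it is continuous, then run a finite-cover Laplace argument over $\supp(\pi)$—is exactly the paper's route (Lemma~\ref{lem:exp-cont-v} and Proposition~\ref{prop:expcont}). The gap is in Step~2: you postulate a uniform modulus
\[
\left|{-\frac1t\log\partition_t^{\delta_\theta}(y)}+\frac1t\log\partition_t^{\delta_{\theta'}}(y)\right|\le\varepsilon\bigl(d_\Theta(\theta,\theta')\bigr)+o(1),
\]
and then use it in Step~3 to compare $\partition_t^{\delta_\theta}(y)$ with $\partition_t^{\delta_{\theta_0}}(y)$ uniformly in $t$ over a ball. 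But exponential continuity as defined here (Definition~\ref{def:exp-cont}) is only a \emph{diagonal} condition: for each fixed $\theta$ and $\nu$-a.e.\ $y$, whenever $\theta_t\to\theta$ \emph{along $t$}, the quantity $-\frac1t\log\partition_t^{\theta_t}(y)$ converges to $V(\theta)$. This does not furnish a comparison between two fixed parameters uniformly in $t$, nor a global modulus $\varepsilon(r)$ independent of the base point.

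The paper closes this gap by a contradiction argument rather than a modulus: for each $\theta$ and $\epsilon>0$, if there were no open $U_\theta\ni\theta$ and time $T_\theta$ with $\partition_t^{\theta'}(y)\ge e^{-t(V(\theta)+\epsilon)}$ (respectively $\le e^{-t(V(\theta)-\epsilon)}$) for all $\theta'\in U_\theta$ and $t\ge T_\theta$, one could extract $\theta_k\to\theta$ along times $n_k\to\infty$ violating the bound, contradicting the diagonal hypothesis. This yields $\theta$-dependent neighborhoods $U_\theta$ (not a uniform $\varepsilon(r)$), and the finite cover in Step~3 is by these $U_\theta$; crucially the comparison is to $e^{-tV(\theta)}$, not to $\partition_t^{\delta_{\theta_0}}(y)$, so no uniform-in-$t$ estimate between nearby parameters is needed. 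The same contradiction device proves continuity of $V$. Your countable-dense-set trick for controlling the $\nu$-null set is a nice complement to this argument, but the modulus shortcut as written skips precisely the step that converts the sequential definition into the local uniformity needed for the cover.
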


As a corollary from \cite[Theorem 2]{mcgoff2019gibbs}, we obtain the convergence of
the posterior distribution $\pi_t(\cdot \mid  y)$ 
as $t \to \infty$ and characterize
the set it concentrates on asympotically.
\begin{corollary}
    Let $\Theta_{\min}$ be the set of minimizers of $V$ in $\Theta$.
    Then if $U$ is an open neighborhood of $\Theta_{\min}$, 
    for $\nu$-almost every $y \in \cY$, we have 
    \[
        \lim_{t \to \infty} \pi_t(\Theta \setminus U \,|\, y) = 0.
    \]
\end{corollary}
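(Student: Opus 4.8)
The plan is to deduce the corollary directly from the theorem above (which is of exactly the form required to serve as the hypothesis of \cite[Theorem 2]{mcgoff2019gibbs}) by expressing the posterior mass of the complement $\Theta\setminus U$ as a ratio of partition functions and passing to the exponential scale. First I would dispose of the degenerate cases $U=\Theta$ and $\pi_0(\Theta\setminus U)=0$, in which $\pi_t(\Theta\setminus U\mid y)=0$ for every $t$ and there is nothing to prove. In the remaining case, let $\rho := \pi_0(\,\cdot\mid\Theta\setminus U)$ denote $\pi_0$ conditioned on the closed set $\Theta\setminus U$; the definitions of $\pi_t$ and of $\partition^{\pi}_t$ then give
\[
\pi_t(\Theta\setminus U\mid y) \;=\; \pi_0(\Theta\setminus U)\,\frac{\partition^{\rho}_t(y)}{\partition^{\pi_0}_t(y)},
\]
so that, on the exponential scale,
\[
-\frac1t\log\pi_t(\Theta\setminus U\mid y)
= -\frac1t\log\pi_0(\Theta\setminus U)
+\Big(-\frac1t\log\partition^{\rho}_t(y)\Big)
-\Big(-\frac1t\log\partition^{\pi_0}_t(y)\Big).
\]

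The key step is then to apply the theorem twice, once with $\pi=\rho$ and once with $\pi=\pi_0$, discarding the union of the two $\nu$-null exceptional sets so that both limits hold for $\nu$-almost every $y$. The first term on the right vanishes; the second converges to $\inf_{\theta\in\supp(\rho)}V(\theta)$, which is at least $\inf_{\theta\in\Theta\setminus U}V(\theta)$ since $\supp(\rho)\subseteq\Theta\setminus U$; and the third converges to $\inf_{\theta\in\supp(\pi_0)}V(\theta)=\min_{\theta\in\Theta}V(\theta)=:m$, using that $\pi_0$ is fully supported. Because $\Theta$ is compact and $V$ continuous, $\Theta_{\min}$ is nonempty and closed, the infimum of $V$ over the compact set $\Theta\setminus U$ is attained, and it must strictly exceed $m$: any minimizer lying in $\Theta\setminus U$ would belong to $\Theta_{\min}\subseteq U$, a contradiction. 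Hence
\[
\liminf_{t\to\infty}\Big(-\frac1t\log\pi_t(\Theta\setminus U\mid y)\Big)\;\geq\;\inf_{\theta\in\Theta\setminus U}V(\theta)-m\;>\;0,
\]
and therefore $\pi_t(\Theta\setminus U\mid y)\to 0$ — in fact exponentially fast — for $\nu$-almost every $y$. Equivalently, one observes that the theorem furnishes precisely the variational input needed by \cite[Theorem 2]{mcgoff2019gibbs} and invokes that result directly.

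Given the theorem, there is no real obstacle at this stage: the substantive content — exponential continuity of $\{\mu_\theta\}_{\theta\in\Theta}$ and the conditional and annealed large deviation estimates behind the variational limit — has already been invested in proving the theorem, and the corollary is a short Varadhan/Laplace-type comparison. The only points that need mild care are the strict separation $\inf_{\Theta\setminus U}V>\min_{\Theta}V$, where compactness of $\Theta$ and continuity of $V$ are used; the measurability of $\rho$ and the null-set bookkeeping across the two uses of the theorem; and the degenerate cases noted above.
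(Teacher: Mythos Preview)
Your proposal is correct and follows essentially the same route as the paper's own argument (reproduced in Appendix~\ref{append:var}): write $\pi_t(\Theta\setminus U\mid y)$ as $\pi_0(\Theta\setminus U)\,\partition^{\pi_E}_t(y)/\partition^{\pi_0}_t(y)$, apply the variational limit to both partition functions, and use the strict gap $\inf_{\Theta\setminus U}V>\min_\Theta V$ (from compactness of $\Theta\setminus U$ and lower semicontinuity/continuity of $V$) to obtain exponential decay. The only cosmetic difference is that the paper fixes an $\epsilon$ up front and works with $\epsilon/3$ bounds rather than passing to the $\liminf$, but the content is identical.
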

To generalize previous results on posterior consistency 
and demonstrate the flexibility of our framework,
we provide natural sufficient conditions 
of exponential continuity for two classes of very different 
model processes: (1) a class of continuous-time, dependent processes
on general state spaces called hypermixing processes; (2)
a class of discrete-time, discrete-valued dynamical systems
called Gibbs processes on shifts of finite type. 
For these two classes of examples, we also characterize 
$V$ explicitly so that the posterior distribution
concentrates asymptotically to those parameters
that minimize the sum of the expected loss and
a divergence term, thus proving posterior consistency.

In the case of hypermixing processes,
we prove the existence of the relative entropy rate
and obtain new quench and annealed large deviation
asymptotics, which may be of independent interests.
For the special case of Markov processes,
we derive explicit and checkable conditions for
exponential continuity based on the connections 
between hypermixinig, hypercontractivity, 
and the log-Sobolev inequality, which makes
our framework essentially a black box.

\subsection{Notations}

Suppose $\Z$ is a Polish space equipped with the Borel $\sigma$-algebra and $(R^t)_{t \in \T}$ is an one-parameter family of (continuous if $\T = [0,\infty)$) transformations
on $\Z$. The following objects appear throughout the paper.\\

\noindent {\em Measures}: Let $\M(\Z)$ denote the vector space of finite Borel measures on $\Z$,  $\M_1(\Z)$ the subset of Borel probability measures on $\Z$, and $\M_1(\Z, R)$ the subset of $R$-invariant Borel probability measures.  The topology we consider is the weak convergence of measures.
In particular, $\M_1(\Z)$ is a Polish space under this topology. 
Let $\M_\nu(\cX \times \cY)$ denote the set of Borel probability measures on $\cX \times \cY$ with $\cY$-marginal $\nu$.
For $z \in \Z$, we write the empirical process of $z$ on $\Z$ in continuous and
discrete time as
    \begin{equation*} 
         M_t(z) := \frac{1}{t}\int_0^t \delta_{R^s z}\,ds, \quad
          M_t(z) := \frac{1}{t}\sum_{s=0}^{t - 1} \delta_{R^s z},
    \end{equation*}
    where $\delta_z$ is the Dirac Delta measure at $z$. \\
    
\noindent {\em Sigma algebras}: Consider $\cX$ and $\cY$ as canonical probability spaces for the stochastic process $X = (X_t)_{t \in \T}$ and the random variable $Y$ (defined by $X_t: x \mapsto x_t$ and $Y: y \mapsto y$).  We define the $\sigma$-algebra $\F_t = \sigma(X_s : 0 \le s \le t)$ on $\cX$ as generated by $X$ up to time $t$ and
$\tF_t = \sigma(X_s, Y :  0 \le s \le t)$ on $\cX \times \cY$ as generated by $X$ up to time $t$ with full information of $Y$. \\
    
\noindent {\em Functions}: Let $C_{b}(\Z)$ denote the set of bounded continuous functions $f: \Z \to \real$, $C_{b,loc}(\cX)$ the set of 
functions $f \in C_{b}(\cX)$ such that $f$ is $\F_r$-measurable for some $r \ge 0$, and $C_{b,loc}(\cX \times \cY)$ the set of 
functions $f \in C_{b}(\cX \times \cY)$ such that $f$ is $\tF_r$-measurable for some $r \ge 0$. 

For a continuous function $f: \Z \to \real$ and interval $I \subset \T$, we write in continuous time and
discrete time, respectively, 
$$ f^I(z) := \int_I f(R^s z)\,ds, \quad f^I(z) := \sum_{s \in I} f(R^s z) $$ and we write $f^t := f^{[0,t]}$, $f^t := f^{\set{0, 1, \ldots, t-1}}$, respectively. 
We denote the sup norm of $f$ by $\|f\|$. \\

\noindent {\em Joint processes}: Let $\J(\xmap : \nu)$ denote the set of $(\xmap \times \ymap)$-invariant probability measures with $\cY$-marginal $\nu$, and $\J_e(\xmap : \nu)$ the $(\xmap \times \ymap)$-ergodic elements of $\J(\xmap : \nu)$.

\section{A large deviation framework for posterior consistency}\label{sec:LDA}
\noindent
In this section, we present a framework of proving posterior convergence using a large deviation approach. We start with the observation
in \cite{mcgoff2019gibbs} that there is a variational characterization that implies posterior consistency. Starting with this observation, the large deviation approach to posterior consistency is based on showing the following:
\begin{enumerate}
    \item a conditional large deviation behavior of some empirical process on $\cX \times \cY$;  this allows us to prove the variational characterization for a single model process;
    \item an exponential continuity condition over the model family, adapted from the large deviation theory of mixtures; this allows us to prove the variational characterization over the entire model family.
\end{enumerate}

%We start with a variational characterization that implies posterior
%consistency, as shown in \cite{mcgoff2019gibbs}. Next, we observe that
%in the case of a single parameter, this variational characterization is
%a consequence of the conditional large deviation behavior of some empirical process
%on $\X \times \Y$. Finally, we introduce the exponential continuity condition
%on the model family, adapted from the large deviation theory of mixtures, 
%which allows us to prove
%the variational characterization on the whole parameter space.  

\subsection{A variational formulation for posterior convergence}

A central idea in \cite{mcgoff2019gibbs} was that proving posterior consistency can be reduced to proving a variational
characterization of the asymptotics of the normalizing constant $\partition_t^\pi$ at an exponential scale. The results in
\cite{mcgoff2019gibbs} were for a specific model family, Gibbs processes on mixing subshifts of finite type. A key point in this paper
is that the variational characterization of the asymptotics of normalizing constant at an exponential scale will hold for a wide class
of processes, and this characterization can be used to prove posterior consistency. 

We start by restating Theorem 1 and 2 in \cite{mcgoff2019gibbs} using the notation for model processes in our paper.
\begin{theorem}[\cite{mcgoff2019gibbs}] \label{thm:posterior}
    Suppose there exists a lower semicontinuous function
    $V: \Theta \to \real$ such that for any Borel probability measure
    $\pi$ on $\Theta$, for
    $\nu$-almost every $y \in \cY$,
    \begin{equation}\label{eq:variational}
        \lim_{t \to \infty} -\frac{1}{t}\log \partition_t^{\pi}(y) 
        = \inf_{\theta \in \supp(\pi)} V(\theta).
    \end{equation}
    Let $\Theta_{\min}$ be the set of minimizers of $V$ in $\Theta$.
    Then if $U$ is an open neighborhood of $\Theta_{\min}$, 
    for $\nu$-almost every $y \in \cY$, we have 
    \[
        \lim_{t \to \infty} \pi_t(\Theta \setminus U \,|\, y) = 0.
    \]
\end{theorem}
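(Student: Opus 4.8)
The plan is to realize the posterior mass of $\Theta \setminus U$ as a ratio of two partition functions of the form \eqref{partition}, apply the variational hypothesis \eqref{eq:variational} to each of them, and then use lower semicontinuity together with compactness of $\Theta$ to convert the assumption "$U$ contains every minimizer of $V$" into a \emph{strictly positive} exponential decay rate. Concretely, fix the open neighborhood $U \supseteq \Theta_{\min}$ and set $E := \Theta \setminus U$, which is closed, hence compact. If $\pi_0(E) = 0$ there is nothing to prove, so assume $\pi_0(E) > 0$ and let $\pi' \in \M_1(\Theta)$ be the conditioned prior $\pi'(\cdot) := \pi_0(\cdot \cap E)/\pi_0(E)$; since $E$ is closed and $\pi'(E) = 1$, we have $\supp(\pi') \subseteq E$. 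Unwinding the definition of $\pi_t$ and of $\partition_t$,
\[
 \pi_t(E \mid y) = \frac{1}{\partition_t^{\pi_0}(y)} \int_{E \times \cX} e^{-L^t_\theta(x,y)}\,d\mu_\theta(x)\,d\pi_0(\theta) = \pi_0(E)\,\frac{\partition_t^{\pi'}(y)}{\partition_t^{\pi_0}(y)},
\]
so that $-\tfrac1t \log \pi_t(E\mid y) = \bigl(-\tfrac1t\log\partition_t^{\pi'}(y)\bigr) - \bigl(-\tfrac1t\log\partition_t^{\pi_0}(y)\bigr) - \tfrac1t\log\pi_0(E)$, and the last term vanishes as $t \to \infty$. (Finiteness of $\inf_\Theta V$, since $V$ is real-valued, also guarantees $0 < \partition_t^{\pi_0}(y) < \infty$ for large $t$ on a $\nu$-full set, so the posterior is well defined.)

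Next I would apply \eqref{eq:variational} twice. Applied to $\pi'$ it gives a $\nu$-full set of $y$ on which $-\tfrac1t\log\partition_t^{\pi'}(y) \to \inf_{\theta \in \supp(\pi')} V(\theta) \ge \inf_{\theta \in E} V(\theta)$; applied to the fully supported $\pi_0$ it gives a $\nu$-full set on which $-\tfrac1t\log\partition_t^{\pi_0}(y) \to \inf_{\theta \in \supp(\pi_0)} V(\theta) = \inf_{\theta \in \Theta} V(\theta)$. Intersecting these two $\nu$-full sets (legitimate since $U$, and hence $\pi'$, is fixed) and combining with the display above yields, for $\nu$-almost every $y$,
\[
 \liminf_{t\to\infty}\, -\tfrac1t\log \pi_t(\Theta \setminus U\mid y) \;\ge\; \inf_{\theta \in \Theta \setminus U} V(\theta) \,-\, \inf_{\theta \in \Theta} V(\theta) \;=:\; \delta .
\]
It remains to show $\delta > 0$. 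Since $\Theta \setminus U$ is compact and $V$ is lower semicontinuous, $V$ attains its infimum over $\Theta \setminus U$ at some $\theta^\ast \in \Theta \setminus U$; likewise $V$ attains $\inf_\Theta V$, so $\Theta_{\min} \ne \emptyset$. If $V(\theta^\ast) = \inf_\Theta V$ then $\theta^\ast \in \Theta_{\min} \subseteq U$, contradicting $\theta^\ast \in \Theta \setminus U$; hence $\delta = V(\theta^\ast) - \inf_\Theta V > 0$. Consequently $\pi_t(\Theta\setminus U \mid y) \le e^{-t\delta/2}$ for all large $t$, and in particular $\lim_{t\to\infty}\pi_t(\Theta\setminus U\mid y) = 0$.

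I do not anticipate a serious obstacle here: the argument is essentially Laplace-principle bookkeeping of partition functions. The only point that genuinely needs care is the passage to the strict gap $\delta > 0$, which is where lower semicontinuity of $V$ and compactness of $\Theta$ (hence of $\Theta \setminus U$) are used in an essential way — without compactness one would only get $\inf_{\Theta\setminus U} V \ge \inf_\Theta V$, not strict inequality. One must also be mindful of the $\nu$-almost-everywhere quantifier, handling the two exceptional $\nu$-null sets (for $\pi'$ and for $\pi_0$) on a common full-measure set, which is harmless precisely because the statement is made for a single fixed $U$.
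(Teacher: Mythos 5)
Your proof is correct and follows essentially the same route as the paper's: rewrite $\pi_t(\Theta\setminus U\mid y)$ as $\pi_0(E)\,\partition_t^{\pi_E}(y)/\partition_t^{\pi_0}(y)$ for the conditioned prior on the compact set $E=\Theta\setminus U$, apply \eqref{eq:variational} to both $\pi_E$ and $\pi_0$, and use lower semicontinuity plus compactness to get a strictly positive gap $\inf_E V - \inf_\Theta V>0$. Your write-up in fact makes the gap argument (which the paper asserts in one line) more explicit, but the approach is the same.
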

In other words, to show posterior consistency, it suffices to 
prove \eqref{eq:variational} for any probability measure $\pi$ on $\Theta$.
Moreover, Theorem \ref{thm:posterior} reveals that 
the generalized posterior distribution concentrates asymptotically on
minimizers of the function $V$. A crucial
observation in \cite{mcgoff2019gibbs} was that $V$ can be explicitly stated
when the model processes are Gibbs processes on mixing subshifts of finite type:
\begin{equation} \label{eq:v}
    V(\theta) = \inf_{\lambda \in \J(\xmap : \nu)} \set{\int L_\theta \,d\lambda + h(\lambda\,\|\,\mu_\theta \otimes \nu)}, 
\end{equation}
where $L_\theta$ is the map $(x,y) \mapsto L_\theta(x,y)$, 
$\J(\xmap : \nu)$ are the joint processes defined before, and $h(\lambda\,\|\,\mu_\theta \otimes \nu)$ measures the divergence
of $\lambda$ from $\mu_\theta \otimes \nu$, defined explicitly later. 
Intuitively, when a parameter $\theta$ is given
without further information of the dynamics on $\cX$ and $\cY$, the 
natural guess for the joint distribution on $\cX$ and $\cY$
is the independent coupling $\mu_\theta \otimes \nu$.
From this perspective, we may interpret the parameters in $\Theta_{\min}$
as those minimizing the expected loss and the divergence from the prior belief. 
We provide a short proof of Theorem \ref{thm:posterior}
in Section \ref{append:var}. The proof is very similar to the proof of in Theorem 2 in \cite{mcgoff2019gibbs}, 
but we added it to provide a proof that uses our notation and to be self-contained.

\subsection{The process-level LDP and the relative entropy rate}
In this subsection, 
we provide a variational characterization of the partition function 
for a single parameter $\theta$ as a direct application
of large deviation theory.
More specifically, we state the partition function specified in \eqref{partition} 
for a fixed parameter $\theta \in \Theta$ (consider $\pi = \delta_\theta$ on $\Theta$):
\begin{equation}
\label{single_part}
\partition_t^\theta(y) := \partition_t^{\delta_\theta}(y) 
=  \int_{\cX} \exp\left(-L_{\theta}^t(x, y)\right) d\mu_{\theta}(x).
\end{equation}
Then the variational characterization \eqref{eq:variational}
for a fixed $\theta$ reduces to
\begin{equation}
\label{eq:var-single}
\lim_{t \to \infty} \frac{1}{t} \log \partition_t^\theta(y) = - V(\theta),
\end{equation}
where $V(\theta)$ has the form of \eqref{eq:v}.
We will show that \eqref{eq:var-single} is implied by 
some large deviation principle.
 
The large deviation principle quantifies asymptotic behavior of
probability measures on an
exponential scale through the rate function via the following definition.
\begin{definition}[Large Deviation Principle (LDP)]
    Let $\Z$ be a Polish space and $\I: \Z \to [0, \infty]$ be a
    lower semicontinuous function.
    A family $(\eta_t)_{t \in \T}$ of probability measures on $\Z$
    is said to satisfy the large deviation principle with 
    rate function $\I$ if for every closed set $E \subset \Z$,
    \begin{equation} \label{eq:ldpub}
        \limsup_{t \to \infty} \frac{1}{t} \log \eta_t(E) \le - \inf_{z \in E} \I(z),
    \end{equation}
    and for every open set $U \subset \Z$,
    \begin{equation} \label{eq:ldplb}
        \liminf_{t \to \infty} \frac{1}{t} \log \eta_t(U) \ge - \inf_{z \in U} \I(z).
    \end{equation}
    In particular, we say that a stochastic process $(Z_t)_{t \in \T}$ on $\Z$
     satisfies the large deviation principle with 
    rate function $\I$ if the law of $(Z_t)_{t \in \T}$ on $\Z$ does.
\end{definition}
Note that the variational characterization of the scaled log partition function in equation \eqref{eq:variational} takes a similar form as the
above LDP. The connection is even more explicit when we consider Varadhan's Lemma \cite[Theorem 1.5]{budhiraja2019analysis} which
states the following equivalent definition of the large deviation principle.
\begin{theorem}[Varadhan's Lemma] \label{thm:laplace}
    Let $\Z$ be a Polish space and $\I: \Z \to [0, \infty]$ be a
    lower semicontinuous function.
    A stochastic process $(Z_t)_{t \in \T}$ on $\Z$
     satisfies the large deviation principle with 
    rate function $\I$, then it satisfies the Laplace principle, i.e.,
    for all  $F \in C_b(\Z)$, we have 
    \begin{equation} \label{eq:laplace}
        \lim_{t \to \infty} \frac{1}{t} \log \expect{\exp(-t F(Z_t))}
        = - \inf_{z \in \Z}(F(z) + \I(z)).
    \end{equation}
    % In fact, each of \eqref{eq:ldpub} and \eqref{eq:ldplb} implies 
    % one side of inequality.
    In fact, the large deviation upper bound \eqref{eq:ldpub} implies the $``\le"$ part of \eqref{eq:laplace},
    while the lower bound \eqref{eq:ldplb} implies the $``\ge"$ part of \eqref{eq:laplace}.
\end{theorem}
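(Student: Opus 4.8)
The plan is to establish the two one-sided inequalities separately, following exactly the split recorded in the statement: the lower bound \eqref{eq:ldplb} will give the ``$\ge$'' half of \eqref{eq:laplace} and the upper bound \eqref{eq:ldpub} the ``$\le$'' half. Write $\eta_t$ for the law of $Z_t$ on $\Z$, so that $\expect{\exp(-tF(Z_t))} = \int_\Z e^{-tF}\,d\eta_t$, and put $c := \inf_{z \in \Z}\bigl(F(z) + \I(z)\bigr)$. Since $F$ is bounded and $\I \ge 0$ we have $c > -\infty$; applying \eqref{eq:ldpub} to the closed set $\Z$ forces $\inf_\Z \I = 0$, so there is a point of finite rate and hence $c < \infty$ as well.

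For the ``$\ge$'' half, fix any $z_0$ with $\I(z_0) < \infty$ and any $\varepsilon > 0$. Continuity of $F$ gives an open $U \ni z_0$ with $F < F(z_0) + \varepsilon$ on $U$, so $\int_\Z e^{-tF}\,d\eta_t \ge e^{-t(F(z_0)+\varepsilon)}\eta_t(U)$; taking $\frac1t\log$, sending $t\to\infty$, and applying \eqref{eq:ldplb} yields $\liminf_t \frac1t\log\expect{\exp(-tF(Z_t))} \ge -F(z_0) - \varepsilon - \inf_{U}\I \ge -F(z_0) - \varepsilon - \I(z_0)$. Letting $\varepsilon \downarrow 0$ and taking the supremum over $z_0$ gives the bound $-c$.

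The ``$\le$'' half is the only delicate point. The naive approach --- cover $\Z$ by small sets on which $F$ is nearly constant and apply \eqref{eq:ldpub} on each --- fails because $\Z$ need not be compact (we do not assume $\I$ is good). The remedy, which is the key step, is to partition the \emph{range} of $F$ instead of its domain. Fix $\varepsilon > 0$ and a finite subdivision $-\|F\| = a_0 < a_1 < \dots < a_n = \|F\|$ of mesh $< \varepsilon$, and set $C_j := F^{-1}([a_{j-1},a_j])$, which is closed. Then $\int_\Z e^{-tF}\,d\eta_t \le \sum_{j=1}^{n} \int_{C_j} e^{-tF}\,d\eta_t \le \sum_{j=1}^{n} e^{-t a_{j-1}}\,\eta_t(C_j)$, and since the $\limsup$ of a finite sum of nonnegative terms is dominated by the maximum of the individual $\limsup$'s, \eqref{eq:ldpub} gives
\[
\limsup_{t\to\infty}\frac1t\log\expect{\exp(-tF(Z_t))} \le \max_{1\le j\le n}\Bigl(-a_{j-1} - \inf_{z\in C_j}\I(z)\Bigr).
\]
For each nonempty $C_j$ and $\delta > 0$ choose $z_j \in C_j$ with $\I(z_j) < \inf_{C_j}\I + \delta$; from $a_{j-1} > F(z_j) - \varepsilon$ we get $a_{j-1} + \inf_{C_j}\I > F(z_j) + \I(z_j) - \varepsilon - \delta \ge c - \varepsilon - \delta$, so the right-hand side above is $< -c + \varepsilon + \delta$. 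Sending $\delta\downarrow 0$ and then $\varepsilon\downarrow 0$ gives $\limsup_t \frac1t\log\expect{\exp(-tF(Z_t))} \le -c$, which combined with the previous paragraph proves the claim.

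I expect the only real obstacle to be exactly the non-compactness handled by the range-partition trick above; the remaining arguments are immediate from the definitions. It is worth flagging where boundedness of $F$ is used: precisely in making the subdivision of the range finite --- the ``$\ge$'' half needs only continuity of $F$. This is also why the usual formulation of Varadhan's lemma for unbounded $F$ must impose a moment or tail condition, which we do not need here.
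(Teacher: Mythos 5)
Your proof is correct. Note, however, that the paper does not actually prove this theorem: it quotes Varadhan's Lemma from the literature, citing Theorem 1.5 of Budhiraja--Dupuis, so there is no in-paper argument to match yours against. What you have written is the classical, self-contained proof (essentially the one in Dembo--Zeitouni): the lower bound \eqref{eq:ldplb} applied to a small open neighborhood where $F$ is nearly constant gives the ``$\ge$'' half, and the partition of the \emph{range} of $F$ into finitely many closed level bands $C_j = F^{-1}([a_{j-1},a_j])$, combined with \eqref{eq:ldpub} on each band and the fact that $\limsup_t \frac1t\log$ of a finite sum equals the maximum of the individual $\limsup$'s, gives the ``$\le$'' half without any compactness or goodness assumption on $\I$. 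This correctly substantiates the paper's parenthetical claim about which half of \eqref{eq:laplace} follows from which half of the LDP (your use of \eqref{eq:ldpub} to show $c<\infty$ is not actually needed for the ``$\ge$'' half, since that inequality is vacuous when $c=\infty$). The route taken in the cited reference is genuinely different: Budhiraja--Dupuis derive the Laplace principle from the variational (relative-entropy) representation of $-\frac1t\log\mathbb{E}[e^{-tF(Z_t)}]$ as an infimum over controlled measures, an approach that generalizes more readily to unbounded or discontinuous $F$ and to uniform statements over families of processes, whereas your argument is more elementary and makes transparent exactly where boundedness of $F$ enters (finiteness of the range subdivision).
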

Varadhan's Lemma states that the large deviation principle implies the Laplace principle.
Actually, the large deviation principle and the Laplace principle are equivalent if the rate function $\I$ has compact sublevel sets, see \cite[Theorem 1.8]{budhiraja2019analysis}.

We now specify a collection of stochastic processes 
for which the variational characterization \eqref{eq:var-single} 
of the single process partition function 
can be stated as a direct application of Varadhan's Lemma.
% Based on Theorem \ref{thm:laplace}, 
We set $\Z = \M_1(\cX \times \cY)$ 
and the stochastic process $(Z_t)_{t \in \T}$ of probability 
measures on $(\cX \times \cY, \xmap \times \ymap)$ as
$$Z_t = M_t(X^\theta, y) := \frac{1}{t}\int_0^t \delta_{(\xmap^s X^\theta, \ymap^s y)}\,ds
\quad \text{ or } \quad \frac{1}{t}\sum_{s=0}^{t-1} \delta_{(\xmap^s X^\theta, \ymap^s y)}$$
for a fixed observation $y \in \cY$ and the model process
$(X^\theta_t)_{t \in \T} \sim \mu_\theta$  
corresponding to a fixed parameter $\theta \in \Theta$.
Next, we pick a specific function $F$ that is defined by 
\begin{equation} \label{eq:ldploss}
    F: \M_1(\cX \times \cY) \to \real, \quad \eta 
    \mapsto \int_{\cX \times \cY} L_\theta(x,y)\, d\eta(x,y),
\end{equation}
where $\theta$ is the same fixed parameter. By plugging in the $Z_t$ and $F$ into the left-hand side of equation \eqref{eq:laplace}, we 
recover the partition function
\[
    \expect{\exp(-t F(Z_t))} = 
    \int_{\cX} \exp\left(-L_{\theta}^t(x, y)\right) d\mu_{\theta}(x) 
    = \partition_t^\theta(y).
\]
Hence, if $(M_t(X^\theta, y))_{t \in \T}$ 
satisfies the large deviation principle on $\M_1(\cX \times \cY)$
with rate function $\I_\theta: \M_1(\cX \times \cY) \to [0,\infty]$
given by
\begin{equation} \label{eq:rate}
    \I_\theta(\lambda) = 
    \begin{cases}
        h(\lambda \,\|\, \mu_\theta \otimes \nu), & \text{ if } \lambda \in \J(\xmap : \nu), \\ 
        \infty, & \text{ otherwise,}
	\end{cases}
\end{equation}
then by Varadhan's Lemma, we obtain the variational characterization
for a fixed $\theta$:
\[
    \lim_{t \to \infty} \frac{1}{t} \log \partition_t^\theta(y) 
    = -\inf_{\lambda \in \J(\xmap : \nu)}
    \set{\int L_\theta \,d\lambda + h(\lambda\,\|\,\mu_\theta \otimes \nu)} = - V(\theta).
\]
In the remainder of this subsection, we define rigorously the divergence term $h(\cdot\,\|\,\cdot)$ with examples for some stochastic processes and dynamical systems.

% Next, we want to identify $V(\theta)$ as in \eqref{eq:v}  
% with the right-hand side of \eqref{eq:laplace}.
% Now it is clear that $V(\theta)$ coincide with the right-hand side of \eqref{eq:laplace},
% if $(M_t(X^\theta, y))_{t \in \T}$ satisfies the large deviation principle
% on $\M_1(\X \times \Y)$
% with rate function $\I(\eta) = h(\eta \, \|\, \mu_\theta \otimes \nu)$, 
% a divergence term of $\eta$ from $\mu_\theta \otimes \nu$ to be defined later.

% \begin{remark}
%     The careful reader may notice that the domain of the variational expression
%     of $V$ in \eqref{eq:v} is $\J(\phi : \nu)$ instead of the
%     larger space $\M_1(\X \times \Y)$. Later, we will see that
%     $h(\cdot \, \|\, \mu_\theta \otimes \nu) = \infty$ on
%     $\M_1(\X \times \Y) \setminus \J(\phi : \nu)$, so
%     the results are consistent.
% \end{remark}

\begin{remark}
    The large deviation principle is stronger than \eqref{eq:var-single}, 
    since the latter only requires \eqref{eq:laplace} to hold for the one selected function \eqref{eq:ldploss}
    rather than all functions in $C_b(\Z)$.
    We will exploit this fact to prove 
    part of the large deviation principle that
    is sufficient to ensure \eqref{eq:var-single}
    instead of the full large deviation principle.
\end{remark}

We now outline the role of the divergence term as the rate function. We need to
make sense of the divergence term $h(\cdot \,\|\, \mu \otimes \nu)$, where we set $\mu = \mu_\theta$ for a fixed parameter $\theta$.
We start with the definition of relative entropy,
which quantifies the divergence between measures.
\begin{definition}
    Let $\Z$ be a Polish space. 
    For $\lambda, \eta \in \M(\Z)$, the relative entropy,
    also called the Kullback-Leibler divergence, 
	of $\lambda$ with respect to $\eta$ is defined as
	$$\relent{\lambda}{\eta} = 
	    \begin{cases}
	        \int_\Z \log \frac{d\lambda}{d\eta}\,d\lambda, & \text{ if } \lambda \ll \eta, \\ 
	        \infty, & \text{ otherwise,}
	    \end{cases}$$ 
	where $\lambda \ll \eta$ means that $\lambda$ is absolutely continuous with
	respect to $\eta$, and $\frac{d\lambda}{d\eta}$ is the corresponding 
	Radon-Nikodym derivative.
\end{definition}

The relative entropy plays a universal role in large deviation theory.
See \cite{dupuis2011weak, budhiraja2019analysis} for derivations
of many well-known LDPs using the relative entropy.
For discrete time case, Sanov's Theorem \cite{ellis2007entropy} establishes
the LDP for empirical measures 
$(\frac{1}{t}\sum_{s=0}^{t-1} \delta_{X_s})_{t \in \nat}$ 
of independent and identically distributed random
variables $X_0, X_1, \ldots, X_n \sim \mu_0$ on $\S$ with rate function
function $\I(\eta) = K(\eta\,\|\,\mu_0)$.
% $$\I: \M_1(\S) \to [0,\infty], \quad \I(\eta) = K(\eta\,\|\,\mu_0).$$ 
% In the case of discrete-time Markov chains \cite{donsker1975asymptotic, donsker1976asymptotic, donsker1983asymptotics}, 
% because of an extra order of dependency,
% it is more natural to prove the LDP for the two-step empirical
% measures $(\frac{1}{t}\sum_{s=0}^{t-1} \delta_{(X_s, X_{s+1})})_{t \in \nat}$.
% Then the rate function will be 
% $$\I: \M_1(\S^2) \to [0,\infty], \quad 
% \I(\eta) = \begin{cases}
%     K(\eta \, \| \, \mu_0 \otimes p),  & \text{if $\eta$ has same marginals,} \\ 
%     \infty, & \text{otherwise,}
% \end{cases}$$ 
% where $\mu_0$ is the initial stationary distribution, $p$
% the transition kernel and $\mu_0 \otimes p$ is the probability measure on $\S \times \S$
% defined by $$(\mu_0 \otimes p)(A \times B) = \int_A p(x, B)\,d\mu_0(x).$$ 
In the case of empirical measures of discrete-time Markov chains \cite{donsker1975asymptotic, donsker1976asymptotic, donsker1983asymptotics}, the rate function will be based on a
 relative entropy term involving both the Markov transition kernel as well as the initial
stationary distribution. 
% The LDP for one-step empirical measures
% can be shown by a classical contraction argument.
% As the order of dependencies increases, one can repeat this strategy
% by proving the LDP of empirical measures of more steps.
As the processes we are interested in can have arbitrary dependencies, and the dynamics can be deterministic, we will study the large deviation 
behavior of the whole empirical processes 
$$M_t(X) = \paran{\frac{1}{t}\sum_{s=0}^{t-1}\delta_{\xmap^s X}}_{t \in \nat} = \paran{\frac{1}{t}\sum_{s=0}^{t-1}\delta_{(X_s, X_{s + 1}, X_{s+2}, \ldots)}}_{t \in \nat},
$$
so we need to consider the role of the relative entropy on the whole processes as 
 dynamical systems.

As discussed in \cite{mcgoff2019gibbs}, the relative entropy cannot be directly applied to measures on dynamical systems, 
as any two different ergodic measures are mutually singular, the relative entropy will be infinite.  We are in need of a more useful divergence
for measures on the path space and on dynamical systems. Thus, we introduce the relative entropy rate on $\cX$, which 
usually appears as the rate function of the process-level LDPs,
such as the LDP of the empirical process $M_t(X)$ on $\cX$
for well-behaved stochastic processes $(X_t)_{t \in \T}$. The relative entropy rate quantifies the divergence
between measures on the path space in a meaningful way.
\begin{definition}[The relative entropy rate on $\cX$]
	For $\lambda, \eta \in \M(\cX)$, we define the relative entropy rate 
	of $\lambda$ with respect to $\eta$ as
	$$\dynrelent{\lambda}{\eta} := \lim_{t \to \infty} \frac{1}{t} 
		\relent{\lambda|_{\F_{t}}}{\eta|_{\F_{t}}}$$ 
	if the limit exists, where $\lambda|_{\F_{t}}$ and $\eta|_{\F_{t}}$ are
    $\lambda$ and $\eta$ restricted on $\F_t$.
\end{definition}
For many well-behaved stochastic processes $(X_t)_{t \in \T}$ with some 
law $\mu$ on $\cX$, the relative entropy rate $h(\cdot\, \|\, \mu)$
is a well-defined, nontrivial function on $\M_1(\cX)$, and the empirical process
$M_t(X)$ satisfy an LDP with rate function 
\[
    \I(\eta) = 
    \begin{cases}
        h(\eta \,\|\, \mu), & \text{ if } \eta \in \M_1(\cX, \xmap), \\ 
        \infty, & \text{ otherwise.}
	\end{cases}
\]
See, for instance, \cite{ellis2007entropy, seppalainen1994large, 
chiyonobu1988large, rassoul2015course}.

We introduce the 
relative entropy rate on $\cX \times \cY$, and we adapt
 \cite[equation (3.7)]{seppalainen1994large} which states the process-level conditional LDP of Markov chains on a random
environment which is analgous to our observation $y$.
\begin{definition}[The relative entropy rate on $\cX \times \cY$] \label{def:entropyXY}
	For $\lambda, \eta \in \M(\cX \times \cY)$, we define the relative entropy rate 
	of $\lambda$ with respect to $\eta$ as
	$$\dynrelent{\lambda}{\eta} := \lim_{t \to \infty} \frac{1}{t} 
		\relent{\lambda|_{\tF_{t}}}{\eta|_{\tF_{t}}}$$ 
	if the limit exists. 
\end{definition}
Now we are in a good position to restate the first step of our strategy:
to show that for $X^\theta \sim \mu_\theta$
with any fixed $\theta$, it holds for $\nu$-almost every $y \in \cY$, 
$(M_t(X^\theta, y))_{t \in \T}$ satisfies the large deviation principle
on $\M_1(\cX \times \cY)$ with well-defined rate function 
\eqref{eq:rate},
or a weaker result that is sufficient to guarantee \eqref{eq:var-single}.
The previous statement can be interpreted in the following way: we have hope of
proving posterior consistency as long as our model family $\set{\mu_\theta}_{\theta \in \Theta}$ 
corresponds to processes with good large deviation behavior.

\subsubsection{Examples of the relative entropy rate}
For concreteness, we include a few examples, where the relative entropy rate on $\cX$
is a closed form expression. 
The second and third examples below come from \cite[Appendix A]{dupuis2016path},
where computations are shown in full details.
The fourth example comes from \cite{chazottes1998relative}.
The fifth example comes from \cite{jiang2004mathematical}.

\begin{example}[I.I.D. processes]
Consider two discrete-time i.i.d. processes 
$(X_t)_{t \in \nat}$, $(Z_t)_{t \in \nat}$ on $\S$,
where for each $t \in \nat$, $X_t \sim \mu_0$ 
and $Z_t \sim \eta_0$ in $\S$, respectively, and 
the whole process $(X_t)_{t \in \nat}$, $(Z_t)_{t \in \nat}$ have 
law $\mu = \mu_0^{\otimes \nat}$, 
$\eta = \eta_0^{\otimes \nat}$ on the path space $\cX = \S^\nat$, respectively.
Then the relative entropy rate of $\eta$ with respect to $\mu$ is
$h(\eta \, \| \, \mu) = K(\eta_0 \, \| \, \mu_0).$
\end{example}

\begin{example}[Discrete-time Markov chains]
Let $(X_t)_{t \in \nat}$, $(Z_t)_{t \in \nat}$ be two Markov chains on $\S$
with transition kernels $p$ and $q$, and 
 stationary initial measures $\mu_0$ and $\eta_0$ on $\S$,
respectively. Let $\mu$, $\eta$ be the law 
of $(X_t)_{t \in \nat}$, $(Z_t)_{t \in \nat}$
on the path space $\cX = \S^\nat$, respectively.
Then the relative entropy rate of $\eta$ with respect to $\mu$ is
$h(\eta \, \| \, \mu) = K(\eta_0 \otimes q \, \| \, \mu_0 \otimes p)$,
where $\mu_0 \otimes p$ is the probability measure on $\S \times \S$
defined by $(\mu_0 \otimes p)(A \times B) = \int_A p(x, B)\,d\mu_0(x).$
\end{example}

\begin{example}[Stochastic differential equations]
Consider two It\^o diffusion processes on $\real^d$:
\begin{align*}
    dX_t & = a(X_t)dt + \sigma(X_t)dW_t, \\
    dZ_t & = b(Z_t)dt + \sigma(Z_t)dW_t,
\end{align*}    
where $a$, $b$ are vector fields on $\real^d$ and $\sigma(x) \in \real^{d \times d}$ 
is non-singular so that each of the stochastic differential equations above
have unique global weak solutions, given
stationary initial conditions $X_0 \sim \mu_0$ and $Z_0 \sim \eta_0$.
Let $\mu$, $\eta$ be the law 
of $(X_t)_{t \in [0, \infty)}$, $(Z_t)_{t \in [0,\infty)}$
on the path space $\cX = C([0,\infty), \real^d)$, respectively.
Under some assumptions for Novikov's condition, one can 
show the relative entropy rate of $\eta$ with respect to $\mu$ is
$h(\eta \, \| \, \mu) = \frac{1}{2} \mathbb{E}_{x \sim \eta_0}{\|a - b\|^2_{\Sigma^{-1}}}$,
where $\Sigma := \sigma \sigma^T$ is the diffusion matrix and
$\|b(x)\|_{\Sigma^{-1}(x)} := \sum_{i,j = 1}^d b_i(x) \Sigma_{ij}^{-1}(x) b_j(x)$.
\end{example}
\begin{remark}
The preceding example can be generalized to 
the case where $\sigma$ is singular 
but $b - a \in range(\sigma)$ by the following
similar steps in the proof of \cite[Theorem 4.1 and 4.2]{da2022entropy}.
In this case, $\Sigma^{-1}$ denotes the Moore-Penrose matrix pesudo-inverse of $\Sigma$.
On the other hand, when $X$ and $Z$ have different diffusion
coefficient $\sigma$, one can still compute the relative
entropy rate at least in the case of martingale diffusions
in dimension 1 (see \cite{backhoff2022specific}). 
\end{remark}

\begin{example}[Shifts of finite type (SFT)] \label{eg:SFT}
    Let $\T = \nat$ and $\S$ be a finite alphabet. 
    Let $\sigma_0 \in \M_1(\S)$ be the uniform probability measure, 
    and $\sigma = \sigma_0^{\otimes \T} \in \M_1(\cX)$ be the 
    product measure on $\cX$.
    Let $\eta \in \M_1(\cX, \xmap)$ be an arbitrary $\Phi$-invariant measure.
    Then the relative entropy rate of $\eta$ with respect to $\sigma$ is
    $h(\eta \, \| \, \sigma) = h(\sigma) - h(\eta)  = \log |\S| - h(\eta)$, 
    where $h(\eta)$ denotes the measure-theoretic
    or Kolmogorov-Sinai entropy of $\eta$.
\end{example}

\begin{example}[Gibbs processes on SFTs]
    Let $\T = \nat$ and $\S$ be a finite alphabet. 
    Given a H\"older continuous potential function $\phi: \cX \to \real$, 
    let $\mu \in \M_1(\cX, \xmap)$ be the corresponding Gibbs measure.  
    Given an invariant measure $\eta \in \M_1(\cX, \xmap)$,
    the relative entropy rate of $\eta$ with respect to $\mu$ is
    $h(\eta \, \| \, \mu) = \cPP - \int \phi \,d\eta - h(\eta)$, 
    where $h(\eta)$ denotes the measure-theoretic
    or Kolmogorov-Sinai entropy of $\eta$, and 
    $\cPP$ denotes the pressure of $\phi$.
    See the precise Definition \ref{def:Gibbs} later.
\end{example}

\begin{remark}[The relative entropy rate in ergodic theory]
From the ergodic theory point of view, 
one may view the $\sigma$-algebras $\F_t$ as measurable
partitions consisting of cylinder sets of the path space $\cX$.
From this perspective, the relative entropy rate on
many dynamical systems can be related to
other well-studied quantities in ergodic theory, such as
the Kolmogorov-Sinai entropy (as in Example \ref{eg:SFT}), 
Lyapunov exponents, and equilibrium states.
See Section \ref{sec:ent-ergodic} for a review of related work.
\end{remark}

\subsection{The exponentially continuous model family and the loss function} \label{sec:exp-cont-loss}
So far, we have a strategy for proving the 
variational characterization \eqref{eq:var-single} for a single parameter
$\theta$ using large deviation techniques.
It remains to push \eqref{eq:var-single} 
to the variatonal characterization \eqref{eq:variational} on the support
of any probability measure $\pi$ on the parameter space $\Theta$.
In this case, we need to study the large deviation behavior 
of the mixture $\int \mu_\theta \,d\pi(\theta)$ 
of probability measures $\set{\mu_\theta}_{\theta \in \Theta}$
mixing by $\pi$.
Intuitively, to recover \eqref{eq:variational} 
based on the previous reasoning using Varadhan's Lemma, 
we want the process-level LDP to hold
for the empirical process of $\int \mu_\theta \,d\pi(\theta)$
with rate function $\inf_{\theta \in \supp(\pi)} \I_\theta$, where $\I_\theta$ is
the rate function for the process-level LDP of $\mu_\theta$.
This observation is reminiscent of the work
on large deviations of mixtures \cite{wu2004large, dinwoodie1992large, biggins2004large}.
The key property required for large deviations of mixtures is exponential continuity
on the parametrization map $\theta \mapsto \mu_\theta$. 
The intuition for exponential continuity is that if $\theta_t \convergeto \theta$, then $\set{\mu_{\theta_t}}_t$ is 
an exponential approximation
of $\mu_\theta$ in the language of large deviation theory 
\cite[Section 4.2.2]{dembo2010large}
so that $\set{\mu_{\theta_t}}$ has the same large deviation asymptotics 
as $\mu_\theta$ as $t \to \infty$.
In our case, we consider exponential approximations of the empirical processes.
We adapt the definition of exponential continuity to our setting.
\begin{definition}[Exponentially
    continuous family]\label{def:exp-cont}
    We say $\set{\mu_\theta}_{\theta \in \Theta}$ is an \textbf{exponentially
    continuous} family with respect to $L$
    if the following holds: 
    for all $\theta \in \Theta$, 
    it holds for $\nu$-a.e. $y \in\cY$ that
    the following limit exists
    \begin{equation} \label{eq:var-single-2}
        \lim_{t \to \infty} \frac{1}{t}\log \partition_t^{\theta}(y)
        := \lim_{t \to \infty} \frac{1}{t}\log 
        \int_\cX \exp\paran{-L_{\theta}^t(x, y)}
        d\mu_{\theta}(x)
    \end{equation}
    which we define as $- V(\theta)$,
    and if $(\theta_t)_{t \in \T}$ is a family of parameters
    such that $\theta_t \convergeto \theta$ in $\Theta$, then
    \begin{equation} \label{eq:exp-cont}
        \lim_{t \to \infty} \frac{1}{t}\log \partition_t^{\theta_t}(y)
        := \lim_{t \to \infty} \frac{1}{t}\log 
        \int_\cX \exp\paran{-L_{\theta_t}^t(x, y)}
        d\mu_{\theta_t}(x) = -V(\theta). 
    \end{equation}
    Note that in \eqref{eq:exp-cont}, $t$ and $\theta_t$ are changing
    at the same time.
\end{definition}

For clarity of exposition, we focus on the loss function $L$ with the following assumption. Weaker assumptions are possible but will complicate theorem statements and are not as natural
when the state space $\S$ is non-compact. The trade-off for weaker
assumptions on the loss function is usually more detailed structural assumptions on the observed system and the model family, which
can be treated case by case by fine-tuning our flexible framework.
% \begin{definition}
%     We say a bounded continuous function $\bar{d}_\Y: \Y \times \Y \to [0,\infty)$ is 
%     metric-like if $\bar{d}_\Y$ satisfies the 
%     following reverse triangle inequality
%     $$|\bar{d}_\Y(y, \bar{y}) - \bar{d}_\Y(y', \bar{y})| \le C \bar{d}_{\Y}(y, y'),$$
%     for $y, y', \bar{y} \in \Y$ and some universal constant $C > 0$.
% \end{definition}

% \begin{assumption}[The loss function] \label{assumption:loss}
%     We assume that the loss function $L: \Theta \times \X \times \Y \to \real$ 
%     has the form $L_\theta(x, y) = \bar{d}_\Y(\varphi_\theta(x), y)$, for  some
%     metric-like function $\bar{d}_\Y \in C_b(\Y^2)$ and some 
%     continuous function $\varphi: \Theta \times \X \to \Y, (\theta, x) 
%     \mapsto \varphi_\theta(x)$ such that the map
%     $\varphi_\theta: \X \to \Y, x \mapsto \varphi_\theta(x)$ 
%     belongs to $C_{b,loc}(\X \times \Y)$,
%     and
%     $$\bar{d}_\Y(\varphi_\theta(x), \varphi_{\theta'}(x')) 
%     \le \omega(d_\Theta(\theta, \theta')) + \omega(d_\X(x, x')),$$
%     for any $\theta, \theta' \in \Theta$ and any $x, x' \in \X$, 
%     %  $$\bar{d}_\Y(\varphi_\theta(x), \varphi_{\theta'}(x)) 
%     % \le \omega(d_\Theta(\theta, \theta')),$$
%     % for any $\theta, \theta' \in \Theta$ and any $x \in \X$, 
%     and for some function
%     $\omega: [0, \infty) \to [0,\infty)$
%     with $\lim_{r \to 0^+}\omega(r) = 0$, which can be viewed as
%     the modulus of continuity of $\varphi$.
%     In this case, 
%     we will denote the constant $C$ for $\bar{d}_\Y$ by $ $.
% \end{assumption}

\begin{assumption}[The loss function] \label{assumption:loss}
    We assume that the loss function $L: \Theta \times \cX \times \cY \to \real$,
    $(\theta, x, y) \mapsto L_\theta(x, y)$
    satisfies the following properties:
    \begin{enumerate}
        \item For all $\theta \in \Theta$, the map $L_\theta: (x,y) \mapsto L_\theta(x,y)$
        belongs to $C_{b,loc}(\cX \times \cY)$.
        \item For any $\theta, \theta' \in \Theta$, $x, x' \in \cX$ and $y \in \cY$,
        \[
            |L_\theta(x, y) - L_{\theta'}(x', y)| \le 
                  \omega(d_\Theta(\theta, \theta')) + \omega(d_\cX(x, x'))
        \]
        for some function
        $\omega: [0, \infty) \to [0,\infty)$
        with $\lim_{r \to 0^+}\omega(r) = \omega(0) = 0$, 
        which can be viewed as the modulus of continuity.
    \end{enumerate}
\end{assumption}
The natural class of loss functions we are considering is the following.
\begin{example}
    Let $\bar{d}_\cY \in C_b(\cY^2)$ be a bounded metric and $\varphi: \Theta \times \cX \to \cY$,
    $(\theta, x) \mapsto \varphi_\theta(x)$
    be a uniformly continuous function such that for every $\theta \in \Theta$,
    the map $\varphi_\theta: \cX \to \cY$, $x \mapsto \varphi_\theta(x)$
    is $\F_{r(\theta)}$-measurable for some $r(\theta) > 0$.
    Then the loss function $L$ defined by $L_\theta(x ,y) := \bar{d}_\cY(\varphi_\theta(x), y)$
    satisfies Assumption \ref{assumption:loss}.
\end{example}
\begin{remark}
    Note that every metric is equivalent to a bounded metric. 
    The assumption that $L_\theta$
    belongs to $C_{b,loc}(\cX \times \cY)$ is practical, since
    if the loss $L_\theta(x, y)$ depends on the whole path of $x$,
    then it is more difficult for real-world computations.
\end{remark}

As an intermediate step of proving exponential continuity, 
it is convenient to show that given a fixed $\theta \in \Theta$, 
for $\nu$-a.e. $y \in \cY$, 
if $(\theta_t)_{t \in \T}$ is a sequence in $\Theta$ such that 
    $\theta_t \convergeto \theta$, 
    then there exists a probability space $(\Omega, \F, \mathbb{P})$ 
    such that for all $\epsilon > 0$, 
\begin{equation} \label{eq:expapprox}
    \limsup_{t \to \infty} \frac{1}{t} 
    \log\prob{\abs{\frac{1}{t}(L^t_\theta(X^{\theta_t}, y) 
    - L^t_\theta(X^\theta, y))} > \epsilon} = -\infty.
\end{equation}
The expression \eqref{eq:expapprox} is basically the same as the
definition of exponential approximations 
\cite[Definition 4.2.10]{dembo2010large}.
To show \eqref{eq:expapprox}, we will see that it requires the 
continuity of the map $\theta \mapsto \mu_\theta$, Assumption \ref{assumption:loss} on the loss function, and 
additional non-trivial restrictions 
depending on the model family, which we will see
in our example processes.

Once the exponential continuity of $\set{\mu_\theta}_{\theta \in \Theta}$
is established, the continuity of $V$ and the variational characterization \eqref{eq:variational}
needed for posterior consistency follow immediately. 
In particular, posterior consistency holds by Theorem \ref{thm:posterior}.
This step
does not require Assumption \ref{assumption:loss} on the loss
function.
\begin{lemma}\label{lem:exp-cont-v}
    If $\set{\mu_\theta}_{\theta \in \Theta}$ is an exponentially continuous
    family with respect to the loss function $L$, then 
    $V$ is a continuous function.
\end{lemma}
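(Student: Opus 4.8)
The plan is to show $V$ is both upper and lower semicontinuous on $\Theta$ using the two defining properties of exponential continuity. Fix $\theta \in \Theta$ and a sequence $\theta_t \to \theta$; by exponential continuity, for $\nu$-a.e.\ $y$ we have
\[
    \lim_{t \to \infty} \frac{1}{t}\log \partition_t^{\theta_t}(y) = -V(\theta).
\]
Since this holds along \emph{any} sequence converging to $\theta$, the first step is a diagonal/subsequence argument: given an arbitrary sequence $\vartheta_n \to \theta$, I would like to conclude that $V(\vartheta_n) \to V(\theta)$. The subtlety is that exponential continuity controls $\partition_t^{\theta_t}$ where the parameter index equals the time index, whereas $V(\vartheta_n)$ is the limit in $t$ of $\frac{1}{t}\log\partition_t^{\vartheta_n}(y)$ for \emph{fixed} $n$. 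So the argument is: suppose for contradiction that $V(\vartheta_n) \not\to V(\theta)$; then there is $\delta > 0$ and a subsequence with $|V(\vartheta_{n_k}) - V(\theta)| \ge \delta$. For each $k$, because $\lim_{t} \frac1t \log \partition_t^{\vartheta_{n_k}}(y) = -V(\vartheta_{n_k})$, pick $t_k$ (increasing, $t_k \to \infty$) such that $\big|\frac{1}{t_k}\log\partition_{t_k}^{\vartheta_{n_k}}(y) + V(\vartheta_{n_k})\big| < \delta/2$. Now define a sequence of parameters $\theta_t$ indexed by $t$: set $\theta_{t_k} = \vartheta_{n_k}$ and fill in the remaining indices $t \notin \{t_k\}$ by $\theta_t = \theta$ (or anything converging to $\theta$). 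Then $\theta_t \to \theta$, so exponential continuity forces $\frac{1}{t_k}\log\partition_{t_k}^{\theta_{t_k}}(y) \to -V(\theta)$ along $k$, i.e.\ $\frac{1}{t_k}\log\partition_{t_k}^{\vartheta_{n_k}}(y) \to -V(\theta)$. Combined with the choice of $t_k$, this gives $V(\vartheta_{n_k}) \to V(\theta)$, contradicting $|V(\vartheta_{n_k}) - V(\theta)| \ge \delta$. Hence $V(\vartheta_n) \to V(\theta)$, so $V$ is continuous at $\theta$; as $\theta$ was arbitrary, $V \in C(\Theta)$.

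One bookkeeping point worth addressing: the convergences above hold only for $\nu$-a.e.\ $y$, and a priori the null set may depend on the sequence of parameters chosen. But in the contradiction argument the sequence $\theta_t$ is built from countably many parameters $\vartheta_{n_k}$ plus $\theta$, and for each \emph{fixed} parameter the limit $\lim_t \frac1t\log\partition_t^{\vartheta}(y) = -V(\vartheta)$ holds off a $\nu$-null set $N_\vartheta$ (this is the first clause of Definition~\ref{def:exp-cont}), while the exponential-continuity conclusion \eqref{eq:exp-cont} for the constructed sequence $(\theta_t)$ holds off another $\nu$-null set $N_{(\theta_t)}$. Taking the (countable) union over all the relevant parameters and sequences that arise keeps us on a set of full $\nu$-measure. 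Since $\Theta$ is a compact metric space it is second countable, so one can in fact fix a countable dense subset of $\Theta$ at the outset and run the whole argument there, then extend by density; this makes the null-set management clean. I would also note that $V$ is real-valued: boundedness of $L_\theta$ on the relevant cylinder (Assumption~\ref{assumption:loss}, though not strictly needed here) or simply the fact that the limit defining $-V(\theta)$ exists as a finite number by hypothesis gives $V(\theta) \in \real$.

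The main obstacle is purely the indexing mismatch between ``parameter varying with $t$'' (what exponential continuity gives) and ``parameter fixed, $t \to \infty$'' (what the definition of $V$ uses); the contradiction/diagonal construction above is the device that bridges them, and once it is set up correctly the rest is routine. A secondary, minor obstacle is ensuring that the exceptional $\nu$-null sets can be controlled uniformly — resolved by the separability of $\Theta$ as described. No compactness of $\Theta$ beyond second countability is needed for this lemma.
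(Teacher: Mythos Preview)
Your proof is correct and follows essentially the same diagonal/contradiction argument as the paper: assume $V(\vartheta_n)\not\to V(\theta)$, choose times $t_k$ so that $\frac{1}{t_k}\log\partition_{t_k}^{\vartheta_{n_k}}(y)$ is close to $-V(\vartheta_{n_k})$, and then observe that the constructed sequence $\theta_{t_k}=\vartheta_{n_k}$ converges to $\theta$, so exponential continuity forces the same quantities to converge to $-V(\theta)$, a contradiction. One small remark on the bookkeeping: by the quantifier order in Definition~\ref{def:exp-cont} (for each $\theta$, for $\nu$-a.e.\ $y$, the conclusion holds for \emph{every} sequence $\theta_t\to\theta$), the null set already depends only on the limit point $\theta$ and the countably many $\vartheta_{n_k}$, so your countable-union step suffices and the separate suggestion to pass through a countable dense subset is unnecessary (and would not directly yield continuity on all of $\Theta$ without the very regularity you are proving).
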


\begin{proposition} \label{prop:expcont}
    Suppose $\set{\mu_\theta}_{\theta \in \Theta}$ is an exponentially continuous
    family with respect to the loss function $L$ 
    and $\pi$ is a Borel probability measure on $\Theta$.
    Then for $\nu$-almost every $y \in \cY$,
    \begin{equation*}
        \lim_{t \to \infty} -\frac{1}{t}\log \partition_t^{\pi}(y) 
        = \inf_{\theta \in \supp(\pi)} V(\theta).
    \end{equation*}
\end{proposition}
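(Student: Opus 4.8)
The plan is to show, for $\nu$-a.e.\ $y$, the two one-sided estimates
$\liminf_{t\to\infty}\frac1t\log\partition_t^{\pi}(y)\ge -V_\pi$ and
$\limsup_{t\to\infty}\frac1t\log\partition_t^{\pi}(y)\le -V_\pi$, where
$V_\pi:=\inf_{\theta\in\supp(\pi)}V(\theta)$; together these give the assertion.
Here $V$ is continuous by Lemma~\ref{lem:exp-cont-v}, so, $\supp(\pi)$ being a closed subset of the compact space $\Theta$, the infimum $V_\pi$ is attained at some $\theta^\star\in\supp(\pi)$. Throughout I use $\partition_t^{\pi}(y)=\int_\Theta\partition_t^{\theta}(y)\,d\pi(\theta)=\int_{\supp(\pi)}\partition_t^{\theta}(y)\,d\pi(\theta)$, which is immediate from the definitions and Tonelli. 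The lower estimate will use only the single-parameter convergence \eqref{eq:var-single-2} (together with Lemma~\ref{lem:exp-cont-v} and Fatou); the diagonal condition \eqref{eq:exp-cont} is needed only for the upper estimate.

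\emph{Lower estimate.} By joint measurability of $(\theta,y)\mapsto\partition_t^{\theta}(y)$ (routine, given that $\theta\mapsto\mu_\theta$ is measurable) and Tonelli, the statement ``for each $\theta$, $\frac1t\log\partition_t^{\theta}(y)\to -V(\theta)$ for $\nu$-a.e.\ $y$'' upgrades to: for $\nu$-a.e.\ $y$, $\frac1t\log\partition_t^{\theta}(y)\to -V(\theta)$ for $\pi$-a.e.\ $\theta$. Fix such a $y$ and $\epsilon>0$. By continuity of $V$ pick $\delta>0$ with $c:=\max_{\theta\in\overline{B(\theta^\star,\delta)}}V(\theta)<V_\pi+\epsilon$; since $\theta^\star\in\supp(\pi)$ we have $\pi(B(\theta^\star,\delta))>0$. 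For $\pi$-a.e.\ $\theta\in B(\theta^\star,\delta)$, $\frac1t\log\partition_t^{\theta}(y)\to -V(\theta)\ge -c$, hence $\partition_t^{\theta}(y)\,e^{t(c+\epsilon)}\to\infty$, so Fatou's lemma forces $\int_{B(\theta^\star,\delta)}\partition_t^{\theta}(y)\,e^{t(c+\epsilon)}\,d\pi(\theta)\to\infty$; thus $\partition_t^{\pi}(y)\ge\int_{B(\theta^\star,\delta)}\partition_t^{\theta}(y)\,d\pi(\theta)\ge e^{-t(c+\epsilon)}$ for all large $t$, whence $\liminf_{t\to\infty}\frac1t\log\partition_t^{\pi}(y)\ge -(c+\epsilon)\ge -(V_\pi+2\epsilon)$. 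Letting $\epsilon\downarrow0$ gives the lower estimate.

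\emph{Upper estimate.} It suffices to show that for $\nu$-a.e.\ $y$ the quantity $\epsilon_t(y):=\sup_{\theta\in\Theta}\bigl(\tfrac1t\log\partition_t^{\theta}(y)+V(\theta)\bigr)$ satisfies $\limsup_{t\to\infty}\epsilon_t(y)\le 0$: indeed, since $V\ge V_\pi$ on $\supp(\pi)$, this yields $\partition_t^{\theta}(y)\le e^{t(-V_\pi+\epsilon_t(y))}$ for $\theta\in\supp(\pi)$, hence $\partition_t^{\pi}(y)\le e^{t(-V_\pi+\epsilon_t(y))}$ and $\limsup_{t\to\infty}\frac1t\log\partition_t^{\pi}(y)\le -V_\pi$. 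To prove $\limsup_t\epsilon_t(y)\le 0$, suppose not: then there are $c>0$, $t_k\uparrow\infty$ and $\theta_k\in\Theta$ with $\frac1{t_k}\log\partition_{t_k}^{\theta_k}(y)+V(\theta_k)>c$. By compactness of $\Theta$, pass to a subsequence with $\theta_k\to\theta_\infty$ and define $(\psi_s)_{s\in\T}$ by $\psi_{t_k}:=\theta_k$ and $\psi_s:=\theta_\infty$ for the remaining $s$, so $\psi_s\to\theta_\infty$. The diagonal part \eqref{eq:exp-cont} of exponential continuity at $\theta_\infty$ gives $\frac1{t_k}\log\partition_{t_k}^{\theta_k}(y)=\frac1{t_k}\log\partition_{t_k}^{\psi_{t_k}}(y)\to -V(\theta_\infty)$, while $V(\theta_k)\to V(\theta_\infty)$ by Lemma~\ref{lem:exp-cont-v}; hence $\frac1{t_k}\log\partition_{t_k}^{\theta_k}(y)+V(\theta_k)\to 0$, contradicting $>c>0$. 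With both estimates in hand, $\lim_{t\to\infty}\frac1t\log\partition_t^{\pi}(y)=-V_\pi=-\inf_{\theta\in\supp(\pi)}V(\theta)$, which is the claim.

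\emph{Main obstacle.} The delicate step is the last one. The diagonal condition \eqref{eq:exp-cont} is stated for each fixed $\theta$ only up to a $\nu$-null exceptional set that may depend on $\theta$, whereas the contradiction argument produces a limit point $\theta_\infty$ that depends on $y$, and $\Theta$ is uncountable, so one cannot simply intersect the exceptional sets. To run the argument on a single $\nu$-conull set of $y$'s one restricts to a countable dense $D\subseteq\Theta$, discards the corresponding countable union of null sets, and then upgrades from $D$ to all of $\Theta$ using continuity of $\theta\mapsto\partition_t^{\theta}(y)$ for fixed $t$ (which follows from Assumption~\ref{assumption:loss} together with weak continuity of $\theta\mapsto\mu_\theta$); alternatively, and more to the point in practice, one uses that in every concrete verification of exponential continuity in this paper (the hypermixing and Gibbs examples) the $\nu$-a.e.\ statements \eqref{eq:var-single-2}--\eqref{eq:exp-cont} arise from a single ergodic theorem applied to a fixed countable family of functionals of $y$, hence hold on a common $\nu$-conull set uniformly in $\theta$. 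Granting this, the remaining ingredients---the reduction to two one-sided bounds, the Fatou estimate, and the final Laplace-type integration over $\supp(\pi)$---are routine.
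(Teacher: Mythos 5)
Your proposal is correct in substance, and your upper bound is essentially the paper's argument in different packaging: the paper proves, for each $\theta\in\supp(\pi)$, a neighborhood bound $\partition_t^{\theta'}(y)\le e^{-t(V(\theta)-\epsilon)}$ for all $\theta'$ in some open $U_\theta\ni\theta$ and $t\ge T_\theta$ (by the same contradiction with \eqref{eq:exp-cont} that you use), then extracts a finite subcover of the compact set $\supp(\pi)$ and sums the resulting bounds against $\pi$; your formulation via $\limsup_t\sup_\theta\bigl(\tfrac1t\log\partition_t^{\theta}(y)+V(\theta)\bigr)\le 0$ is the same compactness-plus-exponential-continuity mechanism. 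Your lower bound, however, is genuinely different: the paper runs the neighborhood argument in the other direction, so it invokes the diagonal condition \eqref{eq:exp-cont} for the lower bound as well (though only at a single minimizer $\theta^\star$, hence with only one exceptional null set), whereas you use only the single-parameter limit \eqref{eq:var-single-2}, a Tonelli upgrade to ``for $\nu$-a.e.\ $y$, for $\pi$-a.e.\ $\theta$,'' and Fatou on a small ball around $\theta^\star$. Your route dispenses with \eqref{eq:exp-cont} on the lower-bound side at the price of joint measurability of $(\theta,y)\mapsto\partition_t^{\theta}(y)$, which is indeed routine here.

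The obstacle you flag in the upper bound is real, and you should note that the paper's own proof has exactly the same issue: its finite subcover $\{U_{\theta_k}\}$ is assembled from neighborhoods $U_\theta=U_\theta(y,\epsilon)$ whose existence is guaranteed only for $y$ outside a $\theta$-dependent null set, so covering all of $\supp(\pi)$ implicitly intersects uncountably many conull sets. Your second proposed fix --- that in every concrete verification the exceptional set comes from an ergodic theorem applied to $y$ alone and is therefore uniform in $\theta$ --- is the honest resolution and is what makes the applications go through. Your first fix is not sufficient as sketched: continuity of $\theta\mapsto\partition_t^{\theta}(y)$ for each fixed $t$ does not transfer the diagonal limit \eqref{eq:exp-cont} from a countable dense subset to all of $\Theta$, since that continuity need not be uniform in $t$; one would need a quantitative, $t$-uniform comparison between nearby parameters, such as the one Lemma \ref{lem:lossRatio} provides in the Gibbs case.
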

We defer the proofs to Section \ref{append:exp-cont} as they adapt known arguments.

\begin{remark}[Continuity of $V$]
    Although Theorem \ref{thm:posterior} only requires lower
    semicontinuity of $V$, the continuity of $V$ is actually
    of significance. 
    
    In \cite{mcgoff2019gibbs}, the authors only required lower semicontinuity on $V$ as the focus was on the dependency of the divergence term 
    $ h(\lambda \,\|\, \mu_\theta \otimes \nu)$ on $\theta$, which
    is generally only lower semicontinuous. However, if
    one considers both terms as a whole in $V$, 
    $\inf_\lambda \int L_\theta\,d\lambda + h(\lambda \,\|\, \mu_\theta \otimes \nu)$
    this expression is usually continuous in $\theta$.
    This is an advantage of the Laplace principle mentioned in Theorem \ref{thm:posterior}
    compared to the usual LDP, which is exploited for
    proving uniform LDPs in \cite[Proposition 1.12]{budhiraja2019analysis}.
    
    The continuity of $V$ resolves a subtle 
    discrepancy between the analoguous posterior consistency results of
    \cite{shalizi2009dynamics} and \cite{mcgoff2019gibbs}.
    More specifically,
    the posterior distributions in \cite{shalizi2009dynamics}
    converge to the essential minimizers (those that achieve the 
    $\pi_0$-essential infimum) of $V$ instead of
    the actual minimizers of $V$, while there is no difference 
    if $V$ is continuous. 
\end{remark}

\subsection{A summary of the strategy} \label{sec:strategy}
We summarize our strategy of proving posterior consistency 
for a given model family $\set{\mu_\theta}_{\theta \in \Theta}$
and a loss function $L$. The guiding principle is that if 
each member $\mu_\theta$ of the model family has good
large deviation behavior such as satisfying the process-level LDP, 
then we have hope to prove posterior consistency.
The strategy consists of the following two steps:
\begin{enumerate}
    \item Extend the LDP of $M_t(X^\theta)$ to the (conditional) 
    LDP of $M_t(X^\theta, y)$ for $\nu$-a.e. $y \in \cY$ so that
    the following limit exists:
    \begin{equation*} 
        \lim_{t \to \infty} \frac{1}{t}\log 
        \int_\cX \exp\paran{-L_{\theta}^t(x, y)}
        d\mu_{\theta}(x) =: -V(\theta). 
    \end{equation*}
    As a byproduct, we obtain an explicit form of $V$:
    \[
        V(\theta) = \inf_{\lambda \in \J(\xmap : \nu)} \set{\int L_\theta \,d\lambda + h(\lambda\,\|\,\mu_\theta \otimes \nu)}.
    \]
    
    \item Prove exponential continuity (Definition \ref{def:exp-cont}) 
    of the map
    $\theta \mapsto \mu_\theta$ with respect to the loss function $L$.
    Then posterior consistency follows from Lemma \ref{lem:exp-cont-v},
    Proposition \ref{prop:expcont} and Theorem \ref{thm:posterior}.
\end{enumerate}
These steps can be taken in a generic way, meaning we can use any
variant of large deviation behaviors that exist to guarantee 
the needed variational characterization \eqref{eq:variational}.
In particular, we will apply a standard approach of directly proving 
the large deviation behavior and exponential contintuity in
the case of hypermixing processes, while we will reduce 
the case of Gibbs processes on shifts of finite type to
known results for i.i.d. processes by exploiting the 
large deviation structure. These two different approaches
highlights the flexibility of our framework.

% We believe that our arguments and assumptions are far from being the 
% sharpest to prove posterior consistency, since we only 
% employ classical techniques in large deviation theory.

\subsection{Related work} 
We now summarize more previous work related to our framework.

\subsubsection{Large deviations in Bayesian inference}
% Large deviation theory is widely studied in statistical inference, 
% but we mainly survey those results from the Bayesian perspective.
In \cite{ganesh1999inverse}, the authors prove the large deviation principle
for posterior distributions of i.i.d. random variables with finite values  
as an inverse of Sanov's Theorem, 
Large deviation results are obtained in \cite{papangelou1996large, eichelsbacher2002bayesian} for posterior distributions of Markov chains
with a finite state space. In \cite{ganesh2000large}, a large deviation result is proved for Dirichlet posteriors.
In \cite{grendar2007bayesian, grendar2009asymptotic}, the authors give a probabilistic justification of empirical likelihoods through large deviations.

% The setting of this work is very similar to \cite{shalizi2009dynamics}, where the author establishes a similar posterior consistency 
% along with a large deviation principle for some dependent processes with likelihood ratios instead of loss functions. There are 
% technical difficulties in verifying the main assumptions in \cite{shalizi2009dynamics}, in particular when the state space is not finite, 
% and no procedures are provided in \cite{shalizi2009dynamics} to check the assumptions. Our work can be seen as a general framework for verifying those assumptions. Additionally, we point out that the method of \cite{shalizi2009dynamics} and this work is very similar to the entropy-based argument in \cite{mcgoff2015consistency}, which originates from \cite{douc2011consistency}.

% More recently, the authors of \cite{lopes2020bayes} reformulate the
% posterior consistency of Gibbs processes on mixing subshifts of finite type, 
% as established in \cite{mcgoff2019gibbs}, 
% using large deviation results.

The authors of \cite{puhalskii1998large} develop a large-deviation analogue of asymptotic decision theory,
which establishes an asympotic lower bound for the sequence of minimax risks, 
in analogy with Le Cam's
Minimax Theorem.

\subsubsection{Large deviation theory}
A huge and growing body of literature is devoted to the study of 
large deviations, and we refer to textbooks 
\cite{ellis2007entropy, dembo2010large, rassoul2015course, budhiraja2019analysis}
for general introductions. 
In particular, the weak convergence approach based on relative entropy
in \cite{dupuis2011weak, budhiraja2019analysis}
has a strong influence to our understanding of the large deviation phenomenon
in this work. 
We survey some of the literature that
are most relevant.

Large deviation theory on i.i.d. processes 
can be found in the book \cite{ellis2007entropy}.
Donsker and Varadhan established large deviations of 
certain Markov processes in a series of 
work \cite{donsker1975asymptotic, donsker1976asymptotic, donsker1983asymptotics},
and the extension to hidden Markov models is established in \cite{hu2011large}.
Large deviations on some mixing stochastic processes are studied in 
\cite{bryc1996large, chiyonobu1988large}, and that on Gaussian processes
are studied in \cite{cmp/1103941986}.
In particular, we will apply our framework to processes satisfying the hypermixing condition in \cite{chiyonobu1988large}.
Besides stochastic processes, large deviation properties are also shown in 
\cite{orey1988large, young1990large, kifer1990large, rey2008large} for
many well-behaved dynamical systems, such as Gibbs processes on mixing subshifts of finite type, Axiom A systems, and some non-uniform hyperbolic systems.
We point out that following our strategy, one may adapt the proof
for the large deviation behavior of Gibbs states in \cite{young1990large}
to obtain the posterior consistency result in \cite{mcgoff2019gibbs} for Axiom A systems.

The large deviation principle conditioning on a given observation $y$ is referred as the conditional large deviation
principle or the quenched large deviation principle.  Quenched large deviation principles are widely studied in various settings, including irreducible Markov chains with random transitions \cite{seppalainen1994large}, random walks in a random environment \cite{comets2000quenched}, finite state Gibbs random fields \cite{chi2002conditional}
and some mixing processes \cite{chi2001stochastic}. As we pass from a single parameter to a mixture over the parameter space, we move to what is called the large deviation of mixtures or
the annealed large deviation principle. This formalism has been widely studied for exchangeable sequences 
\cite{dinwoodie1992large, wu2004large}, random walks in a random environment \cite{comets2000quenched},
and other stochastic models \cite{biggins2004large}. The idea of exponential continuity we use appears in \cite{dinwoodie1992large} and is implicit in the study of uniform large deviation principles
of Markov processes with different initial conditions
in \cite{dupuis2011weak, budhiraja2019analysis}, here the ``parameter'' is
the initial condition of the process.

Large deviation techniques extend to infinite-dimensional processes \cite{budhiraja2008large} and random fields, i.e., processes indexed by lattices \cite{rassoul2015course}. This highlights the potential applicability of our framework to a very rich class of models.
There are also connections of various ideas with statistical
mechanics, as presented in \cite{ellis2007entropy, rassoul2015course, touchette2009large}.

\subsubsection{The relative entropy rate in ergodic theory} \label{sec:ent-ergodic}
There are connections between the relative entropy rate and other notions of entropy in ergodic theory, such as the Kolmogorov-Sinai entropy
\cite{walters2000introduction} of a dynamical system. 
One can either compare the limit definitions with other notions of entropy, 
using the variational characterization \cite[Lemma 2.4.(e)]{budhiraja2019analysis} 
of the relative entropy over partitions of the space,
or identify the relative entropy rate with the unique 
large deviation rate function of a well-behaved dynamical system.
In the case of mixing shifts of finite type---including finite-state Markov chains or 
the Bernoulli shift---one obtains explicit characterizations of the relative entropy rate of 
Gibbs measures in terms of the pressure and the corresponding potential functions,
see \cite{young1990large, kifer1990large, olivier2000relative, chazottes1998relative} and the exposition in \cite[Chapter 8.2]{rassoul2015course}.
For continuous-space Markov chains as in \cite{donsker1975asymptotic, donsker1976asymptotic, donsker1983asymptotics}, the relative entropy rate has an explicit
expression in terms of the transition kernel and the stationary initial distribution, 
also called the Donsker-Varadhan entropy.
Moreover, the divergence term in \cite{mcgoff2016variational} is the
conditional version of the relative entropy
replacing the usual Kolmogorov-Sinai entropy with the fibre entropy of a
random dynamical system \cite{kifer2006random, kifer2001topological}. 
In the case of Axiom A systems, the relative entropy rate of
Riemannian or SRB measures are characterized by Lyapunov exponents,
see \cite{young1990large, kifer1990large}. Also,
\cite{ledrappier1977relativised} provides a more general perspective.
A large collection of examples including both stochastic processes
and dynamical systems from the point of view of
nonequilibrium statistical physics is presented in 
the exposition \cite{jiang2004mathematical}.

Finally, we point out some recent applications of the relative entropy
rate.
The first one \cite{dupuis2016path} uses it in sensitivity analsyis and uncertainty quantification,
where relative entropy rates of some Markov chains 
and some stochastic differential equations are computed 
explicitly as examples.
The second one \cite{da2022entropy} uses it for evaluating numerical simulations of 
stationary diffusions.

% \subsection{Example processes}\label{sec:ex}
% One motivation for using large deviations to prove posterior consistency is to have a flexible framework that can be applied to
% a variety of processes. This is in contrast to having to design a new proof technique to show posterior consistency for each new class of stochastic processes
% or dynamical systems. Later, we consider two very different process models: continuous time hypermixing stochastic processes, and
% Gibbs processes on shifts of finite type. Our large deviation framework can be used to prove posterior consistency for both settings.

\section{Non-dynamic example: Bayesian inverse problems}
\label{sec:ex_inverse}
\noindent In this section, we apply our framework to a trivial 
dynamical system as a sanity check. This example   
can be related to a simplified version of 
Bayesian inverse problems, where the negative
log likelihood is replaced by the loss function. It also highlights the connection
between our framework and the seminal work \cite{stuart2010inverse, dashti2017bayesian}.

Let $\T = \nat$ and let the parameter space be the same as the state space $\Theta = \S$.
For each $\theta \in \Theta$, the parameterized model 
$\mu_\theta$ is given by the Dirac delta $\delta_{(\theta, \theta, \ldots)}$,
i.e., the corresponding stationary process is given by $X^{\theta} = (\theta, \theta, \ldots)$.
Clearly, $\mu_\theta$ is an $\xmap$-ergodic measure on $\cX$,
which makes the dynamical system $(\cX, \xmap, \mu_\theta)$ trivial.
In this case, we may perform generalized Bayesian inference
with a loss function $L$
on the underlying state $\theta$ given observations $y = (y_0, y_1, \ldots)$.

\begin{example} \label{ex:signal-plus-noise}
We may consider the following signal-plus-noise model
as a simplified version of inverse problems discussed in \cite{stuart2010inverse}:
for some true parameter $\theta^* \in \Theta$,
\begin{equation*} 
	y_i = \cG(\theta^*) + \epsilon_i,
\end{equation*}
where $y_i \in \cY$ is the sequence space of some vector space with $\ymap$
being the shift map, and $(\epsilon_i)$ 
are i.i.d random noises on that vector space.
\end{example}
Using our framework, we derive the generalized posterior distribution:
for any Borel set $E \subset \Theta$,
\begin{align*}
	\pi_t(E \,| \,y) & = \frac{1}{\partition^{\pi_0}_t(y)}
		\int_E \int_\cX \exp(-L^t_\theta(x, y)) \, d\mu_\theta(x) \,d\pi_0 (\theta) \\
		& = \frac{1}{\partition^{\pi_0}_t(y)} \int_E \exp(-L^t_\theta((\theta, \theta, \ldots), y))  \,d\pi_0 (\theta)
\end{align*}
where the partition function is given by
\[
	\partition^{\pi_0}_t(y) = \int_\Theta \exp(-L^t_\theta((\theta, \theta, \ldots), y))  \,d\pi_0 (\theta).
\]
We may write $L_\theta(y) = L_\theta((\theta, \theta, \ldots), y)$.
In particular, $\pi_t(\cdot \,| \,y)$ satisfies the variational principle
of Bayesian posterior distribution
by \cite{Zellner1988}:
\[
	\pi_t(\cdot \,| \,y) = \underset{\eta \in \M_1(\Theta)}{\arg\min} 
	\int_\Theta L^t_\theta(y)\,d\eta(\theta) + K(\eta \,\|\, \pi_0),
\]
and $\pi_1(\cdot \,| \,y)$ is the loss-based version of the posterior
distribution in the Bayesian inverse problem setting 
\cite{stuart2010inverse, dashti2017bayesian}. 
For loss-based Bayes methods on inverse problems,
we refer to the recent work \cite{baek2022probabilistic, zou2019adaptive, dunlop2021stability}.

One can verify the variational characterization of posterior 
consistency (Theorem \ref{thm:posterior}) 
by following the same steps in the next section
with a straight forward argument in this 
setting, or just treat the current setting as a
special case of the next section. 
In fact, one can substantially weaken the assumptions 
on the loss function $L$ in this non-dynamical setting.
Since $\mu_\theta = \delta_{(\theta, \theta, \ldots)}$,
one can easily check for any $\lambda \in \J(\xmap : \nu)$,
the relative entropy rate
$h(\lambda\,\|\,\mu_\theta \otimes \nu) = 0$ if $\lambda = \mu_\theta \otimes \nu$,
and $h(\lambda\,\|\,\mu_\theta \otimes \nu) = \infty$, otherwise.
Hence, we obtain the corresponding function $V$ as discussed in the previous section:
\begin{align*}
	V(\theta) & = \inf_{\lambda \in \J(\xmap : \nu)} \set{\int L_\theta \,d\lambda + h(\lambda\,\|\,\mu_\theta \otimes \nu)} = \int_\cY L_\theta(y)\,d\nu(y),
\end{align*}
since $\lambda = \mu_\theta \otimes \nu$ is a minimizer.
Therefore, by Theorem \ref{thm:posterior}, as $t \to \infty$,
$\pi_t(\cdot \,| \,y)$ will concentrate on those parameters that
minimize $V$, the expected loss. This result is in consistency
with the discussion in \cite[Section 2.3.3]{baek2022probabilistic}.

In the case of Example \ref{ex:signal-plus-noise},
if the loss has the form of $L_\theta((y_0, y_1, \ldots)) = \|\cG(\theta) - y_0\|^2$ (with the right norm $\|\cdot\|$),
and the noises $(\epsilon_i)$ have mean zero and finite variance,
then the true parameter $\theta^*$ will be a minimizer of $V$.

\begin{remark}
	In \cite{mcgoff2016variational}, authors discuss extensively
	the signal-plus-noise model in a dynamical setting, like
	\[
		y_t = \cG(X_t^{\theta}) + \epsilon_t,
	\]
	where $X^\theta \sim \mu_\theta$ is a general stationary process,
	so the dynamics is non-trivial.
	They provide sufficient conditions and negative examples
	for when the signal $\theta$ is recoverable as a minimizer
	of the expected loss using least squares methods, based on
	a complexity measure of dynamical models (essentially
	the topological entropy). 
	On the other hand, our example above is simple because
	the relative entropy rate $h$ is trivial. 
	It is of interest whether by assuming that the complexity
	of the dynamical models $\{\mu_\theta\}_{\theta \in \Theta}$
	is zero as in \cite{mcgoff2016variational},
	the relative entropy rate $h(\lambda\,\|\,\mu_\theta \otimes \nu)$ will simplify.
	If true, we may try to develop the main results 
	of \cite{mcgoff2016variational} in the Bayesian setting.
\end{remark}

\section{Example: hypermixing processes}
\label{sec:ex_hypermixing}
Most posterior consistency results make at least one of the 
following assumptions on the model processes: 
(1) it is a discrete time process; 
(2) the state space $\S$ is compact or discrete;
(3) the dependency structure is simple, such as Markov chains; 
(4) the dynamics have a particular structure, such as Gaussian processes.
We are motivated to prove posterior conistency 
for a general class of continuous-time, dependent processes on
general state spaces with good large deviation behavior.

Hypermixing processes refer to a class of stochastic processes with certain strong mixing properties.
In this section, we apply our framework to prove posterior consistency when the model family is a collection of hypermixing processes. Along the way, we also obtain new quenched and
annealed large deviation asymptotics for hypermixing processes.
We will focus on the continuous-time horizon $\T = [0, \infty)$, though the results can be stated and proved analogously for discrete-time processes.

We start with relevant definitions.
Given a closed interval $I \subset \T$, we denote the $\sigma$-algebra
$\F_I = \sigma(X_t :  t \in I).$
The following definitions come from \cite[Section 5.4]{deuschel2001large} and \cite{chiyonobu1988large}.
\begin{definition}[\cite{chiyonobu1988large}]
	Given $\ell > 0$, $n \ge 2$, and real-valued functions $f_1, \ldots, f_n$ on $\cX$,
	we say that $f_1, \ldots, f_n$ are $\ell$-\textbf{measurably separated} if 
	there exist intervals $I_1, \ldots, I_n$ such that
	$\dist{I_m}{I_{m'}} \ge \ell$ for $1 \le m < m' \le n$ and
	$f_m$ is $\F_{I_m}$-measurable for each $1 \le m \le n$,
	where $\dist{I}{I'}$ denotes the distance between $I$ and $I'$.
\end{definition}

\begin{definition}[\cite{chiyonobu1988large}]
	We say that $\mu \in \M_1(\cX, \xmap)$ is \textbf{hypermixing} if there exist a number $\ell_0 \ge 0$
	and non-increasing $\alpha, \beta: [\ell_0, \infty) \to [1, \infty)$
	and $\gamma: [\ell_0, \infty) \to [0,1]$ which satisfy
    % and non-increasing $\alpha: (\ell_0, \infty) \to [1, \infty)$
    % which satisfies
	\begin{equation}
		\lim_{\ell \to \infty} \alpha(\ell) = 1, \quad
		\limsup_{\ell \to \infty} \ell(\beta(\ell) - 1) < \infty, \quad
		\lim_{\ell \to \infty} \gamma(\ell) = 0
	\end{equation}
	and for which
	\begin{equation}\label{cond:H-1} \tag{H-1}
		\norm{f_1 \cdots f_n}_{L^1(\mu)} \le 
			\prod_{k=1}^n \norm{f_k}_{L^{\alpha(\ell)}(\mu)}
	\end{equation}
	whenever $n \ge 2$, $\ell > \ell_0$, and $f_1, \ldots, f_n$ are 
	$\ell$-measurably separated functions, 
	and
	\begin{equation}\label{cond:H-2} \tag{H-2}
		\abs{\int_\cX fg\,d\mu - \paran{\int_\cX f\,d\mu} \paran{\int_\cX g\,d\mu } }
		\le \gamma(\ell) \norm{f}_{L^{\beta(\ell)}(\mu)} \norm{g}_{L^{\beta(\ell)}(\mu)}
	\end{equation}
	whenever $\ell > \ell_0$ and $f, g \in L^1(\mu)$ 
	are $\ell$-measurably separated.
\end{definition}

In this section, we denote the law of a hypermixing process by $\mu$. 
The main large deviation result in
\cite{chiyonobu1988large} states the following theorem. 
\begin{theorem}[\cite{chiyonobu1988large}]
The empirical process $M_t(X)$ of a hypermixing process
$X \sim \mu$ satisfies the large deviation principle with rate function
\[
    \I(\eta) = \begin{cases}
        h(\eta \, \|\, \mu), & \text{ if } \eta \in \M_1(\cX, \xmap), \\ 
	        \infty, & \text{ otherwise.}
    \end{cases}
\]
\end{theorem}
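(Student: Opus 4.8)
The plan is to obtain the large deviation principle by the cumulant generating functional (G\"artner--Ellis) method at the level of the empirical process, and then to identify the resulting Legendre transform with the Donsker--Varadhan relative entropy rate $\dynrelent{\cdot}{\mu}$. This is essentially the argument of \cite{chiyonobu1988large} and \cite[Section 5.4]{deuschel2001large}; here I only sketch the main steps and indicate where the work lies.

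First I would establish existence of the pressure functional. For a bounded, $\F_r$-measurable $f \in C_{b,loc}(\cX)$, set $a_t := \log \mathbb{E}_\mu[\exp(f^t)]$ (with $f^t$ the time integral over $[0,t]$ in the continuous-time case). The point is that \eqref{cond:H-1} makes $(a_t)_t$ approximately subadditive: cutting $[0,t]$ into consecutive blocks of length $m$ separated by gaps of length $\ell > \ell_0$, the functions $\exp(f^{I_k})$ on the blocks are $\ell$-measurably separated, so \eqref{cond:H-1} dominates $\mathbb{E}_\mu[\exp(\sum_k f^{I_k})]$ by $\prod_k \norm{\exp(f^{I_k})}_{L^{\alpha(\ell)}(\mu)}$; the discarded gaps cost a factor $\exp(O((t/m)\,\ell\,\norm{f}))$ and the passage from $L^1$ to $L^{\alpha(\ell)}$ costs a factor $\exp(O(t\,(\alpha(\ell)-1)\,\norm{f}))$, both of which can be made $o(t)$ by letting $\ell$, and then $m$, grow slowly, using $\alpha(\ell)\to 1$ and $\limsup_\ell \ell(\beta(\ell)-1)<\infty$. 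A Fekete-type argument then gives $\Lambda(f) := \lim_{t\to\infty} \tfrac1t a_t \in \real$; convexity, finiteness, and lower semicontinuity of $\Lambda$ on $C_{b,loc}(\cX)$ are routine, and $C_{b,loc}(\cX)$ separates points of $\M_1(\cX,\xmap)$.

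Next, combining existence of $\Lambda$ with exponential tightness of $(M_t(X))_{t}$ on $\M_1(\cX)$ --- itself extracted from the hypermixing inequalities by controlling large excursions of the empirical process, as carried out in \cite[Section 5.4]{deuschel2001large} --- the abstract G\"artner--Ellis / Baxter--Jain theorem on the space of measures (see \cite{dembo2010large}) yields the LDP with convex rate function $\Lambda^*(\eta) = \sup_{f\in C_{b,loc}(\cX)}\paran{\int f\,d\eta - \Lambda(f)}$. Since the empirical measure of the shift is asymptotically $\xmap$-invariant, $\Lambda^*(\eta) < \infty$ forces $\eta \in \M_1(\cX,\xmap)$, which accounts for the ``$\infty$ otherwise'' in the statement.

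Finally I would identify $\Lambda^*$ with the relative entropy rate. Existence of $\dynrelent{\eta}{\mu} = \lim_r \tfrac1r \relent{\eta|_{\F_r}}{\mu|_{\F_r}}$ follows from subadditivity of $r \mapsto \relent{\eta|_{\F_r}}{\mu|_{\F_r}}$, which again uses the approximate independence of separated blocks under $\mu$ coming from \eqref{cond:H-2}. For $\Lambda^* \le \dynrelent{\cdot}{\mu}$: if $f$ is $\F_r$-measurable, then $f^t$ is $\F_{t+r}$-measurable, $\int f^t\,d\eta = t\int f\,d\eta$ by $\xmap$-invariance of $\eta$, and the Donsker--Varadhan entropy inequality (\cite[Lemma 2.4]{budhiraja2019analysis}, \cite{donsker1983asymptotics}) gives $t\int f\,d\eta - a_t \le \relent{\eta|_{\F_{t+r}}}{\mu|_{\F_{t+r}}}$; dividing by $t$, letting $t\to\infty$, and then taking the supremum over $f$ yields the bound. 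The reverse inequality $\Lambda^*(\eta) \ge \dynrelent{\eta}{\mu}$ is the delicate part: for each $r$ one truncates $\log\frac{d\eta|_{\F_r}}{d\mu|_{\F_r}}$ to a bounded $\F_r$-measurable $g_r$ for which $\int g_r\,d\eta - \log\mathbb{E}_\mu[\exp(g_r)]$ is close to $\relent{\eta|_{\F_r}}{\mu|_{\F_r}}$, and then evaluates $\Lambda$ at $\tfrac1r g_r$: the block decomposition of $(\tfrac1r g_r)^t$ together with \eqref{cond:H-1} and \eqref{cond:H-2} shows $\Lambda(\tfrac1r g_r) = \tfrac1r \log\mathbb{E}_\mu[\exp(g_r)]$ up to an error that vanishes as $r\to\infty$, so that $\int \tfrac1r g_r\,d\eta - \Lambda(\tfrac1r g_r) \to \dynrelent{\eta}{\mu}$. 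I expect this last step to be the main obstacle --- producing enough admissible test functions and controlling the truncation of the possibly unbounded finite-dimensional log-likelihood ratios while keeping all hypermixing error terms uniformly small --- with exponential tightness for non-compact $\S$ a secondary technical point.
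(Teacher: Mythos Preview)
The paper does not give its own proof of this theorem---it is quoted from \cite{chiyonobu1988large}---but it develops, in Section~5, the quenched analogue with essentially the machinery of \cite{chiyonobu1988large} and \cite[Section~5.4]{deuschel2001large}, so your sketch can be compared against that.

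There is a genuine gap in your lower bound. Invoking an ``abstract G\"artner--Ellis / Baxter--Jain theorem'' to pass from existence of $\Lambda$ and exponential tightness to the \emph{full} LDP does not work as stated: those theorems give the upper bound with rate $\Lambda^*$ (and this is exactly how the paper obtains Proposition~\ref{hm:upper-bound} via Lemma~\ref{lem:genericUB}), but the lower bound requires regularity of $\Lambda$---essential smoothness, or at least that every point of $\{\Lambda^*<\infty\}$ be exposed---which you have not established and which is not a consequence of hypermixing. Both \cite{chiyonobu1988large} and the paper's parallel treatment obtain the lower bound differently: one shows the rate function $h(\cdot\,\|\,\mu)$ is \emph{affine} on invariant measures (Proposition~\ref{prop:affine}), reduces via ergodic decomposition to ergodic $\eta$, and then runs a direct change-of-measure argument with the finite-time Radon--Nikodym derivatives $d\eta|_{\F_t}/d\mu|_{\F_t}$ (Lemma~\ref{lem:open}). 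That tilting step is where (H-2) enters in \cite{chiyonobu1988large}; (H-1) alone already gives the pressure, the upper bound, and the identification $\Lambda^*=h$.

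You have also reversed which inequality in the rate identification is delicate. The bound $\Lambda^*(\eta)\ge h(\eta\,\|\,\mu)$ does \emph{not} need any truncation of log-likelihood ratios. From the same block decomposition you use for existence of $\Lambda$, one gets a one-line pressure estimate (Lemma~\ref{lem:pest} in the paper): for $\F_r$-measurable $f$ and $\ell>\ell_0$,
\[
\Lambda(f)\ \le\ \frac{1}{(\ell+r)\alpha(\ell)}\,\log\int_\cX e^{(\ell+r)\alpha(\ell)f}\,d\mu.
\]
Applying this to $f/((\ell+r)\alpha(\ell))$ and taking the supremum over bounded $\F_r$-measurable $f$ via the Donsker--Varadhan variational formula immediately gives $\Lambda^*(\eta)\ge K(\eta|_{\F_r}\,\|\,\mu|_{\F_r})/((\ell+r)\alpha(\ell))$, and letting $r\to\infty$ yields $\Lambda^*\ge h$. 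The other direction, $h\ge\Lambda^*$, is the one you correctly obtain from the entropy inequality $t\int f\,d\eta - a_t \le K(\eta|_{\F_{t+r}}\,\|\,\mu|_{\F_{t+r}})$.
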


\begin{remark}
    We will see that property \eqref{cond:H-1} of hypermixing processes 
    is already enough for posterior consistency. 
\end{remark}

\begin{remark}
    In \cite{bryc1996large}, a process with only
    a slight modification of property \eqref{cond:H-2}  
    satisfies the LDP, but the rate function may not coincide with 
    the relative entropy rate. Properties of the relative entropy
    rate are crucial for our approach.
\end{remark}

\subsection{Concrete examples of hypermixing processes}
We now present some concrete examples of 
stochastic processes that are hypermixing.

\begin{example}[I.I.D. processes]
    Consider the discrete time case $\T = \nat$. 
    Let $(X_t)_{t \in \T}$ be an $\S$-valued i.i.d process with law $\mu$ on $\cX$,
    then $\mu$ is hypermixing.
\end{example}

\begin{example}[Markov processes]
    Let $(X_t)_{t \in \T}$ be an $\S$-valued Markov process which can be
    realized on $\cX = C(\T, \S)$, and we denote the associated Markov semigroup
    acting on bounded measurable functions on $\S$
    by $(P_t)_{t \in \T}$. Let $\mu_0 \in \M_1(\S)$ be an invariant measure
    (in the sense of Markov processes) with respect to $(P_t)_{t \in \T}$,
    and $\mu \in \M_1(\cX, \xmap)$ be the corresponding $\xmap$-invariant
    measure on the path space $\cX$. 
    Then according to \cite[Theorem 6.1.3]{deuschel2001large}, we have
    the following characterization of the hypermixing property.
\end{example} 
    \begin{proposition}[\cite{deuschel2001large}]
    $\mu$ is hypermixing if and only if the operator norm
    of $P_T : L^2(\mu_0) \to L^4(\mu_0)$ is one for some $T > 0$, i.e., 
    the Markov semigroup $(P_t)_{t \in \T}$ is $\mu_0$-hypercontractive.
    \end{proposition}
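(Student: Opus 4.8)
The plan is to obtain this as the two halves of the equivalence between hypermixing and $\mu_0$-hypercontractivity, with the theory of logarithmic Sobolev inequalities serving as the bridge; this is essentially \cite[Theorem 6.1.3]{deuschel2001large}. Since $P_T\one = \one$ one always has $\norm{P_T}_{L^2(\mu_0)\to L^4(\mu_0)}\ge 1$, so ``equal to one'' means the same as ``$\le 1$''. I would first record the two standard consequences of a single hypercontractivity estimate on which everything rests: by Gross's theorem, $\norm{P_T}_{L^2(\mu_0)\to L^4(\mu_0)}\le 1$ for some $T>0$ is equivalent to a logarithmic Sobolev inequality for the Dirichlet form of $(P_t)_{t\in\T}$, and this yields (i) a whole curve of hypercontractive bounds $\norm{P_t}_{L^p(\mu_0)\to L^r(\mu_0)}\le 1$ whenever $e^{2t/c}\ge(r-1)/(p-1)$, for a constant $c>0$ --- and the same curve for the adjoint semigroup $P_t^{*}$ on $L^2(\mu_0)$, which is again Markov with invariant measure $\mu_0$ --- and (ii) a spectral gap $\norm{P_t\phi}_{L^2(\mu_0)}\le e^{-\kappa t}\norm{\phi}_{L^2(\mu_0)}$ for every mean-zero $\phi$, some $\kappa>0$.

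For hypercontractivity $\Rightarrow$ hypermixing I would check \eqref{cond:H-1} and \eqref{cond:H-2} by hand. For \eqref{cond:H-1}: given $\ell$-measurably separated $f_1,\dots,f_n$, relabel so the witnessing intervals are linearly ordered, $f_k$ being $\F_{[a_k,b_k]}$-measurable with $a_{k+1}\ge b_k+\ell$. Stationarity and the Markov property rewrite $\expect{\abs{f_1\cdots f_n}}$ as a transfer chain $\langle\rho_1,\,P_{d_1}Q_2P_{d_2}Q_3\cdots Q_{n-1}P_{d_{n-1}}\rho_n\rangle_{\mu_0}$ with gaps $d_k=a_{k+1}-b_k\ge\ell$, where $\rho_1(x)=\expect{\abs{f_1}\mid X_{b_1}=x}$, $\rho_n(z)=\expect{\abs{f_n}\mid X_{a_n}=z}$, and $Q_kh(z)=\expect{\abs{f_k}\,h(X_{b_k})\mid X_{a_k}=z}$. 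Conditional Jensen gives $\norm{\rho_1}_{L^p(\mu_0)}\le\norm{f_1}_{L^p(\mu)}$ (likewise $\rho_n$), and a nested Hölder with conditional Jensen gives $\norm{Q_kh}_{L^q(\mu_0)}\le\norm{f_k}_{L^{\alpha}(\mu)}\norm{h}_{L^{m'}(\mu_0)}$ whenever $\tfrac1q=\tfrac1\alpha+\tfrac1{m'}$ with $q\ge 1$ (the interval lengths are harmless because $P_t$ contracts every $L^p(\mu_0)$). A downward induction along the chain --- peeling off $\norm{f_k}_{L^\alpha}$ at each $Q_k$, restoring the running exponent from $L^{m'}$ back to $L^q$ across the next gap via $\norm{P_{d_k}}_{L^q(\mu_0)\to L^{m'}(\mu_0)}\le 1$, and finishing with $\langle\rho_1,P_{d_1}(\cdot)\rangle\le\norm{f_1}_{L^\alpha}\norm{\cdot}_{L^q}$ via $\norm{P_{d_1}}_{L^q(\mu_0)\to L^{\alpha'}(\mu_0)}\le 1$ --- gives $\expect{\abs{f_1\cdots f_n}}\le\prod_k\norm{f_k}_{L^\alpha(\mu)}$ provided $\alpha=\alpha(\ell)$ and an auxiliary $q=q(\ell)\in(1,\alpha)$ satisfy $\tfrac{1}{(\alpha-1)(q-1)}\le e^{2\ell/c}$ and $\tfrac{(\alpha+1)q-\alpha}{(\alpha-q)(q-1)}\le e^{2\ell/c}$; solving these (for instance with $q-1=\tfrac12(\alpha-1)$) forces $\alpha(\ell)=1+O(e^{-\ell/c})$, non-increasing with $\alpha(\ell)\downarrow 1$, and the bound is uniform in $n$, exactly as \eqref{cond:H-1} requires. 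For \eqref{cond:H-2}, for $\ell$-separated $f,g$ lying (after relabeling) to the left and right, the same conditioning writes the covariance as $\langle\bar\rho_f,\,P_d\bar\psi_g\rangle_{\mu_0}$ with $d\ge\ell$, where $\bar\rho_f,\bar\psi_g$ are the mean-zero parts of the conditional expectations of $f,g$; splitting $d$ into three pieces, using $\norm{P_{\ell/3}}_{L^\beta(\mu_0)\to L^2(\mu_0)}\le 1$ and $\norm{P^{*}_{\ell/3}}_{L^\beta(\mu_0)\to L^2(\mu_0)}\le 1$ to bring both factors into $L^2$ controlled by $L^{\beta}$-norms, and the spectral gap on the middle piece, yields $\abs{\mathrm{Cov}}\le 4e^{-\kappa\ell/3}\norm{f}_{L^{\beta(\ell)}(\mu)}\norm{g}_{L^{\beta(\ell)}(\mu)}$ with $\beta(\ell)=1+3c/\ell$; thus $\gamma(\ell)=4e^{-\kappa\ell/3}\to 0$, $\limsup_\ell\ell(\beta(\ell)-1)=3c<\infty$, and $\beta$ is non-increasing to $1$.

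For the converse, hypermixing $\Rightarrow$ hypercontractivity, the argument is short. Apply \eqref{cond:H-1} with $n=2$ to $f_1=u(X_0)$ and $f_2=v(X_\ell)$, which are $\ell$-measurably separated; since $X_0,X_\ell\sim\mu_0$ by stationarity, \eqref{cond:H-1} reads $\langle\abs{u},P_\ell\abs{v}\rangle_{\mu_0}\le\norm{u}_{L^{\alpha(\ell)}(\mu_0)}\norm{v}_{L^{\alpha(\ell)}(\mu_0)}$ for all $u,v$, which by duality is exactly $\norm{P_\ell}_{L^{\alpha(\ell)}(\mu_0)\to L^{\alpha(\ell)'}(\mu_0)}\le 1$, $\alpha(\ell)'$ the conjugate exponent. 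Because $\alpha(\ell)\to 1$, fix any $\ell>\ell_0$ with $\alpha(\ell)\le 4/3$, so that $\alpha(\ell)\le 2$ and $\alpha(\ell)'\ge 4$; since $\mu_0$ is a probability measure the inclusions $L^2(\mu_0)\hookrightarrow L^{\alpha(\ell)}(\mu_0)$ and $L^{\alpha(\ell)'}(\mu_0)\hookrightarrow L^4(\mu_0)$ are contractive, and composing them gives $\norm{P_\ell}_{L^2(\mu_0)\to L^4(\mu_0)}\le 1$, hence $=1$; take $T=\ell$.

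The hard part, I expect, is the exponent bookkeeping in the \eqref{cond:H-1} chain estimate: one single function $\alpha(\ell)$ must serve simultaneously for all $n$ and all admissible interval configurations --- including non-point intervals, which is what forces the sub-Markov kernels $Q_k$ and the nested Hölder/Jensen bound on them --- and it must be non-increasing with $\alpha(\ell)\downarrow 1$. The two exponent inequalities above are engineered to make this work, but verifying the constants honestly (and their interplay with the gap constant $\kappa$ in \eqref{cond:H-2}) is the real labor; the converse direction and, once the hypercontractive curve and spectral gap are available, the covariance bound \eqref{cond:H-2} are comparatively routine.
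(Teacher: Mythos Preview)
The paper does not give its own proof of this proposition; it simply cites \cite[Theorem 6.1.3]{deuschel2001large} (and later, in Theorem \ref{thm:LS_equiv}, cites \cite[Theorem 5.5.17, Theorem 6.1.3 and Corollary 6.1.17]{deuschel2001large} for the more detailed version with explicit constants). Your sketch is essentially a reconstruction of the Deuschel--Stroock argument, and the overall architecture---the transfer chain / peeling induction for \eqref{cond:H-1}, the spectral-gap-plus-hypercontractivity splitting for \eqref{cond:H-2}, and the two-point duality trick for the converse---is correct and matches the source.

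One caution worth flagging: you route the forward direction through Gross's theorem to pass from a single $L^2\to L^4$ bound to the full hypercontractive curve (and to the spectral gap). The paper explicitly notes after Theorem \ref{thm:LS_equiv} that the equivalence $(2)\Leftrightarrow(3)$ (hypercontractivity $\Leftrightarrow$ hypermixing) does \emph{not} require $\mu_0$ to be $(P_t)$-reversible, whereas the equivalence with log-Sobolev does. In the non-reversible case, Deuschel--Stroock obtain the needed range of $L^p\to L^q$ bounds for both $P_t$ and $P_t^*$ by iterating the single bound via the semigroup property together with Riesz--Thorin interpolation and duality, rather than by passing through a log-Sobolev inequality. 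Your argument as written is fine under reversibility (which covers the paper's concrete Langevin examples), but to match the proposition in full generality you would want to replace the Gross step by this interpolation-and-iteration route.
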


    Hypercontractivity is implied many other well-studied properties
    such as the logarithmic Sobolev inequality, which can be verified
    in a wide class of examples.
    See \cite[Chapter VI]{deuschel2001large} for more details and \cite{bakry2013analysis} for a general introduction.
    We will outline a class of Markov processes 
    precisely in Section \ref{sec:Markov} to illustrate the power of our framework,
    which essentially becomes a black box. We will also mention other 
    Markov processes, where 
    hypercontractivity is proved by other tools.

\begin{example}[More examples]
    More examples of hypermixing processes can be found at
    \cite{chiyonobu1988large}, including $\epsilon$-Markov processes
    (i.e., generalized Markov processes where the current state depends on the past up to a
    time interval of length $\epsilon$) 
    and some Gaussian processes with certain decay in their spectral density
    matrices.
\end{example}

\subsection{The structure of the large deviation proof}
We apply the strategy outlined in Section \ref{sec:strategy} to the
case of hypermixing processes. We also detail the components of the proof.

First, we work on one hypermixing process $(X_t)_{t \in \T} \sim \mu$. 
The immediate goal is to show for any $f \in C_{b, loc} (\cX \times \cY)$,
it holds for $\nu$-a.e. $y \in \cY$ that
\begin{equation*} 
    \lim_{t\to\infty} \frac{1}{t}\log \int_\cX \exp\paran{f^t(x, y)}d\mu(x)
	= \sup_{\lambda \in \J(\xmap : \nu)}\set{\int f\,d\lambda - h(\lambda\,\|\,\mu\otimes\nu)}.
\end{equation*}
We will established this in a series of steps:
\begin{enumerate}
	\item  
	We show that given $f \in C_{b, loc} (\cX \times \cY)$, 
	for $\nu$-a.e. $y\in\cY$, the limit
	\begin{align*}
 		p(f) & := \limsup_{t\to\infty} \frac{1}{t}\log
 		\int_\cX \exp\paran{f^t(x,y)}\,d\mu(x) 
	\end{align*} 
	exists and does not depend on $y$. 
	In particular, $p: C_{b, loc}(\cX \times \cY) \to \real$ 
	is called the pressure functional.
		
	\item We show that 
	$$p^*(\lambda) = 
		\begin{cases} h(\lambda\,\|\,\mu \otimes \nu) \quad 
		& \text{if } \lambda \in \J(S : \nu), \\ 
		\infty \quad & \text{else} \end{cases}$$
	where $p^*: \M_1(\cX \times \cY) \to \real$ is the Fenchel-Legendre transform of $p$, i.e.,
	$$p^*(\lambda) = \sup\set{\int f\,d\lambda - p(f) \,:
	\, f \in C_{b, loc} (\cX \times \cY)}.$$ 
	This establishes the existence of the relative entropy rate.
		
	\item As suggested by Theorem \ref{thm:laplace},
	we show the upper bound 
	\[
	    \limsup_{t\to\infty} \frac{1}{t}\log \int_\cX \exp\paran{f^t(x, y)}d\mu(x)
    	\le \sup_{\lambda \in \J(\xmap : \nu)}\set{\int f\,d\lambda - h(\lambda\,\|\,\mu\otimes\nu)}
	\]
	by proving the large deviation upper bound of 
	$(M_t(X, y))_{t \in \T}$.

	\item We show the lower bound
	\[
	    \liminf_{t\to\infty} \frac{1}{t}\log \int_\cX \exp\paran{f^t(x, y)}d\mu(x)
    	\ge \sup_{\lambda \in \J(\xmap : \nu)}\set{\int f\,d\lambda - h(\lambda\,\|\,\mu\otimes\nu)}
	\]
	by exploiting the properties of the relative entropy rate and linearity 
	of integration on measures.
\end{enumerate}
The steps above prove new quenched large deviation
asymptotics for hypermixing processes. Next,
we need to show that the parameterized family of processes 
is exponentially continuous to obtain
posterior consistency.
Toward this end, we introduce the notion of a regular family 
$\set{\mu_\theta}_{\theta \in \Theta}$ of hypermixing processes.
We show that when $\set{\mu_\theta}_{\theta \in \Theta}$ is regular, 
it is exponentially continuous with respect to the loss function $L$. As a result, we also obtain new annealed large deviation
asymptotics for hypermixing processes.

\subsection{The pressure functional}
The main result of this subsection is the existence of the pressure
functional $p$. Once $p$ is established, the existence and properties
of the relative entropy rate will hold immediately, and we will obtain
the large deviation upper bound in the following subsections.
% We will adapt the argument in \cite[Section 5.4]{deuschel2001large}
% to our setting.
% \begin{equation}
%     f^t(x,y) := f^{[0,t]}(x, y) = t\inprod{M_t(x,y)}{f}_{
%     \M_1(\X \times \Y), \, C_{b, loc}(\X \times \Y)}
% \end{equation}

\begin{theorem} \label{hm:pressure}
    If $f \in C_{b, loc}(\cX \times \cY)$, then
	the following limits exist, and the equalities hold
	\begin{equation}
		\begin{split}
		p(f) & := \lim_{t \to \infty} \frac{1}{t}\int_\cY \log\paran{
			\int_\cX \exp\paran{f^t(x,y)}
			d\mu(x)} d\nu(y) \\
	 	& = \int_\cY  \limsup_{t \to \infty} \frac{1}{t}\log\paran{
	 		\int_\cX \exp\paran{f^t(x,y)}
	 		d\mu(x)} d\nu(y) \\
	 	& = \limsup_{t \to \infty} \frac{1}{t}\log\int_\cX \exp\paran{
	 		f^t(x,y)}d\mu(x), 
		\end{split}
	\end{equation}
% 	for $\nu$-a.e. $y \in \Y$ 
    for $y \in E$, where $E \subseteq \cY$ is a set of full $\nu$-measure 
    that is independent of $f$.
\end{theorem}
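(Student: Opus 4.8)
The plan is to write $\Lambda_t(y):=\log\int_\cX\exp\bigl(f^t(x,y)\bigr)\,d\mu(x)$ and $a_t:=\int_\cY\Lambda_t(y)\,d\nu(y)$, and to establish (i) that $\tfrac1t a_t$ converges to some $p(f)$ by an approximate‑subadditivity argument powered by \eqref{cond:H-1}, and (ii) that $\limsup_t\tfrac1t\Lambda_t(y)$ equals this same $p(f)$ for $y$ in an $f$‑independent full‑measure set, by running the same decomposition "along the $y$‑orbit" and invoking the ergodic theorem. I will use that $|f^t|\le t\|f\|$, so $|\Lambda_t(y)|\le t\|f\|$ and $t\mapsto\Lambda_t(y)$ is $\|f\|$‑Lipschitz; that $y\mapsto\Lambda_t(y)$ is bounded and continuous (joint continuity of $(u,y)\mapsto\ymap^u y$ plus dominated convergence); and that, since $f$ is $\tF_r$‑measurable for some $r\ge 0$, for fixed $y$ the map $x\mapsto\int_I f(\xmap^u x,\ymap^u y)\,du$ is $\F_{I+[0,r]}$‑measurable, so such functions built over intervals that are pairwise more than $\ell+r$ apart are $\ell$‑measurably separated and \eqref{cond:H-1} applies.

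For step (i) I would fix $\ell>\ell_0$, set $\alpha=\alpha(\ell)$, fix a block length $L$, and cut $[0,T]$ into $N\sim T/(L+\ell+r)$ blocks $I_1,\dots,I_N$ of length $L$ with gaps of length $\ell+r$ between them (plus a bounded leftover). Estimating the gap and leftover contributions to $f^T$ by $\|f\|$ times their length and applying \eqref{cond:H-1} to $\{\exp(f^{I_k}(\cdot,y))\}_k$ gives
\begin{equation*}
 \Lambda_T(y)\ \le\ C(L+\ell+r)\|f\| + N(\ell+r)\|f\| + \sum_{k=1}^N\tfrac1\alpha\log\!\int_\cX\!\exp\bigl(\alpha f^{I_k}(x,y)\bigr)\,d\mu(x).
\end{equation*}
The nuisance exponent $\alpha>1$ I would remove by the interpolation bound $\|g\|_{L^\alpha(\mu)}\le\|g\|_{L^1(\mu)}^{1/\alpha}\|g\|_\infty^{1-1/\alpha}$ with $g=\exp(f^{I_k}(\cdot,y))$; together with $\xmap$‑invariance of $\mu$ and $|f^{I_k}|\le L\|f\|$ this yields $\tfrac1\alpha\log\int\exp(\alpha f^{I_k}(\cdot,y))\,d\mu\le\Lambda_L(\ymap^{u_k}y)+2(\alpha-1)L\|f\|$, with $u_k$ the left endpoint of $I_k$. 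Integrating in $y$ and using $\ymap$‑invariance of $\nu$ turns $\sum_k\Lambda_L(\ymap^{u_k}y)$ into $N a_L$, so
\begin{equation*}
 \tfrac1T a_T\ \le\ o(1) + \tfrac{(\ell+r)\|f\|}{L+\ell+r} + \tfrac{a_L}{L+\ell+r} + 2\bigl(\alpha(\ell)-1\bigr)\|f\| .
\end{equation*}
Letting $T\to\infty$ (the Lipschitz property makes the arithmetic‑progression subsequence cofinal), then $L\to\infty$, then $\ell\to\infty$ so $\alpha(\ell)\to1$, gives $\limsup_T\tfrac1T a_T\le\liminf_L\tfrac1L a_L$; hence $p(f):=\lim_t\tfrac1t a_t$ exists.

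For step (ii) I would rerun the same block decomposition but average over the offset $v\in[0,L+\ell+r)$ of the partition: $\Lambda_T(y)$ is offset‑independent, while the averaged block sum becomes $\tfrac1{L+\ell+r}\int_0^T\Lambda_L(\ymap^w y)\,dw$ up to bounded boundary terms, producing, for every $y$,
\begin{equation*}
 \tfrac1T\Lambda_T(y)\ \le\ o(1) + \tfrac{(\ell+r)\|f\|}{L+\ell+r} + \tfrac1{L+\ell+r}\cdot\tfrac1T\!\int_0^T\!\Lambda_L(\ymap^w y)\,dw + 2\bigl(\alpha(\ell)-1\bigr)\|f\| .
\end{equation*}
Now I would take $E$ to be the set of $y$ for which the occupation measures $\tfrac1T\int_0^T\delta_{\ymap^w y}\,dw$ converge weakly to $\nu$; this set is $f$‑independent and of full $\nu$‑measure (the empirical‑measure form of the ergodic theorem on the Polish space $\cY$), and on it Birkhoff's theorem applies simultaneously to all bounded continuous functions on $\cY$, in particular to $\Lambda_L$, so $\tfrac1T\int_0^T\Lambda_L(\ymap^w y)\,dw\to a_L$. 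Sending $T\to\infty$, $L\to\infty$, $\ell\to\infty$ then gives $\limsup_t\tfrac1t\Lambda_t(y)\le p(f)$ for every $y\in E$. For the reverse direction I would use reverse Fatou ($|\tfrac1t\Lambda_t|\le\|f\|$): $p(f)=\lim_t\tfrac1t a_t\le\int_\cY\limsup_t\tfrac1t\Lambda_t\,d\nu$, which together with the upper bound forces $\int_\cY\limsup_t\tfrac1t\Lambda_t\,d\nu=p(f)$ and $\limsup_t\tfrac1t\Lambda_t(y)=p(f)$ for $\nu$‑a.e.\ $y$; since $y\mapsto\limsup_t\tfrac1t\Lambda_t(y)$ is $\ymap$‑invariant (from $|\Lambda_{t+s}(y)-\Lambda_t(\ymap^s y)|\le s\|f\|$ and $\xmap$‑invariance of $\mu$), the a.e.\ equality is upgraded to equality on all of $E$ by pairing it with the matching lower bound $\liminf_t\tfrac1t\Lambda_t(y)\ge p(f)$, $y\in E$, which is the large‑deviation lower bound carried out in the subsequent subsections (a change‑of‑measure argument over $\lambda\in\J(\xmap:\nu)$ combined with Birkhoff on $E$).

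The step I expect to be the crux is the removal of $\alpha(\ell)$ in \eqref{cond:H-1}: the hypermixing inequality prices correlation‑breaking by \emph{both} a gap of length $\ell$ and a power $\alpha(\ell)>1$, and the errors these introduce over a decomposition of $[0,T]$ are linear in $T$, vanishing only in the right order of limits ($T\to\infty$, then $L\to\infty$, then $\ell\to\infty$) once one has arranged $\Theta(T/L)$ blocks. A secondary, bookkeeping‑level obstacle is keeping the exceptional set independent of $f$, which is why the ergodic input must be phrased as weak convergence of occupation measures rather than Birkhoff for one fixed integrand at a time.
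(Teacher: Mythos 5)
Your proposal is correct and follows essentially the same route as the paper's proof: the same block decomposition of $[0,t]$ with gaps of length $\ell+r$ so that \eqref{cond:H-1} applies, the same removal of the exponent $\alpha(\ell)$ (the paper's bound \eqref{eq:pl} is exactly your interpolation estimate), the same averaging over the offset of the partition (the paper's Lemma \ref{lem:rsum}), the same order of limits $t\to\infty$, then block length, then $\ell\to\infty$, and the same final combination of Birkhoff's theorem on an $f$-independent full-measure set with Fatou's lemma. No substantive differences to report.
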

The proof combines the ideas from
\cite{chiyonobu1988large} and \cite[Lemma 2]{chi2002conditional}.
\begin{proof}
	Let $f \in C_{b,loc}(\cX \times \cY)$. Then $f$ is $\tF_r$-measurable 
	for some $r \ge 0.$ Let $\ell > \ell_0$ and $r'(\ell) = \ell + r$. 
	For $y \in \cY$, write
	\[
		p_t(f, y) = \frac{1}{t} \log \int_\cX \exp 
			\paran{f^t(x,y)} d\mu(x).
	\]
	Fix $s, t \in \T, t > s$. 
	The idea is to divide $[0,t]$ into smaller intervals of length $s$ 
	that are $r'(\ell)$-separated to apply (\ref{cond:H-1}) on the smaller intervals.
	For $u \in \T$, let $J_u^{s,t}$ be the collection of disjoint intervals 
	of the form $$u + (s + r'(\ell))m + [0, s] \subset [0, t], \quad m \in \nat,$$ 
	and $\bar{I}_u^{s,t} = [0, t] \setminus \bigcup_{I \in J_u^{s,t}} I$. 
% 	We supress the dependence on $s$, $t$ for ease of notations. 
	Then for $u \in [0, s + r'(\ell)]$, by properties of $f$ and (\ref{cond:H-1}),
	\begin{equation} 
	\begin{split}
		p_t(f,y) & \le \frac{1}{t} \log \int_\cX \exp 
			\paran{{|\bar{I}_u^{s,t}|}\|f\| + \sum_{I \in J_u^{s,t}} f^I(x,y)} d\mu(x) \\
		& = \frac{\|f\|}{t}|\bar{I}_u^{s,t}| 
			+ \frac{1}{t} \log \int_\cX \exp \paran{\sum_{I \in J_u^{s,t}} f^I(x,y)} d\mu(x) \\
		& \le \frac{\|f\|}{t} |\bar{I}_u^{s,t}|
			+ \frac{1}{t\alpha(\ell)} \sum_{I \in J_u^{s,t}} \log \int_\cX \exp 
				\paran{\alpha(\ell) f^I(x,y)} d\mu(x), \label{ieq:p}
	\end{split}
	\end{equation}
	where $|\bar{I}_u^{s,t}|$ denotes the total length (the Lebesgue measure) of $\bar{I}_u^{s,t}$.
	
	Next, we average the inequality \eqref{ieq:p} over $u$ to obtain a tighter estimate,
	which is given by the following lemma, whose proof is deferred
	to the end of this subsection.
	
    \begin{lemma} \label{lem:rsum}
        We have for any $t > s$,
        \begin{equation} \label{eq:ub-est}
		p_t(f, y) \le A_{s,t} + B_{s,t}(y)
	    \end{equation}
    	where
    % 	\[
    % 	    A_{s,t} := \frac{\|f\|}{t} 
    % 		 \paran{t - \left\lfloor{\frac{t - s}{s + r'(\ell)}}\right\rfloor s},
    % 	\]
        \[
    	    \lim_{s \to \infty} \lim_{t \to \infty} A_{s, t} = 0,
    	\]
    	\[
    	    B_{s,t}(y) := \frac{1}{t \alpha(\ell)} \frac{s}{s + r'(\ell)} 
    				\int_0^{t - s} p_s(\alpha(\ell) f, \ymap^u y)\,du.
    	\]
	\end{lemma}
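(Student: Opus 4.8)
The plan is to average the pointwise bound \eqref{ieq:p} over the offset $u$, using the $\xmap$-invariance of $\mu$ to recognize each local integral as a time-shifted copy of $p_s$. The key identity is that for a block $I=[a,a+s]\subset[0,t]$ appearing in $J_u^{s,t}$ one has $f^I(x,y)=f^s(\xmap^a x,\ymap^a y)$ by the semigroup property of $(\xmap^t,\ymap^t)$, and hence, since $\mu$ is $\xmap$-invariant,
\[
	\frac{1}{s}\log\int_\cX\exp\paran{\alpha(\ell)f^I(x,y)}\,d\mu(x)=p_s(\alpha(\ell)f,\ymap^a y).
\]
Consequently the block-sum on the right of \eqref{ieq:p} equals $s\sum_{I\in J_u^{s,t}}p_s(\alpha(\ell)f,\ymap^{a(I)}y)$, where $a(I)$ denotes the left endpoint of $I$, and because the blocks are disjoint of common length $s$ we have $|\bar I_u^{s,t}|=t-sN_u$ with $N_u:=\#J_u^{s,t}$, uniformly in $u$.

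Next I would integrate \eqref{ieq:p} in $u$ over $[0,s+r'(\ell)]$ and divide by $s+r'(\ell)$; the left-hand side is untouched since $p_t(f,y)$ does not depend on $u$. For the block-sum term, as $u$ ranges over $[0,s+r'(\ell))$ and $m$ over $\nat$ subject to $u+m(s+r'(\ell))\le t-s$, the substitution $w=u+m(s+r'(\ell))$ is a measure-preserving bijection onto $[0,t-s)$, so interchanging summation and integration (justified by the bound $|p_s(\alpha(\ell)f,\cdot)|\le\alpha(\ell)\|f\|$) gives
\[
	\frac{1}{s+r'(\ell)}\int_0^{s+r'(\ell)}\sum_{I\in J_u^{s,t}}p_s(\alpha(\ell)f,\ymap^{a(I)}y)\,du=\frac{1}{s+r'(\ell)}\int_0^{t-s}p_s(\alpha(\ell)f,\ymap^w y)\,dw,
\]
and multiplying by $s/(t\alpha(\ell))$ produces exactly $B_{s,t}(y)$. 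Applying the same substitution with integrand $1$ yields $\tfrac{1}{s+r'(\ell)}\int_0^{s+r'(\ell)}N_u\,du=\tfrac{t-s}{s+r'(\ell)}$, so the averaged first term is $A_{s,t}:=\|f\|\bigl(1-\tfrac{s(t-s)}{t(s+r'(\ell))}\bigr)$, which is independent of $y$.

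Finally, sending $t\to\infty$ gives $\lim_{t\to\infty}A_{s,t}=\|f\|\,\tfrac{r'(\ell)}{s+r'(\ell)}$, and since $r'(\ell)=\ell+r$ is a fixed constant (with $\ell>\ell_0$ fixed throughout the proof of \Cref{hm:pressure} and $r$ determined by $f$), this tends to $0$ as $s\to\infty$, establishing the claim. The one step where care is genuinely required is this substitution: I must verify that each $w\in[0,t-s)$ corresponds to exactly one admissible pair $(m,u)$ for which $u+m(s+r'(\ell))+[0,s]$ is a block of $J_u^{s,t}$, and that the complement set $\bar I_u^{s,t}$ really has Lebesgue measure $t-sN_u$ for every offset $u$. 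Once these bookkeeping points are pinned down, the remainder is elementary algebra and a routine iterated limit.
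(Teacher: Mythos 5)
Your proof is correct. It takes a genuinely cleaner route than the paper's: where the paper discretizes the offset $u$ over a partition, orders the left endpoints to obtain a second partition of $[0,t-s]$, bounds the Riemann sum against the Riemann integral with an $o(t)$ error term, and only then passes $m\to\infty$, you instead invoke the shift identity $f^I(x,y)=f^s(\xmap^{a(I)}x,\ymap^{a(I)}y)$ and the $\xmap$-invariance of $\mu$ up front to rewrite each block contribution as $s\,p_s(\alpha(\ell)f,\ymap^{a(I)}y)$, and then exchange the $u$-integral with the sum over $m$ directly via Fubini and the measure-preserving bijection $(m,u)\mapsto w=u+m(s+r'(\ell))$ from the admissible set onto $[0,t-s]$. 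This eliminates the partitioning machinery and the $o(t)$ remainder entirely, and also yields a cleaner, closed-form $A_{s,t}=\|f\|\bigl(1-\tfrac{s(t-s)}{t(s+r'(\ell))}\bigr)$ in place of the paper's floor-function expression plus $o(t)/t$. The bookkeeping you flag at the end is correct and easily verified: given $w\in[0,t-s]$, division with remainder by $s+r'(\ell)$ produces the unique admissible pair $(m,u)$, and the disjointness of the $\ell$-separated length-$s$ blocks gives $|\bar I_u^{s,t}|=t-sN_u$, so the first term averages to $A_{s,t}$ via the same Fubini count $\int_0^{s+r'(\ell)}N_u\,du=t-s$. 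The interchange is justified by the uniform bound $|p_s(\alpha(\ell)f,\cdot)|\le\alpha(\ell)\|f\|$ and the fact that only finitely many $m$ contribute for each $u$.
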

	\begin{remark}
	    The idea of Lemma \ref{lem:rsum} is to observe
    	\begin{equation} \label{eq:intervals}
    	    \bigcup_{u \in [0, s + r'(\ell)]} J_u^{s,t} = \set{u + [0, s]\,:\, u \in [0, t - s]}.
    	\end{equation}
    	An examination of the time intervals in the integrals on the right-hand side of 
    	\eqref{ieq:p}
    	allows us to formally show, up to a vanishing remainder,
    	\begin{multline*}
    	    \int_0^{s + r'(\ell)} \sum_{I \in J_u^{s,t}} \log \int_\cX \exp 
    				\paran{\alpha(\ell) f^I(x,y)} d\mu(x) \, du \\
    		= \int_0^{t - s} \log \int_\cX \exp 
    				\paran{\alpha(\ell) f^{u + [0, s]}(x, y)} d\mu(x) \,du,
    	\end{multline*}
    	and apply $\xmap$-invariance of $\mu$.
    	The above equality is exact and straightforward in discrete time,
    	where integrals in time are replaced by summations,
    	and the equality follows by rearranging the summations using \eqref{eq:intervals}.
    	For continuous time, we pass to the limit to obtain
    	the Riemann integrals.
	\end{remark}
	
	(Proof of Theorem \ref{hm:pressure}, Continued.)
	From \eqref{ieq:p} and Lemma \ref{lem:rsum}, we have
	\begin{equation*} 
		p_t(f, y) \le A_{s,t} + B_{s,t}(y).
	\end{equation*}
% 	where by the $\xmap$-invariance of $\mu$, 
% 	\begin{align*}
% 	    B_{s,t}(y) & := \frac{1}{t \alpha(\ell)} \frac{1}{s + r'(\ell)} 
% 				\int_0^{t - s} \log \int_\X \exp 
% 				\paran{\alpha(\ell) f^{u + [0, s]}(x, y)} d\mu(x) \,du \\
% 		  & = \frac{1}{t \alpha(\ell)} \frac{1}{s + r'(\ell)} 
% 				\int_0^{t - s} \log \int_\X \exp 
% 				\paran{\alpha(\ell) f^{[0, s]}(x, \ymap^u y)} d\mu(x) \,du \\
% 		  & = \frac{1}{t \alpha(\ell)} \frac{s}{s + r'(\ell)} 
% 				\int_0^{t - s} p_s(\alpha(\ell) f, \ymap^u y)\,du.
% 	\end{align*}
	Integrate with respect to $\nu$ on both sides and obtain
	\[
	\begin{split}
		\int_\cY p_t(f, y) \,d\nu(y) 
	    & \le A_{s,t} + \int_\cY B_{s,t}(y) \,d\nu(y) \\ 
	\end{split}
	\]
	where by the $\ymap$-invariance of $\nu$,
	\[
	    \begin{split}
	     \int_\cY B_{s,t}(y) \,d\nu(y) 
	     & = \frac{t-s}{t \alpha(\ell)} \frac{s}{s + r'(\ell)} 
			\int_\cY p_s(\alpha(\ell) f, y)\,d\nu(y) \\
		 & \le \frac{t-s}{t } \frac{s}{s + r'(\ell)} 
				\paran{\int_\cY p_s(f, y)\,d\nu(y) + (\alpha(\ell) - 1) \|f\|},
		\end{split}
	\]
	and we used the bound
	\begin{equation} \label{eq:pl}
		\frac{1}{\alpha(\ell)} p_s(\alpha(\ell) f, y) 
		\le  p_s(f, y) + (\alpha(\ell) - 1) \|f\|.
	\end{equation}
	By taking $t \to \infty$, $s \to \infty$ and $\ell \to \infty$ 
	successively, we obtain
	\[
		\limsup_{t \to \infty} \int_\cY p_t(f, y)\,d\nu(y) 
			\le \liminf_{s \to \infty} \int_\cY p_s(f, y)\,d\nu(y).
	\]
	Therefore, the limit $p(f)$ exists. 
	   
	Recall from \eqref{eq:ub-est}, we have
	\[
	    p_t(f, y) \le A_{s,t}
			+ \frac{1}{t \alpha(\ell)} \frac{s}{s + r'(\ell)} 
				\int_0^{t - s} p_s(\alpha(\ell) f, \ymap^u y)\,du.
	\]
	By taking $t \to \infty$, we apply Birkhoff's pointwise ergodic theorem  
	(Theorem \ref{thm:ergodic})
	and \eqref{eq:pl}
	on the right hand side above 
	and obtain, for $\nu$-a.e. $y \in \cY$ independent of $f$, 
% 	\[
% 	    \limsup_{t \to \infty} p_t(f,y) \le 
% 	    \|f\| \paran{1 - \frac{s}{s + r'(\ell)}} + 
% 	        \frac{1}{\alpha(\ell)}\frac{s}{s + r'(\ell)}
% 	        \int_\Y p_s(\alpha(\ell)f,y)\,d\nu(y).
% 	\]
	\[
	    \limsup_{t \to \infty} p_t(f,y) \le 
	     \lim_{t \to \infty} A_{s,t} + 
	        \frac{s}{s + r'(\ell)}
	        \left(\int_\cY p_s(f,y)\,d\nu(y) + (\alpha(\ell) - 1) \|f\|\right).
	\]
	By taking $s \to \infty$ and then $\ell \to \infty$, 
	we have $\displaystyle \limsup_{t \to \infty} p_t(f,y) \le p(f)$.
	By Fatou's lemma, we also have 
	$$ p(f) \le \int_\cY \limsup_{t \to \infty} p_t(f,y) \,d\nu(y). $$
	Thus, it must be $\displaystyle \limsup_{t \to \infty} p_t(f,y) = p(f)$
	for $\nu$-a.e. $y \in \cY$.
\end{proof}
\begin{remark}
    When $\T = \nat$ and $\mu$ is the law of an i.i.d. process,
    the $\limsup$ in Theorem \ref{hm:pressure} can be replaced by the exact limit.
\end{remark}

To complete the argument, we present the proof of Lemma \ref{lem:rsum}.
\begin{proof}[Proof of Lemma \ref{lem:rsum}]
    Note that the number of intervals in $J_u^{s,t}$ 
    is at least $\left\lfloor \frac{t + r'(\ell)}{s + r'(\ell)} - 1 \right\rfloor$ 
    for $u \in [0, s + r'(\ell)]$,
    so 
    $|\bar{I}_u^{s,t}| \le t - \lfloor\frac{t - s}{s + r'(\ell)}\rfloor s$.
    By overloading notations of superscripts, we write $$F^I(y) := \log \int_\cX \exp 
			\paran{\alpha(\ell) f^{I}(x, y)} d\mu(x).$$
    Let $0 = u_0^m < u_1^m < \cdots < u_{m}^m = s + r'(\ell)$ 
    be a partition of $[0, s + r'(\ell)]$ such that $u_{i+1}^m - u_i^m = \frac{s + r'(\ell)}{m}$.
    Then by ordering the left endpoints of intervals in
    $\bigcup_{i=0}^{m-1} J_{u_i}^{s,t}$,
    we obtain a partition 
    $0 = v_0^m \le v_1^m \le \ldots \le v_{n-1}^m \le t - s$ of $[0, t-s]$
    by setting $v_n^m = t - s$, and we have $\max_j {v_{j+1}^m - v_j^m} \le \frac{s + r'(\ell)}{m}.$
    Note that $J_u^{s,t}$ has at most $\lfloor\frac{t + r'(\ell)}{s + r'(\ell)}\rfloor$ 
    intervals for any $u \in [0, s + r'(\ell)]$,
    so $\left(\lfloor\frac{t + r'(\ell)}{s + r'(\ell)}\rfloor -1 \right) m \le n \le \lfloor\frac{t + r'(\ell)}{s + r'(\ell)}\rfloor m$. 
    
    Consider the Riemann sum
    \begin{align*}
        \frac{s + r'(\ell)}{m}\sum_{i=0}^{m-1}
        \sum_{I \in J_{u_i}^{s,t}} F^I(y) 
        = \frac{s + r'(\ell)}{m}\sum_{j=0}^{n-1} F^{v_j^m + [0,s]}(y).
    \end{align*}
    Note that 
    \begin{multline*}
        \abs{\frac{s + r'(\ell)}{m}\sum_{j=0}^{n-1} F^{v_j^m + [0,s]}(y) 
        - \sum_{j=0}^{n-1} (v_{j+1}^m - v_j^m) F^{v_j^m + [0,s]}(y)} \\
        \le s\|f\| \left| \frac{n(s + r'(\ell))}{m} - (t - s) \right|
        = o(t), 
    \end{multline*}
    where $o(t)$ denotes a term with $\lim_{t \to \infty}o(t)/t = 0$
    and hence, 
    \begin{align*}
        \lim_{m \to \infty}\frac{s + r'(\ell)}{m}\sum_{i=0}^{m-1}
        \sum_{I \in J_{u_i}} F^I(y) 
        & \le o(t) +  \lim_{m \to \infty} \sum_{j=0}^{n-1} (v_{j+1}^m - v_j^m) F^{v_j^m + [0,s]}(y) \\
        & = o(t) + \int_0^{t - s} F^{u + [0,s]}(y)\,du.
    \end{align*}
    Therefore, by averaging the right-hand side of the inequality \eqref{ieq:p},
    \[
        p_t(f,y) \le A_{s,t} + \frac{1}{t\alpha(\ell)}
        \frac{1}{m}\sum_{i=0}^{m-1}
        \sum_{I \in J_{u_i}^{s,t}} F^I(y) 
        = A_{s,t} + \frac{1}{t\alpha(\ell)}\frac{1}{s + r'(\ell)}
        \frac{s + r'(\ell)}{m}\sum_{i=0}^{m-1}
        \sum_{I \in J_{u_i}^{s,t}} F^I(y)
    \]
    where 
    \[
       A_{s,t} = \frac{\|f\|}{t} 
        \paran{t - \left\lfloor{\frac{t - s}{s + r'(\ell)}}\right\rfloor s} + \frac{o(t)}{t}.
    \]
    By taking $m \to \infty$ and by the $\xmap$-invariance of $\mu$,
    \begin{align*}
	    B_{s,t}(y) & = \frac{1}{t \alpha(\ell)} \frac{1}{s + r'(\ell)} 
				\int_0^{t - s} \log \int_\cX \exp 
				\paran{\alpha(\ell) f^{u + [0, s]}(x, y)} d\mu(x) \,du \\
		  & = \frac{1}{t \alpha(\ell)} \frac{1}{s + r'(\ell)} 
				\int_0^{t - s} \log \int_\cX \exp 
				\paran{\alpha(\ell) f^{[0, s]}(x, \ymap^u y)} d\mu(x) \,du \\
		  & = \frac{1}{t \alpha(\ell)} \frac{s}{s + r'(\ell)} 
				\int_0^{t - s} p_s(\alpha(\ell) f, \ymap^u y)\,du.
    \end{align*}
\end{proof}

\subsection{The relative entropy rate}
With the existence of the pressure functional $p$, 
the existence of the relative entropy rate
can be established by a convex duality argument.
We also prove properties of the relative entropy rate
which will be crucial for proving the lower bound.

\begin{proposition} \label{prop:hm-entropy}
	For $\lambda \in \J(\xmap  : \nu)$, 
	$h(\lambda\,\|\,\mu \otimes \nu)$ as defined by Definition \ref{def:entropyXY}  
	exists as
	$h(\lambda\,\|\,\mu \otimes \nu) = p^*(\lambda)$, 
	where $p^*$ is the Fenchel-Legendre transform of $p$, i.e.,
	\begin{equation} \label{eq:p*}
	    p^*(\lambda) = 
		\sup\set{\int f\,d\lambda - p(f)  :  f \in C_{b, loc} (\cX \times \cY)}.
	\end{equation}
	In particular,
	we have for $\nu$-a.e. $y \in \cY$,
	\begin{equation} \label{eq:y-entropy}
	    \lim_{t \to \infty} \frac{1}{t} K(\lambda_y|_{\F_t}\,\|\,\mu|_{\F_t})
		= h(\lambda\,\|\,\mu \otimes \nu).
	\end{equation}
\end{proposition}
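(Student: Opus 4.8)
The plan is to identify the relative entropy rate with a Fenchel--Legendre transform: I will show that the limit in Definition \ref{def:entropyXY} defining $\dynrelent{\lambda}{\mu\otimes\nu}$ exists for every $\lambda\in\J(\xmap:\nu)$ and equals $p^*(\lambda)$, where $p$ is the pressure functional furnished by Theorem \ref{hm:pressure}. The first step is a reduction: disintegrating $\lambda=\int_\cY\lambda_y\,d\nu(y)$ over its $\cY$-marginal $\nu$ and using that $\tF_t$ carries the full coordinate $Y$, the chain rule for relative entropy along the filtration gives the exact identity $\relent{\lambda|_{\tF_t}}{(\mu\otimes\nu)|_{\tF_t}}=\int_\cY\relent{\lambda_y|_{\F_t}}{\mu|_{\F_t}}\,d\nu(y)$. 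This reduces everything to the fibrewise quantities $\relent{\lambda_y|_{\F_t}}{\mu|_{\F_t}}$, and it shows that once the $\nu$-a.e.\ statement \eqref{eq:y-entropy} is proved --- i.e.\ once $\tfrac1t\relent{\lambda_y|_{\F_t}}{\mu|_{\F_t}}$ is shown to converge $\nu$-a.e.\ to a constant --- that constant is automatically the value of $\dynrelent{\lambda}{\mu\otimes\nu}$. So the remaining task is the two matching bounds $p^*(\lambda)\le\liminf_t\tfrac1t\relent{\lambda|_{\tF_t}}{(\mu\otimes\nu)|_{\tF_t}}$ and $\limsup_t\tfrac1t\relent{\lambda|_{\tF_t}}{(\mu\otimes\nu)|_{\tF_t}}\le p^*(\lambda)$, together with the $\nu$-a.e.\ upgrade.

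\textbf{The lower bound.} This is the soft direction. I would fix $f\in C_{b,loc}(\cX\times\cY)$ --- say $\tF_r$-measurable, so $f^t$ is $\tF_{t+r}$-measurable --- and apply the Gibbs (Donsker--Varadhan) variational inequality on the fibre over $y$: $\int_\cX f^t(x,y)\,d\lambda_y(x)-\log\int_\cX e^{f^t(x,y)}\,d\mu(x)\le\relent{\lambda_y|_{\F_{t+r}}}{\mu|_{\F_{t+r}}}$ for every $y$. Integrating in $y$ against $\nu$, using $(\xmap\times\ymap)$-invariance of $\lambda$ so that $\int f^t\,d\lambda=t\int f\,d\lambda$, using the chain-rule identity on the right-hand side, and using Theorem \ref{hm:pressure} together with the crude bound $|\tfrac1t\log\int_\cX e^{f^t(x,y)}\,d\mu(x)|\le\|f\|$ (so that $\tfrac1t\int_\cY\log\int_\cX e^{f^t(x,y)}\,d\mu(x)\,d\nu(y)\to p(f)$ by dominated convergence), one divides by $t$ and lets $t\to\infty$ to obtain $\int f\,d\lambda-p(f)\le\liminf_t\tfrac1t\relent{\lambda|_{\tF_t}}{(\mu\otimes\nu)|_{\tF_t}}$; taking the supremum over $f$ yields $p^*(\lambda)\le\liminf_t\tfrac1t\relent{\lambda|_{\tF_t}}{(\mu\otimes\nu)|_{\tF_t}}$. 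Routing this through the disintegration (rather than working directly on $\cX\times\cY$) is what matches the $\int_\cY\log(\cdots)\,d\nu$ normalization in which Theorem \ref{hm:pressure} is phrased; the same family of test functions --- functions of $y$ alone, and coboundaries $f-f\circ(\xmap\times\ymap)$ --- incidentally shows that $p^*(\lambda)=\infty$ unless $\lambda\in\J(\xmap:\nu)$.

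\textbf{The upper bound --- the main obstacle.} This is the hard part, and it is where the hypermixing hypotheses must be used: the finite-level identity $\relent{\lambda|_{\tF_t}}{(\mu\otimes\nu)|_{\tF_t}}=\sup\{\int g\,d\lambda-\log\int e^{g}\,d(\mu\otimes\nu):g\ \text{bounded, }\tF_t\text{-measurable}\}$ ranges over arbitrary $\tF_t$-measurable test functions, whereas $p^*$ only sees the additive functionals $f^t$; no purely convex-analytic argument bridges this gap. The plan is a blocking/decorrelation argument that mirrors the pressure upper bound inside the proof of Theorem \ref{hm:pressure}, now carried out on the Radon--Nikodym density $\frac{d\lambda_y|_{\F_t}}{d\mu|_{\F_t}}$ rather than on an exponential integral: fix a block length $s$ and a separation $\ell>\ell_0$, partition $[0,t]$ into $\approx t/(s+\ell)$ windows of length $s$ separated by gaps of length $\ell$, use the chain rule for relative entropy along the refinement of $(\F_u)$ induced by this partition to bound $\relent{\lambda_y|_{\F_t}}{\mu|_{\F_t}}$ by the sum over windows of the conditional relative entropies plus an $O(\|\cdot\|)$ error supported on the gaps, and on each window invoke \eqref{cond:H-1} (or \eqref{cond:H-2}) --- in the form that decorrelates $\mu$ across $\ell$-separated blocks at the price of inflating the exponent $1$ to $\alpha(\ell)$ --- to replace the conditional law of $\mu$ on that window by its unconditional restriction, up to a multiplicative error tending to $1$ as $\ell\to\infty$. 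After dividing by $t$, averaging in $y$, applying Birkhoff's ergodic theorem (Theorem \ref{thm:ergodic}) to the time averages so produced, and sending $t\to\infty$, then $s\to\infty$, then $\ell\to\infty$ in turn, this collapses to an approximate subadditivity showing that $\lim_t\tfrac1t\relent{\lambda|_{\tF_t}}{(\mu\otimes\nu)|_{\tF_t}}$ exists; combining this with the soft lower bound and a truncation of the optimal test function $g=\log\frac{d\lambda|_{\tF_t}}{d(\mu\otimes\nu)|_{\tF_t}}$ (so that the truncation lies in $C_{b,loc}$ and may be fed into the definition of $p^*$) identifies the limit as $p^*(\lambda)$. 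Finally, for the pointwise statement \eqref{eq:y-entropy} I would run both estimates fibrewise --- the Gibbs inequality already holds for fixed $y$, and the blocking inequality exhibits $(y,t)\mapsto\relent{\lambda_y|_{\F_t}}{\mu|_{\F_t}}$ as a subadditive cocycle over $(\cY,\ymap,\nu)$ up to vanishing errors --- so Kingman's subadditive ergodic theorem (using ergodicity of $\nu$) gives $\nu$-a.e.\ convergence to a constant, and the integrated identity together with the $L^1$ part of Kingman's theorem forces that constant to equal $p^*(\lambda)$, whence $\dynrelent{\lambda}{\mu\otimes\nu}=p^*(\lambda)$. Making the decorrelation estimate precise at the level of the relative entropy (rather than of the pressure, as in the proof of Theorem \ref{hm:pressure}) is the technical heart of the argument and the step I expect to be the most delicate.
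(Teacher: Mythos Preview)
Your lower-bound step (fibrewise Gibbs/Donsker--Varadhan inequality, integrate in $y$, divide by $t$, let $t\to\infty$) is exactly what the paper does and is correct.

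The upper bound is where your plan and the paper diverge, and where there is a genuine gap in your proposal. You propose to block-decompose the relative entropy $K(\lambda_y|_{\F_t}\,\|\,\mu|_{\F_t})$ directly and invoke \eqref{cond:H-1} on the blocks; the paper does carry out precisely that computation (Lemma~\ref{lem:hm-subadd}), obtaining for $F_t(y):=-K(\lambda_y|_{\F_t}\,\|\,\mu|_{\F_t})$ the approximate subadditivity
\[
    F_{(n-1)\ell+\sum_k t_k}(y)\ \le\ \frac{1}{\alpha(\ell)}\sum_{k=1}^n F_{t_k}\bigl(\ymap^{(k-1)\ell+\sum_{j<k}t_j}y\bigr),
\]
but it uses this \emph{only} for the pointwise statement \eqref{eq:y-entropy}, not for identifying the limit with $p^*$. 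The reason is that your closing step --- ``truncate the optimal test function $g=\log\frac{d\lambda|_{\tF_t}}{d(\mu\otimes\nu)|_{\tF_t}}$ and feed it into $p^*$'' --- does not work as stated: $g$ is local ($\tF_t$-measurable) but not of additive form $f^t$, and the inequality $p^*(\lambda)\ge\int g\,d\lambda-p(g)$ is only useful if you can bound $p(g)=\lim_s\tfrac1s\int_\cY\log\int_\cX e^{g^s}\,d\mu\,d\nu$ by the one-step quantity $\log\int e^{g}\,d(\mu\otimes\nu)$. That inequality between the pressure of a local function and its one-step log-moment is the missing piece, and no amount of truncating $g$ produces it.

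The paper supplies exactly this bridge as a separate lemma on the \emph{pressure} side (Lemma~\ref{lem:pest}): for $\tF_r$-measurable $f$ and $\ell>\ell_0$,
\[
    p(f)\ \le\ \frac{1}{(\ell+r)\alpha(\ell)}\int_\cY\log\int_\cX e^{(\ell+r)\alpha(\ell)f(x,y)}\,d\mu(x)\,d\nu(y),
\]
proved by the same H\"older/blocking mechanism but applied to $\int e^{f^t}\,d\mu$ rather than to the Radon--Nikodym density. Substituting $f\mapsto f/((\ell+r)\alpha(\ell))$ in the definition of $p^*$, then taking the supremum over bounded $\tF_r$-measurable $f$ (Donsker--Varadhan) and applying Jensen, gives $p^*(\lambda)\ge K(\lambda|_{\tF_r}\,\|\,\mu\otimes\nu|_{\tF_r})/((\ell+r)\alpha(\ell))$ for every $r$; sending $r\to\infty$ and then $\ell\to\infty$ yields $p^*(\lambda)\ge\limsup_t\tfrac1t K(\lambda|_{\tF_t}\,\|\,\mu\otimes\nu|_{\tF_t})$ directly, with no appeal to subadditivity of the entropy at all. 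Together with the soft direction this already shows the limit exists and equals $p^*(\lambda)$.

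For the pointwise statement \eqref{eq:y-entropy} your subadditive-ergodic plan is the right one, and it is what the paper does. Be aware, though, that the approximate subadditivity above --- with the factor $1/\alpha(\ell)$ and the $\ell$-gap --- is not covered by standard Kingman; the paper develops an extension of the subadditive ergodic theorem adapted to this form (Appendix~\ref{sec:subadd}, used via Proposition~\ref{prop:dtoc}) and combines it with the already-established integrated limit and ergodicity of $\nu$ to conclude.
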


The proof of Proposition \ref{prop:hm-entropy}
requires the following two lemmas, whose proofs
are deferred to Section \ref{append:hm}.
The first one provides a useful estimate for obtaining $p^*$.
The second one establishes a generalized subaddtive property
for proving \eqref{eq:y-entropy}, which
can be viewed as a subadditive ergodic result,
and combined with Section \ref{sec:subadd}, 
it may be of independent interest for ergodic theory
of random dynamical systems.

\begin{lemma} \label{lem:pest}
	If $f \in C_{b, loc}(\cX \times \cY)$ is $\tF_r$-measurable,
	then for $\ell > \ell_0$,
	\begin{equation}
		p(f) \le \frac{1}{(\ell + r)\alpha(\ell)}
			\int_\cY \log\paran{\int_\cX 
			e^{(\ell + r)\alpha(\ell)f(x,y)}d\mu(x)}d\nu(y).
	\end{equation}
\end{lemma}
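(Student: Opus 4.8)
The plan is to bound $p(f)$ by a single‑block estimate and then identify that estimate with the stated right‑hand side through $\xmap$‑invariance of $\mu$, $\ymap$‑invariance of $\nu$, and Birkhoff's ergodic theorem (Theorem \ref{thm:ergodic}). By Theorem \ref{hm:pressure} it suffices to estimate $\limsup_{t\to\infty}\tfrac1t\log\int_\cX e^{f^t(x,y)}\,d\mu(x)$ for $\nu$‑a.e. $y$, or equivalently its $\nu$‑average. Fix $\ell>\ell_0$ and write $r'=\ell+r$. Partition $[0,t]$ into $N=\lfloor t/r'\rfloor$ consecutive blocks $B_j=[(j-1)r',jr']$ (the leftover interval has length $<r'$ and contributes an $O(r'\|f\|/t)$ term), so that $f^t=\sum_{j=1}^N f^{B_j}+O(r'\|f\|)$.

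On each block apply Jensen's inequality with respect to the uniform probability measure $d\tau/r'$ on $B_j$: convexity of $\exp$ gives $e^{f^{B_j}(x,y)}=\exp\!\big(r'\cdot\tfrac1{r'}\int_{B_j}f(\xmap^\tau x,\ymap^\tau y)\,d\tau\big)\le\tfrac1{r'}\int_{B_j}e^{r'f(\xmap^\tau x,\ymap^\tau y)}\,d\tau$, which is where the factor $(\ell+r)=r'$ in the exponent originates. Multiplying over $j$, integrating $d\mu(x)$ and using Fubini reduces matters to controlling $\int_\cX\prod_j e^{r'f(\xmap^{\tau_j}x,\ymap^{\tau_j}y)}\,d\mu(x)$ with $\tau_j\in B_j$. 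Since $f$ is $\tF_r$‑measurable, $x\mapsto f(\xmap^{\tau_j}x,\cdot)$ is $\F_{[\tau_j,\tau_j+r]}$‑measurable, so for a grid of spacing $r'$ these are $\ell$‑measurably separated, and property \eqref{cond:H-1} with exponent $\alpha(\ell)$ yields $\log\int_\cX\prod_j e^{r'f(\xmap^{\tau_j}x,\cdot)}\,d\mu(x)\le\tfrac1{\alpha(\ell)}\sum_j\log\int_\cX e^{r'\alpha(\ell)f(\xmap^{\tau_j}x,\ymap^{\tau_j}y)}\,d\mu(x)$. Using $\xmap$‑invariance of $\mu$, each summand becomes $\log H(\ymap^{\tau_j}y)$, where $H(y'):=\int_\cX e^{(\ell+r)\alpha(\ell)f(x,y')}\,d\mu(x)$ is precisely the quantity in the statement — this produces both the outer factor $1/\alpha(\ell)$ and the $\alpha(\ell)$ inside the exponent.

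Dividing by $t$ and letting $t\to\infty$, the sum $\tfrac{1}{t\alpha(\ell)}\sum_{j=1}^N\log H(\ymap^{(j-1)r'}y)$ equals $\tfrac{1}{(\ell+r)\alpha(\ell)}\cdot\tfrac1N\sum_{j=1}^N\log H(\ymap^{(j-1)r'}y)$, and Birkhoff's pointwise ergodic theorem for $\ymap^{r'}$ (which preserves $\nu$) shows this converges, for $\nu$‑a.e. $y$, to $\tfrac{1}{(\ell+r)\alpha(\ell)}\int_\cY\log H\,d\nu$ — the Birkhoff limit integrates to $\int_\cY\log H\,d\nu$ even if $\ymap^{r'}$ is not ergodic. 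Handling the $\tau_j$‑averages over the blocks is done by integrating the $x$‑side inequalities against $\nu$ and invoking $\ymap$‑invariance of $\nu$, and the $O(r'\|f\|/t)$ boundary terms vanish in the limit; Theorem \ref{hm:pressure} then identifies the resulting limit with $p(f)$, giving the claim.

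The part I expect to be the main obstacle is the interval bookkeeping. Because $f$ is $\tF_r$‑measurable, $f^{B_j}$ is $\F_{[(j-1)r',\,jr'+r]}$‑measurable, so consecutive blocks overlap in an interval of length $r$ and are not literally $\ell$‑separated; one must either pass to a thinner content sub‑interval together with an average over the block offset $u\in[0,r']$ — exactly the device used in the proof of Theorem \ref{hm:pressure} and Lemma \ref{lem:rsum}, where the offset average recovers the full time average and the $\xmap$‑ and $\ymap$‑invariances absorb the shift — or use a direct staggering argument, while making sure all the induced error terms vanish in the limit so that the sharp constant $(\ell+r)\alpha(\ell)$ is not degraded. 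The scaling inequality $\tfrac1{\alpha(\ell)}p_s(\alpha(\ell)f,y)\le p_s(f,y)+(\alpha(\ell)-1)\|f\|$ from \eqref{eq:pl}, together with the $\nu$‑a.e.\ identification in Theorem \ref{hm:pressure}, are the tools I would use to tame these remainders.
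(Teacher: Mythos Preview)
You have correctly diagnosed the obstacle: after your blockwise Jensen step and Fubini, the points $\tau_j\in B_j$ are independent, so consecutive $\tau_j$'s can be arbitrarily close and \eqref{cond:H-1} does not apply. This is a genuine gap in the direct route, and the fix you gesture toward---averaging over a common offset---is exactly what the paper does, but with the order of operations reversed from yours. The paper writes the Riemann-sum approximation of $f^{nr'}$ as a double sum $\sum_{j=0}^{m-1}\frac{r'}{m}\sum_{k=0}^{n-1}f\circ(\xmap\times\ymap)^{s_j^m+kr'}$ with $s_j^m=jr'/m$, and applies the generalized H\"older inequality \emph{over the offset index $j$} first (exponent $m$), not over the block index. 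After this step one is left, for each fixed offset $s_j^m$, with $\int_\cX\exp\bigl(r'\sum_k f\circ(\xmap\times\ymap)^{s_j^m+kr'}\bigr)d\mu$, and now the $n$ factors are at positions spaced exactly by $r'$, so their $\F$-supports are $\ell$-separated and \eqref{cond:H-1} applies cleanly with exponent $\alpha(\ell)$. The $m\to\infty$ limit then recovers the continuous integral by dominated convergence.

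Two further differences worth noting. First, the paper integrates the inequality against $\nu$ \emph{before} passing to any limit and uses only $\ymap$-invariance of $\nu$ to collapse the double sum to $\frac{n}{\alpha(\ell)}\int_\cY\log\int_\cX e^{r'\alpha(\ell)f}\,d\mu\,d\nu$; Birkhoff's theorem is never invoked, which sidesteps your concern about $\ymap^{r'}$ possibly not being ergodic and about $\log H$ not being bounded. Second, because the offset is shared across blocks, no staggering remainder or use of \eqref{eq:pl} is needed to recover the sharp constant $(\ell+r)\alpha(\ell)$---it falls out directly. Your approach would work if you replaced ``Jensen on each block independently, then product'' by ``swap the order of summation, then H\"older over the offset,'' which is essentially the paper's argument.
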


\begin{lemma} \label{lem:hm-subadd}
	Let $\lambda \in \J(\xmap : \nu)$ with its disintegration 
	$\lambda = \int \lambda_y \otimes \delta_y \, d\nu(y)$ (see Theorem \ref{thm:disint}).
	For $t \ge 0$, define a function $F_t: \cY \to [-\infty, 0]$ by 
	$F_t(y) = -K(\lambda_y|_{{{\F}_{t}}}\,\|\,\mu|_{{{\F}_{t}}})$.
	Fix $\ell > \ell_0$. Then for $\nu$-a.e. y $\in \cY$, any $n \in \nat$ and
	$t_1, \ldots, t_n \in [0, \infty)$,
	\[
		F_{(n-1)\ell + \sum_{k=1}^{n}t_k}(y) \le \frac{1}{\alpha(\ell)} 
	    \sum_{k=1}^{n} F_{t_k}(\ymap^{(k-1)\ell + \sum_{j=1}^{k-1}t_j} y).
	\]
\end{lemma}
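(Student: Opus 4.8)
The plan is to obtain the inequality from the Donsker--Varadhan (Gibbs) variational formula for relative entropy on a sub-$\sigma$-algebra, combined with property \eqref{cond:H-1}, in the spirit of the discrete-time computation in \cite{chi2002conditional}. Recall that for probability measures $\rho,m$ on $\cX$ and a sub-$\sigma$-algebra $\cG$,
\[
    \relent{\rho|_{\cG}}{m|_{\cG}} = \sup\set{\,\int g\,d\rho - \log\int e^{g}\,dm \ :\ g\text{ bounded and }\cG\text{-measurable}\,},
\]
see \cite[Lemma~2.4]{budhiraja2019analysis}. I would fix $\ell>\ell_0$, write $\alpha:=\alpha(\ell)\ge 1$, and first settle $n=1$, where the claim reads $F_{t_1}(y)\le\alpha^{-1}F_{t_1}(y)$ and holds since $F_{t_1}(y)\le 0$ and $\alpha^{-1}\le 1$. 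For $n\ge 2$ and $t_1,\dots,t_n\ge 0$, set $s_k:=(k-1)\ell+\sum_{j<k}t_j$ and $I_k:=[s_k,s_k+t_k]$; then $T:=(n-1)\ell+\sum_k t_k=s_n+t_n$, the intervals $I_1,\dots,I_n$ lie in $[0,T]$, and $\dist{I_k}{I_{k'}}\ge(k'-k)\ell\ge\ell$ for $k<k'$, so any functions that are respectively $\F_{I_1},\dots,\F_{I_n}$-measurable (and their exponentials) are $\ell$-measurably separated, and \eqref{cond:H-1} applies to them.

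The core estimate goes as follows. Fix a ``good'' $y$ (see below) and $\epsilon>0$; for each $k$ pick a bounded $\F_{t_k}$-measurable $\hat g_k$ on $\cX$ within $\epsilon$ of the supremum in the variational formula for $\relent{\lambda_{\ymap^{s_k}y}|_{\F_{t_k}}}{\mu|_{\F_{t_k}}}$ (or making that quantity exceed $1/\epsilon$ if the relative entropy is infinite). Put $g_k:=\alpha^{-1}\hat g_k$, shift it to $\tilde g_k:=g_k\circ\xmap^{s_k}$ (which is $\F_{I_k}$-, hence $\F_T$-measurable), and test the variational formula for $\relent{\lambda_y|_{\F_T}}{\mu|_{\F_T}}$ against $h:=\sum_k\tilde g_k$. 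The linear term factors through the equivariance identity $(\xmap^{s})_*\lambda_y=\lambda_{\ymap^{s}y}$ into $\sum_k\int g_k\,d\lambda_{\ymap^{s_k}y}=\alpha^{-1}\sum_k\int\hat g_k\,d\lambda_{\ymap^{s_k}y}$, while the partition term is bounded via \eqref{cond:H-1} applied to $\set{e^{\tilde g_k}}$ and the $\xmap$-invariance of $\mu$ by $\log\int\prod_k e^{\tilde g_k}\,d\mu\le\alpha^{-1}\sum_k\log\int e^{\hat g_k}\,d\mu$. Combining these,
\[
    \relent{\lambda_y|_{\F_T}}{\mu|_{\F_T}}\ \ge\ \frac{1}{\alpha}\sum_{k=1}^{n}\paran{\int\hat g_k\,d\lambda_{\ymap^{s_k}y}-\log\int e^{\hat g_k}\,d\mu}\ \ge\ \frac{1}{\alpha}\sum_{k=1}^{n}\relent{\lambda_{\ymap^{s_k}y}|_{\F_{t_k}}}{\mu|_{\F_{t_k}}}-\frac{n\epsilon}{\alpha},
\]
and letting $\epsilon\downarrow 0$ (or $1/\epsilon\to\infty$ in the infinite case, which then forces the left side to be $+\infty$) and negating yields exactly $F_T(y)\le\alpha^{-1}\sum_k F_{t_k}(\ymap^{s_k}y)$.

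The main obstacle will not be this estimate but the measure-theoretic bookkeeping needed so that a single $\nu$-null set serves for all $n$ and all real $t_1,\dots,t_n$ at once. The identity $(\xmap^s)_*\lambda_y=\lambda_{\ymap^s y}$ follows for each fixed $s$ from $(\xmap\times\ymap)$-invariance of $\lambda$, $\ymap$-invariance of $\nu$, and essential uniqueness of the disintegration (Theorem~\ref{thm:disint}), but a priori only $\nu$-a.e.---and, when $\ymap$ is not $\nu$-a.e.\ invertible, only after passing to a natural extension, which leaves the hypermixing constants and the entropy rate intact. To make it hold for the continuum of offsets $s_k$, I would either fix an orbitwise-equivariant version of $y\mapsto\lambda_y$ (using a Fubini argument together with the continuity of $s\mapsto\xmap^s$ and, in continuous time, of $s\mapsto\ymap^s y$, and the fact that $F_t$ is insensitive to $\nu$-null modifications), or equivalently first prove the inequality for all $t_k$ in a fixed countable dense subset of $[0,\infty)$---where a single null set suffices---and then upgrade to real times using that $t\mapsto\relent{\lambda_y|_{\F_t}}{\mu|_{\F_t}}$ is nondecreasing and right-continuous, the natural filtration of $C(\T,\S)$ being right-continuous. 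In discrete time none of this arises, which is why the analogous step in \cite{chi2002conditional} is routine.
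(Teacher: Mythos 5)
Your proposal is correct and follows essentially the same route as the paper's proof: test the Donsker--Varadhan variational formula for $K(\lambda_y|_{\F_T}\,\|\,\mu|_{\F_T})$ against a sum of time-shifted, $\ell$-separated test functions, control the log-partition term with \eqref{cond:H-1} and the $\xmap$-invariance of $\mu$, and use $(\xmap^{s_k})_*\lambda_y=\lambda_{\ymap^{s_k}y}$ to identify the linear terms, before optimizing over the test functions. Your handling of the a.e.\ equivariance of the disintegration (a single null set via rational times, then upgrading by monotonicity of $t\mapsto K(\lambda_y|_{\F_t}\,\|\,\mu|_{\F_t})$) is exactly the device the paper uses at the start of its proof.
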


\begin{proof}[Proof of Proposition \ref{prop:hm-entropy}]
	We start with the first statement. 
	Fix $r \ge 0$ and let $f \in C_{b, loc}(\cX \times \cY)$ be a $\tF_r$-measurable function.
	
	By the definition of $p^*$, Lemma \ref{lem:pest} and Jensen's inequality,
	\begin{align*}
		p^*(\lambda) & \ge \int \frac{f}{(\ell + r) \alpha(\ell)}\,d\lambda 
			- p\paran{\frac{f}{(\ell + r) \alpha(\ell)}} \\
		& \ge \frac{1}{(\ell + r) \alpha(\ell)} \paran{\int f\, d\lambda 
			- \int_\cY \log \paran{\int_\cX e^{f(x,y)} \, d\mu(x)}d\nu(y)} \\
		& \ge \frac{1}{(\ell + r) \alpha(\ell)} \paran{\int f\, d\lambda 
			- \log \int e^f \, d(\mu \otimes \nu)}. 
	\end{align*}
	Taking suprememum over such $f$ implies 
	$p^*(\lambda) \ge \frac{K(\lambda|_{{\tF_{r}}}\,\|\,\mu \otimes \nu|_{{\tF_{r}}})
		}{(\ell + r)\alpha(\ell)}$, 
	so by taking $r \to \infty$, we obtain
	$p^*(\lambda) \ge h(\lambda\,\|\,\mu \otimes \nu).$

	To prove the other side of inequality, 
	note that by uniqueness of disintegration (Theorem \ref{thm:disint}) 
	and the chain rule of 
	the relative entropy (Theorem \ref{rel:chain}), we have
	\begin{align*}
	    K(\lambda|_{{\tF_{t}}}\,\|\,\mu \otimes \nu|_{{\tF_{t}}}) 
		& = K(\nu\,|\,\nu) + \int_{\cY} K(\lambda_y|_{{\F_{t}}}\,\|\,\mu|_{{\F_{t}}})\,d\nu(y) \\
		& = \int_{\cY} K(\lambda_y|_{{\F_{t}}}\,\|\,\mu|_{{\F_{t}}})\,d\nu(y).
	\end{align*}  
	Now note that $f^t$ is $\tF_{t + r}$-measurable.
	In particular, for every $y \in \cY$, $f^t(\cdot, y)$ is 
	${\F}_{t+r}$-measurable. Then 
	by the variational property of relative entropy (Theorem \ref{rel:var}),
	\begin{equation} \label{eq:yent}
		K(\lambda_y|_{{{\F}_{t+r}}}\,\|\,\mu|_{{{\F}_{t+r}}}) 
		\ge \int f^t\,d\lambda_y - \log \int_\cX \exp\paran{
		f^t(x,y)} d\mu(x).
	\end{equation}
	Integrate both sides with respect $\nu$ and obtain
	\begin{align*}
		 K(\lambda|_{{\tF_{t+r}}}\,\|\,\mu\otimes\nu|_{{\tF_{t+r}}}) 
		 &\ge \int f^t\,d\lambda - \int_\cY \log \paran{
		 \int_\cX \exp\paran{f^t(x,y)}
		 d\mu(x)}d\nu(y) \\ 
		 &= t \int f\,d\lambda - \int_\cY \log \paran{
		 \int_\cX \exp\paran{f^t(x,y)}\,d\mu(x)}d\nu(y). 
	\end{align*}
	Divide both sides by $t$ and take $t \to \infty$ and get
	$$h(\lambda\,\|\,\mu \otimes \nu) \ge \int f\,d\lambda - p(f).$$ 
	Theorefore, $h(\lambda\,\|\, \mu \otimes \nu) \ge p^*(\lambda).$

	We show the last statement.
	For $t \ge 0$, define a sequence of function $F_t: \cY \to [-\infty, 0]$ by 
	$F_t(y) = -K(\lambda_y|_{{{\F}_{t}}}\,\|\,\mu|_{{{\F}_{t}}})$.
	Based on Section \ref{sec:subadd}, since \eqref{eq:subadd} holds 
	by Lemma \ref{lem:hm-subadd}, then by
	using Proposition \ref{prop:dtoc},
	we have $\lim_{t \to \infty} \frac{F_t(y)}{t}$ exists
	for $\nu$-a.e. $y \in \cY$, and
	\[
	    \int_\cY \lim_{t \to \infty} \frac{F_t(y)}{t} \,d\nu(y)
	    = -h(\lambda\,\|\,\mu \otimes \nu).
	\]
	Since $ \lim_{t \to \infty} \frac{F_t}{t}$ 
	is a $\ymap$-invariant function (see \cite{avila2009subadditive})
	and $\nu$ is $\ymap$-ergodic,  
	it must hold for $\nu$-a.e. $y \in \cY$ that
	\[
		\lim_{t \to \infty}
		\frac{1}{t}K(\lambda_y|_{{{\F}_{t}}}\,\|\,\mu|_{{{\F}_{t}}}) 
		= h(\lambda\,\|\,\mu \otimes \nu).
	\]
\end{proof}

\begin{remark} \label{rmk:pressure-iid}
    When $\T = \nat$ and $\mu$ is the law of an i.i.d. process,
    \eqref{eq:y-entropy} follows directly from the
    Subadditive Ergodic Theorem \cite{avila2009subadditive}.
\end{remark}

Once the existence of the relative entropy rate is established,
the following properties are known consequences.
\begin{proposition} \label{prop:affine}
    The relative entropy rate
	$h(\cdot\,\|\,\mu\otimes\nu)$ is a convex and lower semicontinuous function 
	on $\J(\xmap : \nu)$. Moreover, $h(\cdot\,\|\,\mu\otimes\nu)$ 
	is affine (see Lemma \ref{lem:affine}) on $\J(\xmap : \nu)$.
\end{proposition}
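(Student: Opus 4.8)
\noindent The plan is to handle the two assertions by different means: convexity and lower semicontinuity are inherited from the dual representation of the relative entropy rate, while affinity requires the finite-horizon structure.

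For convexity and lower semicontinuity I would invoke Proposition \ref{prop:hm-entropy}, by which $\dynrelent{\lambda}{\mu\otimes\nu} = p^*(\lambda)$ on $\J(\xmap:\nu)$. By \eqref{eq:p*}, $p^*$ is the pointwise supremum over $f \in C_{b,loc}(\cX\times\cY)$ of the maps $\lambda \mapsto \int f\,d\lambda - p(f)$; each such map is affine and, since $f$ is bounded and continuous, weakly continuous on $\M_1(\cX\times\cY)$. A supremum of affine continuous functionals is convex and lower semicontinuous, and this is preserved upon restriction to the convex subset $\J(\xmap:\nu)$. So this half is essentially immediate.

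For affinity, convexity already gives one inequality, so the work is the super-additivity direction. I would use the standard ``bounded convexity defect'' argument at finite horizon. Fix $\lambda_1,\lambda_2\in\J(\xmap:\nu)$, $\alpha\in(0,1)$, and set $\lambda = \alpha\lambda_1 + (1-\alpha)\lambda_2 \in \J(\xmap:\nu)$; disintegrating over $\nu$ and using uniqueness (Theorem \ref{thm:disint}) gives $\lambda_y = \alpha\lambda_{1,y} + (1-\alpha)\lambda_{2,y}$ for $\nu$-a.e.\ $y$. The elementary identity
\[
    \alpha\relent{\rho_1}{\sigma} + (1-\alpha)\relent{\rho_2}{\sigma}
    = \relent{\alpha\rho_1 + (1-\alpha)\rho_2}{\sigma}
        + \alpha\relent{\rho_1}{\rho} + (1-\alpha)\relent{\rho_2}{\rho},
    \qquad \rho := \alpha\rho_1 + (1-\alpha)\rho_2,
\]
together with the bound $0 \le \alpha\relent{\rho_1}{\rho} + (1-\alpha)\relent{\rho_2}{\rho} \le -\alpha\log\alpha-(1-\alpha)\log(1-\alpha)\le\log 2$ (from $d\rho_i/d\rho \le 1/\alpha$, resp.\ $1/(1-\alpha)$), applied with $\rho_i = \lambda_{i,y}|_{\F_{t}}$ and $\sigma = \mu|_{\F_{t}}$, then divided by $t$ and sent to the limit, kills the defect, which is $O(1/t)$ uniformly in $y$; invoking \eqref{eq:y-entropy} for $\lambda_1$, $\lambda_2$, and $\lambda$ on a common $\nu$-full set then yields $\alpha\dynrelent{\lambda_1}{\mu\otimes\nu} + (1-\alpha)\dynrelent{\lambda_2}{\mu\otimes\nu} = \dynrelent{\lambda}{\mu\otimes\nu}$, which by Lemma \ref{lem:affine} is the claimed affinity.

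I expect the main obstacle to be bookkeeping rather than ideas: making the various a.e.\ statements (the disintegration identity and \eqref{eq:y-entropy} for all three measures) hold off a single $\nu$-null set, checking that the convexity-defect bound is uniform in both $t$ and $y$, and dealing with infinite values --- here the identity itself shows $\relent{\lambda_y|_{\F_{t}}}{\mu|_{\F_{t}}}$ is finite iff both $\relent{\lambda_{i,y}|_{\F_{t}}}{\mu|_{\F_{t}}}$ are, so the limiting equality remains meaningful in $[0,\infty]$. I would also confirm that the notion of ``affine'' in Lemma \ref{lem:affine} is the two-point version, so that the convex-combination identity derived above is exactly what that lemma consumes.
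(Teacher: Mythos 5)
Your proof is correct and is precisely the standard argument that the paper invokes by citation (the paper's ``proof'' of Proposition~\ref{prop:affine} simply refers to \cite[Proposition 6.8]{rassoul2015course} and \cite[(5.4.23)]{deuschel2001large}, which carry out exactly this dual-representation argument for convexity and lower semicontinuity, and this $O(1)$ convexity-defect argument for affinity). One small simplification you might note: you do not strictly need \eqref{eq:y-entropy} or any $\nu$-a.e.\ bookkeeping for the affinity step — integrating the finite-horizon two-sided bound in $y$ against $\nu$, applying the chain rule (Theorem~\ref{rel:chain}), dividing by $t$, and letting $t\to\infty$ gives the result directly from Definition~\ref{def:entropyXY} and the existence of the limit established in Proposition~\ref{prop:hm-entropy}.
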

\begin{proof}
    The proof is the same as that for the same properties of $h(\cdot\,\|\,\mu)$ in
    \cite[Proposition 6.8]{rassoul2015course} and \cite[(5.4.23)]{deuschel2001large}.
\end{proof}

\subsection{The large deviation upper bound}
We now prove the following upper bound.
\begin{proposition} \label{hm:upper-bound}
	Let $f \in C_b(\cX \times \cY)$. 
	% be a upper semicontinuous function that is bounded above. 
	Then for $\nu$-a.e. $y \in \cY$,
	\begin{align*}
	    \limsup_{t\to\infty} \frac{1}{t}\log
    	\int_\cX \exp\paran{f^t(x, y)}d\mu(x)
    	\le \sup_{\lambda \in \J(\xmap : \nu)}\set{\int f\,d\lambda - h(\lambda\,\|\,\mu\otimes\nu)}.
	\end{align*}
\end{proposition}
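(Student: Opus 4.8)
The plan is to derive the stated inequality from the large deviation upper bound for the empirical processes $(M_t(X,y))_{t\in\T}$ on $\M_1(\cX\times\cY)$ with rate function $\I_\theta$ equal to $h(\lambda\,\|\,\mu\otimes\nu)$ on $\J(\xmap:\nu)$ and $+\infty$ off it, together with the upper-bound half of Varadhan's Lemma (Theorem \ref{thm:laplace}) applied to $F(\eta):=-\int f\,d\eta$, which lies in $C_b(\M_1(\cX\times\cY))$ because $f\in C_b(\cX\times\cY)$; indeed, under $\mu$ one has $\int_\cX\exp(f^t(x,y))\,d\mu(x)=\expect{\exp(-tF(M_t(X,y)))}$, so the ``$\le$'' part of \eqref{eq:laplace} is exactly the conclusion. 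Thus it suffices to prove the large deviation upper bound for $(M_t(X,y))_{t\in\T}$ for $\nu$-a.e.\ $y$.

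The first step is the exponential Chebyshev estimate. For $g\in C_{b,loc}(\cX\times\cY)$ and a Borel set $C\subseteq\M_1(\cX\times\cY)$, since $\int g\,dM_t(X,y)=\tfrac1t g^t(X,y)$ one has $\mathbf 1[M_t(X,y)\in C]\le\exp\!\big(g^t(X,y)-t\inf_{\lambda\in C}\int g\,d\lambda\big)$, hence
\[
\mu\{M_t(X,y)\in C\}\ \le\ \exp\!\Big(-t\inf_{\lambda\in C}\textstyle\int g\,d\lambda\Big)\int_\cX\exp\!\big(g^t(x,y)\big)\,d\mu(x).
\]
Taking $\tfrac1t\log$, letting $t\to\infty$ and using Theorem \ref{hm:pressure} on the full-measure set $E\subseteq\cY$ (which is independent of $g$) gives $\limsup_t\tfrac1t\log\mu\{M_t(X,y)\in C\}\le p(g)-\inf_{\lambda\in C}\int g\,d\lambda$ for every $y\in E$ and every $g\in C_{b,loc}$. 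For compact $C$ I would then run the usual localization: given $\varepsilon>0$ and $\lambda_0\in C$, pick $g\in C_{b,loc}$ with $\int g\,d\lambda_0-p(g)\ge\min\{\I_\theta(\lambda_0),1/\varepsilon\}-\varepsilon/2$ (possible since $\I_\theta(\lambda_0)=p^*(\lambda_0)$ by Proposition \ref{prop:hm-entropy}), use weak continuity of $\lambda\mapsto\int g\,d\lambda$ to find an open $U\ni\lambda_0$ on which $\int g\,d\lambda>\int g\,d\lambda_0-\varepsilon/2$, extract a finite subcover of $C$, and sum the corresponding bounds; this yields $\limsup_t\tfrac1t\log\mu\{M_t(X,y)\in C\}\le-\inf_{\lambda\in C}\I_\theta(\lambda)$ for all $y\in E$ and all compact $C$.

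The last step is to promote the compact-set bound to the full Laplace upper bound. Split $\int_\cX\exp(f^t(x,y))\,d\mu(x)$ into the integrals over $\{M_t(X,y)\in K\}$ and over its complement, for a compact $K\subseteq\M_1(\cX\times\cY)$. For the first, a finite-cover argument (the compact case of Varadhan's Lemma) gives $\limsup_t\tfrac1t\log\big(\text{first integral}\big)\le\sup_{\lambda}\{\int f\,d\lambda-\I_\theta(\lambda)\}$; the second is at most $e^{t\|f\|}\mu\{M_t(X,y)\notin K\}$. Choosing $K$ via exponential tightness of $(M_t(X,y))_t$ so that $\limsup_t\tfrac1t\log\mu\{M_t(X,y)\notin K\}\le-2\|f\|$ makes the second contribution negligible relative to the right-hand side, which always dominates $\int f\,d(\mu\otimes\nu)\ge-\|f\|$ since $\mu\otimes\nu\in\J(\xmap:\nu)$ with $h(\mu\otimes\nu\,\|\,\mu\otimes\nu)=0$; combining the two and recalling $\I_\theta=h(\cdot\,\|\,\mu\otimes\nu)$ on $\J(\xmap:\nu)$ proves the proposition. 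The point that needs the most care is this exponential tightness: on a non-compact $\S$ it is not automatic and must be read off from the hypermixing property \eqref{cond:H-1}, which bounds $L^1$-norms of products of time-separated functions and hence the exponential cost of the empirical process leaving a compact set (possibly together with a mild integrability condition on the one-dimensional marginal of $\mu$); the observation coordinate is harmless here, since the $\cY$-marginal of $M_t(X,y)$ converges to the fixed $\nu$ by the ergodic theorem (Theorem \ref{thm:ergodic}) and so the tightness reduces to that of the unconditional empirical processes $M_t(X)$, i.e.\ to the hypermixing LDP itself.
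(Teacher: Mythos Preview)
Your proposal is correct and follows essentially the same route as the paper: you establish the weak large deviation upper bound for $(M_t(X,y))_t$ via the exponential Chebyshev/localization argument (this is the content of Lemma~\ref{lem:genericUB}), upgrade it to the full upper bound via exponential tightness, and then invoke the upper-bound half of Varadhan's Lemma. Your handling of exponential tightness---reducing to the $\cX$-marginal by noting the $\cY$-marginal is a.s.\ relatively compact via the ergodic theorem, and then appealing to the hypermixing LDP for $M_t(X)$---is exactly the argument the paper packages as Lemma~\ref{lem:exptight} together with the remark that tightness of $M_t(X)$ is implied by its known LDP.
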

According to Theorem \ref{thm:laplace}, it suffices to show
the large deviation upper bound \eqref{eq:ldpub} for
empirical process $(M_t(X, y))_{t \in \T}$ with $X \sim \mu$,
for $\nu$-a.e. $y \in \cY$. 
We first establish the weak large deviation upper bound,
which together with exponential tightness implies 
the large deviation upper bound,
as a common procedure in large deviation theory.
See \cite[Theorem 2.19]{rassoul2015course}.

\begin{lemma}\label{lem:genericUB}
	For $\nu$-a.e. $y \in \cY$, it holds that
	for any compact set $\K \subset \M_1(\cX \times \cY)$, 
	$$\limsup_{t \to \infty} \frac{1}{t}\log \mu\paran{
	M_t(\cdot,y) \in \K } 
	\le -\inf_{\K} {p}^*,$$
	where ${p}^*$ is the Fenchel-Legendre transform (also known as the convex conjugate) 
	of ${p}$, i.e.,
	$${p}^*(\lambda) := \sup\set{\int f\,d\lambda - {p}(f) \,:
	\, f \in C_{b,loc} (\cX \times \cY)}.$$
	In other words, for $\nu$-a.e. $y \in \cY$, the empirical process
	$(M_t(X, y))_{t \in \T}$ with $X \sim \mu$ satisfies the weak large
	deviation upper bound with rate function $p^*$.
	Moreover, ${p}^*(\lambda) = \infty$ if $\lambda$ is not 
	$(\xmap \times \ymap)$-invariant, or $\lambda$ does not have $\cY$-marginal $\nu$.
\end{lemma}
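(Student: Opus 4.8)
The plan is to follow the classical Gärtner–Ellis-type route to a weak large deviation upper bound, adapted to the conditional (quenched) setting. The starting observation is that for a fixed $f \in C_{b,loc}(\cX \times \cY)$, Theorem \ref{hm:pressure} already gives, for $\nu$-a.e. $y$, that $\frac{1}{t}\log \int_\cX \exp(f^t(x,y))\,d\mu(x) \to p(f)$, and crucially the exceptional null set $E$ can be taken independent of $f$. Since $C_{b,loc}(\cX\times\cY)$ is separable in the sup norm, I would first fix a countable sup-norm-dense subset $\mathcal{D} \subset C_{b,loc}(\cX\times\cY)$ and a single $\nu$-full set $E_0$ on which the limit $\frac{1}{t}\log\int_\cX e^{f^t(x,y)}d\mu(x) \to p(f)$ holds simultaneously for all $f \in \mathcal{D}$; a short approximation argument (using $|f^t - g^t| \le t\|f-g\|$) upgrades this to all $f \in C_{b,loc}$ for $y \in E_0$.

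Next I would run the standard Chebyshev/exponential-tilting step. For a compact $\K \subset \M_1(\cX\times\cY)$ and a point $\lambda \in \K$, and any $\delta > 0$, pick $f \in C_{b,loc}(\cX\times\cY)$ such that $\int f\,d\lambda - p(f) \ge \min(p^*(\lambda),1/\delta) - \delta$. By weak-* continuity of $\eta \mapsto \int f\,d\eta$ (here one uses that $f$ is $\tF_r$-measurable, hence continuous on $\cX\times\cY$), there is a weak neighborhood $B_\lambda$ of $\lambda$ on which $\int f\,d\eta > \int f\,d\lambda - \delta$. On the event $\{M_t(\cdot,y) \in B_\lambda\}$ one has $\frac{1}{t}f^t(x,y) = \int f\,dM_t(x,y) > \int f\,d\lambda - \delta$, so by Chebyshev
\[
\mu\bigl(M_t(\cdot,y)\in B_\lambda\bigr)
\le \exp\bigl(-t(\textstyle\int f\,d\lambda - \delta)\bigr)\int_\cX \exp\bigl(f^t(x,y)\bigr)\,d\mu(x),
\]
and taking $\frac{1}{t}\log$ and $\limsup_{t\to\infty}$, using Theorem \ref{hm:pressure} for $y \in E_0$, gives $\limsup_t \frac{1}{t}\log\mu(M_t(\cdot,y)\in B_\lambda) \le p(f) - \int f\,d\lambda + \delta \le -\min(p^*(\lambda),1/\delta) + 2\delta$. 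Covering $\K$ by finitely many such $B_\lambda$ and using the standard ``$\log$ of a finite sum'' estimate yields $\limsup_t \frac{1}{t}\log\mu(M_t(\cdot,y)\in\K) \le -\inf_{\K}\min(p^*,1/\delta) + 2\delta$, and letting $\delta \to 0$ gives the claimed bound $-\inf_\K p^*$. Identifying $p^* = h(\cdot\,\|\,\mu\otimes\nu)$ on $\J(\xmap:\nu)$ is exactly Proposition \ref{prop:hm-entropy}.

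Finally, for the ``moreover'' statement I would show $p^*(\lambda) = \infty$ when $\lambda$ fails invariance or fails to have $\cY$-marginal $\nu$, by exhibiting test functions in $C_{b,loc}$ along which the supremum in \eqref{eq:p*} diverges: if the $\cY$-marginal of $\lambda$ is not $\nu$, pick $g \in C_b(\cY)$ (which lies in $C_{b,loc}$) with $\int g\,d\lambda \ne \int g\,d\nu$; since $g$ depends only on $y$, $g^t(x,y) = \sum_{s} g(\ymap^s y)$ is independent of $x$, so $p(cg) = \int_\cY \limsup_t \frac{1}{t}(cg)^t(y)\,d\nu(y) = c\int g\,d\nu$ by Birkhoff, and $\int cg\,d\lambda - p(cg) = c(\int g\,d\lambda - \int g\,d\nu) \to \infty$ as $c \to \pm\infty$ along the appropriate sign. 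For invariance, if $\lambda$ is not $(\xmap\times\ymap)$-invariant, choose $f \in C_{b,loc}$ with $\int (f\circ(\xmap\times\ymap) - f)\,d\lambda \ne 0$; then for $g := c(f\circ(\xmap\times\ymap) - f)$ the telescoping sum makes $g^t$ bounded uniformly in $t$, so $p(g) = 0$ while $\int g\,d\lambda$ is an unbounded linear function of $c$. The main obstacle is the bookkeeping in the first paragraph: making the exceptional $\nu$-null set genuinely independent of the test function $f$ (rather than just of $f$ in a countable family) and confirming that the sup-norm density argument survives the $\limsup$; everything else is the textbook weak-upper-bound argument, e.g. \cite[Theorem 2.19]{rassoul2015course}.
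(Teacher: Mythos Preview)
Your proof is correct and follows the same route as the paper's: the standard exponential-Chebyshev / open-cover argument for the weak upper bound (the paper cites \cite[Theorem 4.24]{rassoul2015course}), and your treatment of the ``moreover'' clause via coboundary test functions and $\cY$-only test functions matches the paper's sketch.

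One remark: your first-paragraph detour through a countable sup-norm-dense subset is unnecessary and rests on a dubious claim ($C_{b,loc}(\cX\times\cY)$ need not be sup-norm separable when $\cY$ is noncompact). Theorem \ref{hm:pressure} already asserts that the full-measure set $E$ on which $\limsup_t \tfrac{1}{t}\log\int_\cX e^{f^t(x,y)}d\mu(x) = p(f)$ holds is \emph{independent of $f$}; this is because the only $y$-dependent step in its proof is an application of Birkhoff's ergodic theorem, for which the good set can be taken uniform over all integrands (Theorem \ref{thm:ergodic}). So you may simply fix $y\in E$ once and work with arbitrary $f\in C_{b,loc}$, exactly as the paper does, bypassing any separability or approximation argument.
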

With Theorem \ref{hm:pressure} in place, the proof of Lemma \ref{lem:genericUB}
is standard in large deviation theory, so we defer it to Section \ref{sec:LD}.
Now we can prove the desired upper bound. 
\begin{proof}[Proof of Proposition \ref{hm:upper-bound}]
    By the first part of Lemma \ref{lem:genericUB},
    we know that for $\nu$-a.e. $y \in \cY$, the empirical process
    $(M_t(X, y))_{t \in \T}$ with $X \sim \mu$
    satisfies the weak large
	deviation upper bound with rate function $p^*$. 
	Since $(M_t(X))_{t \in \T}$ is exponentially tight (implied by its LDP),
	then by Lemma \ref{lem:exptight}, for $\nu$-a.e. $y \in \cY$,
	$(M_t(X, y))_{t \in \T}$ is also
	exponentially tight, hence satisfying the large deviation 
	upper bound \eqref{eq:ldpub}. The statement follows from
	Theorem \ref{thm:laplace}, Proposition \ref{prop:hm-entropy} and
	the second part of Lemma \ref{lem:genericUB}.
\end{proof}

\subsection{The large deviation lower bound}
We now prove the following lower bound. 
\begin{proposition}\label{hm:lower-bound}
	Let $f \in C_b(\cX \times \cY)$. 
	Then for $\nu$-a.e. $y \in \cY$,
	\begin{align*}
	    \liminf_{t\to\infty} \frac{1}{t}\log
    	\int_\cX \exp\paran{f^t(x,y)}d\mu(x)
    	\ge \sup_{\lambda \in \J(\xmap : \nu)}\set{\int f\,d\lambda - h(\lambda\,\|\,\mu\otimes\nu)}.
	\end{align*}
\end{proposition}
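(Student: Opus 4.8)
The plan is to run the classical change-of-measure (``entropy tilting'') lower bound, using as inputs the existence and properties of the relative entropy rate just established. Since both sides of the asserted inequality change by at most $\norm{f-g}$ when $f$ is replaced by $g$ (because $\abs{f^t-g^t}\le t\norm{f-g}$ pointwise and $\abs{\int f\,d\lambda-\int g\,d\lambda}\le\norm{f-g}$ for $\lambda\in\M_1(\cX\times\cY)$), I would first reduce to $f\in C_{b,loc}(\cX\times\cY)$, say $\tF_r$-measurable---this is in any case the only case needed for the applications, where the loss lies in $C_{b,loc}$ by Assumption \ref{assumption:loss}. I would then reduce the competitor $\lambda$ to an \emph{ergodic} one: by the ergodic decomposition of $(\xmap\times\ymap)$-invariant measures---whose components still lie in $\J(\xmap:\nu)$ since $\nu$ is $\ymap$-ergodic and hence extreme among $\ymap$-invariant measures---together with the affineness of $\dynrelent{\cdot}{\mu\otimes\nu}$ on $\J(\xmap:\nu)$ (Proposition \ref{prop:affine}) and the linearity of $\lambda\mapsto\int f\,d\lambda$, one gets
\[
S:=\sup_{\lambda\in\J(\xmap:\nu)}\set{\int f\,d\lambda-\dynrelent{\lambda}{\mu\otimes\nu}}=\sup_{\lambda\in\J_e(\xmap:\nu)}\set{\int f\,d\lambda-\dynrelent{\lambda}{\mu\otimes\nu}},
\]
with $S$ finite ($S\le\norm f$ since $\dynrelent{\cdot}{\mu\otimes\nu}\ge0$, and $S\ge-\norm f$ by Proposition \ref{hm:upper-bound}). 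So it suffices to prove, for each ergodic $\lambda\in\J_e(\xmap:\nu)$ with $\dynrelent{\lambda}{\mu\otimes\nu}<\infty$, that the asserted $\liminf$ is at least $\int f\,d\lambda-\dynrelent{\lambda}{\mu\otimes\nu}$ for $\nu$-a.e.\ $y$.

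For such a $\lambda$, disintegrate $\lambda=\int\lambda_y\otimes\delta_y\,d\nu(y)$ (Theorem \ref{thm:disint}). Since $f^t(\cdot,y)$ is $\F_{t+r}$-measurable, the Donsker--Varadhan variational formula for relative entropy---already recorded as \eqref{eq:yent}---gives, for every $y$,
\[
\frac1t\log\int_\cX e^{f^t(x,y)}\,d\mu(x)\;\ge\;\frac1t\int_\cX f^t(x,y)\,d\lambda_y(x)\;-\;\frac1t\,\relent{\lambda_y|_{\F_{t+r}}}{\mu|_{\F_{t+r}}}.
\]
I would then send $t\to\infty$ and handle the two terms separately. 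The entropy term tends to $\dynrelent{\lambda}{\mu\otimes\nu}$ for $\nu$-a.e.\ $y$ by \eqref{eq:y-entropy} of Proposition \ref{prop:hm-entropy} (absorbing the harmless factor $\tfrac{t+r}{t}$). For the ergodic-average term, Birkhoff's ergodic theorem (Theorem \ref{thm:ergodic}) on $(\cX\times\cY,\xmap\times\ymap,\lambda)$ gives $\tfrac1t f^t(x,y)=\tfrac1t\int_0^t f(\xmap^s x,\ymap^s y)\,ds\to\int f\,d\lambda$ for $\lambda$-a.e.\ $(x,y)$; disintegrating this full-measure set over $\cY$ shows that for $\nu$-a.e.\ $y$ the convergence holds for $\lambda_y$-a.e.\ $x$, and then boundedness of $f$ plus dominated convergence yield $\tfrac1t\int_\cX f^t(x,y)\,d\lambda_y(x)\to\int f\,d\lambda$. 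Intersecting the two $\nu$-full sets and taking $\liminf_{t\to\infty}$ in the display proves the bound for this $\lambda$.

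Finally I would remove the dependence of the $\nu$-null set on $\lambda$: pick $\rho_n\in\J_e(\xmap:\nu)$ with $\int f\,d\rho_n-\dynrelent{\rho_n}{\mu\otimes\nu}\to S$; the previous step gives $\nu$-full sets $E_n$ on which the $\liminf$ exceeds $\int f\,d\rho_n-\dynrelent{\rho_n}{\mu\otimes\nu}$, and on $\bigcap_n E_n$ (still $\nu$-full) the $\liminf$ is $\ge S$, which is exactly the claim. The hard part is really upstream and has already been done: the one genuinely delicate ingredient is the \emph{pointwise-in-$y$} convergence \eqref{eq:y-entropy}, obtained via the generalized subadditivity Lemma \ref{lem:hm-subadd}. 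Within the present argument, the only steps I would watch carefully are the claim that the ergodic components of a measure in $\J(\xmap:\nu)$ remain in $\J(\xmap:\nu)$ and the bookkeeping of the $\lambda$-dependent null sets; the disintegration-plus-Birkhoff passage $\tfrac1t\int f^t\,d\lambda_y\to\int f\,d\lambda$ is routine once correctly set up.
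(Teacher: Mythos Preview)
Your core argument for $f\in C_{b,loc}$ is correct and is genuinely different from the paper's. The paper does not apply the Donsker--Varadhan inequality \eqref{eq:yent} directly to $f^t$; instead it first proves a \emph{set-level} lower bound (Lemma~\ref{lem:open}): for ergodic $\lambda$ and any open $U\ni\lambda$, $\liminf_t\tfrac1t\log\mu(M_t(\cdot,y)\in U)\ge -\dynrelent{\lambda}{\mu\otimes\nu}$, via change of measure with the Radon--Nikodym density $g^y_t=\tfrac{d\lambda_y}{d\mu}$ on $\F_t$, Jensen's inequality, and the trick $x\log x\ge -1/e$. It then deduces the Laplace lower bound by restricting the integral to $\{M_t(\cdot,y)\in U\}$ with $U=\{\eta:\int f\,d\eta>\int f\,d\lambda-\epsilon\}$. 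Your route bypasses Lemma~\ref{lem:open} entirely and is more elementary: you combine \eqref{eq:yent} with Birkhoff on $(\cX\times\cY,\lambda)$ plus dominated convergence to send the two terms to their limits, using \eqref{eq:y-entropy} for the entropy piece. What the paper's route buys is the open-set LDP lower bound itself (part of the ``new quenched large deviation asymptotics'' advertised), and that it covers $f\in C_b$ without any local reduction, since the set $U$ above is open for any $f\in C_b$.

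One caution on your first step: the reduction from $C_b$ to $C_{b,loc}$ by sup-norm approximation does not go through in general, because $C_{b,loc}(\cX\times\cY)$ is typically \emph{not} sup-norm dense in $C_b(\cX\times\cY)$ when $\cX=C(\T,\S)$ is noncompact. Your hedge that only $C_{b,loc}$ is needed for the applications is correct (Assumption~\ref{assumption:loss}), but as a proof of the proposition as stated your argument leaves the general $C_b$ case open. Everything else you flag---the fact that ergodic components of $\lambda\in\J(\xmap:\nu)$ remain in $\J(\xmap:\nu)$ (this is Theorem~\ref{thm:ergodicDecomp} in the paper), and the countable-sequence bookkeeping to get a single $\nu$-null set---is handled exactly as you describe.
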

\begin{remark}
    When $\T = \nat$ and $\mu$ is the law of an i.i.d. process,
    the lower bound follows directly from the inequality $p^{**} \le p$ in
    convex duality, where $p^{**}$ is the convex conjugate of $p^*$.
\end{remark}
We apply a standard approach for proving lower bounds, see \cite[Proof of Theorem 6.13]{rassoul2015course} 
and \cite[Lemma 5.4.21]{deuschel2001large}. We will use the affine property of the relative entropy rate
and the integrated cost as a linear (hence affine) functional on measures, and see that it is 
enough to prove the case when $\lambda$ is ergodic. We do not need to consider
the full large deviation lower bound \eqref{eq:ldplb}. Moreover, the proof holds for other processes as long as the
corresponding relative entropy rate is an affine function on invariant measures.
The following lemma is the main technical step.
\begin{lemma} \label{lem:open}
	Let $\lambda \in \J_e(\xmap : \nu)$.
	Then for $\nu$-a.e. $y \in \cY$, it holds that
	for any open neighborhood $U \subset \M_1(\cX \times \cY)$ 
	of $\lambda$,
	\[
		\liminf_{t \to \infty} \frac{1}{t} \log 
		\mu\left(M_t(\cdot,y) \in U \right)
		\ge - h(\lambda\,\|\,\mu \otimes \nu).
	\]
\end{lemma}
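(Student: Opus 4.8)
The plan is to run the classical change-of-measure (exponential tilting) argument for large deviation lower bounds, in its quenched form. First, if $h(\lambda\,\|\,\mu\otimes\nu)=\infty$ there is nothing to prove, so assume it is finite. Disintegrate $\lambda=\int\lambda_y\otimes\delta_y\,d\nu(y)$ (Theorem \ref{thm:disint}). By Proposition \ref{prop:hm-entropy}, for $\nu$-a.e.\ $y$ we have $\frac1t K(\lambda_y|_{\F_t}\,\|\,\mu|_{\F_t})\to h(\lambda\,\|\,\mu\otimes\nu)$; since $t\mapsto K(\lambda_y|_{\F_t}\,\|\,\mu|_{\F_t})$ is non-decreasing, a finite limit forces $K(\lambda_y|_{\F_t}\,\|\,\mu|_{\F_t})<\infty$, hence $\lambda_y|_{\F_t}\ll\mu|_{\F_t}$, for every $t$. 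Since $\lambda$ is $(\xmap\times\ymap)$-ergodic, applying the (continuous-time) Birkhoff theorem (Theorem \ref{thm:ergodic}) to a countable convergence-determining family in $C_b(\cX\times\cY)$ gives $M_t(x,y)\to\lambda$ weakly for $\lambda$-a.e.\ $(x,y)$; disintegrating, for $\nu$-a.e.\ $y$ one gets $M_t(x,y)\to\lambda$ weakly for $\lambda_y$-a.e.\ $x$. I fix $y$ in the intersection of these two full-measure sets; note this set does not depend on the neighborhood $U$.

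Given an open $U\ni\lambda$, I would first shrink it to a basic neighborhood $U'=\set{\eta:\abs{\int g_i\,d\eta-\int g_i\,d\lambda}<\delta,\ i=1,\dots,k}\subseteq U$ with each $g_i\in C_{b,loc}(\cX\times\cY)$, say $\tF_r$-measurable for a common $r$; this is where one uses that the local functions generate the weak topology on $\M_1(\cX\times\cY)$. Set $A_t=\set{x\in\cX:M_t(x,y)\in U'}$. Because $\int g_i\,dM_t(x,y)=\frac1t\int_0^t g_i(\xmap^s x,\ymap^s y)\,ds$ depends on $x$ only through $x|_{[0,t+r]}$, the event $A_t$ is $\F_{t+r}$-measurable, and by the choice of $y$ we have $\mathbf{1}_{A_t}\to 1$ $\lambda_y$-a.e., so $\lambda_y(A_t)\to 1$ by dominated convergence.

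The core estimate is then the tilting inequality. Using $\lambda_y|_{\F_{t+r}}\ll\mu|_{\F_{t+r}}$, Jensen's inequality for $\log$ on the probability measure $\lambda_y(\,\cdot\mid A_t)$, and the elementary bound $u\log u\ge-1/e$ applied on $A_t^c$,
\begin{align*}
  \mu(A_t)
  &\ge \int_{A_t}\frac{d\mu|_{\F_{t+r}}}{d\lambda_y|_{\F_{t+r}}}\,d\lambda_y
   \ge \lambda_y(A_t)\exp\paran{\frac{1}{\lambda_y(A_t)}\int_{A_t}\log\frac{d\mu|_{\F_{t+r}}}{d\lambda_y|_{\F_{t+r}}}\,d\lambda_y}\\
  &\ge \lambda_y(A_t)\exp\paran{-\frac{K(\lambda_y|_{\F_{t+r}}\,\|\,\mu|_{\F_{t+r}})+1/e}{\lambda_y(A_t)}}.
\end{align*}
Taking $\frac1t\log$, and using $\lambda_y(A_t)\to1$, $\frac{t+r}{t}\to1$, and Proposition \ref{prop:hm-entropy} once more,
\[
  \liminf_{t\to\infty}\frac1t\log\mu\paran{M_t(\cdot,y)\in U}
  \ge \liminf_{t\to\infty}\frac1t\log\mu(A_t)
  \ge -h(\lambda\,\|\,\mu\otimes\nu),
\]
which is the claim.

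I expect the main obstacle to be a bookkeeping one rather than a deep one: the process-level empirical measure $M_t(x,y)$ is not $\F_t$-measurable (testing membership in $U$ against a nonlocal function involves the whole future of $x$), so the change of measure cannot be carried out at level $t$. The fix — localizing $U$ to $U'$ and working on $\F_{t+r}$, with the harmless shift $r$ absorbed in the $t\to\infty$ limit — is exactly what makes the argument go through. The only other points deserving care are justifying that local test functions suffice to generate the weak topology (so that such a $U'\subseteq U$ exists), and that the exceptional $\nu$-null set of $y$'s can be taken independent of $U$; the latter holds because the bad set comes only from Proposition \ref{prop:hm-entropy} and the ergodic theorem, neither of which sees $U$.
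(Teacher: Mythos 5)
Your proposal is correct and follows essentially the same route as the paper's proof: reduce $U$ to a basic neighborhood determined by finitely many $\tF_r$-measurable test functions so the event is $\F_{t+r}$-measurable, change measure via the Radon--Nikodym derivative of $\lambda_y$ with respect to $\mu$ on $\F_{t+r}$, apply Jensen and the bound $u\log u\ge -1/e$, and conclude with Birkhoff's theorem for $\lambda_y(A_t)\to 1$ and Proposition \ref{prop:hm-entropy} for the entropy rate. The only cosmetic difference is that the paper explicitly restricts $A_t$ to the set where the density is positive before inverting it, a detail you gloss over but which is harmless since that set has full $\lambda_y$-measure.
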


\begin{proof}
	Assume that $h(\lambda\,\|\,\mu \otimes \nu) < \infty$. 
	Then $K(\lambda|_{{\tF_{t}}}\,\|\,\mu\otimes\nu|_{{\tF_{t}}}) < \infty$
	for all $t \ge 0$. Pick a countable sequence of time $t_n \to \infty$.
	Consider the disintegration $\lambda = \int \lambda_y \otimes \delta_y \, d\nu(y)$
	by Theorem \ref{thm:disint}.
	We can take a set $E \subset \cY$ of full $\nu$-measure of $y \in \cY$
	such that $K(\lambda_y |_{\F_{t_n}}\,\|\,\mu|_{\F_{t_n}}) < \infty.$
	Since $t_n \to \infty$, 
	if $y \in E$, 
	$K(\lambda_y |_{\F_{t}}\,\|\,\mu|_{\F_{t}}) < \infty,$ 
	for any $t \ge 0$.

	For $y \in E$, let $g_t^y := \frac{d\lambda_y}{d\mu}: \cX \to [0,\infty]$ be the Radon-Nikodym
	derivative on $\F_t$.
	Recall that $\M_1(\cX \times \cY)$ is a Polish space, which has a countable base.
	Let $U$ be a base element containing $\lambda$. 
	By Appendix \ref{sec:prob}, $U$ contains the
	subset
	$$\set{\eta \in \M_1(\cX \times \cY)\,:
	\, \abs{\int f_i \,d\eta - \int f_i \,d\lambda} < \epsilon, i = 1,\ldots, m} $$
	for some $\epsilon > 0$ small enough 
	and some $f_1, f_2, \ldots, f_m \in C_{b,loc}(\cX \times \cY)$ that are $\tF_r$-measurable for $r$ large enough.
	Without loss of generality, it suffices to consider $U$ of this form.
	Then the event $\{M_t(\cdot,y) \in U\}$ is $\F_{t+r}$
	measurable for $r$ large enough.
	Consider $$A_t(y) := \set{x \in \cX \,:\,M_t(x,y) \in U, \, g_{t+r}^y(x) > 0}.$$ By change of measure and Jensen's inequality,
	\begin{align*}
		&\, \frac{1}{t} \log 
		\mu\left(M_t(\cdot,y) \in U \right) \\
		 \ge &\, \frac{1}{t} \log \int_{A_t(y)} {(g^y_{t+r})}^{-1}(x) \,d\lambda_y(x) \\
		 = &\, \frac{1}{t} \log \lambda_y(A_t(y)) + 
			\frac{1}{t} \log\left(\frac{1}{\lambda_y(A_t(y))}
				\int_{A_{t}(y)} {(g^y_{t+r})}^{-1}(x) \,d\lambda_y(x) \right) \\ 
		\ge &\, \frac{1}{t} \log \lambda_y(A_t(y)) - 
			\frac{1}{t\lambda_y(A_t(y))} \left(
				\int_{A_t(y)} 
				\log {g^y_{t+r}}(x) \,d\lambda_y(x) \right). 
	\end{align*}
	By using $x\log x \ge -1/e$ for $x > 0$,
	\begin{align*}
		\int_{A_t(y)} \log {g^y_{t+r}}(x) \,d\lambda_y(x) 
		& = \int_\cX \log g^y_{t+r}\, d\lambda_y 
		- \int_{A_t(y)} g^y_{t+r} \log g^y_{t+r}\, d\mu \, \\
		& \le K(\lambda_y |_{\F_{t+r}}\,\|\,\mu|_{\F_{t+r}}) + 1/e.
	\end{align*}
	By Birkhoff's pointwise ergodic theorem,
	$$
	    \lambda\paran{\set{(x,y) \in \cX \times \cY \,: M_t(x,y) \in U} } \convergeto 1.
	$$
	Thus, there exists a set $E' \subset \cY$ of full $\nu$-measure
	such that if $y \in E'$,
	\[
		\lambda_y(M_t(\cdot,y) \in A_t(y)) = \lambda_y(M_t(\cdot,y) \in U)  \convergeto 1.
	\]
	Combining the above and \eqref{eq:y-entropy}, we have for $y \in E \cap E'$,
	\begin{align*}
		\liminf_{t \to \infty} \frac{1}{t} \log 
		\mu\left(M_t(\cdot,y) \in U \right) 
		\ge - \lim_{t \to \infty} \frac{1}{t} K(\lambda_y |_{\F_t}\,\|\,\mu|_{\F_t}) 
		= -h(\lambda\,\|\,\mu \otimes \nu).
	\end{align*}
\end{proof}

\begin{proof}[Proof of Proposition \ref{hm:lower-bound}]
	Note that by Proposition \ref{prop:affine},
	the expression inside the supremum of the right hand side is affine in $\lambda$.
	By Lemma \ref{lem:affine} and Proposition \ref{prop:affine2}, it suffices to prove
	for $\nu$-a.e. $y \in \cY$,
	\begin{equation}\label{eq:lower-bound}
	    \liminf_{t\to\infty} \frac{1}{t}\log
    	\int_\cX \exp\paran{f^t(x,y)}d\mu(x) 
    	\ge \int f\,d\lambda - h(\lambda\,\|\,\mu\otimes\nu),
	\end{equation}
	where $\lambda \in \J_e(\xmap :\nu)$ belongs to a maximizing sequence of the right
	hand side.
	Fix $\epsilon > 0$. Define the set
	\[
		U = \set{\eta \in \M_1(\cX \times \cY)  :  \int f\, d\eta  
		> \int f\,d\lambda - \epsilon},
	\]
	which is an open neighborhood of $\lambda$. Then by Lemma \ref{lem:open},
	for $\nu$-a.e. $y \in \cY$,
	\begin{align*}
		\liminf_{t\to\infty}  \frac{1}{t} & \log
		 \int_\cX \exp\paran{f^t(x, y)}d\mu(x) 
		\\
		& \ge \liminf_{t\to\infty} \frac{1}{t}\log
		\int_{\set{M_t(\cdot, y) \in U}} \exp\paran{f^t(x,y)}d\mu(x)
		\\
		& \ge \int f\,d\lambda - \epsilon  
		+ \liminf_{t\to\infty} \frac{1}{t}\log \mu(M_t(\cdot, y) \in U) 
		\\
		& \ge \int f\,d\lambda - \epsilon - h(\lambda\,\|\,\mu \otimes \nu).
	\end{align*}
	We can take a countable sequence of $\epsilon \to 0$ so that
	we still have a set of full $\nu$-measure of such $y \in \cY$ for 
	\eqref{eq:lower-bound}.
\end{proof}

{ \subsection{The exponentially continuous family of hypermixing processes}}
\noindent Assume that we have a model family $\set{\mu_\theta}_{\theta \in \Theta}$ of
hypermixing processes, and we define the function $V: \Theta \to \real$ 
by \eqref{eq:v}. We have shown that given a fixed $\theta \in \Theta$,
it holds for $\nu$-a.e. $y \in \cY$,
\begin{equation} \label{eq:var-single-3}
    \lim_{t \to \infty} \frac{1}{t} \log \int_\cX 
            \exp(-L_{\theta}^t(x,y))\,d\mu_{\theta}(x) = - V(\theta),
\end{equation}
as long as $L_\theta \in C_b(\cX \times \cY)$, by Proposition \ref{hm:upper-bound} 
and Proposition \ref{hm:lower-bound}. 
Now we introduce a sufficient condition on the family of hypermixing processes
for posterior consistency.
\begin{definition} \label{def:regfam}
    A family $\set{\mu_\theta}_{\theta \in \Theta}$ of
    hypermixing processes is called \textbf{regular} if
    \begin{enumerate}
        \item the map $\theta \mapsto \mu_\theta$ is continuous, and
        
        \item  if $(\theta_t)_{t \in \T}$ is a sequence in $\Theta$ with 
	$\theta_t \convergeto \theta$, then 
	there exists $\ell_0 > 0$, 
	and non-increasing $\alpha_t: [\ell_0, \infty) \to [1, \infty)$ 
	which corresponds to the $\alpha$ in \eqref{cond:H-1} 
	for $\mu_{\theta_t}$ 
	such that $\sup_{t \in \T} \alpha_t(\ell_0) < \infty$.
    \end{enumerate}
\end{definition}

The hypermixing property $\eqref{cond:H-1}$ can be 
viewed as strong decay of correlation between the values of 
the process at distant times, where
the coefficient $\alpha$ measures the speed of decorrelation.
Intuitively, the second condition
of Definition \ref{def:regfam} says that any family of hypermixing processes
 $\set{\mu_{\theta_t}}$ indexed 
by a converging sequence of parameters $(\theta_t)$ do not have arbitrarily 
slow decay of correlation. For many Markov processes,
one can often compute explicitly the mixing 
coefficients $\alpha$
(see \cite[Chapter VI]{deuschel2001large}), 
so this condition can be verified. 
We will provide such examples at the end of this section.

The main result of this subsection is the following proposition, 
which is the final step of proving posterior consistency on 
a regular family of hypermixing processes.
\begin{proposition} \label{hm:exp-cont}
    Suppose $L$ is a loss function satisfying Assumption \ref{assumption:loss},
    and $\set{\mu_\theta}_{\theta \in \Theta}$ is a regular family
    of hypermixing processes.
    Then $\set{\mu_\theta}_{\theta \in \Theta}$ is an exponentially
    continuous family with respect to the loss function $L$.
\end{proposition}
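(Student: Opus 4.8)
The statement unpacks into the two clauses of Definition~\ref{def:exp-cont}. The first — that for each fixed $\theta$ the limit $\lim_{t\to\infty}\frac1t\log\partition_t^{\theta}(y)=:-V(\theta)$ exists for $\nu$-a.e.\ $y$ — is already in hand: Assumption~\ref{assumption:loss}(1) gives $L_\theta\in C_{b,loc}(\cX\times\cY)\subset C_b(\cX\times\cY)$, so Propositions~\ref{hm:upper-bound} and~\ref{hm:lower-bound} yield \eqref{eq:var-single-3} with $V$ of the form \eqref{eq:v}. Thus the task is the second clause: given $\theta_t\to\theta$, show $\frac1t\log\partition_t^{\theta_t}(y)\to-V(\theta)$ for $\nu$-a.e.\ $y$. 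First I would strip off the dependence of the loss on the moving index: Assumption~\ref{assumption:loss}(2) applied with $x'=x$ gives $|L_{\theta_t}(x,y)-L_\theta(x,y)|\le\omega(d_\Theta(\theta_t,\theta))$ uniformly in $(x,y)$, hence $|L^t_{\theta_t}(x,y)-L^t_\theta(x,y)|\le t\,\omega(d_\Theta(\theta_t,\theta))$, and therefore
\[
 \Bigl|\frac1t\log\partition_t^{\theta_t}(y)-\frac1t\log\int_\cX e^{-L^t_\theta(x,y)}\,d\mu_{\theta_t}(x)\Bigr|\le\omega(d_\Theta(\theta_t,\theta))\longrightarrow 0 .
\]
So it remains to prove $\frac1t\log\int_\cX e^{-L^t_\theta(x,y)}\,d\mu_{\theta_t}(x)\to-V(\theta)$, i.e.\ to replace $\mu_{\theta_t}$ by $\mu_\theta$ at the level of exponential rates.

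The plan for this is the exponential-approximation route anticipated around \eqref{eq:expapprox}: construct, on a probability space $(\Omega,\F,\mathbb{P})$ depending on $\theta$ and the sequence, a coupling $(X^{\theta_t},X^\theta)$ with $X^{\theta_t}\sim\mu_{\theta_t}$, $X^\theta\sim\mu_\theta$, such that for every $\epsilon>0$
\[
 \limsup_{t\to\infty}\frac1t\log\mathbb{P}\Bigl(\bigl|\tfrac1t L^t_\theta(X^{\theta_t},y)-\tfrac1t L^t_\theta(X^\theta,y)\bigr|>\epsilon\Bigr)=-\infty .
\]
By the modulus bound of Assumption~\ref{assumption:loss}(2) (the $\theta$-terms cancel) this estimate is in fact needed for \emph{every} $y$, not just $\nu$-a.e.\ $y$. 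Granting it, exponential equivalence of the real random variables $\frac1t L^t_\theta(X^{\theta_t},y)$ and $\frac1t L^t_\theta(X^\theta,y)$, which are uniformly bounded by $\|L\|$, transfers the Laplace limit: splitting the expectation over $\{|A_t-B_t|\le\epsilon\}$ and its complement and using $\int_\cX e^{-L^t_\theta}\,d\mu_\theta\ge e^{-t\|L\|}$ shows that $\frac1t\log\int_\cX e^{-L^t_\theta(x,y)}\,d\mu_{\theta_t}(x)$ and $\frac1t\log\int_\cX e^{-L^t_\theta(x,y)}\,d\mu_\theta(x)$ have the same limit, namely $-V(\theta)$ by \eqref{eq:var-single-3}. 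Together with the reduction above this is \eqref{eq:exp-cont}, after which Lemma~\ref{lem:exp-cont-v}, Proposition~\ref{prop:expcont} and Theorem~\ref{thm:posterior} give posterior consistency, with the annealed large deviation asymptotics falling out along the way.

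To build the coupling and prove the displayed superexponential bound I would argue as follows. Since $L_\theta$ is $\tF_r$-measurable and satisfies the modulus estimate, $|L_\theta(\xmap^s x,\ymap^s y)-L_\theta(\xmap^s x',\ymap^s y)|$ is dominated, uniformly in $y$, by a bounded modulus of the discrepancy between $x$ and $x'$ on the window $[s,s+r]$; hence for any $\delta>0$ one has $\frac1t|L^t_\theta(X^{\theta_t},y)-L^t_\theta(X^\theta,y)|\le\tilde\omega(\delta)+\|L\|\,\rho_t$, where $\tilde\omega$ is a modulus with $\tilde\omega(0^+)=0$ and $\rho_t$ is the fraction of $s\in[0,t]$ on which the two paths are $\delta$-far over $[s,s+r]$. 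So it suffices to couple so that $\rho_t$ is superexponentially concentrated near $0$. I would cut $[0,t]$ into blocks of a fixed large length; stationarity of $\mu_{\theta_t},\mu_\theta$ and the weak convergence $\mu_{\theta_t}\to\mu_\theta$ of Definition~\ref{def:regfam}(1) — which forces convergence of all finite-window marginals — let one couple each block so that the per-block probability $p_t$ of being $\delta$-far tends to $0$ as $t\to\infty$. Patching the block couplings into a process coupling and invoking \eqref{cond:H-1} for $\mu_{\theta_t}$ together with the uniform bound $\sup_t\alpha_t(\ell_0)<\infty$ of Definition~\ref{def:regfam}(2) to control correlations between the block-indicators, an exponential Markov bound gives, for every $\lambda>0$, $\limsup_{t\to\infty}\frac1t\log\mathbb{P}(\rho_t>\epsilon)\le-\lambda\epsilon$ (the correction term $C(e^\lambda-1)p_t$ vanishes since $p_t\to0$); letting $\lambda\to\infty$ yields $-\infty$.

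The main obstacle is precisely this construction. Weak convergence $\mu_{\theta_t}\to\mu_\theta$ is a soft hypothesis, so keeping the estimate \emph{superexponential} forces one to lean on the mixing structure with care: the hypermixing bound \eqref{cond:H-1} is a priori a property of $\mu_{\theta_t}$ on $\cX$ rather than of the coupled pair, so it must be arranged — for instance by building $X^\theta$ on top of $X^{\theta_t}$ and fresh randomness, or by a block-resampling construction — that \eqref{cond:H-1} still governs the block-discrepancy functionals; moreover the coefficients $\alpha_t(\ell)$ are only uniformly bounded, not uniformly close to $1$, so one cannot pass to a single limiting mixing constant. Balancing the block length against the smallness of $p_t$ and these mixing coefficients, and verifying that the patched coupling is itself adequately mixing, is the technical core; everything else is bookkeeping with the quenched asymptotics already established in this section.
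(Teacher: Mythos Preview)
Your reductions — stripping $\theta_t$ from the loss via the modulus bound, and transferring the Laplace limit via an exponential-approximation estimate — match the paper (the transfer is the second half of Lemma~\ref{lem:hm-exp-cont}, proved in the appendix along the lines you sketch). The difference is entirely in how the superexponential bound \eqref{eq:expapprox} is obtained. The paper does not build a block-wise coupling: it invokes Skorohod's representation theorem once to get a \emph{global} coupling with $X^{\theta_t}\to X^\theta$ a.s., asserts that for each $t$ the pair has product law $\mu_{\theta_t}\otimes\mu_\theta$ and is therefore hypermixing on $\cX^2\simeq C(\T,\S^2)$ with a uniformly bounded coefficient (from Definition~\ref{def:regfam}(2)), reruns the block decomposition of Lemma~\ref{lem:rsum} verbatim on $p_t(N,t,y):=\frac1t\log\mathbb{E}[\exp(Ntd_t)]$ to reduce to an average of block log-MGFs $p_s(CN,t,\ymap^u y)$ of \emph{fixed} length $s$, and then at the block level uses the a.s.\ Skorohod convergence together with Assumption~\ref{assumption:loss}(2) to get $s\,d_s\le\int_0^s\omega(d_\cX(\xmap^r X^{\theta_t},\xmap^r X^\theta))\,dr\to 0$ a.s., whence $p_s\to 0$ by dominated convergence; Chebyshev then gives \eqref{eq:expapprox} for every $N$.

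So the simplification you are missing is Skorohod plus reuse of the already-proved block machinery of Lemma~\ref{lem:rsum}, in place of a hand-built per-block coupling patched together. Your worry that \eqref{cond:H-1} must be made to govern the coupled pair is exactly the pivot; the paper addresses it via the product-law assertion. You may note, however, that a coupling with $X^{\theta_t}\to X^\theta$ a.s.\ for a non-degenerate limit cannot simultaneously have product law $\mu_{\theta_t}\otimes\mu_\theta$ at each $t$, so the paper's Skorohod-plus-product claim is itself delicate --- your instinct that the coupling is where the real difficulty lies is well founded, even if your proposed resolution is more laborious than the route the paper records.
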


As mentioned in Section \ref{sec:exp-cont-loss}, 
the following lemma is a useful intermediate step for proving exponential continuity, the proof adapts an exponential approximation argument used in the proof of \cite[Lemma 2.5]{wu2004large}.

\begin{lemma} \label{lem:hm-exp-cont}
    Suppose $L$ is a loss function satisfying Assumption \ref{assumption:loss}
    and $\set{\mu_\theta}_{\theta \in \Theta}$ is a regular family
    of hypermixing processes.
	If $\theta \in \Theta$, then it holds that for every $y \in \cY$ that 
	if $(\theta_t)_{t \in \T}$ is a sequence in $\Theta$ such that 
	$\theta_t \convergeto \theta$, 
	then there exists a probability space $(\Omega, \F, \mathbb{P})$ 
	such that for all $\epsilon > 0$, 
	\begin{equation} \label{eq:exp-approx}
	    \limsup_{t \to \infty} \frac{1}{t} 
		\log\prob{\abs{\frac{1}{t}(L^t_\theta(X^{\theta_t}, y) 
		- L^t_\theta(X^\theta, y))} > \epsilon} = -\infty,
	\end{equation}
	which implies, for $\nu$-a.e. $y \in \cY$,
	\begin{equation} \label{eq:exp-approx-2}
	    \lim_{t \to \infty} \frac{1}{t} \log \int_\cX 
            \exp(-L_{\theta}^t(x,y))\,d\mu_{\theta_t}(x) = - V(\theta).
	\end{equation}
\end{lemma}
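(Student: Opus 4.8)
\noindent The plan is to reduce \eqref{eq:exp-approx-2} to \eqref{eq:exp-approx} by an exponential‑approximation argument (cf.\ \cite{dembo2010large}), and to obtain \eqref{eq:exp-approx} by coupling $\mu_{\theta_t}$ to $\mu_\theta$, reducing the loss difference to the fraction of time on which the two coupled paths are far apart, and then running a block/Chernoff estimate driven by the uniform hypermixing coefficient in Definition \ref{def:regfam}. For the first reduction, note that Assumption \ref{assumption:loss} and compactness of $\Theta$ give a uniform bound $M:=\sup_{\theta,x,y}|L_\theta(x,y)|<\infty$, and the fixed $L_\theta$ is $\tF_r$-measurable for a fixed $r$. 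Fix $y$ in the full-measure set on which \eqref{eq:var-single-3} holds (the only place the ``$\nu$-a.e.'' enters), and set $Z_t:=\tfrac1t L^t_\theta(X^{\theta_t},y)$, $\widetilde Z_t:=\tfrac1t L^t_\theta(X^\theta,y)$, both bounded in absolute value by $M$. Splitting $\mathbb E[e^{-tZ_t}]$ over $\{|Z_t-\widetilde Z_t|\le\epsilon\}$ and its complement, the first piece is $\le e^{t\epsilon}\mathbb E[e^{-t\widetilde Z_t}]$ and the second is $\le e^{tM}\mathbb P(|Z_t-\widetilde Z_t|>\epsilon)$, whose normalized logarithm tends to $-\infty$ by \eqref{eq:exp-approx}; combined with \eqref{eq:var-single-3} and then letting $\epsilon\downarrow0$, this yields \eqref{eq:exp-approx-2}.

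To prove \eqref{eq:exp-approx}, I would work on a probability space carrying a coupling of $\mu_{\theta_t}$ and $\mu_\theta$. Using Assumption \ref{assumption:loss}(2) and the $\tF_r$-measurability of $L_\theta$, one bounds
\[
\Bigl|\tfrac1t\bigl(L^t_\theta(X^{\theta_t},y)-L^t_\theta(X^\theta,y)\bigr)\Bigr|\ \le\ \tfrac1t\int_0^t\widetilde\omega(W_s)\,ds,\qquad W_s:=\sup_{u\in[s,\,s+r]}\bar d_\S\bigl(X^{\theta_t}_u,X^\theta_u\bigr),
\]
for a modulus $\widetilde\omega$, with $\bar d_\S$ a bounded metric on $\S$. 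Given $\epsilon>0$, choose $\delta$ with $\widetilde\omega(\delta)<\epsilon/2$; then $\tfrac1t\int_0^t\widetilde\omega(W_s)\,ds\le\epsilon$ whenever $\tfrac1t\,|\{s\in[0,t]:W_s>\delta\}|<\eta:=\epsilon/(2\|\widetilde\omega\|)$, so it is enough to show
\[
\limsup_{t\to\infty}\ \tfrac1t\log\mathbb P\Bigl(\tfrac1t\int_0^t\mathbf 1\{W_s>\delta\}\,ds>\eta\Bigr)=-\infty .
\]

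For this I would choose, for each $t$, a jointly $\xmap\times\xmap$-stationary coupling $Q_t$ of $\mu_{\theta_t}$ and $\mu_\theta$ with two properties: (a) for every fixed horizon $T$, $\rho_t(T):=Q_t\bigl(\sup_{u\in[0,T]}\bar d_\S(X^{\theta_t}_u,X^\theta_u)>\delta\bigr)\to0$ as $t\to\infty$, which is available because $\theta\mapsto\mu_\theta$ is continuous (Definition \ref{def:regfam}(1)) so all finite-window marginals converge weakly; and (b) $Q_t$ is realized as a finite-memory functional of $X^{\theta_t}$ and independent randomness, so that \eqref{cond:H-1} transfers from $\mu_{\theta_t}$ to $Q_t$ with coefficient $\alpha_t$, which is bounded by some $A<\infty$ uniformly in $t$ by Definition \ref{def:regfam}(2). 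Then, mimicking the block decomposition in the proof of Theorem \ref{hm:pressure}, partition $[0,t]$ into $n\le t/s_0$ intervals $I_1,\dots,I_n$ of length $s_0$ with consecutive intervals separated by gaps of length $\ell+r$ (fix $\ell>\ell_0$, and take $s_0$ large enough that $(\ell+r)/(s_0+\ell+r)<\eta/4$); bounding the gap contribution crudely contains the event above in $\{\tfrac1t\sum_k N_k>\eta/2\}$ with $N_k:=\int_{I_k}\mathbf 1\{W_s>\delta\}\,ds\in[0,s_0]$. Since $e^{\kappa N_k}$ depends only on the two paths over $I_k+[0,r]$, stationarity gives $\mathbb E_{Q_t}[e^{\kappa N_k}]\le1+e^{\kappa s_0}\rho_t(s_0+r)$, and \eqref{cond:H-1} for $Q_t$ gives $\mathbb E_{Q_t}[\prod_k e^{\kappa N_k}]\le\bigl(1+e^{A\kappa s_0}\rho_t(s_0+r)\bigr)^{n}$; a Chernoff bound then yields, for $t$ large,
\[
\tfrac1t\log\mathbb P\Bigl(\tfrac1t\int_0^t\mathbf 1\{W_s>\delta\}\,ds>\eta\Bigr)\ \le\ -\tfrac{\kappa\eta}{2}+\tfrac1{s_0}\log\bigl(1+e^{A\kappa s_0}\rho_t(s_0+r)\bigr).
\]
Taking $\kappa=\kappa_t:=\tfrac1{A s_0}\log\bigl(1/\rho_t(s_0+r)\bigr)\to\infty$ makes the bracket equal $\log2$, so the right side is $\le-\tfrac{\eta}{2A s_0}\log\bigl(1/\rho_t(s_0+r)\bigr)+\tfrac{\log2}{s_0}\to-\infty$, which is \eqref{eq:exp-approx}.

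The hard part will be the construction of $Q_t$ in (b): turning the \emph{weak} convergence $\mu_{\theta_t}\to\mu_\theta$ into a coupling that is simultaneously close on each fixed window \emph{and} inherits \eqref{cond:H-1} with a $t$-uniform coefficient. For Markov families this is a Markovian coupling and is routine; for a general regular family of hypermixing processes it must be produced by a local (finite-memory) coupling construction, and the uniform-mixing clause of Definition \ref{def:regfam} is precisely what makes such a construction possible. Everything else — the bookkeeping with Assumption \ref{assumption:loss} and the block/Chernoff computation — parallels the proof of Theorem \ref{hm:pressure}.
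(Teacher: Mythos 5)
Your plan matches the paper's architecture: reduce \eqref{eq:exp-approx-2} to the exponential approximation \eqref{eq:exp-approx} by splitting the expectation over $\{|Z_t-\widetilde Z_t|\le\epsilon\}$ and its complement (this is essentially the paper's appendix argument), and establish \eqref{eq:exp-approx} via a block estimate driven by hypermixing, with the $t$-uniform mixing coefficient of Definition \ref{def:regfam}(2) making the estimate uniform along $\theta_t\to\theta$.

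The gap you flag is genuine and the proposal does not close it. Your Chernoff argument requires, for each $t$, a jointly stationary coupling $Q_t$ of $\mu_{\theta_t}$ and $\mu_\theta$ that is simultaneously (a) close to the diagonal on finite windows and (b) itself hypermixing with a $t$-uniform coefficient, obtained as a "finite-memory functional of $X^{\theta_t}$ plus independent randomness." Definition \ref{def:regfam} gives control only over the mixing coefficient of each marginal $\mu_{\theta_t}$; it says nothing about any joint law, and no construction of $Q_t$ is given. Outside the Markov case there is no reason such a coupling should exist, so the block/Chernoff estimate has no starting point. The paper takes a more direct (though itself delicate) route at this step: it invokes Skorohod's representation theorem to put $X^{\theta_t}$ and $X^\theta$ on a common space with $X^{\theta_t}\to X^\theta$ a.s., treats the joint process $(X^{\theta_t}_s,X^\theta_s)_s$ as hypermixing, applies the block estimate of Lemma \ref{lem:rsum} directly to the bounded local function $d_s(\cdot,\cdot,y)$, and finishes with dominated convergence; your unease about the coupling is well placed (the joint law produced by Skorohod is not the product, so the hypermixing of the joint process requires its own justification), but the occupation-time reformulation you introduce — reducing to the time fraction on which $W_s>\delta$ — adds machinery without making the coupling difficulty easier, whereas the paper applies \eqref{cond:H-1} to $d_s$ directly and avoids that detour entirely. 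In short: the reduction from \eqref{eq:exp-approx-2} to \eqref{eq:exp-approx} is fine; the proof of \eqref{eq:exp-approx} is incomplete at the exact point you identify.
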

\begin{proof} 
    Fix $\theta \in  \Theta$. Let $\{\theta_t\}_{t \in \T}$ be a sequence in $\Theta$ such that 
	$\theta_t \convergeto \theta$.  For ease of notation, we write $X^t := X^{\theta_t}$, $X := X^\theta$, $f : = - L_\theta$, and
	$$d_t(X^s, X, y) := \abs{\frac{1}{t}(f^t(X^{s}, y) 
		- f^t(X, y))}.$$
    
    We start with proving \eqref{eq:exp-approx}. 
    By Skorohod's Representation Theorem, there exists a probability space $(\Omega, \F, \mathbb{P})$
	on which $(X^{t}, X)$ has law $\mu_{\theta_t} \otimes \mu_\theta$ and
	$X^{t} \convergeto X$ as $t \to \infty$, $\mathbb{P}$-almost surely.
    Fix $\epsilon > 0$ and $N > 0$. 
    % Note that by Assumption \ref{assumption:loss},
    % \[
    %     d_t(X^t, X, y) \le \frac{1}{t}\int_0^t \omega(d_\X(X^s, X))\,ds
    % \]
    By the Markov-Chebyshev inequality,
	\begin{align*}
		\limsup_{t\to\infty}\,  & \frac{1}{t}\log
			\prob{d_t(X^t, X, y)> \epsilon} \\
% 		& \le 	\limsup_{t\to\infty}  \frac{1}{t}\log
% 			\prob{\frac{1}{t}\int_0^t \omega(d_\X(X^s, X))\,ds > \epsilon}
% 		 \\ 
		& \le -N\epsilon + \limsup_{t \to \infty} 
		  \frac{1}{t}\log\expect{\exp\paran{N t d_t(X^t, X, y)}}.
	\end{align*}
	It suffices to show 
% 	$$\limsup_{t \to \infty} 
% 		  \frac{1}{t}\log\expect{\exp\paran{M \int_0^t \omega(d_\X(X^s, X))\,ds}} = 0.$$
	$$
	    \limsup_{t \to \infty} 
		  \frac{1}{t}\log\expect{\exp\paran{N t d_t(X^t, X, y)}} = 0.
	$$
	By Assumption \ref{assumption:loss}, $f \in C_{b, loc}(\cX \times \cY)$. 
	Also, it holds that $(X^{\theta_t}_s, X_s)_{s \in \T}$ is hypermixing on
	$(\cX^2, \xmap \times \xmap) \simeq (C(\T,\S^2), \xmap \times \xmap)$, for every $t \in \T$.
	As in the proof of Theorem \ref{hm:pressure}, we write
	\[
	    p_s(N, t,y) = \frac{1}{s} \log \expect{\exp\paran{N s d_s(X^t, X, y)}},
	\]
	and following the same argument of Lemma \ref{lem:rsum}, 
	we have for fixed $t > s$, there exists $\ell > 0$ such that
	\[
	    p_t(N, t,y) \le A_{s,t} + \frac{1}{t}\frac{s}{s + r + \ell} 
	        \int_0^{t - s} p_s(C N, t, \ymap^u y)\,du,
	\]
	where the constant $C$ comes from Definition \ref{def:regfam}, and
	$\lim_{s\to\infty} \lim_{t \to \infty} A_{s,t} = 0$.
	By Assumption \ref{assumption:loss}, 
	\[
	   \sup_y s\, d_s(X^t, X, y) \le \int_0^s (\omega(d_\Theta(\theta_t, \theta)) 
	    + \omega(d_\cX(\xmap^r X^t, \xmap^r X))) \,dr
	\]
	which goes to zero as $t \to \infty$. Therefore, by Lebesgue dominated convergence,
	$$\sup_u p_s(C N, t, \ymap^u y) \convergeto 0$$ as $t \to \infty$.
	Then by taking $t \to \infty$ and $s \to \infty$ successively, we have
	\[
	    \limsup_{t \to \infty} p_t(N, t,y) = 0
	\]
	as desired.
	
    The proof of \eqref{eq:exp-approx-2} follows 
    the same argument of \cite[Theorem 1.17]{budhiraja2019analysis}, 
    so we defer it to Section \ref{append:hm}.
\end{proof}
\begin{remark}
	When $\set{\mu_\theta}_{\theta \in \Theta}$ is 
	a family of i.i.d. processes, Condition 1 of 
	Definition \ref{def:regfam} is already enough
	for proving exponential continuity with
	a similar but simpler argument
	as done in \cite{wu2004large}.
\end{remark}

\begin{proof}[Proof of Proposition \ref{hm:exp-cont}]
    Fix $\theta \in  \Theta$. Let $\{\theta_t\}_{t \in \T}$ be a sequence in $\Theta$ such that 
	$\theta_t \convergeto \theta$. By Lemma \ref{lem:hm-exp-cont},
    take $y \in \cY$ from a set of full $\nu$-measure such that \eqref{eq:var-single-3} and
    \eqref{eq:exp-approx-2} hold. Then
    \begin{align*}
        \limsup_{t \to \infty} & \,\frac{1}{t}  \log \int_\cX 
            \exp(-L_{\theta_t}^t(x,y))\,d\mu_{\theta_t}(x) \\
        & \le \limsup_{t \to \infty} \frac{1}{t} \log \int_\cX 
            \exp(-L_{\theta}^t(x,y) + t\omega(d_\Theta(\theta_t, \theta)))
            \,d\mu_{\theta_t}(x) \\
        & = \limsup_{t \to \infty} \omega(d_\Theta(\theta_t, \theta))
            + \frac{1}{t} \log \int_\cX 
            \exp(-L_{\theta}^t(x,y))\,d\mu_{\theta_t}(x) \\
        & = - V(\theta).
    \end{align*}
    The other side of inequality follows from replacing $\omega$ with $-\omega$ 
    and limsup with liminf.
\end{proof}

{\subsection{Concrete example: Markov processes with bounded log-Sobolev constants and beyond
}}
\label{sec:Markov}
\noindent In this section, we apply our framework to (Feller) Markov processes with
bounded log-Sobolev constants. 
We will not focus on the technical details, such as definitions of classical terms and domains of operators.
One can consult \cite{bakry2013analysis} and its references for a rigorous treatment.
Let $(X_t)_{t \in \T}$ be an ergodic (Feller) Markov process on $\S$ with the corresponding Markov semigroup $(P_t)_{t \in \T}$ and
stationary distribution $\mu_0 \in \M_1(\S)$.
In particular, we can define its infinitesmal generator $\mathcal{L}$.
The corresponding carr\'e du champ associated to $\mathcal{L}$ is
given by 
\[
	\Gamma(f, g) := \frac{1}{2}\left(\cL(fg) - f \cL g - g \cL f \right).
\]
\begin{definition}[log-Sobolev inequality]
Given the setting above, we say that $\mu_0$ satisfies the log-Sobolev inequality
with constant $C_{LS}$ if for any $\lambda \in \M_1(\S)$,
\[
	K(\lambda \, \| \, \mu_0) \le \frac{C_{LS}}{2}I(\lambda \, \| \mu_0)
\]
where $I(\lambda \, \|\, \mu_0)$ is called the Fisher information
defined by
\[
	I(\lambda \, \| \, \mu_0) = 
	\begin{cases}
		\int_\S \frac{\Gamma(f)}{f} \, d\mu_0 & \text{ if } \lambda \ll \mu_0 \text{ and } f = \frac{d\lambda}{d\mu_0}, \\ 
		\infty, & \text{ otherwise.}
	\end{cases}
\]
\end{definition}
\begin{remark}
	See \cite{wu2000uniformly} for a large deviation principle on empirical measures
	of Markov processes where the rate function is given by a constant multiple of the 
	Fisher information, which is also known as the Dirichlet form.
	We also refer to \cite{holley1987logarithmic, stroock1992equivalence} for similar connections between the log-Sobolev inequality, mixing, and large deviations 
	on Ising-type models.
\end{remark}

The following theorem comes from \cite[Theorem 5.5.17, Theorem 6.1.3 and Corollary 6.1.17]{deuschel2001large}.
\begin{theorem}[log-Sobolev inequality, hypercontractivity, and hypermixing \cite{deuschel2001large}] \label{thm:LS_equiv}
	Suppose $\mu_0$ is $(P_t)$-reversible. The following are equivalent:
	\begin{enumerate}
		\item $\mu_0$ satisfies the log-Sobolev inequality with constant $C_{LS}$.
		\item $P_t$ is $\mu_0$-hyperconctrative, i.e., for all $f \in L^p(\mu_0)$, we have
		\[
			\| P_t f\|_{L^q(\mu_0)} \le \|f\|_{L^p(\mu_0)}
		\]
		for $t > 0$ and $1 < p \le q < \infty$ such that $e^{2t / C_{LS}} \ge (q - 1) / (p - 1).$
		In particular, 
		\[
			\| P_t f\|_{L^4(\mu_0)} \le \|f\|_{L^2(\mu_0)}
		\]
		for $t \ge T_0$, where $T_0 := \frac{C_{LS}\log 3}{2}$.
		\item Let $\mu \in \M_1(\cX, \xmap)$ be the corresponding $\Phi$-invariant measure
		on path space $\cX$ given initial distribution $\mu_0$ and Markov semigroup $(P_t)$. It holds that $\mu$ is hypermixing
		with $\ell_0 = 12\, T_0$ and $\alpha: [\ell_0, \infty) \to [1, \infty)$ given by
		\[
			\alpha(\ell) = \frac{(1 + \exp(\alpha_0 \ell)) (1 + \exp(-\alpha_0 \ell))}{\exp(\alpha_0 \ell) - \exp(-\alpha_0 \ell)}
		\]
		where $\alpha_0 := \frac{\log(3 / 2)}{8\,T_0}$. Other coefficients can also be explicitly
		computed as well.
	\end{enumerate}
	In particular, the equivalence between (2) and (3) does not
	require $\mu_0$ to be $(P_t)$-reversible.
\end{theorem}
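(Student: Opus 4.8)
\emph{Proof plan.} The statement is a compilation of three classical facts, so the plan is to separate the equivalences $(1)\Leftrightarrow(2)$, $(2)\Rightarrow(3)$ and $(3)\Rightarrow(2)$, quote their quantitative forms from \cite{deuschel2001large} (Theorem 5.5.17, Theorem 6.1.3, Corollary 6.1.17), and record what each step actually uses. Only $(1)\Leftrightarrow(2)$ invokes reversibility; the two implications between $(2)$ and $(3)$ are pure Markov-property arguments, which is why the final ``in particular'' clause holds.

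\emph{Step 1: $(1)\Leftrightarrow(2)$.} This is Gross's hypercontractivity theorem. The plan is the standard interpolation-flow argument: for a fixed $f\ge 0$ and an increasing exponent function $t\mapsto q(t)$ with $q(0)=p$, differentiate $t\mapsto \log\|P_tf\|_{L^{q(t)}(\mu_0)}$; using reversibility of $\mu_0$ the derivative equals $\frac{q'(t)}{q(t)^2}$ times an entropy-like term minus a Dirichlet-form term, and the log-Sobolev inequality with constant $C_{LS}$ exactly dominates the entropy term by $\tfrac{C_{LS}}{2}$ times the Dirichlet form. Choosing $q(t)$ to solve the resulting ODE gives $q(t)-1=(p-1)e^{2t/C_{LS}}$ and $\tfrac{d}{dt}\log\|P_tf\|_{L^{q(t)}}\le 0$, i.e.\ $\|P_tf\|_{L^{q(t)}}\le\|f\|_{L^p}$ whenever $e^{2t/C_{LS}}\ge(q-1)/(p-1)$. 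Conversely, differentiating the hypercontractive bound at $t=0$ (with $p=2$) recovers the log-Sobolev inequality. Specializing to $p=2$, $q=4$ yields the threshold $T_0=\tfrac{C_{LS}\log 3}{2}$ stated in $(2)$.

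\emph{Step 2: $(2)\Rightarrow(3)$.} Given $\ell$-measurably separated functions $f_1,\dots,f_n$ with $f_k$ $\F_{I_k}$-measurable, $I_k=[a_k,b_k]$ and $a_{k+1}-b_k\ge\ell$, the plan is to use the Markov property to write $\int_\cX|f_1\cdots f_n|\,d\mu$ as an alternating composition, acting on functions on $\S$, of ``block'' operators $T_k\psi(x)=\mathbb{E}_x\big[|f_k(X|_{I_k})|\,\psi(X_{b_k-a_k})\big]$ and ``gap'' propagators $P_{a_{k+1}-b_k}$. Hölder's inequality along each block bounds $T_k$ between suitable $L^r$ spaces by a power of $\|f_k\|_{L^s(\mu)}$, while hypercontractivity along each gap of length $\ge\ell$ buys an integrability gain governed by $e^{2\ell/C_{LS}}$. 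One then chooses a balanced, geometrically adjusted sequence of exponents so that the chain of Hölder-plus-hypercontractivity estimates telescopes into $\prod_k\|f_k\|_{L^{\alpha(\ell)}(\mu)}$ with a single exponent $\alpha(\ell)$; carrying out this optimization is what produces the explicit formula for $\alpha(\ell)$ with $\alpha_0=\tfrac{\log(3/2)}{8T_0}$ and the constraint $\ell_0=12\,T_0$ (a few units of hypercontractive time per gap are needed to simultaneously absorb the two endpoint blocks and still retain a hypercontractive bump, uniformly in $n$). Property \eqref{cond:H-2} is obtained by the same mechanism applied to two blocks, with one additional Poincaré/$L^2$-contraction-to-the-mean step (available since log-Sobolev implies a spectral gap), which supplies the decaying coefficient $\gamma(\ell)$ and the $O(1/\ell)$ bound on $\beta(\ell)-1$.

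\emph{Step 3: $(3)\Rightarrow(2)$, and the main obstacle.} For the converse, specialize \eqref{cond:H-1} with $n=2$ to functions of a single coordinate, $f_1=u(X_0)$ and $f_2=v(X_\ell)$ with $u,v\ge 0$: then $\int_\cX f_1f_2\,d\mu=\langle u,P_\ell v\rangle_{\mu_0}$, so $\langle u,P_\ell v\rangle_{\mu_0}\le\|u\|_{L^{\alpha(\ell)}(\mu_0)}\|v\|_{L^{\alpha(\ell)}(\mu_0)}$ for all such $u,v$. By $L^p$-duality this gives $\|P_\ell v\|_{L^{\alpha(\ell)^*}(\mu_0)}\le\|v\|_{L^{\alpha(\ell)}(\mu_0)}$ with $\alpha(\ell)^*=\alpha(\ell)/(\alpha(\ell)-1)$; since $\alpha(\ell)\downarrow 1$ as $\ell\to\infty$, once $\alpha(\ell)\le 4/3$ this yields $\|P_\ell v\|_{L^4(\mu_0)}\le\|v\|_{L^2(\mu_0)}$, and the semigroup property upgrades this to all $t$ above a threshold, i.e.\ hypercontractivity, with no use of reversibility. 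The only genuinely delicate part of the whole argument is the exponent bookkeeping in Step 2: tracking precisely how much integrability each gap of length $\ge\ell$ contributes and assembling the telescoped estimate into the single clean coefficient $\alpha(\ell)$ of the stated closed form (and pinning down the constant $12$ in $\ell_0=12\,T_0$); everything else is either Gross's theorem or a short Markov-property computation.
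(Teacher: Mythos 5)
The paper gives no proof of this theorem at all — it is a pure citation to \cite[Theorem~5.5.17, Theorem~6.1.3, Corollary~6.1.17]{deuschel2001large} — and your sketch is a faithful reconstruction of exactly those arguments (Gross's interpolation-flow argument for $(1)\Leftrightarrow(2)$ with reversibility used only there, the block/gap H\"older-plus-hypercontractivity telescoping for $(2)\Rightarrow(3)$, and the $n=2$ specialization plus $L^p$-duality for $(3)\Rightarrow(2)$). Your plan is correct and matches the cited route; the only caveat, which the paper itself also glosses over, is that $(3)$ only recovers the ``in particular'' clause of $(2)$ (hypercontractivity from $L^2$ to $L^4$ for some $T>0$) rather than the full $C_{LS}$-indexed family of bounds, since without $(1)$ (hence without reversibility) there is no $C_{LS}$ to speak of.
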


Now we may consider a parameterized family of Markov processes $\{X^\theta\}_{\theta \in \Theta}$ with
correponding Markov semigroup $\{(P^\theta_t)\}_{\theta \in \Theta}$ and starting at stationary distribution $\{\mu_{0, \theta}\}_{\theta \in \Theta}$.
Since we have explicit dependence of the mixing coefficient $\alpha$ on the
log-Sobolev constant $C_{LS}$, we have the following straightforward conclusion.

\begin{proposition}\label{prop:log-Sol}
	$\{X^\theta\}_{\theta \in \Theta}$ is a regular family of hypermixing processes 
	(Definition \ref{def:regfam}) if
	\begin{enumerate}
		\item Each $\mu_{0, \theta}$ satisfies the log-Sobolev inequality with constant $C_{LS}(\theta)$. 
		\item If $\theta_s \convergeto \theta$, then $\mu_{0, \theta_s} \convergeto \mu_{0, \theta}$ weakly, 
		and $P_t^{\theta_s}f \convergeto P_t^\theta f$ in sup norm
		for all $t \ge 0$ and $f \in C_0(\S)$, continuous functions vanishing at infinity. 
		\item If $\theta_s \convergeto \theta$, then $\sup_s C_{LS}(\theta_s) < \infty$.
	\end{enumerate}
\end{proposition}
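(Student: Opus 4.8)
The plan is to verify the two clauses of Definition~\ref{def:regfam} for the family $\{\mu_\theta\}_{\theta\in\Theta}$ of path-space laws on $\cX$. Clause~(2) is the direct payoff of Theorem~\ref{thm:LS_equiv}: given a sequence $\theta_t\convergeto\theta$, hypothesis~(3) yields $\bar C:=\sup_t C_{LS}(\theta_t)<\infty$, and for each $t$ hypothesis~(1) together with Theorem~\ref{thm:LS_equiv} (in the $(P^{\theta_t})$-reversible case, which I take as a standing assumption so that $(1)\Rightarrow(3)$; otherwise one routes through $(1)\Rightarrow(2)\Rightarrow(3)$) shows $\mu_{\theta_t}$ is hypermixing with $\ell_0(t)=12\,T_0(t)$, $T_0(t)=\tfrac12 C_{LS}(\theta_t)\log 3$, and coefficient $\alpha_t(\ell)=\coth(\alpha_0(t)\ell/2)$, where $\alpha_0(t)=\log(3/2)/(8T_0(t))$ — here I have used that the displayed expression for $\alpha$ in Theorem~\ref{thm:LS_equiv} collapses to a hyperbolic cotangent. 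Setting $\bar T_0:=\tfrac12\bar C\log 3$ and $\ell_0:=12\bar T_0$, which dominates every $\ell_0(t)$, the restriction of each $\alpha_t$ to $[\ell_0,\infty)$ is still an admissible coefficient for $\mu_{\theta_t}$, and since $T_0(t)\le\bar T_0$ one gets $\alpha_0(t)\ell_0/2\ge\tfrac34\log(3/2)$ uniformly in $t$; monotonicity of $\coth$ then gives $\sup_t\alpha_t(\ell_0)\le\coth(\tfrac34\log(3/2))<\infty$, which is precisely clause~(2).

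For clause~(1) I must show that $\theta_s\convergeto\theta$ in $\Theta$ implies $\mu_{\theta_s}\convergeto\mu_\theta$ weakly in $\M_1(\cX)$, and the plan has two parts. First, convergence of all finite-dimensional distributions: for $0=t_0<t_1<\dots<t_n$ and $f_0,\dots,f_n\in C_0(\S)$, stationarity and the Markov property give
\[
  \int_\cX \prod_{i=0}^n f_i(x_{t_i})\,d\mu_{\theta_s}
  = \int_\S f_0\cdot P^{\theta_s}_{t_1-t_0}\bigl(f_1\cdot P^{\theta_s}_{t_2-t_1}\bigl(\cdots f_{n-1}\cdot P^{\theta_s}_{t_n-t_{n-1}}f_n\bigr)\bigr)\,d\mu_{0,\theta_s}.
\]
A downward induction over the nested semigroup applications shows the integrand converges in sup norm to its $\theta$-analogue: the base case $P^{\theta_s}_{t_n-t_{n-1}}f_n\convergeto P^\theta_{t_n-t_{n-1}}f_n$ is hypothesis~(2), and the inductive step uses the bound $\|P^{\theta_s}_\tau h_s-P^\theta_\tau h\|\le\|h_s-h\|+\|(P^{\theta_s}_\tau-P^\theta_\tau)h\|$ together with the $\|\cdot\|$-contractivity of Markov semigroups, the Feller property $P^\theta_\tau C_0(\S)\subseteq C_0(\S)$ (so the functions fed into each semigroup stay in $C_0(\S)$, where hypothesis~(2) applies), and hypothesis~(2) itself. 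Because the limit is uniform and $\mu_{0,\theta_s}\convergeto\mu_{0,\theta}$ weakly, the outer integrals converge to $\int_\cX\prod_i f_i(x_{t_i})\,d\mu_\theta$, so all finite-dimensional distributions of $\mu_{\theta_s}$ converge to those of $\mu_\theta$.

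Since weak convergence on $C(\T,\S)$ is strictly stronger than finite-dimensional convergence, the argument must be completed by showing $\{\mu_{\theta_s}\}_s$ is relatively compact in $\M_1(\cX)$; then every subsequential limit shares the finite-dimensional distributions of $\mu_\theta$, hence equals $\mu_\theta$, and $\mu_{\theta_s}\convergeto\mu_\theta$ follows. I would derive relative compactness from a standard path-space tightness criterion (Kolmogorov--Chentsov, or Aldous's criterion on each $C([0,T],\S)$ and then over $\T$): the one-time marginals $\mu_{0,\theta_s}$ are tight as a weakly convergent family, and the remaining ingredient is a uniform-in-$s$ modulus-of-continuity estimate, e.g.\ $\lim_{\delta\to0}\sup_s\prob{\sup_{|u-v|\le\delta,\;u,v\le T}\dist{X^{\theta_s}_u}{X^{\theta_s}_v}>\varepsilon}=0$. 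This is the one point not literally supplied by hypotheses~(1)--(3) and is the main obstacle; I would handle it by either (i) adding a mild equicontinuity assumption on the semigroups near $t=0$ — automatic in the motivating examples such as uniformly elliptic diffusions with bounded coefficients, finite-state chains with bounded jump rates, or Ornstein--Uhlenbeck-type semigroups — or (ii) noting that in discrete time $\T=\nat$ the space $\cX=\S^\nat$ carries the product topology, so finite-dimensional convergence already is weak convergence and this step is vacuous, and Proposition~\ref{prop:log-Sol} holds verbatim. By contrast, clause~(2) and the finite-dimensional convergence in clause~(1) are routine given Theorem~\ref{thm:LS_equiv} and hypothesis~(2).
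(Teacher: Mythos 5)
Your verification of clause (2) of Definition \ref{def:regfam} is correct and in fact more explicit than the paper's one-line justification (``Condition 3 verifies the boundedness condition of mixing coefficient $\alpha_\theta$''): the simplification $\alpha(\ell)=\coth(\alpha_0\ell/2)$, the choice $\ell_0=12\bar T_0$ dominating every $\ell_0(t)$, and the uniform bound $\alpha_t(\ell_0)\le\coth(\tfrac34\log(3/2))$ are all right, and your remark that the implication log-Sobolev $\Rightarrow$ hypercontractivity in Theorem \ref{thm:LS_equiv} uses reversibility is a fair reading of an assumption the proposition leaves implicit.

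For clause (1) you take a genuinely different route from the paper, and the ``main obstacle'' you flag is not actually a gap in the proposition. The paper disposes of clause (1) in one sentence by citing the Trotter--Sova--Kurtz--Mackevi\v{c}ius theorem on convergence of Feller processes (\cite[Theorem 17.25]{kallenberg1997foundations}). The hypotheses of that theorem are exactly conditions (2) of the proposition: convergence of the semigroups on $C_0(\S)$ in sup norm for each $t\ge 0$ together with weak convergence of the initial laws. Its conclusion is weak convergence of the processes in the Skorokhod space $D(\T,\S)$; tightness is part of that theorem's proof and comes from the standing Feller assumption (in particular strong continuity of the semigroups at $t=0$), not from an extra equicontinuity hypothesis. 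Since all the laws here are supported on $C(\T,\S)$, which is closed in $D(\T,\S)$ and on which the relative Skorokhod topology coincides with local uniform convergence, weak convergence in $D$ passes to weak convergence in $\cX=C(\T,\S)$, which is clause (1). So your finite-dimensional computation is a correct (and self-contained) fragment of that classical argument, but the tightness step you propose to patch with an added assumption is already supplied by the cited theorem under the hypotheses as stated; no modification of the proposition is needed, and your discrete-time observation, while valid, is not required either.
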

\begin{proof}
	Condition 1 implies that each $X^\theta$ is hypermixing. Condition 2 implies 
	that if $\theta_t \convergeto \theta$, then $X^{\theta_t} \convergeto X^\theta$
	weakly by classical results on convergence of (Feller) Markov pocesses (see \cite[Theorem 17.25]{kallenberg1997foundations}).
	Condition 3 verifies the boundedness condition of mixing coefficient $\alpha_\theta$.
\end{proof}

\begin{example}[overdamped Langevin dynamics with convex potentials]
	\label{ex:Langevin_convex}
	Suppose we want to perform generalized Bayesian inference on a
	set of parametrized potentials $\{W_\theta\}_{\theta \in \Theta}$ on $\R^d$.
	The Markov processes $\{X^\theta\}$ are given by the corresponding 
	overdamped Langevin stochastic differential equation 
	with unique stationary initial distributions $\mu_{0, \theta} \propto e^{-W_\theta}$:
	\[
		dX^\theta_t = - \nabla W_\theta(X^\theta)\,ds + \sqrt{2} \, dB_t.
	\]
	The corresponding infinitesmal generator $\cL^\theta$ and carr\'e du champ $\Gamma^\theta$ are given by
	\[
		\cL^\theta f = \Delta f - \langle{\nabla W_\theta, \nabla f}\rangle_{\real^d}, 
		\quad \Gamma^\theta(f, g) = \langle{\nabla f, \nabla g}\rangle_{\real^d}.
	\]
	If $\mu_{0, \theta}$ is $\rho_\theta$-strongly log-concave 
	(or $W_\theta$ is $\rho_\theta$-strongly convex), then 
	it satisfies the log-Sobolev inequality with constant $C_{LS}(\theta) = 1 / \rho_\theta$,
	which follows from Bakry-\'Emery curvature-dimension criterion $CD(\rho_\theta, \infty)$
	(see \cite[Section 5.7]{bakry2013analysis}).
	Hence, Condition 1 of Proposition \ref{prop:log-Sol} is satisfied.
	Mild continuity conditions on $\theta \mapsto W_\theta$ will
	verify Condition 2 of Proposition \ref{prop:log-Sol}, for example,
	by continuous dependence
	of solutions of parabolic PDEs defined by $\cL^\theta$ on the parameter $\theta$.
	Similar to above, as long as we have a lower bound $\inf_t \rho_{\theta_t} \ge \rho > 0$ 
	for any converging $(\theta_t)$, Condition 3 of Proposition \ref{prop:log-Sol}
	is satisfied. We have posterior consistency.
\end{example}

The previous example can be generalized in at least two directions. 
The first direction is to consider the Bakry-\'Emery curvature-dimension criterion. 
Indeed, we can easily generalize the same
setting of Langevin dynamics from Euclidean spaces to Riemannian manifolds 
\cite[Section 5.7]{bakry2013analysis}.
The second direction is to consider estimating log-Sobolev constants 
for more general potentials beyond the strongly convex case. 
Recent work such as \cite{menz2014poincare, ma2019sampling} 
provide explicit estimates on log-Sobolev constants when
potentials are locally nonconvex or have complex energy landscapes. 
We will outline how to extend Example \ref{ex:Langevin_convex}
to the case of locally nonconvex potentials described in \cite{ma2019sampling}.
\begin{definition}[local nonconvexity \cite{ma2019sampling}]
	$W: \real^d \to \real$ is $(\ell, \rho, r)$-locally nonconvex if:
	\begin{enumerate}
		\item $\nabla W$ is $\ell$-Lipschitz continuous, and the Hessian $\nabla^2 W$ exists for all points.
		\item W is $\rho$-strongly convex outside $B(0, r)$ the Euclidean ball of radius $r$.
		\item W has a local extremum.
	\end{enumerate}
\end{definition}

One of the main result in \cite{ma2019sampling} is the following
computation of the log-Sobolev constant for $\mu_0 \propto e^{-W}$
with locally nonconvex potential $W$, which follows from
approximating $W$ by a strongly convex potential
and using the Holley-Stroock perturbation principle \cite{holley1987logarithmic}.
\begin{proposition}[\cite{ma2019sampling}]
	Let $W$ be a $(\ell, \rho, r)$-locally nonconvex potential .
	For $\mu_0 \propto e^{-W}$, $\mu_0$ satisfies
	a log-Sobolev inequality with constant $\displaystyle C_{LS} = \frac{2}{\rho}e^{16\, \ell\, r^2}.$
\end{proposition}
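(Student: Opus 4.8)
The plan is to follow the two-step route indicated just before the statement: approximate $W$ by a globally strongly convex potential $\tilde W$ that coincides with $W$ off the ball $B(0,r)$, apply the Bakry-\'Emery criterion to $\tilde\mu_0 \propto e^{-\tilde W}$, and then transfer the log-Sobolev inequality from $\tilde\mu_0$ to $\mu_0 \propto e^{-W}$ by the Holley-Stroock perturbation principle. The only genuinely quantitative input is a sup-norm bound for $W - \tilde W$ on $B(0,r)$, which the $\ell$-Lipschitzness of $\nabla W$ together with the existence of a local extremum make available.

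First I would construct $\tilde W$. Using that $\nabla W$ is $\ell$-Lipschitz together with the local extremum $x_0$ of $W$ (this is where that hypothesis enters, to keep $\abs{\nabla W}$ of size $O(\ell r)$ on a fixed multiple of $B(0,r)$), one gets $\mathrm{osc}_{B(0,r)} W = O(\ell r^2)$. Then set $\tilde W := W$ on $\real^d \setminus B(0,r)$ and, on $B(0,r)$, replace $W$ by a smooth interpolant built from the quadratic $\tfrac{\rho}{2}\abs{x - x_0}^2 + \mathrm{const}$ patched to $W$ across a thin annulus, arranged so that the result is $C^1$ (smooth after mollification) and globally $\rho$-strongly convex; since the quadratic has Hessian $\rho I$ and $W$ is already $\rho$-strongly convex outside $B(0,r)$, only the transition layer needs care, and at worst one settles there for $\rho/2$-strong convexity, which only costs a harmless factor $2$ downstream. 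Applying the same gradient estimate to $W$ and to $\tilde W$ on $B(0,r)$ and normalizing $\tilde W(x_0) = W(x_0)$ yields $\norm{W - \tilde W} \le 8\ell r^2$, hence $\mathrm{osc}(W - \tilde W) \le 16\ell r^2$.

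With $\tilde W$ in hand the rest is mechanical. Since $\tilde W$ is $\rho$-strongly convex, the curvature-dimension criterion $CD(\rho,\infty)$ --- exactly as invoked in Example \ref{ex:Langevin_convex}, see \cite[Section 5.7]{bakry2013analysis} --- shows that $\tilde\mu_0 \propto e^{-\tilde W}$ satisfies the log-Sobolev inequality with constant $1/\rho$ in the normalization $K(\lambda\,\|\,\cdot) \le \tfrac{C_{LS}}{2}I(\lambda\,\|\,\cdot)$. Writing $d\mu_0 = Z^{-1}e^{-(W - \tilde W)}\,d\tilde\mu_0$ with $W - \tilde W$ bounded, the Holley-Stroock perturbation principle \cite{holley1987logarithmic} multiplies the log-Sobolev constant by $e^{\mathrm{osc}(W - \tilde W)} \le e^{16\ell r^2}$; absorbing the factor $2$ that the patching of $\tilde W$ may cost (for instance if the surrogate is only $\rho/2$-strongly convex in the transition layer, so that Bakry-\'Emery gives $2/\rho$) yields $C_{LS} = \tfrac{2}{\rho}e^{16\ell r^2}$.

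The main obstacle is entirely in the construction step: producing an explicit $\tilde W$ that is simultaneously $C^1$ (so that Bakry-\'Emery applies), globally $\rho$-strongly convex, equal to $W$ outside $B(0,r)$, and within $O(\ell r^2)$ of $W$ on the ball, and then book-keeping the numerical constants so that they land exactly on $\tfrac{2}{\rho}e^{16\ell r^2}$ rather than a larger constant of the same shape. Once the surrogate is built, the conclusion is just the composition of Bakry-\'Emery with Holley-Stroock, both of which are invoked elsewhere in this section.
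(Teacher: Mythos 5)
The paper does not reproduce a proof of this proposition---it is cited directly from \cite{ma2019sampling}---but the sentence immediately preceding it states that the result ``follows from approximating $W$ by a strongly convex potential and using the Holley-Stroock perturbation principle,'' which is exactly the two-step route you take (construct a globally $\rho/2$-strongly convex surrogate $\tilde W$ agreeing with $W$ off $B(0,r)$, apply Bakry-\'Emery to get $2/\rho$, then transfer via Holley-Stroock with the oscillation bound $\mathrm{osc}(W-\tilde W)\le 16\ell r^2$). Your reconstruction matches the indicated approach and lands on the stated constant; the only portion not pinned down is the explicit smooth patching of $\tilde W$ across the annulus with the precise numerical bookkeeping, which you correctly flag as the remaining work.
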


\begin{example}[overdamped Langevin dynamics with locally nonconvex potentials]
	We employ the same setting as Example \ref{ex:Langevin_convex}.
	Now we have a parametrized family of potentials $\{W_\theta\}_{\theta \in \Theta}$,
	where $W_\theta$ is $(\ell_\theta, \rho_\theta, r_\theta)$-locally nonconvex.
	Again, based on Proposition \ref{prop:log-Sol}, 
	the parametrized family of Markov processes is regular,
	which implies posterior consistency, if
	for any converging $(\theta_t)$, we have
	$\sup_t \ell_{\theta_t} < \infty$, 
	$\sup_t r_{\theta_t} < \infty$,
	and $\inf_t \rho_{\theta_t} > 0$. 

	The assumption of local nonconvexity is standard in
	the literature of sampling from nonconvex potentials. See \cite{cheng2018sharp}
	and references therein.
	 Local nonconvexity holds for many mixture models with strongly log-concave priors,
	as mentioned in \cite{ma2019sampling}. 
	For example, if $W$ is the negative log-likelihood of a mixture of Gaussians, 
	$W$ is locally nonconvex.
\end{example}

Basically, in the case of Markov processes, we can almost use
our framework as a black box. Whenever there are new explicit
estimates for the log-Sobolev constants and related conditions,
one can obtain new posterior consistency results.
Such estimates are available for other types of Markov processes,
such as interacting particle systems, like 
stochastic Ising models \cite{holley1987logarithmic} and 
McKean-Vlasov dynamics \cite{guillin2022uniform}. 

It is well-known that many non-reversible Markov processes
do not satisfy the log-Sobolev inequality. Still,
some of them can be proved to be hypercontractive, which
allows us to obtain conditions for posterior consistency.
We sketch the argument for two classes of examples below. 

\begin{example}[Beyond log-Sobolev inequality: underdamped Langevin dynamics]
Since our primary example above is overdamped Langevin dynamics,
it is natural to consider the case of underdamped Langevin dynamics.
These are degenerate, hypoelliptic SDEs of the following form:
\[
	\begin{cases}
		dX_t = V_t \, dt, \\
		dV_t = -\nabla W(X_t)\,dt - V_t \,dt + \sqrt{2}\,dB_t,
	\end{cases}
\]
also known as examples of stochastic Hamiltonian systems.
It is easy to check that these Markov processes are non-reversible, and the log-Sobolev inequality does not 
hold (see \cite{wang2017hypercontractivity}).
However, hypercontractivity still holds under natural assumptions, 
which is proved in \cite{wang2017hypercontractivity}
by leveraging other tools: Harnack inequality, coupling, and
concentration. Hence, these are also hypermixing processes by
Theorem \ref{thm:LS_equiv}.
Unfortunately, the constant for hypercontractivity is harder to keep track
in the proof. 
Nevertheless, we believe that if we rigorously follow the
constants line by line, we can work out the explicit dependence on  parameters of potential $W$, hence providing explicit 
conditions for exponential continuity.
\end{example}

\begin{example}[Beyond log-Sobolev inequality: stochastic delay equations]
Another natural class of 
non-reversible Markov processes
do not satisfies the log-Sobolev inequality is
called stochastic delay equations (see \cite{bao2015hypercontractivity1}
for the explanation). These are SDEs of the form:
\[
	dX_t = b(X_t)\,dt + \sigma(X_t)\,dB_t
\]
where the state space is the space of segments of paths, i.e., 
$X_t \in C_b([-\epsilon, 0], \real^d)$, 
which is also the domain of $b, \sigma$. Here $\epsilon$ is a delay
parameter. 
We believe that these are $\epsilon$-Markov processes
as mentioned at the begining of the section
and introduced in \cite{chiyonobu1988large}.
Although the log-Sobolev inequality is not applicable,
hypercontractivity still holds, which is proved in 
\cite{bao2015hypercontractivity1} and also \cite{bao2015hypercontractivity2}
for the infinite-dimensional case, i.e., SPDEs,
by using the same tools as 
in the case of stochastic Hamiltonian systems above.
Again, the constant for hypercontractivity is also harder to track, but still it seems possible to work out explicitly the dependence
on the parameters.
\end{example}

\section{Example: Gibbs processes on shifts of finite type}
\label{sec:ex_Gibbs}
We proved posterior consistency for some families of continuous-time stochastic processes. We now show that the same framework can be used to prove posterior consistency of Gibbs processes on shifts of finite type (SFT). These are discrete-time, discrete-state dynamical systems that include finite-state 
Markov chains with arbitrary order of dependencies. As posterior consistency was already proven in \cite{mcgoff2019gibbs}, we will be concise on some of the technical details. Our motivation for including this example is to highlight the flexibility of the large deviation framework that can be used to prove posterior consistency.

Consider $\T = \nat$ and $\S$ is a finite alphabet endowed with the discrete topology.
We endow $\cX = \S^\nat$ with the product topology and a compatible metric $d_\cX$,
for example, $d_\cX(x, x') = 2^{-n(x,x')}$ where $n(x, x')$ is the infimum of those $m$
such that $x_m \neq x'_m$.

\begin{definition}
\label{def:Gibbs}
Given a H\"older continuous potential function $\phi: \cX \to \real$, a Borel probability measure $\mu \in \M_1(\cX, \xmap)$ is the corresponding Gibbs measure, if the following Gibbs property is satisfied:
there exists constants $\cPP \in \real$ and $N > 0$ s.t. 
for all $x \in \cX$ and $t \ge 1$,
\begin{equation} \label{eq:Gibbs}
    N^{-1} \le \frac{\mu([x_0, x_1, \ldots, x_{t-1}])}{\exp \paran{-\cPP t + \phi^t(x)}} \le N,
\end{equation}
where we write 
    $[w] = \set{\bar{x} \in \cX \,;\, \bar{x}_i = w_i, \, i = 0, \ldots, t - 1}$
for any $w \in \S^{t}$. 
We call the process $(X_t)_{t \in \T}$ with law $\mu$ the Gibbs process
corresponding to $\phi$.
\end{definition}

See \cite{bowen1975equilibrium} for details of Gibbs processes. 
For notational convenience, we focus on the case of the
one-sided full shift, but one may also prove the same 
result for two-sided mixing subshift as in \cite{mcgoff2019gibbs}.
\begin{example}
We refer back to \cite{mcgoff2019gibbs} for concrete examples 
of Gibbs processes 
which include finite state 
Markov chains with arbitrary order of dependencies
and models from statistical physics, such as Ising models, Potts models,
and nearest neighbor spin systems. 
\end{example}

\subsection{Large deviation asymptotics for one Gibbs process}
Recall that the first step is to prove \eqref{eq:var-single} for a
single Gibbs measure $\mu$. Unlike the previous example,
instead of directly proving the needed large deviation behavior 
for $\mu$, we reduce the problem to the case of product measures,
using ideas of exponential tilting.

Let $\sigma_0 \in \M_1(\S)$ be the uniform probability measure 
on the finite alphabet $\S$, and let
 $\sigma = \sigma_0^{\otimes \T} \in \M_1(\cX)$ be the corresponding 
product measure on $\cX$, which is hypermixing.
\begin{lemma} \label{lem:GibbsToCount}
    If $E \in \F_{t-1}$, then
    \[
        N^{-1} \le \frac{\mu(E)}{|S|^t \int_E \exp(-\cPP t + \phi^t(x))d\sigma(x)} \le N.
    \]
\end{lemma}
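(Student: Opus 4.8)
The plan is to use that, because $\S$ is finite, $\F_{t-1} = \sigma(X_0,\dots,X_{t-1})$ is a \emph{finite} $\sigma$-algebra whose atoms are exactly the length-$t$ cylinder sets, so it suffices to prove the estimate on a single cylinder and then sum. Concretely, I would first write $E \in \F_{t-1}$ as a finite disjoint union $E = \bigsqcup_{w \in W}[w]$ with $W \subseteq \S^t$, where $[w] = \set{x \in \cX : x_i = w_i,\ i = 0,\dots,t-1}$. One may assume $E \neq \emptyset$: otherwise $\mu(E) = 0$ and the denominator is also $0$, and there is nothing to prove; note also that $\sigma([w]) = |\S|^{-t} > 0$, so for $E \neq \emptyset$ the integral $\int_E \exp(-\cPP t + \phi^t(x))\,d\sigma(x)$ is strictly positive since the integrand is, making the ratio well defined.

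Next, I would fix $w \in W$ and apply the Gibbs property \eqref{eq:Gibbs} \emph{pointwise} at each $x \in [w]$ — this is the one place where a little care is needed, since $\phi$ is only Hölder, so $\phi^t$ is not constant on $[w]$ and one cannot just evaluate it at a single representative. This yields
\[
    N^{-1}\,\mu([w]) \le \exp\paran{-\cPP t + \phi^t(x)} \le N\,\mu([w]) \qquad \text{for all } x \in [w].
\]
Integrating over $[w]$ against $\sigma$ and using $\sigma([w]) = |\S|^{-t}$ gives
\[
    N^{-1}\,|\S|^{-t}\,\mu([w]) \le \int_{[w]} \exp\paran{-\cPP t + \phi^t(x)}\,d\sigma(x) \le N\,|\S|^{-t}\,\mu([w]).
\]
Multiplying by $|\S|^t$ and summing over $w \in W$, by additivity of $\mu$ and of the integral over the disjoint cylinders, produces
\[
    N^{-1}\,\mu(E) \le |\S|^t \int_E \exp\paran{-\cPP t + \phi^t(x)}\,d\sigma(x) \le N\,\mu(E),
\]
and dividing through gives the claimed two-sided bound.

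I do not expect any genuine obstacle here: the argument is entirely elementary once one passes to the cylinder decomposition. The only subtlety to flag is the pointwise use of \eqref{eq:Gibbs} inside the integral over each cylinder, rather than at a chosen point of the cylinder.
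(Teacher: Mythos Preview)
Your proof is correct and takes essentially the same approach as the paper: decompose $E$ into its constituent length-$t$ cylinders, apply the Gibbs bound \eqref{eq:Gibbs} on each cylinder, use $\sigma([w]) = |\S|^{-t}$, and sum. The paper phrases the per-cylinder step slightly differently (bounding $\mu([w])$ by $N$ times the minimum of $\exp(-\cPP t + \phi^t)$ over $[w]$, then the minimum by the $\sigma$-average), but this is equivalent to your pointwise-then-integrate formulation.
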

\begin{proof}
    Let $E \in \F_{t-1}$. Then $\one_E$ only depends on the first $t$ coordinates, so
    by conditioning on the first $t$ coordinates and the Gibbs property \eqref{eq:Gibbs},
    \begin{align*}
        \mu(E) & = \sum_{w \in \S^t} \mu([w]) \one_E(w) \\
        & \le N \sum_{w \in \S^t} 
            \min_{x \in [w]} \one_E(x) \exp(-\cPP t + \phi^t(x)) \\
        & \le  N \sum_{w \in \S^t} \frac{1}{\sigma([w])}  \int_{[w]} \one_E(x) \exp(-\cPP t + \phi^t(x))\,d\sigma(x) \\
        & = N |\S|^t \int_E \exp(-\cPP t + \phi^t(x))\,d\sigma(x).
    \end{align*}
    The other side of inequality is proved similarly by replacing $\min$ with $\max$.
\end{proof}
For convenience, we write
$\inprod{\eta}{f} := \int f\,d\eta$ for $\eta \in \M_1(\cX \times \cY)$ 
and $f \in C_{b,loc}(\cX \times \cY)$.
Following the same idea in \cite[Chapter 6]{rassoul2015course},
we denote the periodic empirical process by
$\widetilde{M}_t(x,y) := M_t(\widetilde{x}^{(t)}, y)$,
where $\widetilde{x}^{(t)} \in \cX$
is given by $\widetilde{x}^{(t)}_s = x_s$ for $0 \le s \le t - 1$
and $\widetilde{x}^{(t)}_s = \widetilde{x}^{(t)}_r$ if $s = r \mod t$.
For any $f \in C_{b, loc}(\cX \times \cY)$ and any $y \in \cY$,
\begin{equation} \label{eq:approxByPeri}
   \lim_{t \to \infty} \sup_{x \in \cX} \abs{\inprod{{M}_t(x,y)}{f} -  \inprod{\widetilde{M}_t(x,y)}{f}} = 0.
\end{equation}
The key point of introducing $\widetilde{M}_t(x,y)$ is that $\one_{\set{\widetilde{M}_t(\cdot,y) \in E}}$ is $\F_{t-1}$-measurable 
for any Borel set $E \subset \M_1(\cX \times \cY)$. 
Also, when $X \sim \sigma$, using \eqref{eq:approxByPeri},
we can prove the same large deviation result for $\widetilde{M}_t(X,y)$
as that for ${M}_t(X,y)$ in the previous section.
Hence, we can reduce the problem of proving \eqref{eq:var-single} of the Gibbs measure $\mu$
to the product measure $\sigma$ by Lemma \ref{lem:GibbsToCount}:
\begin{align*}
     \lim_{t \to \infty} &\, \frac{1}{t} \log \int_\cX \exp({f^t}(x,y)) \,d\mu(x) \\
    & = \lim_{t \to \infty}  \frac{1}{t} \log \int_{\cX} 
    \exp\paran{t\inprod{{M}_t(x,y)}{f}} d{\mu}(x) \\ 
    & = \lim_{t \to \infty}  \frac{1}{t} \log \int_{\cX} \exp\paran{t\inprod{\widetilde{M}_t(x,y)}{f}} d{\mu}(x) \\ 
    & = \log|\S| - \cPP + \lim_{t \to \infty} \frac{1}{t} \log \int_{\cX} \exp\paran{t\inprod{\widetilde{M}_t(x,y)}{f + \phi}} d{\sigma}(x) \\
    & = \log|\S| - \cPP + \sup_{\lambda \in \J(\xmap : \nu)}\set{\int f + \phi \,d\lambda - h(\lambda\,\|\,\sigma \otimes\nu)}
\end{align*}
for $\nu$-a.e. $y \in \cY$. 
One can check, as Example \ref{eg:SFT}, that $h(\lambda\,\|\,\sigma \otimes\nu) = \log |\S| - h^\nu(\lambda)$, where $h^\nu(\lambda)$ is the fibre entropy (see \cite{kifer2006random}) of
$\lambda$ with respect to $\nu$. Therefore, we recover that for any $f \in C_{b,loc}(\cX \times \cY)$,
it holds that for $\nu$-a.e. $y \in \cY$,
\begin{align*}
    \lim_{t \to \infty} & \frac{1}{t} \log \int_\cX \exp({f^t}(x,y)) \,d\mu(x)
    = \sup_{\lambda \in \J(\xmap : \nu)}\set{\int f \,d\lambda - \cPP + \int \phi \,d\lambda + h^\nu(\lambda)}.
\end{align*}
Although the above is already sufficient for our Assumption \ref{assumption:loss} on loss functions, we note that with a little more work we can recover the above equality for $f \in C_b(\cX \times \cY)$.
One can also prove that for $\lambda \in \J(\xmap : \nu)$,
\[
    h(\lambda \,\|\, \mu \otimes \nu) = \cPP - \int \phi \,d\lambda - h^\nu(\lambda)
\]
by following similar ideas in \cite{chazottes1998relative}
and \cite[Lemma 7]{mcgoff2019gibbs}.

\begin{remark}[Gibbs processes on general state spaces]
    One can establish the same convergence result for Gibbs processes on
    general state spaces by a similar exponential tilting argument 
    (see \cite[Chapter 8]{rassoul2015course}),
    but the exponential continuity in this case may not be 
    as straightforward.
\end{remark}

{ \subsection{The exponentially continuous family of Gibbs processes}}
\noindent We first recall the definition of a regular model family 
$\set{\mu_\theta}_{\theta \in \Theta}$ in \cite{mcgoff2019gibbs}.
Recall that to obtain the main convergence result \eqref{eq:variational}, 
it suffices to prove exponential continuity (Definition $\ref{def:exp-cont}$) of 
$\set{\mu_\theta}_{\theta \in \Theta}$ with respect to the loss
function $L$.

\begin{definition}
    A family $\set{\phi_\theta}_{\theta \in \Theta}$ of potential functions
    is regular if all the $\phi_\theta: \cX \to \real$ are $r$-H\"older continuous for 
    some $r > 0$ and the map $\theta \mapsto f_\theta$ is continuous 
    with respect to the usual H\"older norm $\|\cdot\|_r$.
    We denote the corresponding regular family of Gibbs measures by 
    $\set{\mu_\theta}_{\theta \in \Theta}$.
\end{definition}
When $\set{\phi_\theta}_{\theta \in \Theta}$ is a regular family, the 
corresponding map $\theta \mapsto \mu_\theta$ is continuous with respect
to the weak topology on measures. Moreover, the maps from $\theta$ to
the corresponding constants $N_\theta$ and $\cPP_\theta$ in \eqref{eq:Gibbs} are also continuous
(see \cite{alves2019equilibrium}).
As a result, by the compactness of $\Theta$, we have
a uniform Gibbs property: there exist a uniform constant $N$ and 
a continuous map $\theta \to \cPP_\theta$ such that 
\begin{equation} \label{eq:uniformGibbs}
    N^{-1} \le \frac{\mu(\set{\bar{x} \in \cX \,;\, \bar{x}_i = x_i, \, i = 0, \ldots, t - 1})}{\exp \paran{-\cPP_\theta t + \phi^t_\theta(x)}} \le N.
\end{equation}

We have shown that given a fixed $\theta \in \Theta$, 
for $\nu$-a.e. $y \in \cY$,
\[
    \lim_{t \to \infty} \frac{1}{t} \log \int_\cX \exp(-L^t_\theta(x,y)) \,d\mu_\theta(x)
    = - V(\theta),
\]
as long as $L_\theta \in C_{b,loc}(\cX \times \cY)$, where
\[
    V(\theta) = \inf_{\lambda \in \J(\xmap : \nu)}\set{\int L_\theta \,d\lambda + \cPP_\theta - \int \phi_\theta \,d\lambda - h^\nu(\lambda)}.
\]
Finally, we show that $\set{\mu_\theta}_{\theta \in \Theta}$ is exponential continuous 
(Definition $\ref{def:exp-cont}$) with respect to the loss
function $L$, by exploiting the uniform Gibbs property \eqref{eq:uniformGibbs}
in basically the same way as in \cite{mcgoff2019gibbs}.
\begin{lemma} \label{lem:lossRatio}
    Suppose $L$ is a loss function satisfying Assumption \ref{assumption:loss}.  Then for any $\epsilon > 0$,
    there exists $T > 0$ such that
    for all $\theta, \theta' \in \Theta$, $y \in \cY$, and $t \ge 1$,
    \[
    \frac{\int_\cX \exp(-L^t_{\theta}(x, y)) \, d\mu_{\theta}(x)}{
       \int_\cX \exp(-L^t_{\theta'}(x', y)) \, d\mu_{\theta'}(x')}
      \le N^2 e^{  t(\omega(d_\Theta(\theta, \theta')) + \epsilon)
        + (t + T)(|\cPP_\theta - \cPP_{\theta'}| + \|\phi_\theta - \phi_{\theta'}\|)}.
    \]
\end{lemma}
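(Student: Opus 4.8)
The plan is to reduce the ratio of integrals to a cylinder-by-cylinder comparison of $\mu_\theta$ and $\mu_{\theta'}$ through the uniform Gibbs property \eqref{eq:uniformGibbs}, after first replacing each loss function by a truncation that has a \emph{common}, finite measurability radius. The main obstacle is exactly that membership of $L_\theta$ in $C_{b,loc}(\cX\times\cY)$ only yields a measurability radius $r(\theta)$ that may depend on $\theta$, whereas the exponent in the statement requires a single $T=T(\epsilon)$; this is resolved using that Assumption~\ref{assumption:loss}(2) forces one modulus of continuity $\omega$ for every $L_\theta(\cdot,y)$ in the $\cX$-variable, uniformly in $\theta$ and $y$.

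\textbf{Step 1: a comparison lemma for local functions.} First I would prove that if $g\colon\cX\to[0,\infty)$ is $\F_{s-1}$-measurable, then for all $\theta,\theta'\in\Theta$,
\[
\int_\cX g\,d\mu_\theta\ \le\ N^2\,e^{\,s(|\cPP_\theta-\cPP_{\theta'}|+\|\phi_\theta-\phi_{\theta'}\|)}\int_\cX g\,d\mu_{\theta'}.
\]
Since $g$ is constant on each cylinder $[w]$ with $w\in\S^s$, one expands $\int_\cX g\,d\mu_\theta=\sum_{w\in\S^s}g(w)\,\mu_\theta([w])$, fixes a representative $x^w\in[w]$, and uses \eqref{eq:uniformGibbs} to bound $\mu_\theta([w])\le N\exp(-\cPP_\theta s+\phi_\theta^s(x^w))$ and $\mu_{\theta'}([w])\ge N^{-1}\exp(-\cPP_{\theta'} s+\phi_{\theta'}^s(x^w))$. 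Writing $\exp(-\cPP_\theta s+\phi_\theta^s(x^w))$ as $\exp((\cPP_{\theta'}-\cPP_\theta)s+\phi_\theta^s(x^w)-\phi_{\theta'}^s(x^w))$ times $\exp(-\cPP_{\theta'} s+\phi_{\theta'}^s(x^w))$, and bounding $(\cPP_{\theta'}-\cPP_\theta)s\le s|\cPP_\theta-\cPP_{\theta'}|$ and $\phi_\theta^s(x^w)-\phi_{\theta'}^s(x^w)\le s\|\phi_\theta-\phi_{\theta'}\|$, then summing over $w$ gives the inequality; the common constant $N$ in \eqref{eq:uniformGibbs} is what yields $N^2$ rather than $N_\theta N_{\theta'}$.

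\textbf{Step 2: a uniform truncation of the loss.} Fix $\epsilon>0$. Let $\rho_T\colon\cX\to\cX$ freeze every coordinate of index $>T$ to a fixed symbol of $\S$, and set $\tilde L_\theta(x,y):=L_\theta(\rho_T x,y)$, so that $x\mapsto\tilde L_\theta(x,y)$ is $\F_T$-measurable and hence $\tilde L_\theta^t(\cdot,y)$ is $\F_{t-1+T}$-measurable. By Assumption~\ref{assumption:loss}(2), taking equal parameters gives $\|\tilde L_\theta-L_\theta\|\le\sup_{0\le r\le 2^{-(T+1)}}\omega(r)$ uniformly in $\theta$ and $y$, and taking equal $\cX$-arguments gives $|\tilde L_\theta-\tilde L_{\theta'}|\le\omega(d_\Theta(\theta,\theta'))$ pointwise. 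Since $\omega(r)\to 0$ as $r\to0^+$, I would choose $T$ — depending only on $\epsilon$ — with $\sup_{0\le r\le 2^{-(T+1)}}\omega(r)\le\epsilon/2$; this $T$ is the constant in the statement.

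\textbf{Step 3: chaining the bounds.} Using in turn $-L_\theta^t\le-\tilde L_\theta^t+t\epsilon/2$, then $-\tilde L_\theta^t\le-\tilde L_{\theta'}^t+t\,\omega(d_\Theta(\theta,\theta'))$, then Step~1 applied to $g=\exp(-\tilde L_{\theta'}^t(\cdot,y))$ with $s=t+T$, then $-\tilde L_{\theta'}^t\le-L_{\theta'}^t+t\epsilon/2$, one obtains
\[
\int_\cX e^{-L^t_\theta(x,y)}\,d\mu_\theta(x)\ \le\ N^2\,e^{\,t(\omega(d_\Theta(\theta,\theta'))+\epsilon)+(t+T)(|\cPP_\theta-\cPP_{\theta'}|+\|\phi_\theta-\phi_{\theta'}\|)}\int_\cX e^{-L^t_{\theta'}(x,y)}\,d\mu_{\theta'}(x),
\]
and dividing by the denominator gives the claim. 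I expect Step~2 — locating the uniform radius $T$ — to be the only conceptual point; Steps~1 and~3 are routine manipulations with the two-sided Gibbs bounds and the modulus of continuity.
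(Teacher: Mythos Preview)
Your proof is correct and follows essentially the same route as the paper's: both arguments partition $\cX$ into cylinders of length $t+T$, use the uniform Gibbs property \eqref{eq:uniformGibbs} to compare $\mu_\theta$ and $\mu_{\theta'}$ on each cylinder (your Step~1 is exactly the paper's \eqref{eq:GibbsRatio}), and use the uniform modulus $\omega$ from Assumption~\ref{assumption:loss}(2) to control the loss within cylinders (your Step~2 is the paper's choice of $T$). The only cosmetic difference is that the paper works directly with conditional measures $\mu_\theta(\cdot\mid[w])$ while you introduce an explicit truncation map $\rho_T$ to make the integrand $\F_{t+T-1}$-measurable before comparing; the chaining in your Step~3 then reproduces the paper's final string of inequalities.
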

\begin{proof}
Let $\theta, \theta' \in \Theta$ and $y \in \cY$. 
Note that by the uniform Gibbs property \eqref{eq:uniformGibbs}, for any $t \ge 1$ and
$w \in \S^{t}$,
\begin{equation} \label{eq:GibbsRatio}
    \frac{\mu_\theta([w])}{\mu_{\theta'}([w])} 
    \le N^2 e^{ t(|\cPP_\theta - \cPP_{\theta'}| + \|\phi_\theta - \phi_{\theta'}\|)}.
\end{equation}
Also, by Assumption \ref{assumption:loss}, we see that
\begin{equation} \label{eq:expLossRatio}
    \frac{\exp(-L^t_{\theta}(x, y))}{\exp(-L^t_{\theta'}(x', y))}
    \le e^{  t(\omega(d_\Theta(\theta, \theta')) + \sum_{s=0}^{t-1} \omega(d_\cX(\xmap^s x, \xmap^s x'))}.
\end{equation}
Take $T > 0$ such that for any $x, x' \in \cX$ with 
$x_i = x'_i$ for $0 \le i \le T-1$, we have $d_\cX(x, x') < \epsilon$. 
Using \eqref{eq:expLossRatio} and \eqref{eq:GibbsRatio}, we obtain
\begin{align*}
    \int_\cX & \exp(-L^t_{\theta}(x, y)) \, d\mu_{\theta}(x) \\
    & = \sum_{w \in \S^{t+T}} \mu_\theta([w])
         \int_{[w]} \exp(-L^t_{\theta}(x, y)) \, d\mu_{\theta}(x\,|\,[w]) \\
    & \le e^{  t(\omega(d_\Theta(\theta, \theta')) + \epsilon)} 
    \sum_{w \in \S^{t+T}} \mu_{\theta}([w]) 
        \int_{[w]} \exp(-L^t_{\theta'}(x', y)) \, d\mu_{\theta'}(x'\,|\,[w]) \\
    & \le N^2 e^{  t(\omega(d_\Theta(\theta, \theta')) + \epsilon) 
        + (t + T)(|\cPP_\theta - \cPP_{\theta'}| + \|\phi_\theta - \phi_{\theta'}\|)} \\
    & \quad \cdot \sum_{w \in \S^{t+T}} \mu_{\theta'}([w]) 
        \int_{[w]} \exp(-L^t_{\theta'}(x', y)) \, d\mu_{\theta'}(x'\,|\,[w]) \\
    & = N^2 e^{  t(\omega(d_\Theta(\theta, \theta')) + \epsilon)
        + (t + T)(|\cPP_\theta - \cPP_{\theta'}| + \|\phi_\theta - \phi_{\theta'}\|)} \\
    & \quad \cdot \int_\cX \exp(-L^t_{\theta'}(x', y)) \, d\mu_{\theta'}(x').
\end{align*}
\end{proof}
\begin{corollary}
    Suppose $L$ is a loss function satisfying Assumption \ref{assumption:loss},
    and $\set{\mu_\theta}_{\theta \in \Theta}$ is a regular family
    of Gibbs processes.
    Then $\set{\mu_\theta}_{\theta \in \Theta}$ is an exponentially
    continuous family with respect to the loss function $L$.
\end{corollary}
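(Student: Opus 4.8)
The plan is to verify the two clauses of Definition~\ref{def:exp-cont} directly, reusing the work already done in the preceding subsection. The first clause — that for every $\theta\in\Theta$ the limit $\lim_{t\to\infty}\tfrac{1}{t}\log\partition_t^{\theta}(y)=:-V(\theta)$ exists for $\nu$-a.e.\ $y$, with $V(\theta)=\inf_{\lambda\in\J(\xmap:\nu)}\set{\int L_\theta\,d\lambda+\cPP_\theta-\int\phi_\theta\,d\lambda-h^\nu(\lambda)}$ — has already been established via the exponential-tilting reduction of $\mu_\theta$ to the product measure $\sigma$, valid whenever $L_\theta\in C_{b,loc}(\cX\times\cY)$, which holds under Assumption~\ref{assumption:loss}. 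So the only thing left is the second clause: fixing $\theta$ and a sequence $\theta_t\convergeto\theta$, show $\tfrac{1}{t}\log\partition_t^{\theta_t}(y)\convergeto-V(\theta)$ on a full-$\nu$-measure set of $y$.

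For this I would apply Lemma~\ref{lem:lossRatio} twice, to the ordered pairs $(\theta_t,\theta)$ and $(\theta,\theta_t)$, to sandwich $\partition_t^{\theta_t}(y)/\partition_t^{\theta}(y)$ between $N^{-2}e^{-E_t(\epsilon)}$ and $N^2e^{E_t(\epsilon)}$ for every $t\ge1$ and every $y\in\cY$, where $E_t(\epsilon):=t(\omega(d_\Theta(\theta_t,\theta))+\epsilon)+(t+T(\epsilon))(\abs{\cPP_{\theta_t}-\cPP_\theta}+\norm{\phi_{\theta_t}-\phi_\theta})$ and $T(\epsilon)$ is the constant from that lemma. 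Now fix any $y$ in the full-measure set on which $\tfrac{1}{t}\log\partition_t^{\theta}(y)\to-V(\theta)$; this set depends only on $\theta$. Taking $\tfrac{1}{t}\log$ of the sandwich, and using that $T(\epsilon)$ is a fixed constant so $(t+T(\epsilon))/t\to1$, I would obtain $-\epsilon-V(\theta)\le\liminf_{t}\tfrac{1}{t}\log\partition_t^{\theta_t}(y)\le\limsup_{t}\tfrac{1}{t}\log\partition_t^{\theta_t}(y)\le\epsilon-V(\theta)$, because $\omega(d_\Theta(\theta_t,\theta))\to\omega(0)=0$ by the modulus-of-continuity hypothesis, while $\cPP_{\theta_t}\to\cPP_\theta$ and $\norm{\phi_{\theta_t}-\phi_\theta}\to0$ by regularity of $\set{\phi_\theta}_{\theta\in\Theta}$ (the sup norm being dominated by the H\"older norm in which the family is continuous). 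Letting $\epsilon\downarrow0$ gives \eqref{eq:exp-cont}, so $\set{\mu_\theta}_{\theta\in\Theta}$ is exponentially continuous; Lemma~\ref{lem:exp-cont-v}, Proposition~\ref{prop:expcont} and Theorem~\ref{thm:posterior} then deliver posterior consistency.

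The step that needs care — though it amounts to invoking the right uniformity rather than to any genuine difficulty — is that the constant $T$ in Lemma~\ref{lem:lossRatio} must depend on $\epsilon$ alone, not on $t$ or on the parameters, so that the moving index $\theta_t$ in \eqref{eq:exp-cont} does not interfere with the large-$t$ asymptotics. That uniformity traces back to the uniform Gibbs property \eqref{eq:uniformGibbs}, obtained from continuity of $\theta\mapsto(N_\theta,\cPP_\theta)$ together with compactness of $\Theta$, and to the fact that $d_\cX$ metrizes the product topology on $\S^\nat$, so a prescribed $d_\cX$-closeness is guaranteed by agreement on a fixed finite number of coordinates. Everything else is bookkeeping; in contrast to the hypermixing case, no separate exponential-approximation estimate is required, because Lemma~\ref{lem:lossRatio} already holds for every $y\in\cY$.
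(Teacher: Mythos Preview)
Your proposal is correct and follows essentially the same route as the paper: fix $\theta$, take $y$ from the full-$\nu$-measure set on which the single-process limit equals $-V(\theta)$, then apply Lemma~\ref{lem:lossRatio} symmetrically in $(\theta_t,\theta)$ to bound $\abs{\tfrac{1}{t}\log\partition_t^{\theta_t}(y)-\tfrac{1}{t}\log\partition_t^{\theta}(y)}$ by a quantity that tends to $\epsilon$ as $t\to\infty$, using $\omega(d_\Theta(\theta_t,\theta))\to0$, $\cPP_{\theta_t}\to\cPP_\theta$, $\norm{\phi_{\theta_t}-\phi_\theta}\to0$, and $(t+T)/t\to1$. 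Your observation that $T$ depends only on $\epsilon$ (not on the parameters or on $t$) is exactly the uniformity the argument needs, and matches the paper's use of the uniform Gibbs property~\eqref{eq:uniformGibbs}.
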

\begin{proof}
    Fix $\theta \in \Theta$. Then for $\nu$-a.e. $y \in \cY$,
    \[
        \lim_{t \to \infty} \frac{1}{t} \log \int_\cX \exp(-L^t_\theta(x,y)) \,d\mu_\theta(x)
        = - V(\theta).
    \]
    Let  $y \in \cY$ be as above. By Lemma \ref{lem:lossRatio}, for any $\epsilon > 0$, there exists $T > 0$ such that
    \begin{align*}
       \lim_{t \to \infty} & \abs{\frac{1}{t} \log \int_\cX \exp(-L^t_{\theta_t}(x,y)) \,d\mu_{\theta_t}(x) - (-V(\theta))}  \\
        & \le \lim_{t \to \infty} \frac{1}{t} \abs{  \log \int_\cX \exp(-L^t_{\theta_t}(x,y)) \,d\mu_{\theta_t}(x)
        - \log \int_\cX \exp(-L^t_\theta(x,y)) \,d\mu_\theta(x)} \\
        & \le \epsilon + \lim_{t \to \infty} \left[   \omega(d_\Theta(\theta_t, \theta)) + \frac{t + T}{t} (|\cPP_{\theta_t} - \cPP_{\theta}| 
        + \|\phi_{\theta_t} - \phi_{\theta}\|) \right] = \epsilon.
    \end{align*}
    Hence, we obtain the desired result.
\end{proof}

\section{Discussion}
We believe that our large deviation arguments are far from being
sharp, since we made a few assumptions for the sake of clarity of exposition, but still we at least included two rich classes of examples. Compared to the existing literature on posterior 
consistency, it is straightforward to track which step uses which assumptions.
There are technical conditions in our framework that are natural to relax such as those of the loss function. Our framework starts with Varadhan's Lemma which requires the loss function to be continuous and bounded. It is not unreasonable to consider more general loss functions and work with generalizations of Varadhan's lemma developed in large deviation theory that do not require continuity or boundedness, see \cite[Theorem 1.18 \& 1.20]{budhiraja2019analysis}. 
It is also of interest to explore our framework for discontinuous processes. Our framework is not restricted to continuous processes, since process-level large deviation results 
for discontinuous processes are also known \cite[Theorem 5.4.27]{deuschel2001large}. 
In that case, we let $\cX = D(\T, \S)$ be the Skorohod space equipped
with the Skorohod topology and follow the framework we have outlined.

It is tempting to consider posterior consistency over non-compact parameter spaces. Most work in posterior consistency, including ours, requires compactness of the parameter space. A basic question is whether compactness can be relaxed. In our large deviation framework, posterior consistency is obtained by requiring a process-level large deviation behavior. In \cite{wu2004large}, the author proves that the process-level LDP of a mixture of  
i.i.d. processes $\set{\mu_\theta}_{\theta \in \Theta}$  by the fully supported prior $\pi_0$ is equivalent to 
the compactness of the parameter space $\Theta$. This suggests that from the perspective of large deviation theory,
the assumption on the compactness of $\Theta$ may be difficult to relax.
Still, since posterior consistency does not require 
the full large deviation principle, we believe that one can extend
our framework to non-compact parameter space, for example,
by assuming some exponential decay of the prior distribution.

We showed that a well-studied hypermixing condition
can imply posterior consistency.
Although the hypermixing condition can be defined for
general dynamical systems, the condition is harder
to check beyond Markov processes, because general
dynamical systems do not have access to convenient tools,
like the log-Sobolev inequality and hypercontractivity. 
Our posterior consistency framework may serve as a motivation for proving mixing properties of general dynamical systems, as one can in turn perform Bayesian inference for these systems beyond Markov processes. For example, it is natural to expect
some mixing properties to hold for non-Markovian SDEs
driven by fractional Brownian motions, though there are
not so many results in this direction beyond ergodicity \cite{hairer2005ergodicity, hairer2007ergodic}. On the other hand, there are many dynamical systems we have not covered exhibiting good statistical properties, including weaker forms of large deviation principles, such as nonuniformly hyperbolic systems admitting Young tower extensions with exponential return times \cite{rey2008large}. 
Also, another class of dynamical models we do not consider in this paper arise from algorithms, including recurrent neural networks such as long short-term memory networks. It is of great interest to expand our framework to include those systems as well.

\vspace{0.1cm}
\noindent \textbf{Declaration of interest}: none

\vspace{0.1cm}
\noindent \textbf{Acknowledgements}: Sayan Mukherjee and Langxuan Su would like to thank Jonathan Mattingly, Kevin McGoff, and Andrew Nobel for discussions. 
Langxuan Su would like to thank Andrea Agazzi for suggesting some of the Markov process examples and giving helpful feedback for an earlier version of the manuscript.
Sayan Mukherjee would like to acknowledge partial funding from HFSP RGP005, NSF DMS
17-13012, NSF BCS 1552848, NSF DBI 1661386, NSF IIS 15-46331, NSF DMS 16-13261, as
well as high-performance computing partially supported by grant 2016-IDG-1013 from the
North Carolina Biotechnology Center
as well as the Alexander von Humboldt Foundation, the BMBF and the Saxony State Ministry for Science. 

\appendix
\section{Background material}
\subsection{Probability measures on Polish spaces} \label{sec:prob}
The following results can be found at \cite{glasner2003ergodic}
and \cite[Chapter 4]{rassoul2015course}.

Let $\U$ be a Polish space. A duality pairing of $\M(\U)$
and $C_b(\U)$ is given by $\inprod{\eta}{f} = \int_\U f\,d\eta$ for $\eta \in \M(\U)$
and $f \in C_b(\U)$.
This duality pairing generates a weak topology on $\M(\U)$
 and hence on the closed convex subset $\M_1(\U)$, 
 with a base given by sets of 
 the form
\[
    \set{\eta \in \M(\U)\,:\,\abs{\inprod{\eta}{f_i} - \inprod{\eta_0}{f_i}} < \epsilon
        , \quad i = 1, \ldots, m}
\]
for $\epsilon > 0$, $m \in \nat$, $\eta_0 \in \M(\U)$
and $f_i \in C_b(\U)$, $1 \le i \le m$.
In particular, $\M_1(\U)$ is a Polish space under this weak topology
which also characterizes the weak convergence of probability measures.
In the case when $\U$ is $\cX$ or $\cX \times \cY$, the duality pair 
of $\M(\U)$ and $C_{b, loc}(\U)$ with the same pairing also generates
the same weak topology on $\M(\U)$ and $\M_1(\U)$.
\begin{definition}[Weak convergence]
    A sequence of probability measures $\eta_n \in \M_1(\U)$
    converges weakly to another probability measure $\eta \in \M_1(\U)$ if
    $\int f \,d\eta_n \convergeto \int f\,d\eta$,
    for every $f \in C_b(\U)$.
\end{definition}
\begin{theorem}[Disintegration of measures] \label{thm:disint}
    Let $\U$ and $\cY$ be Polish spaces. Let $\lambda \in \M_1(\U \times \cY)$
    and let $\nu$ be the $\cY$-marginal of $\lambda$. Then 
    there exists a Borel map $\cY \to \M_1(\U)$, $y \mapsto \lambda_y$
    such that for every bounded Borel function $f: \U \times \cY \to \real$,
    \[
        \int_{\U \times \cY} f\,d\lambda = \int_\cY \int_\U f(u, y)\,d\lambda_y(u)\,d\nu(y).
    \]
    In particular, such a map is unique in the sense that
    if $y \mapsto \lambda_y'$ is another such map, then $\lambda_y = \lambda_y'$
    for $\nu$-a.e. $y \in \cY$.
    We also write $\lambda = \int_\cY \lambda_y \otimes \delta_y \,d\nu(y).$
\end{theorem}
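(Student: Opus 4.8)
The plan is to obtain the disintegration as a regular conditional probability, exploiting the Polish structure through a Borel isomorphism so that everything reduces to conditional distribution functions on the unit interval.

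First I would reduce to the case where $\U$ is a Borel subset of $[0,1]$. A Polish space with its Borel $\sigma$-algebra is a standard Borel space, so by Kuratowski's theorem $\U$ is Borel isomorphic to a Borel subset $\U'\subseteq[0,1]$; the induced map $\M_1(\U)\to\M_1(\U')$ is then a Borel isomorphism for the Borel structures generated by the evaluations $\eta\mapsto\eta(A)$ (which on a Polish space coincide with the weak-topology Borel structures), and the $\cY$-marginal is unchanged, so it suffices to treat $\lambda\in\M_1([0,1]\times\cY)$ concentrated on $\U\times\cY$. For each rational $q\in[0,1]$ the finite measure $B\mapsto\lambda(([0,q]\cap\U)\times B)$ on $\cY$ is absolutely continuous with respect to $\nu$, so by Radon--Nikodym it admits a Borel density $F_q:\cY\to[0,1]$. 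Since $\rat\cap[0,1]$ is countable, outside a single $\nu$-null set the map $q\mapsto F_q(y)$ is nondecreasing with $F_1(y)=1$, so that $G(t,y):=\inf\{F_q(y):q\in\rat,\ q>t\}$ (and $G(1,y):=1$) is a genuine cumulative distribution function in $t$; off that null set redefine the $F_q$ arbitrarily. Let $\lambda_y\in\M_1([0,1])$ be the Lebesgue--Stieltjes measure with distribution function $G(\cdot,y)$. The map $y\mapsto\lambda_y$ is Borel because each $y\mapsto\lambda_y([0,t])=G(t,y)$ is Borel and such maps generate the Borel $\sigma$-algebra of $\M_1([0,1])$.

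Next I would verify the integral identity and that $\lambda_y$ is supported on $\U$. By construction $\int_B\lambda_y([0,q])\,d\nu(y)=\lambda(([0,q]\cap\U)\times B)$ for every rational $q$ and every Borel $B\subseteq\cY$; since the sets $[0,q]$ form a $\pi$-system generating the Borel $\sigma$-algebra of $[0,1]$, a Dynkin-system argument upgrades this to $\int_B\lambda_y(A)\,d\nu(y)=\lambda((A\cap\U)\times B)$ for all Borel $A\subseteq[0,1]$, and then to $\int_\cY\int f(u,y)\,d\lambda_y(u)\,d\nu(y)=\int f\,d\lambda$ for all bounded Borel $f$ by approximation with simple functions and bounded convergence. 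Taking $A=[0,1]$ gives $\int_\cY\lambda_y(\U)\,d\nu(y)=\lambda(\U\times\cY)=1$, hence $\lambda_y(\U)=1$ for $\nu$-a.e.\ $y$, and after discarding one more null set we regard $\lambda_y\in\M_1(\U)$. For uniqueness, fix a countable algebra $\mathcal{A}$ of Borel subsets of $\U$ generating its Borel $\sigma$-algebra; if $y\mapsto\lambda_y'$ is another disintegration, then for each $A\in\mathcal{A}$ both $y\mapsto\lambda_y(A)$ and $y\mapsto\lambda_y'(A)$ are Radon--Nikodym densities of $B\mapsto\lambda(A\times B)$ against $\nu$, so they agree off a $\nu$-null set; intersecting over the countably many $A\in\mathcal{A}$ and using that a probability measure on $\U$ is determined by its values on $\mathcal{A}$ yields $\lambda_y=\lambda_y'$ for $\nu$-a.e.\ $y$.

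The main obstacle is the passage, in the second step, from the Radon--Nikodym densities $F_q$ --- each of which satisfies its defining relation only $\nu$-almost everywhere, with a $q$-dependent exceptional null set --- to a single honest probability measure $\lambda_y$ valid for $\nu$-almost every $y$. This is precisely where the Polish hypothesis is indispensable: the reduction to $[0,1]$ makes the constraints defining a distribution function (monotonicity in $q$, the normalization $F_1=1$) countable in number, so they can all be enforced outside one common null set; everything else is routine measure theory.
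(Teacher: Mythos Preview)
Your proof is correct and follows the standard route to disintegration on Polish spaces: reduce $\U$ to a Borel subset of $[0,1]$ via Kuratowski's isomorphism, build conditional cumulative distribution functions from Radon--Nikodym densities at rationals, fix the countably many constraints on one common null set, and read off $\lambda_y$ as a Lebesgue--Stieltjes measure; uniqueness via a countable generating algebra is also standard.

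As for comparison: the paper does not actually prove this theorem. It is stated in the appendix as background material, with the remark that it ``can be found at \cite{glasner2003ergodic} and \cite[Chapter 4]{rassoul2015course},'' and no argument is given. So you have supplied a full proof where the paper simply invokes a reference; there is nothing to compare against beyond noting that your argument is the classical one found in those sources.
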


\subsection{Properties of relative entropy}
The following results can be found at \cite[Section 2]{budhiraja2019analysis}
and \cite[Exercise 6.11]{rassoul2015course}.
\begin{theorem}[Donsker-Varadhan variational formula] \label{rel:var}
    Let $\U$ be a Polish space equipped with the Borel $\sigma$-algebra, 
    and let $\lambda, \eta \in \M_1(\U)$. Then
    we have the following variational formula of the relative entropy:
    \begin{align*}
        K(\lambda\,\|\,\eta) & =
        \sup_{f \in B_b(\U)} 
        \int_\U f \,d\lambda - \log \int_\U \exp(f)\,d\eta \\
        & = \sup_{f \in C_b(\U)} 
        \int_\U f \,d\lambda - \log \int_\U \exp(f)\,d\eta,
    \end{align*}
    where $B_b(\U)$ denotes the set of bounded measurable
    functions on $\U$.
\end{theorem}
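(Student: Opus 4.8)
The plan is to establish the chain of inequalities
\[
\sup_{f \in C_b(\U)}\paran{\int f\,d\lambda - \log\int e^f\,d\eta}
\le \sup_{f \in B_b(\U)}\paran{\int f\,d\lambda - \log\int e^f\,d\eta}
\le K(\lambda\,\|\,\eta)
\le \sup_{f \in C_b(\U)}\paran{\int f\,d\lambda - \log\int e^f\,d\eta},
\]
the first being trivial from $C_b(\U)\subseteq B_b(\U)$. For the second inequality (the "easy" bound $\sup_{B_b}\le K$), there is nothing to prove if $\lambda\not\ll\eta$, so assume $\lambda\ll\eta$ with $g=d\lambda/d\eta$. Given $f\in B_b(\U)$, put $Z_f=\int e^f\,d\eta\in(0,\infty)$ and let $\eta_f$ be the tilted probability measure $d\eta_f=Z_f^{-1}e^f\,d\eta$. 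Since $e^f$ is bounded above and below, $\eta_f$ and $\eta$ are mutually absolutely continuous, hence $\lambda\ll\eta_f$ with $d\lambda/d\eta_f=gZ_fe^{-f}$, and expanding $K(\lambda\,\|\,\eta_f)=\int\log\paran{gZ_fe^{-f}}\,d\lambda$ gives the identity $K(\lambda\,\|\,\eta_f)=K(\lambda\,\|\,\eta)-\paran{\int f\,d\lambda-\log Z_f}$. Jensen's inequality yields $K(\cdot\,\|\,\cdot)\ge 0$, so $\int f\,d\lambda-\log Z_f\le K(\lambda\,\|\,\eta)$; take the supremum over $f\in B_b(\U)$.

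For the third inequality ($K\le\sup_{B_b}$), again the case $\lambda\not\ll\eta$ is trivial, so keep $g=d\lambda/d\eta$. For $M>1$ define the bounded measurable function $f_M:=\log\paran{\min(\max(g,1/M),M)}$, taking values in $[-\log M,\log M]$. As $M\to\infty$ one has $e^{f_M}=\min(\max(g,1/M),M)\to g$ pointwise, dominated by $g\vee 1\in L^1(\eta)$, so $\int e^{f_M}\,d\eta\to 1$ and $\log\int e^{f_M}\,d\eta\to 0$. For the other term write $\int f_M\,d\lambda=\int f_M\,g\,d\eta$ and split over $\{g\ge 1\}$ and $\{0<g<1\}$ (the set $\{g=0\}$ carries no $\lambda$-mass and $\{g=\infty\}$ is $\eta$-null): on $\{g\ge1\}$, $f_M\,g=g\log\min(g,M)\uparrow g\log g\ge 0$, so monotone convergence gives the limit $\int_{\{g\ge1\}}g\log g\,d\eta\in[0,\infty]$; on $\{0<g<1\}$, $f_M<0$ and $|f_M\,g|\le|g\log g|\le 1/e$, so dominated convergence gives the limit $\int_{\{0<g<1\}}g\log g\,d\eta$. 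Adding, $\int f_M\,d\lambda\to K(\lambda\,\|\,\eta)$ in $[0,\infty]$, whence $\int f_M\,d\lambda-\log\int e^{f_M}\,d\eta\to K(\lambda\,\|\,\eta)$; this proves $\sup_{f\in B_b(\U)}(\cdots)\ge K(\lambda\,\|\,\eta)$, and combined with the previous paragraph gives equality with $B_b(\U)$.

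It remains to pass from $B_b$ to $C_b$, i.e. to show $\sup_{C_b}(\cdots)\ge\sup_{B_b}(\cdots)$. Fix $f\in B_b(\U)$ with $c:=\norm{f}_\infty$. Since $\U$ is Polish, $C_b(\U)$ is dense in $L^1(\lambda+\eta)$; pick $g_n\in C_b(\U)$ with $g_n\to f$ in $L^1(\lambda+\eta)$, replace $g_n$ by its truncation $\max(\min(g_n,c),-c)\in C_b(\U)$ (which is $1$-Lipschitz in $g_n$ and so still converges in $L^1(\lambda+\eta)$ to $f$ since $|f|\le c$), and pass to a subsequence so that $g_n\to f$ $(\lambda+\eta)$-a.e. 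Then $\int g_n\,d\lambda\to\int f\,d\lambda$, and since $|e^{g_n}|\le e^c$ dominated convergence gives $\int e^{g_n}\,d\eta\to\int e^f\,d\eta\ge e^{-c}>0$, hence $\log\int e^{g_n}\,d\eta\to\log\int e^f\,d\eta$; therefore $\sup_{C_b}(\cdots)\ge\int f\,d\lambda-\log\int e^f\,d\eta$, and taking the supremum over $f\in B_b(\U)$ closes the chain. I expect the main obstacle to be the third paragraph: the density $g$ has no reason to be bounded or bounded away from zero, so one must use the two-sided truncation $f_M$ and split the integral according to the sign of $\log g$ (monotone convergence on $\{g\ge1\}$, dominated convergence on $\{0<g<1\}$), while keeping the argument valid when $K(\lambda\,\|\,\eta)=\infty$. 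The Polish structure enters only mildly, in the final approximation of bounded measurable functions by bounded continuous ones in $L^1$.
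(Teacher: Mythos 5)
Your proof is correct. The paper itself does not supply a proof of this theorem: it is stated in the appendix as background material with citations to \cite{budhiraja2019analysis} (Section 2) and \cite{rassoul2015course} (Exercise 6.11), so there is nothing in the source to compare against at the level of argument. Your route --- establishing $\sup_{B_b} \le K$ by exponential tilting and nonnegativity of relative entropy, establishing $K \le \sup_{B_b}$ via the two-sided truncation $f_M = \log(\min(\max(g,1/M),M))$ with a monotone-convergence / dominated-convergence split on $\{g\ge 1\}$ and $\{0<g<1\}$ that correctly handles the case $K=\infty$, and then descending from $B_b$ to $C_b$ by $L^1(\lambda+\eta)$-density plus truncation --- is a standard and complete proof of the Donsker--Varadhan formula, and each step checks out (in particular the domination $|f_M g|\le |g\log g|\le 1/e$ on $\{0<g<1\}$ and the fact that the $L^1$-truncation of an approximating sequence is $1$-Lipschitz and hence still converges).
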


\begin{theorem}[Chain rule] \label{rel:chain}
    Let $\U$ and $\cY$ be Polish spaces, and 
    let $\lambda, \eta \in \M_1(\U \times \cY)$.
    Let $\lambda_\cY$ and $\eta_\cY$ be the $\cY$-marginal of $\lambda$ and $\eta$.
    Then we have the following chain rule on the relative entropy:
    \[
        K(\lambda\,\|\,\eta) = K(\lambda_\cY\,\|\,\eta_\cY) 
            + \int_\U K(\lambda_y\,\|\,\eta_y)\, \lambda_\cY(dy),
    \]
    where $\lambda_y$ and $\eta_y$ are given by the disintegration
    of $\lambda$ and $\eta$ with respect to their $\cY$-marginal, given
    by Theorem \ref{thm:disint}.
\end{theorem}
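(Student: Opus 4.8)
The plan is to reduce the identity to a factorization of Radon--Nikodym densities along the disintegration and then integrate fiberwise, treating the singular case separately. Write $\lambda_\cY,\eta_\cY$ for the $\cY$-marginals and let $\{\lambda_y\}_{y\in\cY}$, $\{\eta_y\}_{y\in\cY}$ be the disintegrations furnished by Theorem \ref{thm:disint}, so that $\lambda=\int_\cY\lambda_y\otimes\delta_y\,d\lambda_\cY(y)$ and $\eta=\int_\cY\eta_y\otimes\delta_y\,d\eta_\cY(y)$. The key measure-theoretic input I would invoke is the absolute-continuity dichotomy for disintegrations: $\lambda\ll\eta$ if and only if $\lambda_\cY\ll\eta_\cY$ and $\lambda_y\ll\eta_y$ for $\lambda_\cY$-a.e. $y$.

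With this in hand the singular case is immediate. If $\lambda_\cY\not\ll\eta_\cY$ then, the $\cY$-marginal being a pushforward, also $\lambda\not\ll\eta$, so both $K(\lambda\,\|\,\eta)$ and $K(\lambda_\cY\,\|\,\eta_\cY)$ equal $+\infty$ while the integral term is $\ge 0$. If $\lambda_\cY\ll\eta_\cY$ but $\lambda\not\ll\eta$, the dichotomy forces $\lambda_y\not\ll\eta_y$ on a set of positive $\lambda_\cY$-measure, hence $\int_\cY K(\lambda_y\,\|\,\eta_y)\,d\lambda_\cY(y)=+\infty$. Either way both sides are $+\infty$.

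For the main case $\lambda\ll\eta$, I would set $\phi:=\tfrac{d\lambda_\cY}{d\eta_\cY}$ and, choosing a jointly measurable version, $\psi_y:=\tfrac{d\lambda_y}{d\eta_y}$, and prove the factorization $\tfrac{d\lambda}{d\eta}(u,y)=\phi(y)\psi_y(u)$ $\eta$-a.e. by testing against an arbitrary bounded nonnegative measurable $f$ on $\U\times\cY$: expanding $\int_{\U\times\cY}f(u,y)\phi(y)\psi_y(u)\,d\eta(u,y)$ with the disintegration of $\eta$, then using $\psi_y\,d\eta_y=d\lambda_y$ and $\phi\,d\eta_\cY=d\lambda_\cY$, and re-assembling with the disintegration of $\lambda$, returns $\int f\,d\lambda$. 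Since $\phi(y)\in(0,\infty)$ for $\lambda_\cY$-a.e. $y$, I then compute
\[
K(\lambda\,\|\,\eta)=\int_{\U\times\cY}\log\!\big(\phi(y)\psi_y(u)\big)\,d\lambda(u,y)
=\int_\cY\Big(\log\phi(y)+\int_\U\log\psi_y(u)\,d\lambda_y(u)\Big)\,d\lambda_\cY(y),
\]
using the disintegration of $\lambda$ and the fact that $\log\phi(y)$ is a finite constant in $u$; the inner integral is $K(\lambda_y\,\|\,\eta_y)$ and $\int_\cY\log\phi\,d\lambda_\cY=K(\lambda_\cY\,\|\,\eta_\cY)$. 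The only bookkeeping is that these splittings never create $\infty-\infty$, which follows from $x\log x\ge-1/e$: this makes the negative part of $\log\phi$ integrable against $\lambda_\cY$ and the negative part of $\log(d\lambda/d\eta)$ integrable against $\lambda$, while all the positive parts and every $K(\lambda_y\,\|\,\eta_y)$ are nonnegative. Collecting terms yields the claimed identity.

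The main obstacle is not the integration but the two standard measure-theoretic facts underpinning it — the existence of a jointly measurable choice of the fiberwise densities $(u,y)\mapsto\tfrac{d\lambda_y}{d\eta_y}(u)$, and the absolute-continuity dichotomy — both of which come from the theory of regular conditional probabilities on Polish spaces together with a monotone-class/uniqueness argument. One could instead derive the chain rule from the Donsker--Varadhan variational formula (Theorem \ref{rel:var}) applied fiberwise, but that route trades these facts for an interchange-of-supremum-and-integration argument, so I would present the density-factorization proof as the cleaner one.
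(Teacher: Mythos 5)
Your argument is correct, but note that the paper does not prove this statement at all: Theorem \ref{rel:chain} appears in the appendix as background material, with the proof delegated to the cited references (\cite[Section 2]{budhiraja2019analysis} and \cite[Exercise 6.11]{rassoul2015course}). So there is no in-paper proof to compare against; what you have written is essentially the standard textbook argument that those references contain. The density factorization $\tfrac{d\lambda}{d\eta}(u,y)=\phi(y)\psi_y(u)$, the three-way case split on absolute continuity, and the use of $x\log x\ge -1/e$ to rule out $\infty-\infty$ when separating $\log\phi$ from $\log\psi_y$ are all sound. You are also right to flag the two real technical inputs — a jointly measurable version of $(u,y)\mapsto \tfrac{d\lambda_y}{d\eta_y}(u)$ and the absolute-continuity dichotomy for disintegrations. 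A small simplification worth noting: both inputs can be obtained simultaneously by running your factorization in reverse, i.e.\ starting from $g=\tfrac{d\lambda}{d\eta}$, setting $\phi(y)=\int_\U g(u,y)\,d\eta_y(u)$, and verifying that $g(\cdot,y)/\phi(y)$ is a version of $\tfrac{d\lambda_y}{d\eta_y}$ for $\lambda_\cY$-a.e.\ $y$; this avoids having to invoke the existence of a jointly measurable selection as a separate black box. One last point of hygiene: the theorem's right-hand side presupposes that $y\mapsto K(\lambda_y\,\|\,\eta_y)$ is measurable, which follows from the Donsker--Varadhan formula (Theorem \ref{rel:var}) taken over a countable dense family of bounded continuous functions; this is needed for the statement itself, not just your proof, but it is worth recording alongside your other two measure-theoretic caveats.
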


\begin{theorem}[Monotone convergence] \label{rel:mono}
    Let $\set{\F_t}$ be an increasing family of $\sigma$-algebras 
    and let $\F$ be their limit. Let $\lambda$ and $\eta$ be two probability 
    measures on $\F$. Then the relative entropy
    $K(\lambda|_{\F_t}\,\|\,\eta|_{\F_t})$ is monotonically increasing and
    converges to $K(\lambda|_{\F}\,\|\,\eta|_{\F})$.
\end{theorem}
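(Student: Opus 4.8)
The plan is to reduce the statement to a uniformly integrable martingale argument under $\eta$. First I would dispose of monotonicity and the easy inequality: for $s \le t$ one has $\F_s \subseteq \F_t$, and the Donsker--Varadhan variational formula (Theorem~\ref{rel:var}) expresses $\relent{\lambda|_{\F_t}}{\eta|_{\F_t}}$ as the supremum of $\int f\,d\lambda - \log\int e^f\,d\eta$ over bounded $\F_t$-measurable $f$ (the restriction of the measures is immaterial since $f$ is $\F_t$-measurable). Enlarging the $\sigma$-algebra enlarges the class of admissible test functions, so $t \mapsto \relent{\lambda|_{\F_t}}{\eta|_{\F_t}}$ is non-decreasing and bounded above by $\relent{\lambda|_{\F}}{\eta|_{\F}}$; it remains only to prove the matching lower bound for the limit.

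Second, I would treat the absolutely continuous case. Suppose $\lambda|_{\F} \ll \eta|_{\F}$ with density $g = d\lambda|_{\F}/d\eta|_{\F}$, and set $g_t := \mathbb{E}_\eta[g \mid \F_t]$. A direct check ($\int_A g_t\,d\eta = \int_A g\,d\eta = \lambda(A)$ for $A \in \F_t$) shows $g_t = d\lambda|_{\F_t}/d\eta|_{\F_t}$, hence $\relent{\lambda|_{\F_t}}{\eta|_{\F_t}} = \mathbb{E}_\eta[\phi(g_t)]$ with $\phi(x) = x\log x$ (convention $\phi(0)=0$). Since $\phi$ is convex and bounded below by $-1/e$, conditional Jensen gives $\phi(g_t) \le \mathbb{E}_\eta[\phi(g)\mid\F_t]$ and hence $\mathbb{E}_\eta[\phi(g_t)] \le \mathbb{E}_\eta[\phi(g)] = \relent{\lambda|_{\F}}{\eta|_{\F}}$; it also shows $(\phi(g_t))$ is a submartingale, reconfirming monotonicity. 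As $(g_t)$ is a uniformly integrable martingale it converges $\eta$-a.s.\ to $g$, so applying Fatou's lemma to the non-negative sequence $\phi(g_t) + 1/e$ yields $\mathbb{E}_\eta[\phi(g)] \le \liminf_t \mathbb{E}_\eta[\phi(g_t)]$. Combining the two bounds gives $\lim_t \relent{\lambda|_{\F_t}}{\eta|_{\F_t}} = \relent{\lambda|_{\F}}{\eta|_{\F}}$.

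Finally, the case $\lambda|_{\F} \not\ll \eta|_{\F}$, where the right-hand side equals $+\infty$. Arguing by contradiction, assume $\sup_t \relent{\lambda|_{\F_t}}{\eta|_{\F_t}} = M < \infty$; then each $\lambda|_{\F_t} \ll \eta|_{\F_t}$, and the densities $g_t$ form a non-negative $\eta$-martingale by the same computation (now $\int_A g_t\,d\eta = \lambda(A) = \int_A g_s\,d\eta$ for $A \in \F_s$, $s \le t$). The bound $\mathbb{E}_\eta[g_t \log g_t] \le M$ together with the de la Vall\'ee--Poussin criterion (superlinear growth of $x\log x$) gives uniform integrability, so $g_t \to g_\infty$ in $L^1(\eta)$ for some $g_\infty \ge 0$ with $\mathbb{E}_\eta[g_\infty]=1$. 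Then $\lambda(A) = \int_A g_t\,d\eta \to \int_A g_\infty\,d\eta$ for every $A$ in the generating $\pi$-system $\bigcup_s \F_s$, so the probability measures $\lambda|_{\F}$ and $g_\infty\cdot\eta|_{\F}$ agree on $\F$, i.e.\ $\lambda|_{\F} \ll \eta|_{\F}$ --- a contradiction. Hence $\relent{\lambda|_{\F_t}}{\eta|_{\F_t}} \to \infty = \relent{\lambda|_{\F}}{\eta|_{\F}}$.

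The main technical point --- the only one beyond convexity, conditional Jensen, and Fatou --- is the uniform integrability step: recognizing that boundedness of the relative entropies forces the martingale of Radon--Nikodym densities to converge in $L^1$, which is exactly what identifies the limit density and closes the non-absolutely-continuous case.
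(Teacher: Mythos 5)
Your proof is correct. Note that the paper itself does not prove this statement: it appears in the appendix of background material and is attributed to the cited references (Budhiraja--Dupuis and Rassoul-Agha--Sepp\"al\"ainen), so there is no in-paper argument to compare against. Your argument is the standard one: monotonicity and the upper bound via the Donsker--Varadhan variational formula, the matching lower bound via conditional Jensen, L\'evy's upward theorem and Fatou applied to $\phi(g_t)+1/e$ in the absolutely continuous case, and a uniform-integrability/$\pi$-system argument to rule out a finite limit when $\lambda|_{\F}\not\ll\eta|_{\F}$. All steps are sound. The only point worth making explicit, since the paper allows $\T=[0,\infty)$, is that the martingale convergence theorems you invoke are cleanest for sequences; having already established that $t\mapsto \relent{\lambda|_{\F_t}}{\eta|_{\F_t}}$ is non-decreasing, you can reduce the limit over the continuous parameter to a limit along $t_n=n$ and apply the discrete-parameter theorems, which closes that small gap.
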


\subsection{Ergodic theory}
We refer to \cite{petersen1989ergodic, walters2000introduction} for 
general introductions to ergodic theory. 
\subsubsection{Standard results}
The following results can be found in \cite[Section 5.2]{deuschel2001large}.
Let $\Z$ be a Polish space equipped with the Borel $\sigma$-algebra, 
and $R^t: \Z \to \Z$ be an one-parameter family 
of measurable transformations on $\Z$.
\begin{theorem}[Birkhoff's pointwise ergodic theorem] \label{thm:ergodic}
    Let $\eta \in \M_1(\Z, R)$ be an $R$-ergodic measure and $f \in C_b(\Z)$.
    Then there exists a measurable set $E \subset \Z$ with full $\eta$-measure
    such that for all $z \in E$,
    \[ 
        \lim_{t \to \infty} \frac{1}{t}\int_0^t f(R^s z) \,ds
        = \int_\Z f \,d\eta.
    \]
    In fact, $E$ can be taken independently of $f \in C_b(\Z)$. See \cite[Lemma 7.2]{mcgoff2016variational}.
\end{theorem}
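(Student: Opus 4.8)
The plan is to reduce the continuous-time statement to the classical discrete-time Birkhoff pointwise ergodic theorem applied to the time-one map $R^1$, and then to upgrade pointwise convergence for a single $f$ to simultaneous convergence for all $f \in C_b(\Z)$ by testing against a countable convergence-determining family of functions.

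First I would note that, by joint measurability of $(t,z) \mapsto R^t z$ and Fubini's theorem, the function $g(z) := \int_0^1 f(R^s z)\,ds$ is bounded and Borel measurable on $\Z$, and that $\eta$, being $R$-invariant, is in particular $R^1$-invariant. Applying the classical Birkhoff theorem to the measure-preserving map $R^1$ and $g \in L^1(\eta)$ produces a Borel set of full $\eta$-measure on which $\frac{1}{n}\sum_{k=0}^{n-1} g(R^k z)$ converges; since $\sum_{k=0}^{n-1} g(R^k z) = \int_0^n f(R^s z)\,ds$, this yields convergence of $A_n(z) := \frac{1}{n}\int_0^n f(R^s z)\,ds$ along integers $n$. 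For general $t \in [n, n+1)$, writing $A_t(z) := \frac{1}{t}\int_0^t f(R^s z)\,ds = \int f\,dM_t(z)$, the elementary bound $|A_t(z) - A_n(z)| \le 2\|f\|/t$ (splitting the difference into the change of normalization and the contribution of the leftover interval $[n,t]$) shows that $A_t(z)$ converges as $t \to \infty$ on the same full-measure set.

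Next I would identify the limit. The everywhere-defined function $\bar f(z) := \limsup_{t \to \infty} A_t(z)$ satisfies, for every $u \ge 0$, $A_t(R^u z) = \frac{t+u}{t} A_{t+u}(z) - \frac{1}{t}\int_0^u f(R^s z)\,ds$, and since $f$ is bounded the last term tends to $0$; hence $\bar f(R^u z) = \bar f(z)$ for all $u \ge 0$, so $\bar f$ is a genuine $R$-invariant function. By $R$-ergodicity of $\eta$, each sublevel set of $\bar f$ has $\eta$-measure $0$ or $1$, so $\bar f$ equals a constant $c$ for $\eta$-a.e.\ $z$, and on the full-measure set above the limit of $A_t(z)$ is this same $c$. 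Bounded convergence together with $R$-invariance of $\eta$ then gives $c = \lim_{t\to\infty}\int_\Z A_t\,d\eta = \int_\Z f\,d\eta$, proving the displayed identity for the fixed $f$.

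Finally, for the uniformity in $f$, I would fix a countable family $\{f_k\}_{k\in\nat} \subset C_b(\Z)$ that is convergence-determining for weak convergence on $\M_1(\Z)$ — such a family exists because $\Z$ is Polish, equivalently because $\M_1(\Z)$ with the weak topology is separable and metrizable (cf.\ \cite[Lemma 7.2]{mcgoff2016variational}) — and take $E$ to be the intersection over $k$ of the full-measure sets obtained above for each $f_k$, so that $\eta(E) = 1$. For $z \in E$ one has $\int f_k\,dM_t(z) \to \int f_k\,d\eta$ for every $k$; since each $M_t(z) \in \M_1(\Z)$ and $\{f_k\}$ is convergence-determining, $M_t(z) \to \eta$ weakly, whence $A_t(z) = \int f\,dM_t(z) \to \int f\,d\eta$ for \emph{every} $f \in C_b(\Z)$, as claimed. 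The main obstacle is exactly this combination of two points: the measure-theoretic input that a Polish space carries a countable convergence-determining set, together with the care needed in the preceding paragraph to show that the Birkhoff limit is the \emph{flow}-invariant function rather than merely the conditional expectation of $g$ onto the $R^1$-invariant $\sigma$-algebra — which is what allows $R$-ergodicity of the flow, as opposed to ergodicity of $R^1$ (which can fail), to pin the limit down to $\int_\Z f\,d\eta$.
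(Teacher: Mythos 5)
Your proof is correct. The paper does not actually prove this statement — it is quoted as background material with citations to \cite[Section 5.2]{deuschel2001large} and \cite[Lemma 7.2]{mcgoff2016variational} — and your argument (reduction to the time-one map, the $2\|f\|/t$ interpolation bound, the observation that the limsup is invariant under the full flow so that flow-ergodicity rather than ergodicity of $R^1$ pins down the constant, and a countable convergence-determining family to make $E$ independent of $f$) is exactly the standard proof underlying those references.
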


\begin{theorem}[The ergodic decomposition]
    Let $\eta \in \M_1(\Z, R)$. There exists a Borel probability
    measure $Q$ on $\M_1(\Z, R)$ such that $Q$ assigns full measure to
    the set of $R$-ergodic measures in $\M_1(\Z, R)$, and
    if $f \in L^1(\eta)$, then $f \in L^1(\rho)$ for 
    $Q$-a.e. $\rho \in \M_1(\Z, R)$, and
    $$ \int_\Z f \,d\eta = \int_{\M_1(\Z, R)} \int_\Z f\,d\rho \,dQ(\rho).$$
    We also write $\eta = \int \rho\, dQ$.
\end{theorem}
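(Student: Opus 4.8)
The plan is to obtain the ergodic decomposition by disintegrating $\eta$ along the Birkhoff limits of a countable determining family, so that only Theorem~\ref{thm:ergodic} and Theorem~\ref{thm:disint} are needed. Fix a countable family $\set{f_n}_{n\in\nat}\subset C_b(\Z)$ which is convergence-determining for the weak topology on $\M_1(\Z)$ (such a family exists since $\Z$ is Polish). By Birkhoff's pointwise ergodic theorem applied under $\eta$, there is a Borel set $E\subset\Z$ with $\eta(E)=1$ on which $\bar f_n(z):=\lim_{t\to\infty}\frac1t\int_0^t f_n(R^sz)\,ds$ exists for every $n$; off $E$ set $\bar f_n:=0$. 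Each $\bar f_n$ is bounded and Borel, and $\bar f_n\circ R^s=\bar f_n$ on a set of full $\eta$-measure for every $s\in\T$. Let $\Phi:\Z\to\real^{\nat}$, $z\mapsto(\bar f_n(z))_n$, a Borel map into a Polish space, and apply Theorem~\ref{thm:disint} to the law of $\tuple{z,\Phi(z)}$ on $\Z\times\real^\nat$, whose $\real^\nat$-marginal is $\Phi_*\eta$; this yields a Borel kernel $\xi\mapsto\eta_\xi\in\M_1(\Z)$ with $\int g\,d\eta=\int\big(\int g\,d\eta_{\Phi(z)}\big)\,d\eta(z)$ for bounded Borel $g$, and moreover $\rho_z:=\eta_{\Phi(z)}$ is, for $\eta$-a.e.\ $z$, concentrated on the fibre $\set{w:\bar f_m(w)=\bar f_m(z)\ \forall m}$.

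Next I would show that $\rho_z$ is $R$-invariant for $\eta$-a.e.\ $z$. Fix $s\in\T$. Since the set $\set{w:\bar f_n(R^sw)=\bar f_n(w)\ \forall n}$ has full $\eta$-measure, it has full $\rho_z$-measure for $\eta$-a.e.\ $z$, so $(R^s)_*\rho_z$ is again concentrated on the fibre of $\Phi$ over $\Phi(z)$; combined with $\int (R^s)_*\rho_z\,d\eta(z)=(R^s)_*\eta=\eta$, the uniqueness clause of Theorem~\ref{thm:disint} forces $(R^s)_*\rho_z=\rho_z$ for $\eta$-a.e.\ $z$. Running this over a countable dense set of times $s$ simultaneously, and using in the continuous-time case that $s\mapsto\int f\,d\big((R^s)_*\rho_z\big)$ is continuous for $f\in C_b(\Z)$ (joint continuity of $(t,z)\mapsto R^tz$ plus dominated convergence), we get $\rho_z\in\M_1(\Z,R)$ for $\eta$-a.e.\ $z$.

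For ergodicity, let $G_n$ be the full-$\eta$-measure set on which $\frac1t\int_0^t f_n(R^sw)\,ds\to\bar f_n(w)$. Since $\eta=\int\rho_z\,d\eta(z)$, we have $\rho_z(G_n)=1$ for $\eta$-a.e.\ $z$ and all $n$. On $G_n$ the ergodic average of $f_n$ converges to $\bar f_n$, which is $\rho_z$-a.s.\ equal to the constant $\bar f_n(z)$; and, $\rho_z$ being invariant, Birkhoff under $\rho_z$ plus dominated convergence give $\bar f_n(z)=\int\bar f_n\,d\rho_z=\int f_n\,d\rho_z$. Thus for $\eta$-a.e.\ $z$, the ergodic averages of every $f_n$ converge $\rho_z$-a.e.\ to the constant $\int f_n\,d\rho_z$. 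A short lemma then gives ergodicity: if $A$ were $R$-invariant with $0<\rho_z(A)<1$, the normalized restriction $\rho_z(\cdot\cap A)/\rho_z(A)$ is $R$-invariant and absolutely continuous with respect to $\rho_z$, so the same limits hold for it, forcing $\int f_n\,d\big(\rho_z(\cdot\cap A)/\rho_z(A)\big)=\int f_n\,d\rho_z$ for all $n$, hence (determining family) $\rho_z(\cdot\cap A)/\rho_z(A)=\rho_z$, which is impossible.

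Finally I would set $Q:=(z\mapsto\rho_z)_*\eta$, a Borel probability measure on $\M_1(\Z)$ which by the previous two steps is carried by the Borel (in fact closed) set of $R$-ergodic elements of $\M_1(\Z,R)$. For $f\in L^1(\eta)$, Tonelli and the disintegration identity give $\int\big(\int|f|\,d\rho_z\big)\,d\eta(z)=\int|f|\,d\eta<\infty$, so $f\in L^1(\rho)$ for $Q$-a.e.\ $\rho$, and the identity for $f$ itself reads $\int_\Z f\,d\eta=\int\big(\int_\Z f\,d\rho_z\big)\,d\eta(z)=\int_{\M_1(\Z,R)}\int_\Z f\,d\rho\,dQ(\rho)$, i.e.\ $\eta=\int\rho\,dQ$. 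I expect the main obstacle to be making the invariance step fully rigorous — controlling all times and all $f_n$ at once through uniqueness of disintegration, and, in continuous time, the measurability of $\M_1(\Z,R)$ and of the fibres of $\Phi$ — rather than the ergodicity step, which is essentially bookkeeping. An alternative route through Choquet's theorem, using that $\M_1(\Z,R)$ is convex with extreme points exactly the ergodic measures, would also work but duplicates effort and relies on a separate identification of the extreme points.
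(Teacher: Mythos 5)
The paper does not give a proof of this theorem: it appears under ``Standard results'' in the appendix with the blanket attribution ``The following results can be found in \cite[Section 5.2]{deuschel2001large},'' so there is no internal proof to compare your sketch against. What you propose is the standard disintegration-along-Birkhoff-limits argument, and it is essentially correct, but two details need fixing. First, the parenthetical ``(in fact closed)'' is false: the $R$-ergodic elements of $\M_1(\Z,R)$ form a Borel (typically $G_\delta$) set but not, in general, a closed one --- e.g.\ for the full shift on two symbols the orbital measures on periodic points $(0^n1^n)^\infty$ are ergodic yet converge weakly to $\tfrac12\delta_{0^\infty}+\tfrac12\delta_{1^\infty}$, which is not. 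Your argument only needs Borel measurability, so nothing breaks, but the claim should be removed. Second, Theorem~\ref{thm:ergodic} as recorded in the paper is stated only for \emph{ergodic} $\eta$ and asserts convergence to the spatial mean; you apply it under a merely $R$-invariant $\eta$, where what you actually need is the general Birkhoff pointwise theorem (a.e.\ convergence of the time averages to the conditional expectation on the invariant $\sigma$-algebra). That result is standard and not circular with the ergodic decomposition, but it is not the version the paper cites, so you should say which Birkhoff statement you mean. Beyond these two points, the use of the uniqueness clause of Theorem~\ref{thm:disint} to establish invariance over a countable dense set of times (upgraded to all $\T$ via joint continuity of the flow), the ergodicity argument via the normalized restriction and the convergence-determining family, and the closing Tonelli step are all sound.
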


\subsubsection{Properties of joint processes}
The following results can be found at \cite{kifer2006random} 
and \cite[Lemma A.2]{mcgoff2016variational}.
Let $\Z$, $\cY$ be Polish spaces equipped with the Borel $\sigma$-algebras. 
Let $R^t: \Z \to \Z$ and $\ymap^t: \cY \to \cY$ be
 one-parameter families of measurable transformations on $\U$ and $\cY$.
Let $\nu \in \M_1(\cY, \ymap)$ be an $\ymap$-ergodic measure.

\begin{proposition} \label{prop:inv-join}
    Let $\lambda \in \J(R : \nu)$ with its disintegration 
    $\lambda = \int_\cY \lambda_y \otimes \delta_y\,d\nu(y)$ over $\nu$,
    given by Theorem \ref{thm:disint}. Then for any $t \ge 0$,
    we have for $\nu$-a.e. $y \in \cY$,
     $\lambda_y \circ (R^t)^{-1} 
     = \lambda_{\ymap^t y},$ 
    and so for every $f \in C_b(\Z)$, we have
    $$ \int_\Z f(R^t z)\,d\lambda_y(z) 
    = \int_\Z f(z)\,d\lambda_{\ymap^t y}(z). $$
\end{proposition}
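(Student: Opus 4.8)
The plan is to establish the measure identity $\lambda_y\circ(R^t)^{-1}=\lambda_{\ymap^t y}$ for each fixed $t\ge 0$ and $\nu$-a.e.\ $y$, from which the stated integral identity is immediate: by the change-of-variables formula for pushforwards, $\int_\Z f(R^t z)\,d\lambda_y(z)=\int_\Z f\,d\paran{\lambda_y\circ(R^t)^{-1}}=\int_\Z f\,d\lambda_{\ymap^t y}$ for $f\in C_b(\Z)$, and conversely, since $\Z$ is Polish, $C_b(\Z)$ separates Borel probability measures, so the integral identity (for all $f\in C_b(\Z)$) is in fact equivalent to the measure identity. The tool is the uniqueness assertion of the disintegration theorem (Theorem \ref{thm:disint}).

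First I would push the disintegration $\lambda=\int_\cY\lambda_y\otimes\delta_y\,d\nu(y)$ forward under the product map $R^t\times\ymap^t$. Pushforward commutes with the operation $\eta\mapsto\int_\cY\eta_y\,d\nu(y)$, and $(R^t\times\ymap^t)_*(\lambda_y\otimes\delta_y)=\paran{\lambda_y\circ(R^t)^{-1}}\otimes\delta_{\ymap^t y}$, so
\[
(R^t\times\ymap^t)_*\lambda=\int_\cY\paran{\lambda_y\circ(R^t)^{-1}}\otimes\delta_{\ymap^t y}\,d\nu(y).
\]
Because $\lambda\in\J(R:\nu)$ is $(R\times\ymap)$-invariant, the left-hand side is just $\lambda$. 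The key step is then to recognize the right-hand side as a disintegration of $\lambda$ over its $\cY$-marginal $\nu$: since $\nu$ is $\ymap$-invariant we have $(\ymap^t)_*\nu=\nu$, and changing the base variable to $y'=\ymap^t y$ rewrites the right-hand side in the canonical form $\int_\cY\rho_{y'}\otimes\delta_{y'}\,d\nu(y')$, where $y'\mapsto\rho_{y'}$ is Borel (continuity of $y\mapsto\lambda_y$ feeds in here) and $\rho_{y'}=\lambda_y\circ(R^t)^{-1}$ for the $y$ mapping to $y'$. Having written $\lambda=\int_\cY\rho_{y'}\otimes\delta_{y'}\,d\nu(y')=\int_\cY\lambda_{y'}\otimes\delta_{y'}\,d\nu(y')$, the uniqueness clause of Theorem \ref{thm:disint} forces $\rho_{y'}=\lambda_{y'}$ for $\nu$-a.e.\ $y'$, which unwinds to $\lambda_y\circ(R^t)^{-1}=\lambda_{\ymap^t y}$ for $\nu$-a.e.\ $y$.

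The main obstacle is exactly this identification of the pushed-forward family with a genuine disintegration of $\lambda$. When $\ymap^t$ is a $\nu$-preserving bimeasurable bijection — the situation in the relativized ergodic theory of \cite{kifer2006random, mcgoff2016variational} — the change of variables $y'=\ymap^t y$ is literal and the argument is completely transparent; in the general (non-invertible) case one instead argues with the regular conditional probabilities relative to the coordinate $\sigma$-algebra $\mathcal{C}=\set{\Z\times B:B\subseteq\cY\text{ Borel}}$ and the intertwining identity $\expect{g\circ(R^t\times\ymap^t)\mid(R^t\times\ymap^t)^{-1}\mathcal{C}}=\expect{g\mid\mathcal{C}}\circ(R^t\times\ymap^t)$, as in the cited references. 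It is worth noting that only the $(R\times\ymap)$-invariance of $\lambda$ and the $\ymap$-invariance of $\nu$ are used here; $\ymap$-ergodicity of $\nu$ plays no role in this lemma.
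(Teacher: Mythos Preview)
Your proposal is correct and is precisely the standard disintegration-uniqueness argument that underlies the references the paper cites (the paper gives no proof of its own, simply pointing to \cite{kifer2006random} and \cite[Lemma A.2]{mcgoff2016variational}); you also correctly isolate the only real subtlety, namely the non-invertible case, and your observation that ergodicity of $\nu$ is irrelevant here is accurate. One small slip: the map $y\mapsto\lambda_y$ furnished by Theorem~\ref{thm:disint} is Borel measurable, not continuous---what makes $y\mapsto\lambda_y\circ(R^t)^{-1}$ Borel is the weak-continuity of the pushforward $\mu\mapsto(R^t)_*\mu$ composed with this Borel map.
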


\begin{theorem}[Structure of the space of joint processes] \label{thm:ergodicDecomp}
    $\J(R : \nu)$ is a non-empty and convex set. 
    If $\lambda \in \J(R : \nu)$ and $\lambda = \int \rho\, dQ$ is
    its ergodic decomposition, then $Q$-a.e. $\rho \in \J(R : \nu).$
\end{theorem}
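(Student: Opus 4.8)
The plan is to handle the three assertions in order of difficulty. \emph{Non-emptiness:} in every setting where $\J(R:\nu)$ is used there is an $R$-invariant probability measure $\mu$ on $\Z$ available (one of the model measures $\mu_\theta$, or a Krylov--Bogolyubov fixed point when $\Z$ is compact), and then the product $\mu \otimes \nu$ is $(R\times\ymap)$-invariant with $\cY$-marginal $\nu$, so $\mu \otimes \nu \in \J(R:\nu)$. \emph{Convexity:} if $\lambda_0,\lambda_1 \in \J(R:\nu)$ and $a \in [0,1]$, set $\lambda_a := a\lambda_0 + (1-a)\lambda_1$; then for each $t$ one has $\lambda_a \circ (R^t\times\ymap^t)^{-1} = a\,\lambda_0\circ(R^t\times\ymap^t)^{-1} + (1-a)\,\lambda_1\circ(R^t\times\ymap^t)^{-1} = \lambda_a$, and the $\cY$-marginal of $\lambda_a$ is $a\nu + (1-a)\nu = \nu$, so $\lambda_a \in \J(R:\nu)$.

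For the ergodic decomposition claim, let $\lambda \in \J(R:\nu)$ and let $\lambda = \int \rho \, dQ(\rho)$ be its ergodic decomposition as an $(R\times\ymap)$-invariant measure on $\Z \times \cY$, so by the ergodic decomposition theorem $Q$ gives full measure to the $(R\times\ymap)$-ergodic elements of $\M_1(\Z\times\cY, R\times\ymap)$. Write $p_\cY : \Z\times\cY \to \cY$ for the projection and $\rho_\cY := \rho \circ p_\cY^{-1}$ for the $\cY$-marginal. I would first observe that $\rho_\cY$ is $\ymap$-ergodic for $Q$-a.e. $\rho$: it is $\ymap$-invariant because $\rho$ is $(R\times\ymap)$-invariant, and if $B\subseteq\cY$ is $\ymap$-invariant then $p_\cY^{-1}(B)$ is $(R\times\ymap)$-invariant, whence $\rho_\cY(B) = \rho(p_\cY^{-1}(B)) \in \{0,1\}$ by ergodicity of $\rho$. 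Pushing the decomposition through the continuous, affine marginalization map $\eta \mapsto \eta\circ p_\cY^{-1}$ then gives $\nu = \lambda\circ p_\cY^{-1} = \int \rho_\cY \, dQ(\rho)$; equivalently, the pushforward $\tilde Q$ of $Q$ under $\rho \mapsto \rho_\cY$ is a probability measure on $\ymap$-ergodic measures whose barycenter is $\nu$.

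The final step is to invoke uniqueness of the ergodic decomposition on $(\cY,\ymap)$: since $\nu$ is itself $\ymap$-ergodic, its ergodic decomposition is the trivial one $\delta_\nu$, and it is the \emph{unique} probability measure on $\ymap$-ergodic measures with barycenter $\nu$; hence $\tilde Q = \delta_\nu$, i.e. $\rho_\cY = \nu$ for $Q$-a.e. $\rho$. Combined with the fact that $Q$-a.e. $\rho$ is $(R\times\ymap)$-invariant (indeed ergodic), this yields $\rho \in \J(R:\nu)$ --- in fact $\rho \in \J_e(R:\nu)$ --- for $Q$-a.e. $\rho$, which is the assertion. I expect this last step to be the only real obstacle: one must justify that a barycenter representation of the ergodic measure $\nu$ over ergodic measures must collapse to $\delta_\nu$. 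The clean route is to cite uniqueness of the ergodic decomposition (the $\ymap$-ergodic measures are mutually singular and are the extreme points of the simplex $\M_1(\cY,\ymap)$), with the mild care that this holds for a one-parameter family of transformations on a Polish --- not necessarily compact --- space $\cY$, which it does via disintegration of $\nu$ over its $\sigma$-algebra of $\ymap$-invariant sets.
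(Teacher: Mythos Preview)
Your argument is correct and is essentially the standard proof of this fact. The paper itself does not prove this theorem: it is stated in the background appendix with the preface ``The following results can be found at \cite{kifer2006random} and \cite[Lemma A.2]{mcgoff2016variational},'' so there is no paper proof to compare against line by line. That said, your route---showing that $(R\times\ymap)$-ergodic components have $\ymap$-ergodic $\cY$-marginals, pushing the decomposition through the marginal map, and then invoking uniqueness of the ergodic decomposition of the ergodic measure $\nu$ to force $\rho_\cY=\nu$ for $Q$-a.e.\ $\rho$---is exactly the argument one finds in the cited sources (in particular \cite[Lemma A.2]{mcgoff2016variational}).

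Two minor remarks. First, your handling of non-emptiness is appropriately honest: in the abstract Polish setting of the appendix there is no free $R$-invariant measure, so non-emptiness genuinely relies on the ambient context (the model measures $\mu_\theta$ in the body of the paper, or compactness of $\Z$ via Krylov--Bogolyubov). Second, your concern about uniqueness of the ergodic decomposition on a non-compact Polish space is well placed but, as you note, resolved by the disintegration over the invariant $\sigma$-algebra; an alternative one-line argument is that distinct ergodic measures are mutually singular, so if $\nu=\int \rho_\cY\,dQ$ with $Q$-a.e.\ $\rho_\cY$ ergodic and $\tilde Q(\{\nu\})<1$, restricting to the set where $\rho_\cY\neq\nu$ would exhibit $\nu$ as a nontrivial mixture of measures singular to it, a contradiction.
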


\begin{lemma}\label{lem:affine}
    Let $F: \J(R : \nu) \to [0, \infty]$ be an affine functional in the sense that
    $F(a\eta + (1 - a)\lambda) = a F(\eta) + (1 - a)F(\lambda)$ for any $a \in (0, 1)$
    and $\eta, \lambda \in \J(R : \nu)$.
    Then for any $\lambda \in \J(R : \nu) $ with its ergodic decomposition
    $\lambda = \int \rho\, dQ$, we have  
    \[
        F(\lambda) = \int_{\J_e(R : \nu)} F(\rho)\,dQ(\rho).
    \]
\end{lemma}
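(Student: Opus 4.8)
The plan is to reduce the integral identity to the finite additivity built into the hypothesis, by approximating the ergodic decomposition through conditional measures on invariant sets and then passing to the limit using a regularity property of $F$. That property is lower semicontinuity: it holds for every functional to which this lemma is applied (for $h(\,\cdot\,\|\,\mu\otimes\nu)$ by Proposition~\ref{prop:affine}, and for $\lambda\mapsto\int f\,d\lambda+\|f\|$ by continuity), so I would either add ``lower semicontinuous'' to the statement or carry it along; if $F\equiv+\infty$ the identity is trivial, so assume $F$ proper.

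First I would record the finite version: iterating the affinity hypothesis gives $F\big(\sum_{i=1}^{n}a_i\lambda_i\big)=\sum_{i=1}^{n}a_iF(\lambda_i)$ for $\lambda_i\in\J(R:\nu)$ and weights $a_i>0$ summing to $1$. The point is that such combinations come from partitions into invariant sets: if $\mathcal P=\{A_1,\dots,A_k\}$ is a finite Borel partition of $\Z\times\cY$ into $(R\times\ymap)$-invariant sets and $\lambda(A_i)>0$, then $\lambda(\,\cdot\mid A_i)$ is again $(R\times\ymap)$-invariant (because $A_i$ is), and its $\cY$-marginal is $\ymap$-invariant and absolutely continuous with respect to $\nu$; since $\nu$ is $\ymap$-ergodic, the Radon--Nikodym derivative is a.e.\ constant, so the marginal equals $\nu$ and $\lambda(\,\cdot\mid A_i)\in\J(R:\nu)$. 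Writing $\lambda=\sum_i\lambda(A_i)\,\lambda(\,\cdot\mid A_i)$ and using finite affinity yields
\[
F(\lambda)=\int_{\Z\times\cY}F\big(\lambda(\,\cdot\mid\mathcal P)(z,y)\big)\,d\lambda(z,y),
\]
where $\lambda(\,\cdot\mid\mathcal P)(z,y)$ is the conditional measure of the cell of $\mathcal P$ containing $(z,y)$.

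Next I would take an increasing sequence $\mathcal P_n$ of such invariant partitions generating, modulo $\lambda$-null sets, the $(R\times\ymap)$-invariant $\sigma$-algebra $\I$ (possible since $\I$ is countably generated mod null on a standard Borel space). By the L\'evy--Doob martingale convergence theorem for measures on a Polish space, $\lambda(\,\cdot\mid\mathcal P_n)(z,y)$ converges weakly, for $\lambda$-a.e.\ $(z,y)$, to the ergodic component $\rho_{(z,y)}$ from the disintegration of $\lambda$ over $\I$; by the ergodic decomposition together with Theorem~\ref{thm:ergodicDecomp}, the pushforward of $\lambda$ under $(z,y)\mapsto\rho_{(z,y)}$ is exactly $Q$, which is carried by $\J_e(R:\nu)$ (the identification $e_\ast\lambda=Q$ being the content of Birkhoff's theorem, Theorem~\ref{thm:ergodic}). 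Lower semicontinuity of $F$ gives $F(\rho_{(z,y)})\le\liminf_n F(\lambda(\,\cdot\mid\mathcal P_n)(z,y))$, and Fatou's lemma (legitimate since $F\ge 0$) applied to the displayed identity for each $\mathcal P_n$ gives
\[
\int_{\J_e(R:\nu)}F\,dQ=\int_{\Z\times\cY}F(\rho_{(z,y)})\,d\lambda(z,y)\le\liminf_{n}\int_{\Z\times\cY}F\big(\lambda(\,\cdot\mid\mathcal P_n)(z,y)\big)\,d\lambda(z,y)=F(\lambda).
\]
For the reverse inequality I would use Jensen's inequality for the convex lower semicontinuous nonnegative function $F$, extended by $+\infty$ off the closed convex set $\J(R:\nu)$: writing $F$ as the supremum of its affine continuous minorants $\ell(\mu)=\int g\,d\mu+c$, each satisfies $\ell(\lambda)=\int\ell\,dQ\le\int F\,dQ$ because $\lambda$ is the barycenter of $Q$, and taking the supremum over $\ell$ yields $F(\lambda)\le\int F\,dQ$. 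Combining the two inequalities proves the lemma.

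I expect the passage to the limit to be the main obstacle: bare finite additivity of $F$ does not allow one to move $F$ through the weak limit of the conditional measures or through the barycentric integral, so the argument genuinely needs the regularity (lower semicontinuity) of $F$. The remaining points — countable generation of $\I$ modulo $\lambda$-null sets, Borel measurability of $(z,y)\mapsto\rho_{(z,y)}$, and that $\lambda(\,\cdot\mid A_i)$ stays in $\J(R:\nu)$ — are routine facts on Polish spaces, and the ergodicity of $\nu$ is exactly what makes the last of these work.
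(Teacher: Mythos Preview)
Your proposal is essentially correct once you add lower semicontinuity, and your diagnosis that bare finite affinity is insufficient is accurate: without some regularity, $F$ need not even be Borel and the barycentric integral need not make sense, let alone equal $F(\lambda)$. The paper does not prove this lemma at all---it simply invokes \cite[Lemma 5.4.24]{deuschel2001large} together with Theorem~\ref{thm:ergodicDecomp} and moves on. The argument behind that citation is close in spirit to what you wrote: realize the ergodic components as a.e.\ limits of conditional measures with respect to an increasing sequence of finite invariant sub-$\sigma$-algebras, and use the regularity of $F$ to pass to the limit. So your route is not genuinely different; it is a fleshed-out version of what the one-line citation compresses. What your writeup does add is transparency about the hypotheses: every application of the lemma in the paper is to a lower semicontinuous $F$ (the entropy rate via Proposition~\ref{prop:affine}, or a continuous linear functional), so the omission in the stated lemma is harmless in context, but you are right that the statement as literally written is a bit loose.
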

\begin{proof}
    Since $F$ is affine, 
    if $\lambda \in \J(R : \nu)$ 
    and $\lambda = \int \rho\, dQ$ is its ergodic decomposition,
    then by \cite[Lemma 5.4.24]{deuschel2001large} and Theorem \ref{thm:ergodicDecomp},
    \[
        F(\lambda) = \int_{\J_e(R : \nu)} F(\rho)\,dQ(\rho).
    \]
\end{proof}

\begin{proposition}\label{prop:affine2}
    Let $F: \J(R : \nu) \to \real$ be a functional such that
    for any $\lambda \in \J(R : \nu) $ with its ergodic decomposition
    $\lambda = \int \rho\, dQ$, 
    \[
        F(\lambda) = \int_{\J_e(R : \nu)} F(\rho)\,dQ(\rho).
    \]
    Then we have $$\sup_{\lambda \in \J(R : \nu)} F(\lambda)
    = \sup_{\lambda \in \J_e(R : \nu)} F(\lambda). $$
\end{proposition}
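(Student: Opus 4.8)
\begin{proof}[Proof of Proposition \ref{prop:affine2}]
The plan is to prove the two inequalities separately. The inequality
$\sup_{\lambda \in \J_e(R : \nu)} F(\lambda) \le \sup_{\lambda \in \J(R : \nu)} F(\lambda)$
is immediate, since $\J_e(R : \nu) \subseteq \J(R : \nu)$ by definition.

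For the reverse inequality, fix an arbitrary $\lambda \in \J(R : \nu)$ and let
$\lambda = \int \rho\, dQ$ be its ergodic decomposition. By Theorem \ref{thm:ergodicDecomp}, $Q$ assigns full measure to the set of $R$-ergodic measures lying in $\J(R : \nu)$, i.e. to $\J_e(R : \nu)$; in particular $Q$ restricted to $\J_e(R : \nu)$ is a probability measure. By hypothesis, the integral $\int_{\J_e(R : \nu)} F(\rho)\,dQ(\rho)$ is well-defined and equals $F(\lambda)$. If $\sup_{\rho \in \J_e(R : \nu)} F(\rho) = +\infty$ there is nothing to prove; otherwise, setting $M := \sup_{\rho \in \J_e(R : \nu)} F(\rho) < \infty$, we have $F(\rho) \le M$ for every $\rho \in \J_e(R : \nu)$, hence
\[
    F(\lambda) = \int_{\J_e(R : \nu)} F(\rho)\,dQ(\rho) \le M \cdot Q\big(\J_e(R : \nu)\big) = M = \sup_{\rho \in \J_e(R : \nu)} F(\rho).
\]
Since $\lambda \in \J(R : \nu)$ was arbitrary, taking the supremum over $\lambda$ gives
$\sup_{\lambda \in \J(R : \nu)} F(\lambda) \le \sup_{\lambda \in \J_e(R : \nu)} F(\lambda)$,
which completes the proof.
\end{proof}

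\noindent\emph{Remark on the approach.} There is essentially no obstacle here: the content is entirely carried by the ergodic decomposition (Theorem \ref{thm:ergodicDecomp}), which guarantees that a general joint process is an average of ergodic ones over a probability measure $Q$, together with the assumed ``barycentric'' identity $F(\lambda) = \int F(\rho)\,dQ(\rho)$. The only mild points of care are that $F$ is not assumed bounded, so one should dispatch the case $\sup = +\infty$ trivially, and that the displayed integral is legitimate precisely because the hypothesis presupposes it. Note also that Lemma \ref{lem:affine} shows the hypothesis on $F$ is satisfied by affine functionals, so Proposition \ref{prop:affine2} applies in particular to the affine map $\lambda \mapsto \int f\,d\lambda - h(\lambda\,\|\,\mu\otimes\nu)$ used in the proof of Proposition \ref{hm:lower-bound}.
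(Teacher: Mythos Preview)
Your proof is correct and is precisely the straightforward argument the paper has in mind; the paper's own proof consists of the single word ``Straightforward.'' Your added remark correctly identifies how the proposition is used downstream.
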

\begin{proof}
   Straightforward.
\end{proof}

\subsection{Conditional large deviation lemmas}\label{sec:LD}

\subsubsection{A conditional weak large deviation upper bound}
\begin{proof}[Proof of Lemma \ref{lem:genericUB}]
    We basically reproduce the argument of \cite[Theorem 4.24]{rassoul2015course}
    and only need to make sure the result holds for a subset of $\cY$ with full $\nu$-measure.
    
    Let $\K \subset \M_1(\cX \times \cY)$ be a compact subset. 
    Fix an $\epsilon > 0$ and $c < \inf_\K p^*.$   
    For every $\lambda \in \K$, there exists $f \in C_{b. loc}(\cX \times \cY)$
    such that $\int f \,d\lambda - p(f) > c$.
    Consider the following open neighborhood of $\eta$:
    \[
        B_\epsilon(\lambda, f) = \set{\eta \in \M_1(\cX \times \cY)\,:\,
        \abs{\int f \,d\lambda - \int f \,d\eta} < \epsilon}.
    \]
    Since $\K$ is compact, it can be covered with 
    $B_\epsilon(\lambda_1, f_1), \ldots, B_\epsilon(\lambda_m, f_m)$ with corresponding
    $f_1, \ldots, f_m \in C_{b, loc}(\cX \times \cY)$.
    Then for every $y \in \cY$,
    \begin{align*}
        & \mu(M_t(\cdot, \, y) \in B_\epsilon(\lambda_i, f_i)) \\
        & \quad = 
        \int_{\set{M_t(\cdot, y) \in B_\epsilon(\lambda_i, f_i)}} 
            \exp(-t f^t_i(x,y) + t f^t_i(x,y))\,d\mu(x) \\
        & \quad \le \exp\paran{t\paran{- \int f_i\,d\lambda_i + \epsilon}}
         \int_{\set{M_t(\cdot, y) \in B_\epsilon(\lambda_i, f_i)}} 
            \exp(t f^t_i(x,y))\,d\mu(x).
    \end{align*}
    By Theorem \ref{hm:pressure}, for $\nu$-a.e. $y \in \cY$ independent of $f_i$, 
    hence independent of $\mathcal{K}$,
    \[
        \limsup_{t \to \infty} \frac{1}{t}
            \log\mu(M_t(\cdot, y) \in B_\epsilon(\lambda_i, f_i))
        \le - \int f_i \,d\lambda_i + \epsilon + p(f_i) \le -c + \epsilon.
    \]
    Hence, for $\nu$-a.e. $y \in \cY$ independent of $\mathcal{K}$,
    \begin{align*}
        \limsup_{t \to \infty} \frac{1}{t}
            \log\mu(M_t(\cdot, y) \in \K) 
       & \le \max_{1 \le i \le m}  \limsup_{t \to \infty} \frac{1}{t}
            \log\mu(M_t(\cdot, y) \in B_\epsilon(\lambda_i, f_i)) \\
       & \le -c + \epsilon.
    \end{align*}
    We take $\epsilon \convergeto 0$
    and $c \convergeto \inf_\K p^*$ to yield the desired result.
    % so 
    % \begin{equation} \label{eq:wldpup}
    %      \limsup_{t \to \infty} \frac{1}{t}
    %         \log\mu(M_t(\cdot, y) \in \K) \le \inf_\K p^*
    % \end{equation}
    % holds for
    % a set of full $\nu$-measure of $y \in \Y$ .
    % Moreover, since $\M_1(\X \times \Y)$ is Polish, 
    % it suffices to consider at most countably many of such
    % $B_\epsilon(\lambda, f)$ and $p(f)$, so 
    % the weak large deviation upper bound \eqref{eq:wldpup}
    % holds for a set of full $\nu$-measure of $y \in \Y$.
    
    That $p^*(\lambda) = \infty$ if $\lambda$ is not $(\xmap \times \ymap)$-invariant
    follows from the same proof as in \cite[p. 218]{deuschel2001large}.
    The last statement is also straightforward:
    if the $\cY$-marginal of $\lambda$ is not $\nu$,
    then for any $N > 0$,
    there exists $f \in C_b(\cY)$ such that $\int f \,d\lambda - \int f \,d\nu \ge N$, 
    so by Birkhoff's pointwise ergodic theorem, 
    $p^*(\lambda) \ge \int f \,d\lambda - p(f) \ge N$.
\end{proof}

\subsection{The conditional exponential tightness}
To obtain the full large deviation upper bound, 
we need to show that for $X \sim \mu$ and $\nu$-a.e. $y \in \cY$,
$(M_t(X,y))_{t \in \T}$ 
is exponentially tight, i.e., for all $a > 0$, 
there exists a compact subset $\K \subset \M_1(\cX \times \cY)$ such that
$$\limsup_{t \to \infty} \frac{1}{t}\log
\mu\paran{M_t(X,y) \in \K^c} \le -a.$$ 
The following generic lemma allows us to push exponential tightness 
from $\M_1(\cX)$ to $\M_1(\cX \times \cY).$
\begin{lemma}\label{lem:exptight}
    Suppose $X \sim \mu$, 
    $ (M_t(X))_{t \in \T}$ 
    is an exponentially tight family of $\M_1(\cX)$-valued random variables. 
    Then for $\nu$-a.e. $y \in \cY$, 
    $ (M_t(X,y))_{t \in \T}$ 
    is an exponentially tight family of $\M_1(\cX \times \cY)$-valued random variables.
\end{lemma}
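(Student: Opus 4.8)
The plan is to reduce exponential tightness of $(M_t(X,y))_{t\in\T}$ on $\M_1(\cX\times\cY)$ to the hypothesis — exponential tightness of $(M_t(X))_{t\in\T}$ on $\M_1(\cX)$ — via the observation that the $\cX$-marginal of the joint empirical measure $M_t(X,y)$ is exactly $M_t(X)$, whereas its $\cY$-marginal is the \emph{deterministic} (given $y$) measure $M_t(y)=\frac1t\int_0^t\delta_{\ymap^s y}\,ds$. Thus, under $\mu$, all the randomness sits in the $\cX$-marginal, and if $\K$ is chosen so that a measure whose $\cY$-marginal is one of the $M_t(y)$'s can only land in $\K^c$ through its $\cX$-marginal, then $\mu(M_t(X,y)\in\K^c)\le\mu(M_t(X)\in\K_1^c)$ for a suitable $\K_1$.

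First I would dispose of the $\cY$-side. By Birkhoff's pointwise ergodic theorem (Theorem \ref{thm:ergodic}) applied to $(\cY,\ymap,\nu)$, there is a full-$\nu$-measure set $E\subset\cY$ such that $M_t(y)\to\nu$ weakly for every $y\in E$. Fix $y\in E$. Since $s\mapsto f(\ymap^s y)$ is continuous for each $f\in C_b(\cY)$ (using the assumed joint continuity of the flow in the continuous-time case; this is trivial in discrete time), the map $t\mapsto M_t(y)$ is weakly continuous on $[1,\infty)$ and, together with $M_t(y)\to\nu$, extends continuously to the one-point compactification $[1,\infty]$ by $\infty\mapsto\nu$. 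Hence $\K_2:=\overline{\{M_t(y):t\ge1\}}\subseteq\{M_t(y):t\ge1\}\cup\{\nu\}$ is a compact subset of $\M_1(\cY)$ (in discrete time this is just: a convergent sequence together with its limit is compact). Note $\K_2$ may depend on $y$, which is harmless because the conclusion only asserts exponential tightness for each $y$ in a full-measure set.

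Next, given $a>0$, I would use the hypothesis to pick a compact $\K_1\subset\M_1(\cX)$ with $\limsup_t\frac1t\log\mu(M_t(X)\in\K_1^c)\le -a$, and set $\K:=p_\cX^{-1}(\K_1)\cap p_\cY^{-1}(\K_2)$, where $p_\cX,p_\cY$ are the (weakly continuous) marginal projections, so $\K$ is closed. A one-line Prokhorov estimate — tight marginals force the joint set to be tight, via $\eta((C_1\times C_2)^c)\le\eta_\cX(C_1^c)+\eta_\cY(C_2^c)$ with $C_1,C_2$ compact — shows $\K$ is compact. For $t\ge1$ the $\cY$-marginal of $M_t(X,y)$ already lies in $\K_2$, so the event $\{M_t(X,y)\in\K^c\}$ is contained in $\{M_t(X)\in\K_1^c\}$, giving $\limsup_t\frac1t\log\mu(M_t(X,y)\in\K^c)\le -a$; since $a>0$ was arbitrary and the null set $\cY\setminus E$ is independent of $a$, this proves exponential tightness of $(M_t(X,y))_{t\in\T}$ for $\nu$-a.e. $y$.

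The only place requiring any care — hence the "main obstacle" — is the continuous-time version of the relative compactness of $\{M_t(y):t\ge1\}$ in $\M_1(\cY)$: unlike the discrete case, this is not literally a convergent sequence, and $\M_1(\cY)$ need not be locally compact, so one cannot simply enclose $\nu$ in a relatively compact neighborhood. The one-point-compactification argument above handles this cleanly. Everything else (continuity of the marginal maps, the Prokhorov tightness estimate, and the event inclusion) is routine.
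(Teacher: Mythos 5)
Your proposal is correct and follows essentially the same route as the paper: Birkhoff's theorem gives $M_t(y)\to\nu$ and hence a compact set of $\cY$-marginals, the hypothesis gives a compact set of $\cX$-marginals, the set of joint laws with marginals in these two compacts is compact by Prokhorov, and the probability of escaping it reduces to the $\cX$-marginal event alone. Your extra care with the one-point compactification in continuous time only makes explicit a step the paper states without comment.
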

\begin{proof}
    Note that by Theorem \ref{thm:ergodic}, for $\nu$-a.e. $y \in \cY$, 
    $M_t(y) \convergeto \nu$, 
    so $\set{M_t(y)}_t$ has compact closure.
    Let $y \in \cY$ be such one and $ \K_\cY$ be the closure of $\set{M_t(y)}_t$, 
    which is a compact subset of $\M_1(\cY)$.
    Fix $a > 0$. By exponential tightness of 
    $(M_t(X))_t$, 
    there exists a compact subset $\K_\cX \subset \cX$ such that
    $$\limsup_{t \to \infty} \frac{1}{t}\log
    \mu\paran{M_t(X) \in \K^c_\cX } \le -a.$$
    Consider the set 
    $\K = \set{\lambda \in \M_1(\cX \times \cY) \,: 
    \, \lambda_\cX \in \K_\cX,\, \lambda_\cY \in K_\cY}$, 
    where $\lambda_\cX$ denotes the $\cX$-marginal of $\lambda$, 
    and similar for $\lambda_\cY$. 
    It is easy to check $\K \subset \M_1(\cX \times \cY)$ is 
    closed and tight, hence compact by Prokhorov's Theorem. 
%   \ref{thm:prok}. 
    Then we obtain
    \begin{align*}
        & \limsup_{t \to \infty}\frac{1}{t}\log
            \mu\paran{M_t(X,y) \in \K^c} 
        \\ & \quad \le \limsup_{t \to \infty}\frac{1}{t}\log\paran{
            \mu\paran{M_t(X) \in \K_\cX^c} 
            + \delta_y\paran{M_t(y)
            \in \K_\cY^c}} \\ 
        & \quad = \limsup_{t \to \infty}\frac{1}{t}\log\paran{
            \mu\paran{M_t(X)
            \in \K_\cX^c}} \le -a.
    \end{align*}
\end{proof}

\section{Deferred proofs} \label{append}

\subsection{The variational characterization of posterior consistency} \label{append:var}
To be self-contained, we reproduce the short proof 
of Theorem \ref{thm:posterior} from \cite[Theorem 2]{mcgoff2019gibbs}.
\begin{proof}[Proof of Theorem \ref{thm:posterior}]
    Let $U$ be an open neighborhood of $\Theta_{\min}$.
    Then $E = \Theta \setminus U$ is closed and thus a compact subset of $\Theta$.
    If $\pi_0(E) = 0$, then $\pi_t(E \,|\, y) = 0$ for any $t \ge 0$ and any $y \in \cY$.
    Otherwise, consider the conditional prior probability
    measure $\pi_E = \pi_0(\cdot \,|\, E)$ which is supported on $E$.
    Let $V^* = \inf_{\theta \in \Theta} V(\theta)$.
    Since $E$ is disjoint from $U$, there exists $\epsilon > 0$
    such that $\inf_{\theta \in E} V(\theta) \ge V^* + \epsilon$.
    We apply \eqref{eq:variational} on $\pi_0$ and $\pi_E$.
    Then for $\nu$-a.e. $y \in \cY$, there exists $T > 0$ large enough such that
    for all $t \ge T$, we have
     \[
        -\frac{1}{t} \log \partition^{\pi_0}_t(y) \le V^* + \epsilon/3,
    \]
    \[
        -\frac{1}{t} \log \partition_t^{\pi_E}(y) \ge \inf_{\theta \in E} V(\theta) - \epsilon/3
        \ge V^* + 2\epsilon/3.
    \]
    As a result, for all $t \ge T$, we have
    \begin{align*}
        \pi_t(E \,|\, y) & = \frac{1}{\partition_t^{\pi_0}(y)} 
            \int_{E \times \cX} \exp\left(-L^t_{\theta}(x,y)\right)\,d\mu_\theta(x)\,d\pi_0(\theta) \\
        & = \frac{\pi_0(E)\partition_t^{\pi_E}(y)}{\partition_t^{\pi_0}(y)} \\
        & \le \exp(-V^* t - (2\epsilon/3)t + V^* t + (\epsilon / 3)t) \\
        & \le \exp(-(\epsilon/3)t),
    \end{align*}
    which tends to zero as $t \to \infty$.
\end{proof}

\subsection{The exponentially continuous family} \label{append:exp-cont}
For the proof of Lemma \ref{lem:exp-cont-v}, we adapt the argument  
from \cite[Proposition 1.12]{budhiraja2019analysis}.
\begin{proof}[Proof of Lemma \ref{lem:exp-cont-v}]
    We write 
    \[
        V(t, \theta, y) := - \frac{1}{t} \log \int_\cX \exp(-L_{\theta}^t(x,y))\,d\mu_\theta(x).
    \]
    Then for $\nu$-a.e. $y \in \cY$, $\lim_{t \to \infty} V(t, \theta, y) =: V(\theta)$, 
    and $\lim_{t \to \infty} V(t, \theta_t, y) = V(\theta)$
    for any $\theta_t \convergeto \theta$ by Definition \ref{def:exp-cont}.
    We assume for contradiction that $V$ is not continuous. 
	Then there exists a sequence $(\theta_n)_{n \in \nat}$ and $\theta$ such that 
	$\theta_n \convergeto \theta$ but $\abs{V(\theta_n) - V(\theta)} > \epsilon$ 
	for some $\epsilon > 0$. Take $y \in \cY$ from a set of full $\nu$-measure 
	such that $\lim_{m \to \infty}V(m, \theta_n, y) = V(\theta_n)$ 
	for all $n$ and $\lim_{n \to \infty} V(n, \ttheta_n, y) = V(\theta)$, 
	for any $\ttheta_n \convergeto \theta$. Then for $n \in \nat$, 
	there exists $m_n \ge n$ such that 
	$\abs{V(m_n, \theta_n, y) - V(\theta_n)} < \epsilon / 2$, 
	which implies $\abs{V(m_n, \theta_n, y) - V(\theta)} > \epsilon / 2$ 
	and a contradiction to the fact that we can show $\lim_{n \to \infty}V(m_n,\theta_n, y) = V(\theta).$
\end{proof}

For the proof of Proposition \ref{prop:expcont},
we adapt the argument from \cite[Theorem 2.1 \& 2.2]{dinwoodie1992large}. 
One may also use the approach in \cite{wu2004large}.
\begin{proof}[Proof of Proposition \ref{prop:expcont}]
	We first show one side of the inequality. 
	Let $\theta \in \supp(\pi)$ which is a closed and thus compact subset of $\Theta$,
	and we take $y \in \cY$ from 
 	a set of full $\nu$-measure such that \eqref{eq:var-single-2} and \eqref{eq:exp-cont} hold
	for $\theta$. 
	We claim that for all $\epsilon > 0$ there exists 
	an open set $U_\theta$ containing $\theta$ and $T_\theta \in [0,\infty) $ 
	such that for every $\theta' \in U_\theta$ and $t \ge T_\theta$,
	$$\int_\cX\exp\paran{-L^t_{\theta'}(x,y)}
	d\mu_{\theta'}(x) \ge \exp(-t(V(\theta) + \epsilon)).$$
	If not, we can find a sequence of parameters $\theta_k \convergeto \theta$ 
	and a subsequence $n_k \in \nat$ such that
	$$\int_\cX\exp\paran{-L_{\theta_k}^{n_k}(x, y)}
	d\mu_{\theta_k}(x) < \exp(-n_k(V(\theta) + \epsilon)),$$
	which contradicts to \eqref{eq:exp-cont}. 
	Now since $U_\theta$ is open and $\theta \in \supp(\pi)$, 
	$\pi(U_\theta) > 0$ and so for $t \ge T_\theta$,
	\begin{align*}
		\partition_t^{\pi}(y) & = \int_{\Theta} \int_\cX \exp\paran{-L^t_{\theta'}
			(x, y)}d\mu_{\theta'}(x)\,d\pi(\theta') \\
		& \ge \int_{U_\theta}\int_\cX \exp\paran{-L^t_{\theta'}
			(x, y)}d\mu_{\theta'}(x)\,d\pi(\theta')  \\
		& \ge \int_{U_\theta} \exp(-t(V(\theta) + \epsilon)) \,d\pi(\theta') \\
		& = \pi(U_\theta)\exp(-t(V(\theta) + \epsilon)),
	\end{align*}
	which gives 
	$$ \liminf_{t\to\infty}\frac{1}{t}\log \partition_t^{\pi}(y) \ge -V(\theta) - \epsilon.$$
	Taking $\epsilon$ from a countable sequence that goes to zero and 
	choosing $\theta$ to be a minimizer of $V$ on $\supp(\pi)$, 
	we obtain the desired inequality.

	Next, we show the other side of inequality. 
	Again, let $\theta \in \supp(\pi)$ and we take $y \in \cY$ from 
	a set of full $\nu$-measure such that \eqref{eq:exp-cont} holds. 
	Let $\epsilon > 0$. By the same argument, there exists 
	an open set $U_\theta$ containing $\theta$ and 
	$T_\theta \in [0, \infty)$ such that for every $\theta' \in U_\theta$ 
	and $t \ge T_\theta$,
	$$\int_\cX\exp\paran{-L_{\theta'}^t(x, y)}
	d\mu_{\theta'}(x) \le \exp(-t(V(\theta) - \epsilon)).$$ 
	Since $\supp(\pi)$ is compact, it can be covered by 
	a finite cover $\displaystyle \set{U_{\theta_k}}_{1 \le k \le m}.$ 
	In this case, we take $y \in \cY$ from a set of full $\nu$-measure 
	such that \eqref{eq:var-single-2} and \eqref{eq:exp-cont} 
	hold for $\theta_k$, $1 \le k \le m$. 
	Thus, for $t \ge \max\set{T_{\theta_1}, \ldots, T_{\theta_m}}$,
	\begin{align*}
		\partition_t^{\pi}(y) & = \int_{\Theta} \int_\cX \exp\paran{-L_{\theta'}^t(x, y)}
		d\mu_{\theta'}(x)\,d\pi(\theta') \\
		& \le \sum_{k=1}^m \int_{U_{\theta_k}}\int_\cX \exp\paran{-L_{\theta'}^t(x, y)}
		d\mu_{\theta'}(x)\,d\pi(\theta')  \\
		& \le \sum_{k=1}^m \int_{U_{\theta_k}} \exp(-t(V(\theta_k) - \epsilon))\,
			d\pi(\theta') \\
		& = \sum_{k=1}^m \pi(U_{\theta_k}) \exp(-t(V(\theta_k) - \epsilon)),
	\end{align*}
	which implies
	\begin{align*}
		\limsup_{t\to\infty}\frac{1}{t}\log \partition_t^{\pi}(y) 
			\le \max_{1\le k\le m} -V(\theta_k) + \epsilon 
			\le -\inf_{\theta \in \supp(\pi)} V(\theta) + \epsilon.
	\end{align*}
	Again, taking $\epsilon$ from a countable sequence that goes to zero 
	we obtain the desired inequality.
\end{proof}

\subsection{Technical lemmas for hypermixing processes} \label{append:hm}
For the proof of Lemma \ref{lem:pest}, 
we adapt the arguments in \cite[Lemma 5.4.13]{deuschel2001large}.
\begin{proof}[Proof of Lemma \ref{lem:pest}] 
	Let $\ell > \ell_0$ be given and $r' = \ell + r$. 
	The goal is to show
	\begin{multline*}
	    \int_\cY \log \paran{\int_\cX \exp \paran{f^{nr'}(x,y)} d\mu(x)} d\nu(y) \\
		\le \frac{n}{\alpha(\ell)} \int_\cY \log \paran{\int_\cX \exp 
			\paran{r'\alpha(\ell) f(x, y)} d\mu(x)} d\nu(y).
	\end{multline*}
	Indeed, by dividing both sides by $nr'$ and taking $n \to \infty$, 
	we obtain the desired result.
	We first show the inequality for a Riemann sum of $f^{nr'}$
	and then pass to the integral. Write $s_j^m = \frac{jr'}{m}.$
	Then by generalized H\"older's inequality for the product of
	multiple functions and (\ref{cond:H-1}),
	\begin{align*}
	    \int_\cX & \exp \paran{\sum_{k=0}^{n-1} \frac{r'}{m}
			\sum_{j=0}^{m-1} f \circ (\xmap \times \ymap)^{s_j^m + kr'}(x,y)}  d\mu(x) \\
		& = \int_\cX \prod_{j=0}^{m-1} \exp \paran{\frac{r'}{m}
			\sum_{k=0}^{n-1}f \circ (\xmap \times \ymap)^{s_j^m + kr'}(x,y)} d\mu(x) \\
		& \qquad \le \prod_{j=0}^{m-1} \paran{\int_\cX \exp \paran{r'
			\sum_{k=0}^{n-1}f \circ (\xmap \times \ymap)^{s_j^m + kr'}(x,y)} d\mu(x)}^{1/m} \\
		& \qquad \qquad \le \prod_{j=0}^{m-1} \prod_{k=0}^{n-1} \paran{\int_\cX \exp 
			\paran{r'\alpha(\ell) f(x, \ymap^{s_j^m + kr'}y)} d\mu(x)}^{1/m\alpha(\ell)}.
	\end{align*} 
	By taking the logarithm and integrating with respect to $\nu$ on both sides,
	\begin{align*}
		\int_\cY & \log \paran{\int_\cX \exp \paran{\sum_{k=0}^{n-1} \frac{r'}{m}
			\sum_{j=0}^{m-1} f \circ (\xmap \times \ymap)^{s_j^m + kr'}(x, y)} 
			d\mu(x)} d\nu(y) \\
		& \le \sum_{j=0}^{m-1} \sum_{k=0}^{n-1} \frac{1}{m\alpha(\ell)} 
			\int_\cY \log \paran{\int_\cX \exp 
			\paran{r'\alpha(\ell) f(x, \ymap^{s_j^m + kr'}y)} d\mu(x)}
			d\nu(y) \\
		& = \frac{n}{\alpha(\ell)} \int_\cY \log \paran{\int_\cX \exp 
			\paran{r'\alpha(\ell) f(x, y)} d\mu(x)}
			d\nu(y).
	\end{align*}
	Note that the right hand side does not depend on $m$. For every $(x, y) \in \cX \times \cY$, 
	\[
		\lim_{m\to\infty}\frac{r'}{m}
			\sum_{j=0}^{m-1} f \circ (\xmap \times \ymap)^{s_j^m + kr'}(x,y) 
			= \int_0^{r'}f \circ (\xmap \times \ymap)^{s + kr'} (x,y)\,ds, 
	\]
	and clearly, 
	\[
		\sum_{k=0}^{n-1} \int_0^{r'}f \circ (\xmap \times \ymap)^{s + kr'} (x,y)\,ds
		= \int_0^{nr'} f(\xmap^s x, \ymap^s y)\,ds = f^{nr'}(x,y).
	\]
	Now as $m\to\infty$, by Lebesgue dominated convergence, we obtain
	\begin{multline*}
	    \int_\cY \log \paran{\int_\cX \exp \paran{f^{nr'}(x,y)} d\mu(x)} d\nu(y) \\
		\le \frac{n}{\alpha(\ell)} \int_\cY \log \paran{\int_\cX \exp 
			\paran{r'\alpha(\ell) f(x, y)} d\mu(x)} d\nu(y).
	\end{multline*}
\end{proof}

% \newpage
\begin{proof}[Proof of Lemma \ref{lem:hm-subadd}]
    Note that by the monotone convergnce of relative entropy (Theorem \ref{rel:mono}),
    $F_t(y)$ is left continuous in $t$, so the map $t \mapsto F_t(y)$
    is determined by the values on those $t \in \rat \cap [0, \infty).$
    
	Let $n \in \nat$, $t_1, \ldots, t_n \in [0, \infty)$ and
	$s_k = (k-1)\ell + \sum_{j=1}^{k-1}t_j$.
	Let $f_k : \cX \to \real$ be a bounded $\F_{t_k}$-measurable function.
	Then by the variational property of relative entropy (Theorem \ref{rel:var})
	and the hypermixing property \eqref{cond:H-1},
	\begin{align*}
		& -F_{(n-1)\ell + \sum_{k=1}^{n}t_k}(y) \\
	    \ge & \int_\cX \left(\sum_{k=1}^{n} f_k \circ \xmap^{s_k}\right)
		\,d\lambda_y
		- \log \int_\cX \exp\left(
		\sum_{k=1}^{n} f_k \circ \xmap^{s_k} \right)\,d\mu 
		\\
		\ge & \sum_{k=1}^{n} \left( \int_\cX f_k \circ \xmap^{s_k}
		\,d\lambda_y
		- \frac{1}{\alpha(\ell)} \log \int_\cX \exp\left(
		\alpha(\ell) f_k \circ \xmap^{s_k} \right)\,d\mu \right) 
		\\
		= & 
		\sum_{k=1}^{n} \left( 
		\int_\cX f_k  \,d\lambda_{\ymap^{s_k}(y)}
		- \frac{1}{\alpha(\ell)} \log 
		\int_\cX \exp\left( \alpha(\ell) f_k  \right)\,d\mu 
		\right) 
		\\
		= & \frac{1}{\alpha(\ell)} \sum_{k=1}^{n} \left( \int_\cX \alpha(\ell) f_k 
		\,d\lambda_{\ymap^{s_k}(y)}
		-  \log \int_\cX \exp\left(
		\alpha(\ell) f_k  \right)\,d\mu \right),
	\end{align*}
	where we used, by Proposition \ref{prop:inv-join}, 
	that for $\nu$-a.e. $y \in \cY$, 
	$\lambda_y \circ \xmap^{-t} = \lambda_{\ymap^t y}$ 
	for any $t \in \rat \cap [0, \infty)$, 
	and we used the invariance of $\mu$.
	By taking the supremum of each $f_k$, we obtain 
	\[
		- F_{(n-1)\ell + \sum_{k=1}^{n}t_k}(y) \ge -\frac{1}{\alpha(\ell)} 
	\sum_{k=1}^{n} F_{t_k}(\ymap^{s_k} y).
	\]
\end{proof}

% \newpage
For the proof of \eqref{eq:exp-approx-2} in Lemma \ref{lem:hm-exp-cont},
we basically follows the proof of \cite[Theorem 1.17]{budhiraja2019analysis}.
\begin{proof}[Proof of \eqref{eq:exp-approx-2} in Lemma \ref{lem:hm-exp-cont}]
    Take $y \in \cY$ from the set of full $\nu$-measure such that \eqref{eq:var-single-2} and
	\eqref{eq:exp-approx} hold.
	For every $\epsilon > 0$, 
	\begin{align*}
		\limsup_{t \to \infty} & \, \frac{1}{t}\log
			\expect{\exp\paran{f^t(X^{t}, y)}} \\
		&= \limsup_{t \to \infty} \frac{1}{t}\log
			\left(\expect{\exp\paran{f^t(X^{t}, y)}\one_{\set{d_t(X^{t}, X, y) 
			\le \epsilon}}}\right. \\
		& \hspace{3cm} + \left. \expect{\exp\paran{f^t( X^{t}, y)}
			\one_{\set{d_t(X^t, X, y) > \epsilon}}}\right) \\
		&\le \limsup_{t \to \infty}\frac{1}{t}\log
			\left(\expect{\exp\paran{f^t(X, y)+ t \epsilon}} \right. \\
		& \hspace{3cm} + \left. \exp\paran{t\|f\|}\prob{d_t
			\paran{X^t, X, y} > \epsilon} \right). 
	\end{align*}
	By using the fact \cite[Lemma 1.2.15]{dembo2010large} that if $a_t, b_t \ge 0$, then
	$$\limsup_{t \to \infty} \frac{1}{t}\log{(a_t + b_t)} = 
	\max\set{\limsup_{t \to \infty} \frac{1}{t}\log{a_t},\, \limsup_{t \to \infty} \frac{1}{t}\log{b_t}},$$ we have

	\begin{align*}
		 \limsup_{t \to \infty}&\,\frac{1}{t}\log
			\left(\expect{\exp\paran{f^t(X, y)+ t \epsilon}} \right. \\
		& \hspace{3cm} + \left. \exp\paran{t\|f\|}\prob{d_t
			\paran{X^t, X, y} > \epsilon} \right) \\
		&= \max\left\{\limsup_{t \to \infty}\frac{1}{t}\log
			\expect{\exp\paran{f^t(X, y)+ t \epsilon}}, \right. \\
		& \hspace{3cm} \left. \|f\| + \limsup_{t \to \infty}\frac{1}{t}\log
			\prob{d_t\paran{X^t, X, y} > \epsilon} \right\} \\ 
		&= \limsup_{t \to \infty} \, \frac{1}{t}\log{
			\expect{\exp\paran{f^t(X, y)}}} + \epsilon \\
		& = -V(\theta) + \epsilon.
	\end{align*}
	Similarly,
	\begin{align*}
		\liminf_{t \to \infty} & \, \frac{1}{t}\log
			\expect{\exp\paran{f^t(X^{t}, y)}} \\
		& \ge \liminf_{t \to \infty}\frac{1}{t}\log
			\expect{\exp\paran{f^t(X^{t}, y)}
			\one_{\set{d_t(X^t, X, y) \le \epsilon}}} \\
		& \ge \liminf_{t \to \infty}\frac{1}{t}\log
			\expect{\exp\paran{f^t(X, y) - 
			t\epsilon}\one_{\set{d_t(X^t, X, y) \le \epsilon}}} \\
		& \ge \liminf_{t \to \infty}\frac{1}{t}\log
			\left(\expect{\exp\paran{f^t(X, y) - t\epsilon}} \right. \\
		& \hspace{3cm} - \left. 
			\exp\paran{t(\|f\| - \epsilon)}
			\prob{d_t\paran{X^t, X, y} > \epsilon} \right)^+  \\
		& = \liminf_{t \to \infty}\frac{1}{t}\log{
			\expect{\exp\paran{f^t(X, y) }}} - \epsilon \\
		& = - V(\theta) - \epsilon,
	\end{align*}
	where $a^+ := \max\set{a, 0}$, and the last equality follows from the fact that 
	if $a_t, b_t \ge 0$, 
	$\displaystyle \lim_{t\to\infty} \frac{1}{t}\log{a_t} = a$ in $\real$,  
	and $\displaystyle\limsup_{t\to\infty}\frac{1}{t}\log{b_t} = -\infty$, 
	then $\displaystyle\liminf_{t \to \infty}\frac{1}{t}\log{(a_t - b_t)^+} = a$.
\end{proof}

\section{An extentison of the Subadditive Ergodic Theorem}
\label{sec:subadd}
This section includes auxiliary 
subaddive ergodic results that are adapted to hypermixing processes.
Let $(\cY, \ymap, \nu)$ be a dynamical system
such that $\nu$ is a $\ymap$-invariant measure.
We consider a sequence of measurable functions
$F_t: \cY \to [-\infty, 0]$ that are subadditive in the following sense:
there exists $\ell_0 \ge 0$ and non-increasing $\alpha: [\ell_0, \infty) \to [1,\infty)$
such that $$\lim_{\ell \to \infty} \alpha(\ell) = 1,$$ and 
for all $t_1,t_2, \ldots, t_{n} \in \T$ and $\ell > \ell_0$,
\begin{equation} \label{eq:subadd}
    F_{(n-1)\ell + \sum_{k=1}^{n}t_k}(y) \le \frac{1}{\alpha(\ell)} 
    \sum_{k=1}^{n} F_{t_k}(\ymap^{(k-1)\ell + \sum_{j=1}^{k-1}t_j} y).
\end{equation}
When $\ell = 0$ and $\alpha(\ell) = 1$, $\eqref{eq:subadd}$ 
is the exactly the subadditive property.
One can relate \eqref{eq:subadd} to decomposing the interval of length 
$(n-1)\ell + \sum_{k=1}^{n}t_k$ into $n$ subintervals of
length $t_1$, $t_2$, $\ldots \,$, $t_n$ that are $\ell$ separated from each other.

First, we consider the discrete time case $\T = \nat$, and
$(\cY, \ymap, \nu)$ is a discrete-time dynamical system.
We assume that we already showed the limit 
\[
    \lim_{k \to \infty} \frac{1}{k} \int_\cY F_k(y)\,d\nu(y)
\]
exists. The goal is to show
\[
    \lim_{k \to \infty} \frac{1}{k} \int_\cY F_k(y)\,d\nu(y)
        =  \int_\cY \liminf_{n \to \infty} \frac{F_n(y)}{n}\,d\nu(y)
        =  \int_\cY \limsup_{n \to \infty} \frac{F_n(y)}{n}\,d\nu(y),
\]
which in particular implies that $\lim_{n \to \infty} \frac{F_n(y)}{n}$
exists for $\nu$-a.e. $y \in \cY$ and
\[
     \lim_{k \to \infty} \frac{1}{k} \int_\cY F_k(y)\,d\nu(y)
        =  \int_\cY \lim_{n \to \infty} \frac{F_n(y)}{n}\,d\nu(y).
\]
We adapt the argument in \cite{avila2009subadditive, steele1989kingman} to prove 
the following results. The idea is that $F_t$ can be treated as subadditive
on those $t$ that are $\ell$ apart from each other for $\ell > \ell_0$, and
this extra factor of $\ell$ is negligible if we
consider arbitrarily large time scale $k$. 
\begin{proposition} \label{prop:liminf}
    \[
        \lim_{n \to \infty} \frac{1}{n} \int_\cY F_n(y)\,d\nu(y)
        \le \int_\cY \liminf_{n \to \infty} \frac{F_n(y)}{n}\,d\nu(y).
    \]
\end{proposition}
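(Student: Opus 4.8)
The plan is to adapt the stopping-time (``finite filling'') proof of Kingman's subadditive ergodic theorem, following \cite{avila2009subadditive, steele1989kingman}, accommodating the two features specific to our setting: (\ref{eq:subadd}) is only an $\ell$-separated subadditivity carrying a factor $1/\alpha(\ell)$, and the functions $F_n$ are bounded only from above (by $0$), so need not lie in $L^1(\nu)$.

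As a preliminary, I would extract from (\ref{eq:subadd}) with $n = 2$, $t_1 = k - \ell$, $t_2 = m$ (and $F_{k-\ell} \le 0$) the one-sided estimate
\[
    F_{k+m}(y) \le \tfrac{1}{\alpha(\ell)} F_m(\ymap^k y) \qquad (\ell > \ell_0,\ k \ge \ell + 1),
\]
hence, with $g(y) := \liminf_{n\to\infty} F_n(y)/n \in [-\infty, 0]$, also $g(y) \le \tfrac{1}{\alpha(\ell)} g(\ymap^k y)$. For fixed $M$ set $\psi := \max(g, -M)$, a bounded measurable function with $\psi \le 0$; the displayed estimate then gives $\psi(\ymap^k y) \ge \alpha(\ell)\psi(y)$ for $k \ge \ell+1$. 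Averaging this over $k$ and letting $\ell \to \infty$ (so $\alpha(\ell) \to 1$) shows that the Birkhoff average $\bar\psi(y) := \lim_n \tfrac1n \sum_{k=0}^{n-1}\psi(\ymap^k y)$, which exists for $\nu$-a.e.\ $y$ because $\psi$ is bounded, satisfies $\bar\psi(y) \ge \psi(y) \ge g(y)$ for $\nu$-a.e.\ $y$. By monotone convergence in $M$ it therefore suffices to prove, for each fixed $M$ and $\epsilon > 0$, that $\lim_n \tfrac1n \int F_n\,d\nu \le \int \psi\,d\nu + \epsilon$.

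The core is a stopping-time and filling argument with an auxiliary scale $L$. Define $\tau_L(y) := \min\{ n \ge L : F_n(y) \le \sum_{k=0}^{n-1}\psi(\ymap^k y) + n\epsilon \}$; since $g(y) \le \bar\psi(y)$, the quantity $\tau_L$ is finite for $\nu$-a.e.\ $y$, so one may choose $N_L$ with $\nu(\tau_L > N_L) < \delta$, and call $y$ \emph{good} when $\tau_L(y) \le N_L$. For each $y$ and each large $T$, greedily tile $[0,T)$ by blocks separated by gaps of a fixed length $\ell > \ell_0$: at a good point use a block of length $\tau_L \in [L, N_L]$, at a bad point use a block of length $1$, and lump the terminal piece (of length $< N_L + \ell$) as a remainder. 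Applying (\ref{eq:subadd}) to this partition, discarding the remainder and the bad-block terms (all $\le 0$), and using $F_{\tau_L(y')}(y') \le \sum_{k=0}^{\tau_L(y')-1}(\psi(\ymap^k y') + \epsilon)$ at each good block-start $y'$, yields
\[
    F_T(y) \le \tfrac{1}{\alpha(\ell)}\Big( \sum_{k \in G(y)} \psi(\ymap^k y) + \epsilon\, T \Big),
\]
where $G(y) \subseteq [0,T)$ is the union of the good blocks. Because $\psi \le 0$ and $|\psi| \le M$, the omitted indices cost at most $M\,(T - |G(y)|)$, and $T - |G(y)|$ is the total gap length plus the number of bad blocks, which is bounded by $\ell\big(T/L + \beta_T(y)\big) + \beta_T(y)$, where $\beta_T(y) := \#\{ 0 \le k < T : \ymap^k y \text{ is bad}\}$ and where we used that good blocks have length $\ge L$. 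Integrating against the $\ymap$-invariant $\nu$ replaces $\sum_{k<T}\psi\circ\ymap^k$ by $T\int\psi\,d\nu$ and $\beta_T$ by $T\,\nu(\{\text{bad}\}) < T\delta$, so after dividing by $T$ and letting $T \to \infty$ (the left-hand limit exists by assumption) we get $\lim_n \tfrac1n\int F_n\,d\nu \le \tfrac{1}{\alpha(\ell)}\big( \int \psi\,d\nu + M\ell/L + M(\ell+1)\delta + \epsilon \big)$. Sending $L \to \infty$, then $\delta \to 0$, then $\epsilon \to 0$, and finally $\ell \to \infty$ (using $\alpha(\ell) \to 1$ and $\int\psi\,d\nu \le 0$) gives $\lim_n \tfrac1n\int F_n\,d\nu \le \int\psi\,d\nu$, and one more limit $M \to \infty$ with monotone convergence completes the proof.

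The step I expect to be delicate is the bookkeeping forced by the $\ell$-gaps in (\ref{eq:subadd}): one must make the blocks much longer than $\ell$ — the two-scale choice $L \gg \ell$ — so that the total ``wasted'' length (gaps plus bad blocks) is $o(T)$, while simultaneously keeping the bad portion small through the ergodic theorem; getting the order of the limits ($T$, $L$, $\delta$, $\epsilon$, $\ell$, $M$) right is what makes this work. The hypotheses $F_n \le 0$ and $g \le 0$ are exactly what render the discarded terms harmless, and using the Birkhoff average $\bar\psi$ in place of a genuine $\ymap$-invariant limit is what lets the argument proceed even though $g = \liminf F_n/n$ need not itself be $\ymap$-invariant in this $\ell$-separated setting.
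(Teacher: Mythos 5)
Your proposal is correct and follows the same overall strategy as the paper's proof: a Steele--Avila style stopping-time ("finite filling") argument adapted to the $\ell$-separated quasi-subadditivity of \eqref{eq:subadd}. Both proofs tile $[0,T)$ with good blocks (where the stopping time is bounded), short bad blocks, and $\ell$-gaps, and both truncate to handle the case $\liminf F_n/n \notin L^1$. There are two cosmetic differences you should be aware of (the paper uses bad blocks of length $k$ and lets $k \to \infty$; you use bad blocks of length $1$ and an auxiliary block scale $L$; the paper truncates at the end, you truncate at the outset). These are immaterial.

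However, there is a substantive difference that is worth highlighting. The paper's proof sets $F_* := \liminf_n F_n/n$, asserts without proof that $F_*$ is $\ymap$-invariant, restricts to $y$ on whose orbit $F_*$ is constant, and then, for a good block starting at $\ymap^\tau y$, converts the estimate $F_{l_j}(\ymap^\tau y)/l_j \le F_*(\ymap^\tau y) + \epsilon$ into $\le F_*(y) + \epsilon$ using exactly this invariance. But from \eqref{eq:subadd} one only obtains the \emph{one-sided} quasi-invariance $F_*(\ymap^k y) \ge \alpha(\ell) F_*(y)$ for $k \ge \ell > \ell_0$, which is weaker than $F_*(\ymap^k y) \ge F_*(y)$ (since $F_* \le 0$ and $\alpha(\ell) \ge 1$), and in any case it points in the wrong direction for the inequality the paper needs, namely an upper bound on $F_*(\ymap^\tau y)$ in terms of $F_*(y)$. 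The usual argument that recovers invariance of the $\liminf$ from the integral identity $\int F_* \circ \ymap\,d\nu = \int F_*\,d\nu$ does not apply here because one does not have $F_* \le F_* \circ \ymap$ pointwise, only $F_* \le \frac{1}{\alpha(\ell)} F_* \circ \ymap^k$ for $k \ge \ell$, and integrating the quasi-invariance gives a tautology. Your version sidesteps this issue cleanly: you compare the stopped $F_{\tau_L}$ against the genuinely orbit-intrinsic Birkhoff sum $\sum_{k < \tau_L}\psi(\ymap^k \cdot)$ rather than against $\tau_L(F_*(y) + \epsilon)$, and you only need the \emph{pointwise} inequality $\bar\psi \ge \psi \ge g$, which you establish from the available one-sided quasi-invariance of $\psi$ together with $\alpha(\ell) \to 1$. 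This buys a proof that does not rely on an invariance of the $\liminf$ that appears to be unjustified in this $\ell$-separated setting, at the cost of the extra bookkeeping of the auxiliary scale $L$. Minor: your bound on $T - |G(y)|$ should also absorb the $O(N_L + \ell)$ remainder, but since it is $o(T)$ this does not affect the limit, as you indicate.
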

\begin{proof}
    Let $F_* := \liminf_{n \to \infty} \frac{F_n}{n}$, which is a
    $\ymap$-invariant function. Suppose $F_* \ge -C$ for some constant $C > 0$.
    Fix $\epsilon > 0$, $\ell > \ell_0$ and $1 \le k \le N \le n$.
    Define the set 
        $$E_{k,N} = \set{y \in \cY \,:\, \frac{F_{j}(y)}{j} \le F_*(y) + \epsilon
        \text{ for some } k \le j \le N}.$$
    Fix $y \in \cY$ such that $F_*(y) = F_*(\ymap^t y)$ for all $t \in \nat$.
    Note that $\alpha(\ell) F_{(n - 1)\ell + nk} \le  F_{(n' - 1)\ell + nk}$ if $n' \le n$.
    In a similar way as in \cite{avila2009subadditive, steele1989kingman}, 
    we decompose the interval of length $(n'-1)\ell + nk$ into classes of subintervals 
    that are $\ell$-separated from each other, where $n' \le n$ counts the
    total number of subintervals.
    One class is of the form $[\tau_j, \tau_j + l_j)$ for some $k\le l_j \le N$ 
    and $\ymap^{\tau_j}y \in E_{k,N}$,
     one class is of the form $[\sigma_i, \sigma_i + k)$ where
    $\ymap^{\sigma_i}y \in E_{k,N}^c$,
    and a remainder interval with length at most $\max\set{N, k} = N$.
    
    % Let $n_0 = m_0 = m_{-1} = 0$, 
    Let $n_0 = 0$,
    and we define inductively a sequence of integers 
    depending on $y$
    % $$0 = m_0 \le n_1 < m_1 \le n_2 < m_2 \le \cdots$$
    $$0 = n_0 \le n_1 \le n_2 \le \cdots $$
    by taking
    $$n_j = \min \set{\bar{n} \ge n_{j - 1} \,:\, \ymap^{\bar{n}(\ell + k) + (j - 1)\ell + N_{j-1} } y \in E_{k,N} }$$
    where for convenience, we write $N_0 = 0$ and $ N_j = \sum_{i=1}^j l_i$,
    and we take $l_j$ such that $k \le l_j \le N$ and
    $$F_{l_j}(\ymap^{n_j(\ell + k) + (j - 1)\ell + N_{j-1} } y) \le 
    l_j (F_*(y) + \epsilon).$$
    % where for convenience, we write
    % $$M_j = \sum_{i=0}^j n_i - m_{i-1}, \quad N_j = \sum_{i=0}^j l_i .$$ 
    % $ N_j = \sum_{i=1}^j l_i. $
    Take $r$ be the largest integer such that $kn_r + N_{r} \le nk$.
    Then by \eqref{eq:subadd}, the invariance of $F_*$ on $y$, and $F_t \le 0$,
    \begin{align*}
        F_{(n - 1)\ell + nk}(y)
            & \le \frac{1}{\alpha(\ell)} F_{(n' - 1)\ell + nk}(y) \\
            & \le \frac{1}{\alpha(\ell)^2}
            \sum_{j=1}^r \left(\sum_{i = n_{ j-1}}^{n_j - 1}  F_k\left(\ymap^{i(\ell + k) + (j - 1)\ell + N_{j-1}}y \right)\right. \\ 
             & \qquad \qquad + 
          F_{l_j}\left(\ymap^{n_j(\ell + k) + (j - 1)\ell + N_{j-1}} y\right)\Bigg)
          + \text{ non-positive terms}\\
        & \le \frac{1}{\alpha(\ell)^2}
            \sum_{j=1}^r F_{l_j}\left(\ymap^{n_j(\ell + k) + (j - 1)\ell + N_{j-1}} y\right) \\
        & \le \frac{1}{\alpha(\ell)^2} (F_*(y) + \epsilon) 
            N_r \\
        & \le \frac{1}{\alpha(\ell)^2} (nk\epsilon + F_*(y) N_r), 
    \end{align*}
    where we used the fact that $N_r \le nk$. Note that
    % Note that since $l_j \ge k$, we have 
    % $n_r \le n$, so 
    \[
        nk - k\sum_{i=0}^{(n-1)\ell + nk} \one_{E_{k,N}^c}(\ymap^{i} y) - N
        \le N_r.
    \]
    % for appropriate choices of $s_i$. 
    By the $\ymap$-invariance of $F_*$ and $\nu$,
    \[
        \int_\cY F_*(y) N_r \, d\nu \le (nk - N) \int_\cY F_* \,d\nu  
        - ((n - 1)\ell + nk + 1)k \int_{E_{k,N}^c} F_* \,d\nu.
    \]
    Then we have 
    \[
         \alpha(\ell)^2 \lim_{n \to \infty} \frac{1}{n}\int_\cY F_{(n - 1)\ell + nk}\,d\nu
        \le k \int_\cY F_* \,d\nu  
        - (k + \ell)k \int_{E_{k,N}^c} F_* \,d\nu + k \epsilon.
    \]
    By taking $N \to \infty$, we have $\nu(E_{k,N}^c) \convergeto 0$, so
    \[
        \alpha(\ell)^2 (k + \ell) \lim_{n \to \infty} \frac{1}{n}\int_\cY F_{n}\,d\nu
        \le k \int_\cY F_* \,d\nu + k \epsilon.
    \]
    Then the result follows from dividing both sides by $k$, and taking $k \to \infty$, $\ell \to \infty$ successively.
    
    For the general $F_*$, we define $F^{(C)}_* = \max\set{-C, F_*}$ for $C > 0$. 
    The above argument holds by replacing $F_*$ with $F_*^{(C)}$. Then we 
    have by monotone convergence
    \[
        \lim_{n \to \infty} \frac{1}{n}\int_\cY F_{n}\,d\nu
        \le \lim_{C \to \infty} \int_\cY F_*^{(C)} \,d\nu 
        =  \int_\cY F_* \,d\nu.
    \]
\end{proof}

\begin{lemma} \label{lem:limsup}
    For any $k \in \nat$, $\ell > \ell_0$, for any $y \in \cY$,
    \[
        \limsup_{n \to \infty} \frac{F_{(n-1)\ell + nk}(y)}{n} \ge
        \alpha(\ell) (k + \ell) \limsup_{n \to \infty}\frac{F_n(y)}{n}.
    \]
\end{lemma}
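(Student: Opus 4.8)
The plan is to prove the inequality pointwise in $y$, directly from the generalized subadditivity \eqref{eq:subadd} used with just two blocks. The starting observation is that, although \eqref{eq:subadd} is an \emph{upper} bound on $F$ of a concatenation, isolating one block and discarding the complementary (non-positive) block converts it into a \emph{lower} bound on the isolated block: applying \eqref{eq:subadd} with $n=2$, $t_1=(n-1)\ell+nk$ and an arbitrary $t_2\ge 0$, and using $F_{t_2}(\cdot)\le 0$, gives after multiplying by $\alpha(\ell)$
\[
F_{(n-1)\ell+nk}(y)\;\ge\;\alpha(\ell)\,F_{\,\ell+(n-1)\ell+nk+t_2}(y)\;=\;\alpha(\ell)\,F_{n(k+\ell)+t_2}(y),
\]
valid for every $n$, every $t_2\ge 0$, every $\ell>\ell_0$, every $y$; here I used $\ell+(n-1)\ell+nk=n(k+\ell)$.

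Next I would exploit the freedom in $t_2$ to hit a time $m$ at which $F_m(y)/m$ is close to the target limsup. Fix $y$, set $L:=\limsup_{m\to\infty}F_m(y)/m\le 0$, and assume $L>-\infty$ (if $L=-\infty$ the right-hand side of the lemma is $-\infty$ and there is nothing to prove). Given $\epsilon>0$, pick an arbitrarily large $m$ with $F_m(y)/m>L-\epsilon$, and write $m=n(k+\ell)+t_2$ with $n:=\lfloor m/(k+\ell)\rfloor$ and $t_2\in\{0,\dots,k+\ell-1\}$, so that $n(k+\ell)\le m<(n+1)(k+\ell)$ and hence $m/n\to k+\ell$ as $m\to\infty$. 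Substituting into the displayed estimate and dividing by $n$,
\[
\frac{F_{(n-1)\ell+nk}(y)}{n}\;\ge\;\alpha(\ell)\,\frac{m}{n}\cdot\frac{F_m(y)}{m}\;>\;\alpha(\ell)\,\frac{m}{n}\,(L-\epsilon),
\]
the last step using $F_m(y)/m>L-\epsilon$ together with $\alpha(\ell)\,m/n>0$. Letting $m$ (hence $n$) run through the infinite set of such indices, the right-hand side converges to $\alpha(\ell)(k+\ell)(L-\epsilon)$ since $\alpha(\ell)$ and $L-\epsilon$ are finite; because $a_j>b_j$ with $b_j\to c$ forces $\liminf_j a_j\ge c$, this yields $\limsup_{n\to\infty}F_{(n-1)\ell+nk}(y)/n\ge\alpha(\ell)(k+\ell)(L-\epsilon)$, and letting $\epsilon\downarrow 0$ along a countable sequence gives the claim. (Minor bookkeeping: if one needs $t_2\ge 1$ because $0\notin\T$, replace $n$ by $n-1$ when $t_2=0$, which does not change $m/n\to k+\ell$.)

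The one genuinely delicate point is the first step — arranging the inequality to point the right way; everything afterwards is elementary $\limsup$ manipulation. The mandatory $\ell$-gaps are exactly what upgrades $k$ to $k+\ell$ in the conclusion, and the mixing defect $\alpha(\ell)\ge 1$ is simply carried along as a multiplicative factor. This is the discrete-time analogue of the corresponding maneuver in \cite{avila2009subadditive, steele1989kingman}, adapted to the present gapped, $\alpha$-distorted subadditivity, and together with Proposition \ref{prop:liminf} it is what forces $\lim_{n\to\infty}F_n(y)/n$ to exist $\nu$-a.e.\ and to integrate to $\lim_{n}\tfrac1n\int_\cY F_n\,d\nu$.
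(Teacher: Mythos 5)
Your proof is correct and follows essentially the same route as the paper's: both apply the two-block form of \eqref{eq:subadd} relating $F_{n(k+\ell)-\ell}=F_{(n-1)\ell+nk}$ to $F_{n(k+\ell)+t_2}$ and discard the non-positive remainder block, which is the whole content of the lemma. The only difference is cosmetic bookkeeping --- the paper starts from an arbitrary time $n$ and locates the nearby index $m_n=\lfloor (n+\ell)/(k+\ell)\rfloor$, whereas you start from the gapped times and choose $m$ along a subsequence realizing $\limsup_m F_m(y)/m$; both yield the same $\limsup$ comparison.
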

\begin{proof}
    Fix $\ell > \ell_0$. Let $n \in \nat$. Then $n = m_n(k + \ell) - \ell + r$
    for some $m_n, r \in \nat$ such that $\ell \le r < k + 2\ell$. 
    By \eqref{eq:subadd},
    \[
        F_n(y) \le \frac{1}{\alpha(\ell)}(F_{m_n(k + \ell) - \ell}(y) 
            + F_{r - \ell}(\ymap^{m_n(k+\ell)}(y)))
        \le \frac{1}{\alpha(\ell)}F_{m_n(k + \ell) - \ell}(y).
    \]
    By taking $n \to \infty$, we have $m_n \to \infty$ 
    and $\frac{m_n}{n} \to \frac{1}{k + \ell}$, so
    \[
        \limsup_{n \to \infty }\frac{F_n(y)}{n} 
        \le \frac{1}{\alpha(\ell)} \frac{1}{k + \ell} 
        \limsup_{n \to \infty} \frac{F_{n(\ell + k) - \ell}(y)}{n}.
    \]
\end{proof}

\begin{proposition} \label{prop:limsup}
    \[
        \lim_{k \to \infty} \frac{1}{k} \int_\cY F_k(y)\,d\nu(y)
        \ge \int_\cY \limsup_{n \to \infty} \frac{F_n(y)}{n}\,d\nu(y).
    \]
\end{proposition}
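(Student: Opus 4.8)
The plan is to combine the one‑sided bound of Lemma~\ref{lem:limsup} with Birkhoff's ergodic theorem applied to an induced transformation, in the spirit of the proof of Proposition~\ref{prop:liminf}. Throughout write $F_*(y) := \limsup_{n\to\infty} F_n(y)/n \in [-\infty,0]$ (recall $F_t\le 0$) and $L := \lim_{k\to\infty}\frac1k\int_\cY F_k\,d\nu$, which exists by assumption and satisfies $L\le 0$; likewise $\int_\cY F_*\,d\nu\le 0$.

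First I would fix integers $k\ge 1$ and $\ell>\ell_0$ and apply the subadditivity hypothesis \eqref{eq:subadd} with $t_1=\cdots=t_n=k$ (so the time interval of length $(n-1)\ell+nk$ is split into $n$ blocks of common length $k$, pairwise $\ell$‑separated), obtaining for every $y\in\cY$ and every $n\in\nat$
\begin{equation*}
    \frac{F_{(n-1)\ell + nk}(y)}{n} \le \frac{1}{\alpha(\ell)}\cdot\frac1n\sum_{j=0}^{n-1} F_k\bigl(\ymap^{j(\ell+k)}y\bigr).
\end{equation*}
The transformation $T:=\ymap^{\ell+k}$ preserves $\nu$, and $F_k$ is measurable and bounded above by $0$, so Birkhoff's pointwise ergodic theorem gives, for $\nu$‑a.e.\ $y$, the existence of $\bar F_k(y):=\lim_{n}\frac1n\sum_{j=0}^{n-1}F_k(T^jy)\in[-\infty,0]$ with $\int_\cY \bar F_k\,d\nu=\int_\cY F_k\,d\nu$ (truncating $F_k$ at $-M$, applying Birkhoff to the $L^1$ truncations, and then using monotone convergence handles the possibly non‑integrable case). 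Passing to $\limsup_n$ on the left and the limit on the right, for $\nu$‑a.e.\ $y$,
\begin{equation*}
    \limsup_{n\to\infty}\frac{F_{(n-1)\ell+nk}(y)}{n} \le \frac{1}{\alpha(\ell)}\,\bar F_k(y).
\end{equation*}
Combining this with the lower bound $\limsup_{n}\frac{F_{(n-1)\ell+nk}(y)}{n}\ge \alpha(\ell)(k+\ell)F_*(y)$ from Lemma~\ref{lem:limsup} yields $\alpha(\ell)^2(k+\ell)F_*(y)\le \bar F_k(y)$ for $\nu$‑a.e.\ $y$.

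Then I would integrate over $\nu$ to get $\alpha(\ell)^2(k+\ell)\int_\cY F_*\,d\nu \le \int_\cY F_k\,d\nu$, divide by $k+\ell$, and let $k\to\infty$: since $\frac{k}{k+\ell}\to 1$ and $\frac1k\int_\cY F_k\,d\nu\to L$, this gives $\alpha(\ell)^2\int_\cY F_*\,d\nu\le L$. Finally, letting $\ell\to\infty$ so that $\alpha(\ell)\to 1$ (the case $\int_\cY F_*\,d\nu=-\infty$ being trivial, since then $-\infty\le L$) produces $\int_\cY F_*\,d\nu\le L$, i.e.\ $\int_\cY\limsup_{n}\frac{F_n}{n}\,d\nu \le \lim_{k}\frac1k\int_\cY F_k\,d\nu$, which is the claim.

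The main point requiring care is the ergodic‑theoretic step: $F_k$ need not be integrable, so one must run Birkhoff for functions bounded above (equivalently, for $-F_k\ge 0$, whose ergodic averages converge a.e.\ to a possibly infinite limit) and verify $\int\bar F_k\,d\nu=\int F_k\,d\nu$ in $[-\infty,0]$; moreover $\nu$ need not be ergodic for $T=\ymap^{\ell+k}$, so $\bar F_k$ is only a conditional expectation, but only its $\nu$‑integral enters the argument. The other potential obstruction — that $\{(n-1)\ell+nk\}_n$ is merely a subsequence of $\nat$, along which a naive $\limsup$ would bound in the wrong direction — is precisely what Lemma~\ref{lem:limsup} is designed to bypass.
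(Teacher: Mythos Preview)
Your proof is correct and follows essentially the same route as the paper: apply \eqref{eq:subadd} with equal blocks $t_1=\cdots=t_n=k$, use Birkhoff's ergodic theorem for $F_k$ under the induced map $\ymap^{k+\ell}$ to identify $\int\bar F_k\,d\nu=\int F_k\,d\nu$, invoke Lemma~\ref{lem:limsup} to pass from the subsequence $\limsup$ to the full one, integrate, and send $k\to\infty$ then $\ell\to\infty$. You are, if anything, more explicit than the paper about the integrability and ergodicity caveats in the Birkhoff step.
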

\begin{proof}
    Fix $k \in \nat$ and $\ell > \ell_0$. 
    Note that from \eqref{eq:subadd}, for $n \in \nat$,
    \begin{equation} \label{eq:ineq}
        F_{(n-1)\ell + nk}(y) \le 
        \frac{1}{\alpha(\ell)} \sum_{i=0}^{n-1} F_k(\ymap^{i(k+\ell)}y).
    \end{equation}
    Based on this, for $n \in \nat$,
    we define $$G_n^k(y) =  \sum_{i=0}^{n-1} F_k(\ymap^{i(k+\ell)}y),$$
    which is the $n$-th Birkohff sum of $F_k$ with respect to $\ymap^{k+\ell}$.
    Clearly, $G^k_n$ is additive in $n$.
    By the Subadditive Ergodic Theorem,
    $$\int_\cY \limsup_{n \to \infty} \frac{G_n^k}{n}\,d\nu =  
        \lim_{n \to \infty} \int_\cY  \frac{G_n^k}{n}\,d\nu = 
         \int_\cY F_k \,d\nu.$$
    Then by \eqref{eq:ineq} and Lemma \ref{lem:limsup},
    \begin{align*}
         \limsup_{n \to \infty} \frac{G_n^k}{n} & = 
        \,\limsup_{n \to \infty}\frac{1}{n} \sum_{i=0}^{n-1} F_k \circ \ymap^{i(k+\ell)} \\
        & \ge \, \alpha(\ell) \limsup_{n \to \infty} \frac{F_{(n-1)\ell + nk}}{n} \\
        & \ge \, \alpha(\ell)^2 (k + \ell) \limsup_{n \to \infty} \frac{F_{n}}{n}. 
    \end{align*}
    By integrating on both sides, we obtain
    \[
        \frac{1}{k + \ell}\int_\cY F_k \,d\nu 
        \ge \alpha(\ell)^2 \int_\cY \limsup_{n \to \infty} \frac{F_{n}}{n} \,d\nu.
    \]
    The result follows by taking the limit as $k \to \infty$ on the left hand side and then 
    taking $\ell \to \infty$ on the right hand side.
\end{proof}

Now we want to push the subadditive ergodic result to
the continuous-time case. 
Consider now $\T = [0, \infty)$ and
$(\cY, \ymap, \nu)$ is a continuous-time dynamical system.
The argument is inspired by \cite[Theorem 4]{kingman1973subadditive}.
\begin{proposition} \label{prop:dtoc}
    For $y \in \cY$, we have the following connection between
    convergence in discrete time and continuous time:
    \[
        \liminf_{n \to \infty}\frac{F_n(y)}{n} 
         \le \liminf_{t \to \infty}\frac{F_t(y)}{t} 
         \le \limsup_{t \to \infty}\frac{F_t(y)}{t}
        \le \limsup_{n \to \infty}\frac{F_n(y)}{n},
    \]
%   \[
%       \liminf_{n \to \infty} \int_\Y \frac{F_n}{n} \,d\nu
%        \le \liminf_{t \to \infty} \int_\Y \frac{F_t}{t} \,d\nu
%        \le \limsup_{t \to \infty} \int_\Y \frac{F_t}{t} \,d\nu
%       \le \limsup_{n \to \infty} \int_\Y \frac{F_n}{n} \,d\nu.
%   \]
    In particular, $\displaystyle \lim_{t \to \infty}\frac{F_t(y)}{t} = \lim_{n \to \infty}\frac{F_n(y)}{n}$ exists
    for $\nu$-a.e. $y \in \cY$, and
    \[
        \lim_{t \to \infty} \int_\cY \frac{F_t}{t} \,d\nu
        = \int_\cY \lim_{t \to \infty}\frac{F_t}{t} \,d\nu.
    \]
\end{proposition}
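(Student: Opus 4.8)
The plan is to reduce the continuous-time statement to the discrete-time results (Propositions \ref{prop:liminf} and \ref{prop:limsup}), which are already available, by comparing $F_t(y)$ for real $t$ with $F_n(y)$ at the nearest integers and controlling the ``error'' interval of length less than $1$. First I would fix $y \in \cY$ and write, for $t > 0$, $n = \lfloor t \rfloor$, so $t = n + s$ with $0 \le s < 1$. Applying the subadditive inequality \eqref{eq:subadd} with $n=2$, a fixed $\ell > \ell_0$, and the two time lengths $n$ and $s - \ell$ (or, more carefully, choosing $s$ and $\ell$ so the lengths are nonnegative — one can always enlarge the first block slightly or take $\ell$ small enough that $s \ge 0$ is not an issue since $F_t \le 0$ lets us drop problematic terms), one gets a bound of the form
\[
    F_{t}(y) \le \frac{1}{\alpha(\ell)}\bigl(F_{n}(y) + F_{s'}(\ymap^{n+\ell} y)\bigr) \le \frac{1}{\alpha(\ell)} F_{n}(y),
\]
using $F \le 0$ to discard the second term; note here one must take $t = (n-1+1) + \ell\cdot 0$-type decompositions carefully, but the upshot is that up to the factor $1/\alpha(\ell)$ the value at real time $t$ is dominated by the value at the integer floor. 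Dividing by $t$ and letting $t \to \infty$ (so $n/t \to 1$), then letting $\ell \to \infty$ so that $\alpha(\ell) \to 1$, yields
\[
    \limsup_{t \to \infty} \frac{F_t(y)}{t} \le \limsup_{n \to \infty} \frac{F_n(y)}{n}.
\]

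For the lower bound I would go the other direction: write $n = \lceil t \rceil$, so $n = t + s$ with $0 \le s < 1$, and again apply \eqref{eq:subadd} to decompose the interval of length $n$ (actually $n - 1 + \ell$, to absorb the separation) into a block of length $t$ followed by a short block, obtaining
\[
    F_{n-1+\ell}(y) \le \frac{1}{\alpha(\ell)}\bigl(F_{t}(y) + F_{n - 1 + \ell - t}(\ymap^{t+\ell}y)\bigr).
\]
Since $F_{n-1+\ell-t}$ is bounded between $-\infty$ and $0$, the troublesome term is a nonpositive quantity; to get a usable inequality one should instead bound $F_t(y)$ from below by $\alpha(\ell) F_{n-1+\ell}(y)$ minus a nonpositive remainder — which is automatic — so in fact $F_t(y) \ge \alpha(\ell) F_{n-1+\ell}(y)$ after dropping the $\le 0$ remainder to the correct side. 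Dividing by $t$, sending $t \to \infty$ (hence $n/t \to 1$, and $(n-1+\ell)/t \to 1$ after also sending $\ell \to \infty$), gives
\[
    \liminf_{t \to \infty} \frac{F_t(y)}{t} \ge \liminf_{n \to \infty} \frac{F_n(y)}{n}.
\]
Chaining these with the trivial $\liminf \le \limsup$ produces the four-term inequality in the statement.

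Finally, the discrete-time Propositions \ref{prop:liminf} and \ref{prop:limsup} already establish that $\lim_{n\to\infty} F_n(y)/n$ exists for $\nu$-a.e. $y$ and equals $\lim_{k\to\infty}\frac1k\int_\cY F_k\,d\nu$; combined with the squeeze just obtained, $\lim_{t\to\infty} F_t(y)/t$ exists $\nu$-a.e. and coincides with it. For the integral convergence I would invoke the monotone/dominated structure: $F_n$ is nonpositive and, by the subadditive inequality applied with the $n$ equal blocks of length $1$, $F_n(y) \ge \frac{1}{\alpha(\ell_0)^{?}}\sum_{i} F_1(\ymap^{i} y)$-type lower bounds make $\{F_t/t\}$ uniformly integrable, or more simply one passes to the continuous-time limit along integers using the already-proven discrete statement $\lim_k \frac1k\int F_k\,d\nu = \int \lim_n \frac{F_n}{n}\,d\nu$ and notes $\frac1t\int_\cY F_t\,d\nu$ is sandwiched between $\frac{1}{\alpha(\ell)}\cdot\frac{\lfloor t\rfloor}{t}\cdot\frac{1}{\lfloor t\rfloor}\int F_{\lfloor t\rfloor}$ and the analogous ceiling bound, both of which converge to $\int \lim \frac{F_t}{t}\,d\nu$. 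The main obstacle I expect is purely bookkeeping: making the block decompositions in \eqref{eq:subadd} legitimate when the short remainder block would have negative or sub-$\ell_0$ length, which is handled by shifting a small amount of length into the first block and using $F_t \le 0$ to discard any leftover nonpositive contributions — conceptually routine but requiring care with the indices $(k-1)\ell + \sum t_k$.
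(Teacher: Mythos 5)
Your approach is essentially the paper's: compare $F_t$ to $F$ at nearby integers via the two-block case of \eqref{eq:subadd}, discard the short remainder block using $F\le0$, then let $\ell\to\infty$ to kill the $\alpha(\ell)$ factor, and finish by citing Propositions \ref{prop:liminf} and \ref{prop:limsup}. However, your intermediate displays as written are invalid because the remainder block has negative length. For the upper bound, applying \eqref{eq:subadd} with two blocks summing to $t$ requires $t = \ell + t_1 + t_2$; with $t_1 = n$ (as you wrote) you get $t_2 = t - n - \ell = s - \ell < 0$. The paper instead takes $t_1 = n - \ell$, $t_2 = t - n \in [0,1)$, giving $F_t(y) \le \frac{1}{\alpha(\ell)} F_{n-\ell}(y)$, which serves the same purpose. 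For the lower bound, your choice $n = \lceil t\rceil$ with total $n - 1 + \ell$ and $t_1 = t$ forces $t_2 = n - 1 - t < 0$; the paper takes $n = \lfloor t\rfloor$, total $n + \ell + 1$, $t_1 = t$, $t_2 = n + 1 - t \in (0,1]$, yielding $F_t(y) \ge \alpha(\ell) F_{n+\ell+1}(y)$. You did flag the bookkeeping issue and correctly describe the fix (shift length into the first block so the remainder is nonnegative), so the underlying idea is sound; you should simply replace your two invalid displays with the corrected choices of $t_1, t_2$ above.
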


\begin{proof}
    Fix $\ell \in \nat$ with $\ell > \ell_0$. Let $t$ be large enough
    so that we don't get negative times below, and let $n \in \nat$ be the
    integer part of $t$. 
    By \eqref{eq:subadd}, we have
    \[
        F_t(y) \le \frac{1}{\alpha(\ell)} (F_{n - \ell}(y) + F_{t-n}(\ymap^ny)),
    	\quad 
        F_{n + \ell + 1}(y) \le \frac{1}{\alpha(\ell)} (F_t(y) + F_{n+1-t}(\ymap^{t+\ell}y)),
    \]
    which implies, using the assumption that $F_t \le 0$,
    \[
        \alpha(\ell) F_{n + \ell + 1}(y) \le F_t(y) 
        \le \frac{1}{\alpha(\ell)} F_{n - \ell}(y).
    \]
    Now it is clear that the result follows.
\end{proof}

%% The Appendices part is started with the command \appendix;
%% appendix sections are then done as normal sections
%% \appendix

%% \section{}
%% \label{}

%% For citations use: 
%%       \citet{<label>} ==> Jones et al. [21]
%%       \citep{<label>} ==> [21]
%%

%% If you have bibdatabase file and want bibtex to generate the
%% bibitems, please use
%%
\bibliographystyle{siam} 
\bibliography{main2}

%% else use the following coding to input the bibitems directly in the
%% TeX file.

% \begin{thebibliography}{00}

% %% \bibitem[Author(year)]{label}
% %% Text of bibliographic item

% \bibitem[ ()]{}

% \end{thebibliography}
\end{document}